\tikzstyle{1function}=[fill=white, draw=black, shape=rectangle, minimum width=0.75cm, minimum height=1 cm]
\tikzstyle{2function}=[fill=white, draw=black, shape=rectangle, minimum width=1cm, minimum height=1 cm]
\tikzstyle{3function}=[fill=white, draw=black, shape=rectangle, minimum width=1.5cm, minimum height=1cm]
\tikzstyle{multi function}=[fill=white, draw=black, shape=rectangle, minimum width=5cm, minimum height=1cm]
\tikzstyle{multi function small}=[fill=white, draw=black, shape=rectangle, minimum width=4cm, minimum height=1cm]
\tikzstyle{Multi function smaller}=[fill=white, draw=black, shape=rectangle, minimum width=3.5 cm, minimum height=1 cm]
\tikzstyle{graph node}=[fill=black, draw=black, shape=circle]
\tikzstyle{smallbox}=[fill=white, draw=black, shape=rectangle, minimum width=0.5 cm, minimum height=0.5cm]
\tikzstyle{Circle1}=[fill=white, draw=black, shape=circle, minimum size=26 mm]
\tikzstyle{black dashed}=[-, draw=black, dashed]
\tikzstyle{new edge style 0}=[thick, ->]
\tikzstyle{thickedge}=[thick, -]
\numberwithin{equation}{section}
\newtheorem{theorem}{Theorem}[section]
\newtheorem{lemma}[theorem]{Lemma}
\newtheorem{proposition}[theorem]{Proposition}
\newtheorem{corollary}[theorem]{Corollary}
\theoremstyle{definition}
\newtheorem{definition}[theorem]{Definition}
\newtheorem{question}[theorem]{Question}
\newtheorem{remark}[theorem]{Remark}
\newcommand{\ot}{\otimes}
\newcommand{\N}{\mathbb{N}}
\newcommand{\Z}{\mathbb{Z}}
\newcommand{\Ga}{\Gamma}
\DeclareMathOperator{\Ker}{Ker}
\DeclareMathOperator{\End}{End}
\DeclareMathOperator{\Hom}{Hom}
\newcommand{\parti}{\vdash}
\newcommand{\la}{\lambda}
\newcommand{\ol}{\overline}
\renewcommand{\S}{\mathbb{S}}
\DeclareMathOperator{\GL}{GL}
\DeclareMathOperator{\R}{R}
\DeclareMathOperator{\Ind}{Ind}
\newcommand{\Q}{\mathbb{Q}}
\DeclareMathOperator{\Res}{Res}
\newcommand{\Ainf}{K[X]}
\DeclareMathOperator{\Tr}{Tr}
\DeclareMathOperator{\Id}{Id}
\newcommand{\wt}{\widetilde}
\newcommand{\Ow}{\mathcal{O}}
\DeclareMathOperator{\Aut}{Aut}
\DeclareMathOperator{\Rep}{Rep}
\newcommand{\dash}{\text{\textemdash}}
\DeclareMathOperator{\Irr}{Irr}
\DeclareMathOperator{\Inf}{Inf}
\renewcommand{\H}{\mathcal{H}}
\newcommand{\W}{\mathcal{W}}
\newcommand{\F}{\mathbb{F}}
\DeclareMathOperator{\Stab}{Stab}
\title[Invariants and coverings]{Invariants that are covering spaces and their Hopf algebras}
\author{Ehud Meir}
\address{Institute of Mathematics, University of Aberdeen, Fraser Noble Building, Aberdeen AB24 3UE, UK}
  \email{ehud.meir@abdn.ac.uk}
\begin{document}
\maketitle
\begin{abstract}
In a previous paper by the author a universal ring of invariants for algebraic structures of a given type was constructed. This ring is a polynomial algebra that is generated by certain trace diagrams. 
It was shown that this ring admits the structure of a rational positive self adjoint Hopf algebra (abbreviated rational PSH-algebra), and was conjectured that it always admits a lattice that is a PSH-algebra, a structure that was introduced by Zelevinsky. In this paper we solve this conjecture, showing that the universal ring of invariants splits as the tensor product of rational PSH-algebras that are either polynomial algebras in a single variable, or admit a lattice that is a PSH-algebra. We do so by considering diagrams as topological spaces, and using tools from the theory of covering spaces. As an application we derive a formula that connects Kronecker coefficients with finite index subgroups of free groups and representations of their Weyl groups, and a formula for the number of conjugacy classes of finite index subgroup in a finitely generated group that admits a surjective homomorphism onto the group of integers. 
\end{abstract}
 
\section{Introduction}
In this paper we establish new relations between the invariant theory of $\GL_d(K)$, where $K$ is a field of characteristic zero, and the representation theory of finite groups. Recall that for $(p_i,q_i))\in (\N^2)^r$, an algebraic structure of type $((p_i,q_i))$ is a finite dimensional vector space $W$ equipped with structure tensors $x_i\in W^{p_i,q_i} = W^{\ot p_i}\ot(W^*)^{\ot q_i}$. In \cite{meirUIR} a universal ring of invariants $\Ainf$ for algebraic structures of type $((p_i,q_i))$ was constructed. The set $X$ is the union $\bigsqcup_{d\geq 0} X_d$, where $X_d$ consists of the isomorphism types of algebraic structures of type $((p_i,q_i))$ with a closed $\GL_d(K)$-orbit. See the Introduction and Section 3 of \cite{meirUIR} for more details. The ring $\Ainf$ is a polynomial ring that projects onto all the invariant rings $K[U_d]^{\GL_d}$, where $U_d$ is the affine space of structure constants for algebraic structures of type $((p_i,q_i))$ and dimension $d$, on which $\GL_d$ acts by a change of basis. In the case where the type of the algebraic structure is $((1,1),(1,1)\ldots, (1,1))$ our algebraic structure is a vector space equipped with a finite number of endomorphisms. The invariant rings for such structures were studied by Procesi, Drensky, and Razmyslov, among others. See \cite{ADS},\cite{BD},\cite{Hoge},\cite{Nakamoto},\cite{Popov},\cite{Procesi},\cite{Teranishi}. The invariant theory of other algebraic structures, such as Lie algebras, subfactors, Hopf algebras, and Hopf-Galois extensions, was also studied by Datt, Kodiyalam, and Sunder, by Kodiyalam and Sunder, by Bar Natan, by Vaintrob, and by the author of this paper. See \cite{DKS}, \cite{KS}, \cite{meirHopfI},\cite{meir2}, \cite{BN}, \cite{Va}

It was shown in \cite{meirUIR} that $\Ainf$ admits a structure of rational positive self adjoint Hopf algebra (rational PSH-algebra). This is a generalization of the structure of a PSH-algebra, defined by Zelevinsky in \cite{Zelevinsky}. PSH-algebras usually arise in the study of the representation theory of families of finite groups such as $(S_n)_n$ or $(\GL_n(\F_q))_n$, where $q$ is a prime power. They are graded, defined over $\Z$, and admit a basis with respect to which all the structure constants are non-negative integers. 

Zelevinsky showed that there is a unique, up to rescaling of the grading, \emph{universal} PSH-algebra, and that every PSH-algebra splits uniquely as the tensor product of universal PSH-algebras.
He also showed that the different tensor factors of universal PSH-algebras are in one-to-one correspondence with \textit{cuspidal} elements. By definition, these are basis elements of norm 1. Equivalently, these are basis elements that are orthogonal to all multiplications of basis elements of lower degree. The universal PSH-algebra is isomorphic to the PSH-algebra that arises in the representation theory of the symmetric groups. 

It was shown in \cite{meirUIR} that in the case of an algebraic structure that contains a single endomorphism, the universal ring of invariants admits a lattice that is a universal PSH-algebra. In other words, the universal ring of invariants $\Ainf$ contains a $\Z$-sub Hopf algebra $A$ that is free as a $\Z$-module, has a $\Z$-basis such that all the structure constants of multiplication and comultiplication with respect to this basis are non-negative integers, and such that the natural map $A\ot_{\Z} K\to \Ainf$ is an isomorphism of Hopf algebras. This lattice was used to give a new proof for the well known isomorphism $K[x_{ij}]^{\GL_d} = K[c_1,\ldots, c_d]$, where $\GL_d$ acts on $(x_{ij})_{i,j=1}^d$ by conjugation, and $c_i$ are the coefficients of the characteristic polynomial of $(x_{ij})_{i,j=1}^d$. The idea is that the PSH-algebra $A$ can be written as $A= \Z[Y_1,Y_2,\ldots]$, and if we write $I_d$ for the kernel of $\Ainf\to K[X_d]$ then it was shown that $I_d\cap A = (Y_{d+1},Y_{d+2},\ldots)$. So $A/(I_d\cap A)\cong \Z[Y_1,\ldots, Y_d]$. The elements $Y_i$ are then exactly the elements $c_i$ that generate $K[x_{ij}]^{\GL_d}$. Another important aspect of these invariants are that even though they are not $\Z$-linear combinations of diagram invariants, they still make sense over $\Z$ and therefore over any field. The aforementioned isomorphism $K[x_{ij}]^{\GL_d}\cong K[c_1,\ldots, c_d]$ remains valid in every characteristic. See Section \ref{sec:remarks} for further discussion. 
Since this PSH-algebra was useful to describe a concrete ring of invariants, the question if $\Ainf$ admits a lattice that is a PSH-algebra also in the general case was raised \cite[Question 1]{meirUIR}.  

In this paper we solve this question by constructing explicit PSH-algebras inside $\Ainf$. We will also give a representation-theoretical parametrisation of the cuspidal elements.

The first step that we will take will be to interpret the diagrams constructed in \cite{meirUIR} as graphs with a specific type of coloring. This will enable us to consider them as topological spaces. 
We will show that any color-preserving map between two such graphs is a covering, and that every graph covers a unique minimal (or \emph{irreducible}) graph. The first step in finding the lattice will be in proving the following result (see Proposition \ref{prop:splitting1}):

\begin{proposition}We have a tensor product decomposition $$\Ainf\cong \bigotimes_{\Ga_0} \Ainf_{\Ga_0},$$ where $\Ainf_{\Ga_0}$ is the polynomial algebra generated by all the connected graphs that cover $\Ga_0$. The tensor product is taken over all irreducible graphs. 
\end{proposition}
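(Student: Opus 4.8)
The plan is to reduce the statement to a purely formal fact about polynomial algebras, once the covering-theoretic framework developed above is in place. Recall from \cite{meirUIR} that $\Ainf = K[X]$ is freely generated as a commutative $K$-algebra by the set $X$ of (isomorphism classes of) connected trace diagrams, with the product of two diagrams corresponding to their disjoint union; in particular every diagram is uniquely a disjoint union of connected ones, so the connected diagrams are exactly the polynomial generators. Reinterpreting diagrams as colored graphs, I will then invoke the results established before the proposition: any color-preserving map of such graphs is a covering, and every connected colored graph $\Ga$ admits a color-preserving map onto a unique irreducible colored graph, which I will call its \emph{core} $\Ga_0$. This assignment $\Ga\mapsto\Ga_0$ is well defined on isomorphism classes, so it induces a partition $X=\bigsqcup_{\Ga_0}X_{\Ga_0}$ indexed by the irreducible colored graphs, where $X_{\Ga_0}$ is the set of connected graphs whose core is $\Ga_0$.

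The argument then proceeds in three steps. First, I would record that an irreducible graph is its own core, so each block $X_{\Ga_0}$ is nonempty (it contains $\Ga_0$ itself) and the index set of the partition is precisely the set of irreducible graphs; disjointness of the blocks is exactly the uniqueness of the core. Second, I would identify $X_{\Ga_0}$ with the set of connected graphs covering $\Ga_0$: if a connected $\Ga$ covers an irreducible graph $\Ga_0$, then $\Ga_0$ must be the core of $\Ga$, since the core is the \emph{only} irreducible graph that $\Ga$ covers. Third, I would apply the elementary fact that a polynomial algebra on a disjoint union of sets of variables is canonically the tensor product of the polynomial algebras on the pieces, $K\bigl[\bigsqcup_{\Ga_0}X_{\Ga_0}\bigr]\cong\bigotimes_{\Ga_0}K[X_{\Ga_0}]$. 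Setting $\Ainf_{\Ga_0}:=K[X_{\Ga_0}]$, which is by construction the polynomial algebra generated by all connected graphs covering $\Ga_0$, gives the asserted decomposition $\Ainf\cong\bigotimes_{\Ga_0}\Ainf_{\Ga_0}$.

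The only point genuinely requiring care is the well-definedness and uniqueness of the core, together with the equivalence ``$\Ga$ covers the irreducible graph $\Ga_0$'' $\iff$ ``$\Ga_0$ is the core of $\Ga$''; this rests on the universal property of the core (every color-preserving map from $\Ga$ to an irreducible graph is, up to isomorphism, the core map), which is part of the covering material developed above and which I will cite rather than reprove here. Everything else is automatic: since $\Ainf$ is a free polynomial algebra there are no relations to check, so compatibility of the decomposition with the multiplication follows at once, and the tensor factors are completely determined by the partition of the generating set.
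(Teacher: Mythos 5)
Your argument is correct and follows essentially the same route as the paper: the paper's proof likewise rests on the uniqueness of the irreducible graph covered by each connected graph (Proposition \ref{prop:uniquecover}) together with the unique decomposition of a graph into connected components, which partitions the polynomial generators and yields the tensor factorization. Your write-up simply makes explicit the formal step $K\bigl[\bigsqcup_{\Ga_0}X_{\Ga_0}\bigr]\cong\bigotimes_{\Ga_0}K[X_{\Ga_0}]$ that the paper leaves implicit.
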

The family of covering spaces of $\Ga_0$ depends only on the fundamental group of $\Ga_0$. Since $\Ga_0$ is a graph, this is a free group of rank $m$ for some $m\geq 0$. The graded Hopf algebra $\Ainf_{\Ga_0}$ depends only on the parameter $m$. 

In case $m=0$ it holds that $\Ainf_{\Ga_0}= K[\Ga_0]$, a polynomial algebra on the single variable $\Ga_0$. 
In case $m>0$ we will show that $\Ainf_{\Ga_0}$ admits a lattice $\H_{\Ga_0}$ which gives it a structure of a PSH-algebra, and we will describe explicitly its cuspidal elements. To do so, we need some more definitions.

Write $F_m:= \langle z_1,\ldots, z_m\rangle$ for the free group of rank $m$, and denote by $\upsilon:F_m\to \Z$ the group homomorphism that sends $z_i$ to 1 for every $i$. Write $F_m^0=\Ker(\upsilon)$. 
The element $z_1$ acts by conjugation on the set of all finite transitive $F_m^0$-sets. We will call a finite transitive $F_m^0$-set $\Ow$ \textit{strongly finite}, if $z_1^k(\Ow)\cong \Ow$ for some $k>0$. We write $k(\Ow)$ for the minimal such $k$. 
For every strongly finite transitive $F_m^0$-set $\Ow$, we fix an isomorphism $\Phi(\Ow):z_1^{k(\Ow)}(\Ow)\to \Ow$. Conjugation by $\Phi(\Ow)$ then induces an automorphism $\rho(\Ow)$ on $\Aut_{F_m^0}(\Ow)$ (see Equation \ref{eq:defrho}). We write $FT(F_m,F_m^0)$ for the set of all pairs of the form $(\Ow,\ol{[W]})$, where $\Ow$ is a strongly finite transitive $F_m^0$-set, $W$ is an irreducible representation of $\Aut_{F_m^0}(\Ow)$, and $\ol{[W]}=\ol{[W']}$ if and only if $W$ and $W'$ are conjugate under the action of $\rho$. We write $l(W)$ for the cardinality of the orbit of $W$ under the action of $\rho$.  
Theorems \ref{thm:main} and \ref{thm:cuspidals} combine to give the following result: 
\begin{theorem}
The lattice $\H_{\Ga_0}$ constructed in Section \ref{sec:mains} is a PSH-algebra, and the set of cuspidal elements of this algebra is in one-to-one correspondence with $FT(F_m,F_m^0)$. The cuspidal element that corresponds to the orbit of $(\Ow,\ol{[W]})$ has degree $|\Ow|k(\Ow)l(\Ow)$. 
\end{theorem}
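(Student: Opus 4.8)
The plan is to identify the PSH-algebra $\H_{\Ga_0}$ with a more familiar object built out of the representation theory of a tower of finite groups, and then apply Zelevinsky's structure theory. First I would make precise the description of the basis of $\H_{\Ga_0}$: connected graphs covering $\Ga_0$ of a fixed degree $n$ correspond, via the covering space dictionary, to conjugacy classes of transitive $F_m$-sets of size $n$ that are compatible with the $\upsilon$-grading, i.e. transitive $F_m^0$-sets together with the extra $\Z$-action coming from $z_1$; disconnected graphs correspond to arbitrary such sets, and the product and coproduct on $\Ainf_{\Ga_0}$ should translate into induction and restriction for the groups $\Aut$ of these sets (equivalently, the wreath-type groups controlling $n$-sheeted covers). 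Concretely I expect $\H_{\Ga_0}$ to be isomorphic, as a graded Hopf algebra with its natural basis, to $\bigotimes$ over strongly finite transitive $F_m^0$-sets $\Ow$ of the PSH-algebra $R(\rho(\Ow) \wr S_\bullet)$ — the representation ring of the tower of groups obtained from $\Aut_{F_m^0}(\Ow)$ twisted by the automorphism $\rho(\Ow)$ and then wreathed with symmetric groups — while the factors indexed by non-strongly-finite $\Ow$ contribute only polynomial generators in infinitely many "degrees" that must be checked to actually not occur (this is where $m>0$ versus $m=0$ enters, and where strong finiteness is forced).

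Once this identification is in place, the proof that $\H_{\Ga_0}$ is a PSH-algebra is essentially Zelevinsky's: I would verify the PSH axioms — $\Z$-structure with a distinguished basis of the $\N$-span, positivity of structure constants for both product and coproduct, self-adjointness of product and coproduct with respect to the basis inner product, and the Hopf compatibility — by transporting them through the covering-space/representation-theory dictionary, where positivity is manifest (induction and restriction send actual representations to actual representations) and self-adjointness is Frobenius reciprocity. The connectedness grading gives the required grading with finite-dimensional pieces. Then Zelevinsky's classification theorem applies verbatim: $\H_{\Ga_0}$ decomposes as a tensor product of copies of the universal PSH-algebra indexed by its cuspidal elements, and the cuspidal elements are exactly the basis elements of norm one, equivalently those orthogonal to all products of lower-degree basis elements.

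It remains to enumerate the cuspidals and compute their degrees, which is the combinatorial heart and the step I expect to be the main obstacle. A basis element — a connected graph covering $\Ga_0$ — is cuspidal iff it is not a nontrivial product, and in the wreath-product picture the cuspidals of $R(G\wr S_\bullet)$ live in the "linear" part $R(G)$ sitting in degree one; pulling this back through the twist by $\rho$, a cuspidal should correspond to choosing a strongly finite transitive $F_m^0$-set $\Ow$ and an irreducible representation $W$ of $\Aut_{F_m^0}(\Ow)$, but because the relevant group is really the twisted group $\langle \rho(\Ow)\rangle \ltimes \Aut_{F_m^0}(\Ow)$ (or rather the associated "Clifford theory" over the cyclic group generated by $\rho$), one must group the $W$ into $\rho$-orbits — yielding exactly the set $FT(F_m,F_m^0)$ of pairs $(\Ow,\ol{[W]})$ — and track how the orbit size $l(W)$ and the period $k(\Ow)$ inflate the degree. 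The degree bookkeeping is: the underlying $F_m$-set of the cuspidal has size $|\Ow|\cdot k(\Ow)$ (one needs $k(\Ow)$ translated copies of $\Ow$ to form a single connected $F_m$-set, since $z_1$ cyclically permutes them), and then passing from a single $W$ to its $\rho$-orbit multiplies the degree by $l(\Ow)$, giving the claimed $|\Ow|k(\Ow)l(\Ow)$. The subtle points I would be most careful about are: checking that every connected cover of $\Ga_0$ whose underlying set is not strongly finite fails to be cuspidal only in the degenerate ways that are already absorbed into the polynomial factor $K[\Ga_0]$ when $m=0$ (and cannot occur as cuspidals when $m>0$), and verifying that the fixed isomorphisms $\Phi(\Ow)$ and the induced automorphisms $\rho(\Ow)$ make the twist well-defined up to the equivalences built into $FT(F_m,F_m^0)$, so that the count is independent of all choices. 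Modulo these checks, Theorems \ref{thm:main} and \ref{thm:cuspidals} combine to give the stated result.
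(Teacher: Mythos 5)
Your overall route---covering spaces to $F_m^0$-sets, then to (twisted) wreath products, then Clifford theory and Zelevinsky's classification---is the same as the paper's, and your enumeration of cuspidals and the degree bookkeeping $|\Ow|k(\Ow)l(W)$ match Theorem \ref{thm:cuspidals}. But there is a genuine gap at the central step: the assertion that positivity of the structure constants is ``manifest'' because ``induction and restriction send actual representations to actual representations.'' The graded pieces of $\Ainf_{\Ga_0,\Ow}$ are \emph{not} representation rings of groups; they are the twisted coinvariant spaces $(K(S_n\ltimes B^n))_{S_n\ltimes B^n}$ for the $\xi_n$-twisted conjugation action, where $B=\Aut_{F_m^0}(\Ow)$ and $\xi_n$ is induced by $\rho(\Ow)$. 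The basis of $\H_{\Ga_0}$ consists of the normalized intertwiners $\ol{T_{i,\la,c}}=\frac{1}{\dim(W_{i,\la,c})}T_{i,\la,c}$ attached to $\xi$-invariant irreducibles, each $T_{i,\la,c}$ being defined only up to a root of unity. A structure constant such as $\langle \ol{T_{i,\la,c}}\cdot\ol{T_{i,\mu,d}},\ol{T_{i,\nu,c+d}}\rangle$ is therefore a normalized trace of a composition of intertwiners, not a multiplicity of an induced representation, and a priori it need not even be a non-negative real number. Writing ``$R(\rho(\Ow)\wr S_\bullet)$'' as if it were an honest representation ring sweeps exactly this difficulty under the rug; it would be legitimate only when the twist is trivial.

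The paper closes this gap by two non-automatic computations that your proposal does not supply: (a) the intertwiners are chosen compatibly so that $\wt{T_{i,\la,c}}\ot\wt{T_{i,\mu,d}}=\wt{T_{i,\nu,c+d}}$ (Equation \ref{eq:Tilc}), and (b) using Lemma \ref{lem:technical} to locate the $W_{i,\la,c}\boxtimes W_{i,\mu,d}$-isotypic component inside $\Res\, W_{i,\nu,c+d}$, the composite $(T_{i,\la,c}\ot T_{i,\mu,d})T_{i,\nu,c+d}^*$ is computed to act as $L\ot\Id$, where $L$ cyclically permutes the tensor factors of $(U_{\la,\mu}^{\nu})^{\ot l_i}$; Lemma \ref{lem:cyclic} then gives trace $\dim(U_{\la,\mu}^{\nu})=c_{\la,\mu}^{\nu}$, whence the structure constants are Littlewood--Richardson coefficients. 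Without steps (a) and (b) the claim that $\H_{\Ga_0}$ is a PSH-algebra (Theorem \ref{thm:main}) is unsupported, and since the identification of cuspidals relies on Zelevinsky's structure theory applied to this lattice, the rest of the argument rests on it. The remaining points you flag as ``subtle'' (only strongly finite orbits occur, independence of the choice of $\Phi(\Ow)$ up to inner automorphism) are indeed handled in the paper (Lemma \ref{lem:extension} and the remark after Equation \ref{eq:defrho}) and are comparatively minor.
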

We will give some examples of cuspidal elements in the cases $m=1$ and $m>1$ in Section \ref{sec:examples}.

The construction presented in this paper has further consequences to general finitely generated groups. 
Let $G$ be a finitely generated group that splits as a semidirect product $G=G^0\rtimes \Z$. The definition of $FT(F_m,F_m^0)$ generalizes directly to the definition of $FT(G,G^0)$. In Section \ref{sec:fggrp} we will prove the following result (see Proposition \ref{prop:fggrp}):
\begin{proposition}
Under the above assumptions on $G$, the number of conjugacy classes of subgroups of $G$ of index $n$ is equal to $\sum_{d|n} |FT(G,G^0)_d|$, where $FT(G,G^0)_d$ is the set of elements in $FT(G,G^0)$ of degree $d$.
\end{proposition}
We will show some concrete calculations with the above formula for finitely generated abelian groups and for Baumslag-Solitar groups in Section \ref{sec:fggrp}.

In \cite[Section 9]{meirUIR} a very complicated formula for the Hilbert series of $\Ainf$ was derived, using both the Littlewood-Richardson coefficients and Kronecker coefficients. In this paper we derive a simpler formula for the Hilbert series of $\Ainf_{\Ga_0}$. We will prove in Section \ref{sec:hilbert} the following result:
\begin{theorem} Assume that $\pi_1(\Ga_0,v) = F_m$. The Hilbert series of $\Ainf_{\Ga_0}$ can be expressed in the following equivalent ways:
$$\sum_{n\geq 0}\dim((\Ainf_{\Ga_0})_n)X^n = \sum_n\sum_{\mu,\la_1,\ldots, \la_m,\mu\parti n}g(\la_1,\ldots,\la_m,\mu)^2X^n =$$ $$ \prod_{\Ow\in T(F_m)}\frac{1}{1-X^{|\Ow|}} = \prod_{(\Ow,\ol{[W]})\in FT(F_m,F_m^0),n}\frac{1}{1-X^{|\Ow|k(\Ow)l([W])n}},$$
where $T(F_m)$ is a set of representatives of the isomorphism classes of finite transitive $F_m$-sets, and $g(\la_1,\ldots,\la_m,\mu)$ are the iterated Kronecker coefficients, see  Subsection \ref{subsec:linalgrep}. 
\end{theorem}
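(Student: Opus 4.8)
The plan is to split the theorem into its three component equalities and derive each of them from the polynomial-algebra description of $\Ainf_{\Ga_0}$ supplied by Proposition~\ref{prop:splitting1}. First I would compute the Hilbert series directly. By Proposition~\ref{prop:splitting1}, $\Ainf_{\Ga_0}$ is a polynomial algebra whose free generators are the connected graphs covering $\Ga_0$. Since $\Ga_0$ is a graph with $\pi_1(\Ga_0,v)=F_m$, the classification of covering spaces puts the isomorphism classes of connected coverings of $\Ga_0$ in bijection with the conjugacy classes of subgroups of $F_m$, equivalently with the set $T(F_m)$ of isomorphism classes of finite transitive $F_m$-sets; under this bijection the number of sheets of the cover attached to $\Ow$ --- which is the degree of the corresponding polynomial generator of $\Ainf_{\Ga_0}$ --- equals $|\Ow|$. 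Multiplying the one-variable series $(1-X^{|\Ow|})^{-1}$ over all generators then gives $\sum_{n\ge 0}\dim((\Ainf_{\Ga_0})_n)X^n=\prod_{\Ow\in T(F_m)}(1-X^{|\Ow|})^{-1}$.

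Next I would match this product with the Kronecker sum. Expanding, the coefficient of $X^n$ in $\prod_{\Ow\in T(F_m)}(1-X^{|\Ow|})^{-1}$ counts multisets of transitive $F_m$-sets of total size $n$, i.e. isomorphism classes of $F_m$-sets on an $n$-element set; fixing such a set, these are exactly the orbits of $S_n$ acting by simultaneous conjugation on $S_n^m$, so by Burnside's lemma this number is $\frac1{n!}\sum_{g\in S_n}|C_{S_n}(g)|^m$. On the other hand, writing the generalized Kronecker coefficient as $g(\la_1,\ldots,\la_m,\mu)=\frac1{n!}\sum_{h\in S_n}\chi^{\la_1}(h)\cdots\chi^{\la_m}(h)\chi^{\mu}(h)$ and applying the column orthogonality relation $\sum_{\nu\parti n}\chi^{\nu}(g)\chi^{\nu}(h)=\delta_{[g],[h]}\,|C_{S_n}(g)|$ (the characters of $S_n$ being real) to each of the $m+1$ summation indices, one obtains
$$\sum_{\la_1,\ldots,\la_m,\mu\parti n}g(\la_1,\ldots,\la_m,\mu)^2=\frac1{(n!)^2}\sum_{g\in S_n}|[g]|\cdot|C_{S_n}(g)|^{m+1}=\frac1{n!}\sum_{g\in S_n}|C_{S_n}(g)|^m,$$
which is precisely the Burnside count; hence the two generating series agree coefficientwise, giving the second equality of the theorem.

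For the third equality I would use the PSH structure. Since $\H_{\Ga_0}$ is a lattice in $\Ainf_{\Ga_0}$, the two have the same Hilbert series; and by Theorems~\ref{thm:main} and~\ref{thm:cuspidals} the algebra $\H_{\Ga_0}$ is a PSH-algebra whose cuspidal elements are indexed by $FT(F_m,F_m^0)$, the cuspidal attached to $(\Ow,\ol{[W]})$ sitting in degree $|\Ow|k(\Ow)l([W])$. By Zelevinsky's structure theorem \cite{Zelevinsky}, $\H_{\Ga_0}$ is the tensor product, over its cuspidal elements, of copies of the universal PSH-algebra, and the copy whose cuspidal has degree $d$ is the ring of symmetric functions with its grading dilated by $d$, hence contributes $\prod_{n\ge 1}(1-X^{dn})^{-1}$ to the Hilbert series. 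Multiplying over all cuspidals yields exactly $\prod_{(\Ow,\ol{[W]})\in FT(F_m,F_m^0),\,n}(1-X^{|\Ow|k(\Ow)l([W])n})^{-1}$. (Comparing exponents, this identity is equivalent to the statement that the number of transitive $F_m$-sets of any given size $d$ equals $\sum_{e\mid d}|FT(F_m,F_m^0)|_e$, which is also the content of Proposition~\ref{prop:fggrp} for $G=F_m$, $G^0=F_m^0$.)

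Once Proposition~\ref{prop:splitting1} and the cuspidal classification are available, all three steps are essentially bookkeeping; the only genuine computation is the character identity in the second step, and the one point that needs care is the degree accounting in the third --- namely verifying that the universal PSH-factor cut out by a cuspidal of degree $d$ carries its grading scaled by exactly $d$, so that its polynomial generators occur in degrees $d,2d,3d,\ldots$. When $m=0$ one has $\Ainf_{\Ga_0}=K[\Ga_0]$ and $T(F_0)$ is a single point, so the first three expressions all reduce to $(1-X)^{-1}$; the PSH-lattice, and with it the fourth expression, enters only for $m\ge1$.
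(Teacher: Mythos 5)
Your proposal is correct, and two of its three steps (the product over $T(F_m)$ from the polynomial-algebra description in Proposition \ref{prop:splitting1}, and the product over cuspidals from the PSH-structure of $\H_{\Ga_0}$) coincide with the paper's argument. Where you genuinely diverge is the Kronecker-coefficient identity: the paper obtains $\dim((\Ainf_{\Ga_0})_n)=\sum_{\la_1,\ldots,\la_m,\mu\parti n}g(\la_1,\ldots,\la_m,\mu)^2$ in one step by applying the Wedderburn decomposition $KS_n^m\cong\bigoplus_{\la_i}\End(\S_{\la_1}\boxtimes\cdots\boxtimes\S_{\la_m})$ and taking $S_n$-invariants, so that each summand contributes $\dim\End_{S_n}\bigl(\bigoplus_\mu\S_\mu^{g(\la_1,\ldots,\la_m,\mu)}\bigr)=\sum_\mu g(\la_1,\ldots,\la_m,\mu)^2$; you instead identify the coefficient of $X^n$ in $\prod_{\Ow}(1-X^{|\Ow|})^{-1}$ with the number of $S_n$-orbits on $S_n^m$ via Burnside's lemma, getting $\frac{1}{n!}\sum_{g}|C_{S_n}(g)|^m$, and then match this with $\sum g^2$ by column orthogonality of $S_n$-characters. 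Both computations are valid (your character manipulation checks out, using that $S_n$-characters are real), and they buy slightly different things: the paper's route is structural and immediate from $(\Ainf_{\Ga_0})_n\cong(KS_n^m)_{S_n}$, while yours is more elementary and has the side benefit of making explicit the combinatorial identity $\sum_{\la_i,\mu\parti n}g(\la_1,\ldots,\la_m,\mu)^2=\frac{1}{n!}\sum_{g\in S_n}|C_{S_n}(g)|^m$ relating iterated Kronecker coefficients to the count of $F_m$-set structures. Your closing remarks on the degree bookkeeping for the cuspidal factors and on the $m=0$ degeneration are accurate and do not affect the proof.
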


\section{Preliminaries}\label{sec:prelim}
Throughout this paper we assume that $K$ is an algebraically closed field of characteristic zero that is equipped with an automorphism $z\mapsto \ol{z}$ of order 2 that inverts all roots of unity. (e.g. $K=\ol{\Q}$ or $K=\mathbb{C}$). We will need these assumptions when applying results from the representation theory of finite groups. 
\subsection{Universal rings of invariants} 
Fix a tuple $((p_i,q_i))$ in $(\N^2)^r$. Recall that an algebraic structure of type $((p_i,q_i))$ is a vector space $W$ equipped with structure tensors $x_i\in W^{p_i,q_i} = W^{\ot p_i}\ot (W^*)^{q_i}$. In \cite{meirUIR} we constructed a universal ring of invariants $K[X]_{((p_i,q_i))}=\Ainf$ for algebraic structures of type $((p_i,q_i))$. This ring is a polynomial algebra on infinitely many variables. These variables are 
all the closed connected diagrams formed by boxes of types $x_1,\ldots, x_r$, where $x_i$ has $q_i$ input strings and $p_i$ output strings, and all the input strings are connected to all the output strings. Such a diagram represents an invariant of the form $\Tr(L_{\sigma}x_1^{_\ot a_1}\ot\cdots\ot x_r^{\ot a_r})$, where $n = \sum a_i p_i = \sum a_iq_i$, $\sigma\in S_n$ is a permutation, and $L_{\sigma}: W^{\ot n}\to W^{\ot n}$ is given by permuting the tensors in $S^{\ot n}$ according to $\sigma$. See Equation 1 in \cite{meirUIR} for more details. 

We will fix the type $((p_i,q_i))$ throughout the paper and write $\Ainf$ for the universal ring of invariants. 
Let $d\in \N$. The structure constants with respect to the standard basis of $K^d$ of a structure of type $((p_i,q_i))$ form an affine space $U_d$. The algebraic group $\GL_d$ acts on $U_d$ by a change of basis, and the different $\GL_d$-orbits in $U_d$ correspond to the different isomorphism types of structures of dimension $d$. The diagrams in $\Ainf$ can be interpreted as $\GL_d$-invariant polynomials on $U_d$. This gives us a map $\Ainf\to K[U_d]^{\GL_d}$ that turns out to be surjective \cite[Section 3]{meirUIR}. We write $I_d$ for the kernel of this map. 

We have seen in \cite{meirUIR} that the algebra $\Ainf$ has a richer structure of an $\N^r$-graded Hopf algebra, where the connected diagrams are primitive, and that moreover it is equipped with an inner product $\langle-,-\rangle$ that makes the multiplication dual to the comultiplication. The pairing is given by $\langle Di_1,Di_2\rangle = 0$ if $Di_1\neq Di_2$, and $\langle Di,Di\rangle = |\Aut(Di)|$, the cardinality of the automorphism group of the diagram $Di$. In this paper we will alter the inner product $\langle-,-\rangle$ to be a sesquilinear form, in order to be able to apply tools from the representation theory of finite groups. We thus have
$$\langle\sum_i a_i Di_i,\sum_i b_i Di_i\rangle = \sum_i a_i\ol{b_i}|\Aut(Di_i)|, a_i,b_i\in K.$$ 
We recall that the automorphism group of a diagram is the group of all permutations on the boxes in the diagram that leaves the diagram stable. For example, if $T$ is a box with one input and one output string, then the automorphism group of the diagram 
\begin{center}
\begin{tikzpicture}
	\begin{pgfonlayer}{nodelayer}
		\node [style=1function] (0) at (2.75, 2) {$T$};
		\node [style=1function] (1) at (0.75, 2) {$T$};
		\node [style=none] (2) at (2.75, 2.25) {};
		\node [style=none] (3) at (2.75, 3.25) {};
		\node [style=none] (4) at (0.75, 2.25) {};
		\node [style=none] (5) at (0.75, 3.25) {};
		\node [style=none] (6) at (0.75, 1.75) {};
		\node [style=none] (7) at (0.75, 0.5) {};
		\node [style=none] (8) at (2.75, 1.75) {};
		\node [style=none] (9) at (2.75, 0.5) {};
		\node [style=none] (10) at (1.75, 3.25) {};
		\node [style=none] (11) at (1.75, 0.5) {};
		\node [style=none] (12) at (-0.25, 3.25) {};
		\node [style=none] (13) at (-0.25, 0.5) {};
	\end{pgfonlayer}
	\begin{pgfonlayer}{edgelayer}
		\draw (6.center) to (7.center);
		\draw (4.center) to (5.center);
		\draw [bend left=90, looseness=1.50] (5.center) to (10.center);
		\draw (10.center) to (11.center);
		\draw [bend right=90, looseness=1.75] (11.center) to (9.center);
		\draw (9.center) to (8.center);
		\draw (2.center) to (3.center);
		\draw [bend left=270, looseness=1.25] (3.center) to (12.center);
		\draw (12.center) to (13.center);
		\draw [bend right=90, looseness=1.50] (13.center) to (7.center);
	\end{pgfonlayer}
\end{tikzpicture}
\end{center}
representing the invariant $\Tr(T^2)$, is cyclic of order 2 (and, more generally, the automorphism group of the diagram representing $\Tr(T^n)$ is cyclic of order $n$). 

\subsection{PSH algebras}
We recall the following definition of PSH-algebras from \cite{Zelevinsky} and of rational PSH-algebras from \cite{meirUIR}: 
\begin{definition}
A positive self-adjoint Hopf algebra (or PSH-algebra) is an $\N$-graded $\Z$-Hopf algebra $A$ equipped with a graded basis $B$ of $A$ and a pairing $\langle-,-\rangle:A\ot_{\Z} A\to \Z$ such that the following conditions are satisfied:
\begin{enumerate}
\item The basis $B$ is orthonormal with respect to $\langle-,-\rangle$. In other words- for every $x,y\in B$ we have $\langle x,y\rangle = \delta_{x,y}$.
\item The multiplication is adjoint to the comultiplication with respect to $\langle -,- \rangle$ where $A\ot_{\Z}A$ has the tensor product pairing. 
\item The unit $u:\Z\to A$ and the counit $\epsilon:A\to\Z$ are adjoint with respect to $\langle -,-\rangle$ where $\Z$ has the canonical pairing.
\item The algebra $A$ is connected, that is $A_0=\Z$.
\item All the structure constants of $m,\Delta,u,\epsilon$ with respect to the basis $B$ are non-negative integers.
\end{enumerate}
By a graded basis we mean that $B=\sqcup_n B_n$, where $B_n$ is a basis for $A_n$. 
\end{definition}
\begin{definition} A rational $K$-PSH-algebra is an $\N^r$-graded $K$-Hopf algebra $A$ equipped with a graded basis $B$ that satisfies the following conditions:
\begin{enumerate}
\item The basis $B$ is orthogonal and positive with respect to $\langle-,-\rangle$. In other words- for every $x,y\in B$ we have $\langle x,y\rangle = \delta_{x,y}c(x)$ for some $c(x)\in\Q_{+}.$
\item The multiplication is adjoint to the comultiplication with respect to $\langle -,- \rangle$, where $A\ot_K A$ is equipped with the tensor product pairing. 
\item The unit $u:K\to A$ and the counit $\epsilon:A\to K$ are adjoint with respect to $\langle -,-\rangle$, where $K$ has the canonical pairing.
\item The algebra $A$ is connected, that is $A_0=K$.
\item All the structure constants of $m,\Delta,u,\epsilon$ with respect to the basis $B$ are in $\Q_{+}$.
\end{enumerate}
The number $r$ which appears in the grading is some positive integer. 
\end{definition}
So every PSH-algebra gives a rational PSH-algebra by extension of scalars. Finding a lattice that is a PSH-algebra inside a $K$-rational PSH algebra is more difficult. 

Zelevinsky proved a very strong classification theorem for PSH-algebras. He showed that, up to isomorphism and rescaling of the grading, there is one basic (or \textit{universal}) PSH-algebra \cite[p. 27]{Zelevinsky}, and that every PSH-algebra $\H$ splits in a unique way as the tensor product $\H=\bigotimes_{\rho} \H_{\rho}$ of universal PSH-algebras \cite[p 22]{Zelevinsky}. Here the tensor product is taken over all the \textit{cuspidal elements} of $\H$. These are basis elements that satisfy $||\rho||=1$ and $\Delta(\rho) = \rho\ot 1+ 1\ot \rho$. The algebra $\H_{\rho}$ is then equal to $$span_{\Z}\{b\in B | \langle b,\rho^n\rangle\neq 0\text{ for some }n\}.$$
The universal PSH-algebra is $\Z[X_1,X_2,\cdots]$ where $\deg(X_n)=n$, $\Delta(X_n) = \sum_{a+b=n}X_a\ot X_b$ and $||X_n||^2=1$. This algebra can be interpreted as $\bigoplus_n \R(S_n)$, where $R(S_n)=K_0(S_n)$ is the character group of $S_n$. The irreducible representations of $S_n$ form a $\Z$-basis of $R(S_n)$, and the multiplication and comultiplication arise from induction and restriction along the inclusions $S_n\times S_m\to S_{n+m}$.  We will denote the universal PSH-algebra by $Zel$. 
If $\rho\in \H$ is a cuspidal element, then $\H_{\rho}= \Z[X_1^{\rho},X_2^{\rho},\ldots]$ where $\deg(X_n^{\rho}) = n\deg(\rho)$. The following lemma then follows immediately from the structure of PSH-algebras:
\begin{lemma}\label{lem:numofgens}
Let $\H$ be a PSH-algebra. Let $a_n$ be the number of cuspidal elements of degree $n$ in $\H$. Then $\H$ is a graded polynomial algebra. The number of variables of degree $n$ in $\H$ is equal to $\sum_{d|n} a_d$. 
\end{lemma}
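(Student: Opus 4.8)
The plan is to derive Lemma \ref{lem:numofgens} directly from Zelevinsky's structure theorem for PSH-algebras, which has already been recalled in the excerpt, so the argument is essentially a bookkeeping exercise. First I would invoke the splitting $\H=\bigotimes_{\rho}\H_{\rho}$, the tensor product ranging over the cuspidal elements $\rho$ of $\H$, together with the fact that each factor is isomorphic as a graded algebra to $\Z[X_1^{\rho},X_2^{\rho},\ldots]$ with $\deg(X_n^{\rho})=n\deg(\rho)$. Since a tensor product of polynomial algebras is again a polynomial algebra (on the union of the variable sets), $\H$ is a graded polynomial algebra, and its generators are precisely the $X_n^{\rho}$ as $\rho$ runs over cuspidal elements and $n$ runs over $\N_{>0}$.

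Next I would count generators in each degree. A generator $X_n^{\rho}$ has degree $n\deg(\rho)$, so it contributes to degree $N$ exactly when $\deg(\rho)\mid N$ and $n=N/\deg(\rho)$; for each cuspidal $\rho$ with $\deg(\rho)=d\mid N$ there is exactly one such generator. Hence the number of degree-$N$ generators is $\sum_{d\mid N} a_d$, where $a_d$ is the number of cuspidal elements of degree $d$. This is the claimed formula. I would also remark that the sum is finite because $\H$ is finitely generated in each degree (each $\H_n$ is a finitely generated free $\Z$-module, being spanned by $B_n$), so only finitely many cuspidals can have degree dividing $N$.

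There is essentially no obstacle here: the only thing to be careful about is that the grading is $\N$ (not $\N^r$) in this lemma, so Zelevinsky's theorem applies verbatim, and that ``graded polynomial algebra'' is interpreted with respect to this $\N$-grading. One should double-check that distinct pairs $(\rho,n)$ give algebraically independent generators, which is immediate from the tensor decomposition together with the algebraic independence of the $X_n^{\rho}$ within a single factor $\H_{\rho}\cong\Z[X_1^{\rho},X_2^{\rho},\ldots]$. With these remarks in place the proof is complete.
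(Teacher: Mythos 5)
Your argument is correct and is exactly the one the paper intends: the lemma is stated as following "immediately from the structure of PSH-algebras," namely the decomposition $\H=\bigotimes_{\rho}\H_{\rho}$ with $\H_{\rho}=\Z[X_1^{\rho},X_2^{\rho},\ldots]$ and $\deg(X_n^{\rho})=n\deg(\rho)$, and your generator count $\sum_{d\mid N}a_d$ is the intended bookkeeping. No issues.
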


\subsection{Clifford Theory}
We will use some parts of Clifford Theory in this paper. For more details, see Chapter 12 of \cite{Kar}.
For a finite group $G$, we write $\Irr(G)$ for the set of isomorphism types of irreducible $G$-representations. We will use the natural identification of this set with the set of irreducible characters of $G$. 
Assume that $G$ fits into a short exact sequence 
$$1\to N\to G\to Q\to 1.$$
For every $g\in G$, conjugation by $g$ is an automorphism of $N$. We write $g^*:\Irr(N)\to \Irr(N)$ for the bijection given by pre-composing with conjugation by $g$. Since $N$ acts trivially on $\Irr(N)$ the action of $G$ on this set factors through $G\to Q$. 
For $\phi\in\Irr(N)$ we write $\Irr(G)_{\phi}$ for the irreducible representations of $G$ whose restriction to $N$ contains only representations in the orbit $G\cdot \phi$.
Let $\phi_1,\ldots, \phi_t$ be a set of representatives of the $G$-orbits in $\Irr(N)$. The first assertion of Clifford Theory is the following:
\begin{equation}\tag{C1}\label{C1}\Irr(G) = \bigsqcup_{i=1}^t \Irr(G)_{\phi_i}.\end{equation}

Consider now the subgroup $\Stab_G(\phi)$. It contains $N$ as a normal subgroup, and the $\Stab_G(\phi)$-orbit of $\phi$ is just $\{\phi\}$. For $\psi\in \Irr(\Stab_G(\phi))_{\phi}$ consider the $G$-representation $\Ind^G_{\Stab_G(\phi)}\psi$. The second assertion of Clifford Theory is the following:
\begin{equation}\tag{C2}\label{C2}\text{The induction functor induces a bijection } \Irr(\Stab_G(\phi))_{\phi}\to \Irr(G)_{\phi}.\end{equation}

Let $W$ be the irreducible representation of $N$ with character $\phi$. Schur's lemma implies that we can extend the action of $N$ to a projective action of $\Stab_G(\phi)$. We can thus think of $W$ as a representation of the twisted group algebra $K^{\beta}\Stab_G(\phi)$, where $[\beta]\in H^2(\Stab_G(\phi),K^{\times})$. This algebra has a basis $\{u_g\}_{g\in \Stab_G(\phi)}$ and multiplication $$u_{g_1}u_{g_2} = \beta(g_1,g_2)u_{g_1g_2}.$$
The class of $\beta$ is inflated from a class in $H^2(\Stab_Q(\phi),K^{\times})$ that we shall denote by the same letter. If $V$ is a representation of $K^{\beta^{-1}}\Stab_Q(\phi)$ then by inflation it is also a representation of $K^{\beta^{-1}}\Stab_G(\phi)$. The cocycles $\beta$ and $\beta^{-1}$ cancel each other in the diagonal action of $\Stab_G(\phi)$ on $W\ot V$, and we get a proper representation of $\Stab_G(\phi)$. We denote by $\Irr(K^{\beta^{-1}}\Stab_Q(\phi))$ the set of equivalence classes of irreducible representations of the algebra $K^{\beta^{-1}}\Stab_Q(\phi)$. The third assertion of Clifford Theory is the following, where we denote by $[W]$ the isomorphism class of a representation $W$:
\begin{equation}\tag{C3}\label{C3}\text{The map } \Irr(K^{\beta^{-1}}\Stab_Q(\phi))\to \Irr(\Stab_G(\phi))_{\phi} \text{ given by } [V]\mapsto [W\ot V] \text{ is a bijection}\end{equation}
In this paper we will apply this part of Clifford Theory only in case the cocycle $\beta$ is trivial. We record this in the following:
\begin{equation}\tag{C3'}\label{C3'}\text{ Assume that } W \text{ admits a structure of a } \Stab_G(\phi) \text{ representation. Then } [\beta]=1 \end{equation} $$\text{ and we therefore have bijections } \Irr(\Stab_Q(\phi))\to \Irr(\Stab_G(\phi))_{\phi}\to \Irr(G)_{\phi}$$
\begin{remark} We can think of $\Irr(N)$ as the set of isomorphism classes of irreducible representations of $N$, or as the set of irreducible characters of $N$. In the sequel we will not make a strict distinction between these two descriptions of $\Irr(N)$.
\end{remark}

\subsection{Linear algebra and representation theory}\label{subsec:linalgrep}
The next few lemmas will be of use when calculating a basis for $\Ainf$.
\begin{lemma}\label{lem:cyclic}
Let $V$ be a finite dimensional vector space, and let $L:V^{\ot n}\to V^{\ot n}$ be given by $L(u_1\ot\cdots\ot u_n) = u_n\ot u_1\ot\cdots\ot u_{n-1}$. Then $\Tr(L)= \Tr(L^{-1})= \dim(V)$.
\end{lemma}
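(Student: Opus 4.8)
The plan is to compute the trace of the cyclic shift operator $L$ on $V^{\ot n}$ directly in a basis. Fix a basis $e_1,\ldots,e_d$ of $V$, where $d=\dim(V)$, so that the tensors $e_{i_1}\ot\cdots\ot e_{i_n}$ form a basis of $V^{\ot n}$ indexed by tuples $(i_1,\ldots,i_n)\in\{1,\ldots,d\}^n$. By definition $L(e_{i_1}\ot\cdots\ot e_{i_n}) = e_{i_n}\ot e_{i_1}\ot\cdots\ot e_{i_{n-1}}$, so the matrix entry of $L$ on the diagonal corresponding to the basis vector $e_{i_1}\ot\cdots\ot e_{i_n}$ is $1$ precisely when $(i_n,i_1,\ldots,i_{n-1}) = (i_1,\ldots,i_n)$, that is, when $i_1=i_2=\cdots=i_n$, and $0$ otherwise. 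Summing over the basis, the only tuples contributing to $\Tr(L)$ are the $d$ constant tuples $(i,i,\ldots,i)$, so $\Tr(L) = d = \dim(V)$.

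For $L^{-1}$ the argument is identical: $L^{-1}(e_{i_1}\ot\cdots\ot e_{i_n}) = e_{i_2}\ot\cdots\ot e_{i_n}\ot e_{i_1}$, and the diagonal entry is $1$ exactly when $(i_2,\ldots,i_n,i_1)=(i_1,\ldots,i_n)$, again forcing all indices equal; hence $\Tr(L^{-1}) = d = \dim(V)$ as well. (Alternatively, $L$ is an invertible operator of finite order dividing $n$, so it is diagonalisable over $K$ with eigenvalues roots of unity, and $\Tr(L^{-1}) = \Tr(L^*) = \overline{\Tr(L)} = \overline{d} = d$ since $d$ is a rational integer; but the direct basis computation is cleanest and avoids any appeal to the field structure.)

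There is essentially no obstacle here: the statement is a routine bookkeeping computation, and the only thing to be careful about is correctly tracking which index goes where under the cyclic shift so that the fixed-point condition on basis vectors is identified correctly. This lemma will presumably be used later to evaluate the contribution of a cycle in a trace diagram, where a loop of length $n$ built from identity-like strands contributes exactly $\dim(V)$; the point of isolating it now is to record that both orientations of the cycle give the same value $\dim(V)$.
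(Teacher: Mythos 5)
Your proof is correct and follows the same route as the paper: both fix a basis of $V$, observe that $L$ permutes the induced basis of $V^{\ot n}$, and count the fixed basis vectors, which are exactly the $\dim(V)$ constant tensors $v_i\ot\cdots\ot v_i$; the case of $L^{-1}$ is handled identically. No gap.
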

\begin{proof}
Fix a basis $\{v_i\}$ of $V$. Then $\{v_{i_1}\ot\cdots\ot v_{i_n}\}$ is a basis for $V^{\ot n}$ that $L$ permutes. 
The trace of $L$ is thus the number of basis elements that $L$ fixes. These are exactly the elements $v_i\ot v_i\ot\cdots\ot v_i$, and there are exactly $\dim(V)$ of them. The proof for $\Tr(L^{-1})=\dim(V)$ is exactly the same. 
\end{proof}

Let $G_1,\ldots, G_n$ be finite groups. We have a bijection \begin{equation}\label{eq:product}\Irr(G_1)\times\cdots\times \Irr(G_n)\cong \Irr(G_1\times\cdots\times G_n)\end{equation} $$([V_1],\ldots, [V_n])\mapsto [V_1\boxtimes V_2\boxtimes\cdots\boxtimes V_n]$$
where $V_1\boxtimes\cdots\boxtimes V_n= V_1\ot \cdots\ot V_n$ is a $G_1\times\cdots\times G_n$-representation by $$(g_1,\ldots, g_n)\cdot (v_1\ot\cdots\ot v_n) = g_1\cdot v_1\ot\cdots\ot g_n\cdot v_n.$$
For a single group $G$, we will use the fact that if $V_1,\ldots, V_n$ are $G$-representations then the restriction of $V_1\boxtimes\cdots\boxtimes V_n$ to the diagonal subgroup $G\subseteq G^n$, is just the representation $V_1\ot\cdots\ot V_n$ with the diagonal $G$-action.  

We use Chapter 2 of \cite{Sagan} as a reference for the representation theory of the symmetric groups. There is a known bijection between partitions of $n$ and isomorphism classes of irreducible representations of $S_n$. We will denote this bijection by $\la\mapsto \S_{\la}$, and call $\S_{\la}$ the Specht module that corresponds to $\la$. We have $$\Ind_{S_a\times S_b}^{S_{a+b}}\S_{\la}\boxtimes \S_{\mu} \cong \bigoplus \S_{\nu}^{\oplus c_{\la,\mu}^{\nu}},$$ where $c_{\la,\mu}^{\nu}$ are the Littlewood-Richardson coefficients. 
\begin{definition} Assume that $\sum_{i =1}^b a_i=n$. We write $S_{a_1,\ldots, a_b}$ for the subgroup of $S_n$ that stabilizes the sets $\{1,\ldots, a_1\},\{a_1+1,\ldots, a_1+a_2\},\ldots, \{(\sum_{i=1}^{b-1}a_i) + 1,\ldots, n\}$.
\end{definition}
The subgroup $S_{a_1,\ldots, a_b}$ is naturally isomorphic to $S_{a_1}\times\cdots\times S_{a_b}$. 

The coefficients for the multiplication in the ring $\R(S_n)$ are called the \textit{Kronecker coefficients}. We write them as $g(\la_1,\la_2,\mu)$. Thus, for two partitions $\la_1,\la_2$ of $n$ we have 
$$\S_{\la_1}\ot \S_{\la_2} = \bigoplus_{\mu\parti n}\S_{\mu}^{\oplus g(\la_1,\la_2,\mu)}.$$ 
More generally, for partitions $\la_1,\ldots, \la_m,\mu\parti n$ we will write $$g(\la_1,\ldots,\la_m,\mu)=\dim\Hom_{S_n}(\S_{\la_1}\ot\cdots\ot \S_{\la_m},\S_{\mu}).$$ Thus, 
$$\S_{\la_1}\ot\cdots\ot \S_{\la_m} = \bigoplus_{\nu\parti n}\S_{\nu}^{\oplus g(\la_1,\ldots, \la_m,\nu)}.$$ 
We call $g(\la_1,\ldots, \la_m,\mu)$ the iterated Kronecker coefficients \cite[Section 2]{meirUIR}. 

\subsection{Wreath products}
Let $B$ be a finite group, and let $n$ be an integer. We consider the wreath product, $S_n\ltimes B^n$, where $S_n$ acts on $B^n$ by permuting the factors. 
This group fits into the short exact sequence $$1\to B^n\to S_n\ltimes B^n\to S_n\to 1.$$ We can thus use Clifford Theory to describe the representation theory of this group. To do so, write $\Irr(G) = \{[W_1],\ldots, [W_b]\}$.
\begin{lemma}
Isomorphism classes of irreducible representations of $S_n\ltimes B^n$ are in one to one correspondence with tuples $(a_1,\ldots, a_b,\la_1,\ldots, \la_b)$ such that $a_i\in \N$, $\la_i\parti a_i$ and $\sum_i a_i = n$.
\end{lemma}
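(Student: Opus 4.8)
The plan is to apply Clifford Theory to the short exact sequence $1\to B^n\to S_n\ltimes B^n\to S_n\to 1$, exactly along the lines of the standard description of the representations of wreath products. First I would identify $\Irr(B^n)$ with $\Irr(B)^n$ via Equation \eqref{eq:product}, writing a typical element as $(W_{j_1},\ldots,W_{j_n})$ for a function assigning to each of the $n$ coordinates one of the $b$ irreducibles $W_1,\ldots,W_b$ of $B$. The group $S_n$ acts on $\Irr(B^n)=\Irr(B)^n$ by permuting coordinates, so the $S_n$-orbit of such a tuple is recorded precisely by the \emph{multiplicities} $a_i=\#\{k : j_k = i\}$, a composition of $n$ into $b$ non-negative parts. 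Thus a set of orbit representatives $\phi$ for the $S_n$-action on $\Irr(B^n)$ is indexed by tuples $(a_1,\ldots,a_b)$ with $a_i\in\N$ and $\sum_i a_i=n$; by assertion \eqref{C1} this gives the first-level decomposition $\Irr(S_n\ltimes B^n)=\bigsqcup_{(a_i)}\Irr(S_n\ltimes B^n)_{\phi}$.

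Next I would compute the stabilizer: for $\phi$ the representative corresponding to $(a_1,\ldots,a_b)$ (say, with coordinates grouped so the first $a_1$ carry $W_1$, the next $a_2$ carry $W_2$, and so on), the stabilizer of $\phi$ in $S_n$ is the Young subgroup $S_{a_1,\ldots,a_b}\cong S_{a_1}\times\cdots\times S_{a_b}$, and hence $\Stab_{S_n\ltimes B^n}(\phi) = (S_{a_1}\times\cdots\times S_{a_b})\ltimes B^n$. The key point — and the one place I'd want to be careful — is that the projective obstruction $[\beta]$ vanishes here: the representation $W=W_1^{\boxtimes a_1}\boxtimes\cdots\boxtimes W_b^{\boxtimes a_b}$ of $B^n$ extends to an honest representation of $\Stab_{S_n\ltimes B^n}(\phi)$, because $S_{a_i}$ merely permutes $a_i$ mutually isomorphic copies of $W_i$ and can be made to act by permuting the tensor legs. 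So by \eqref{C3'} we get bijections $\Irr(\Stab_{S_n}(\phi))\to\Irr(\Stab_{S_n\ltimes B^n}(\phi))_\phi$, i.e. $\Irr(S_{a_1}\times\cdots\times S_{a_b})$ parametrises $\Irr(\Stab_{S_n\ltimes B^n}(\phi))_\phi$; and by \eqref{C1}, \eqref{C2} these then parametrise $\Irr(S_n\ltimes B^n)_\phi$.

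Finally I would assemble the pieces: by Equation \eqref{eq:product} again, $\Irr(S_{a_1}\times\cdots\times S_{a_b})\cong\Irr(S_{a_1})\times\cdots\times\Irr(S_{a_b})$, and the known bijection $\la\mapsto\S_\la$ between partitions of $a_i$ and $\Irr(S_{a_i})$ turns each factor into the set of partitions $\la_i\parti a_i$. Running over all orbit representatives $(a_1,\ldots,a_b)$ and then over all choices of $\la_i\parti a_i$ yields exactly the claimed indexing set: tuples $(a_1,\ldots,a_b,\la_1,\ldots,\la_b)$ with $a_i\in\N$, $\la_i\parti a_i$, $\sum_i a_i=n$. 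The main obstacle is the verification that $[\beta]=1$, i.e. producing the explicit extension of $W$ from $B^n$ to the semidirect product $\Stab$; once that is in place, everything else is a bookkeeping application of \eqref{C1}–\eqref{C3'} together with \eqref{eq:product} and the Specht-module classification for symmetric groups.
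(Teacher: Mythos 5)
Your proposal is correct and follows essentially the same route as the paper: Clifford theory for $1\to B^n\to S_n\ltimes B^n\to S_n\to 1$, orbits on $\Irr(B^n)$ indexed by multiplicities $(a_1,\ldots,a_b)$, stabilizer the Young subgroup $S_{a_1,\ldots,a_b}$, vanishing of the cocycle via the explicit tensor-permutation extension of $W_1^{\boxtimes a_1}\boxtimes\cdots\boxtimes W_b^{\boxtimes a_b}$, and finally the Specht-module parametrisation of $\Irr(S_{a_1}\times\cdots\times S_{a_b})$. The one step you flag as needing care, the triviality of $[\beta]$, is handled in the paper exactly as you suggest.
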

\begin{proof}
We use Clifford Theory. We first describe the irreducible representations of $B^n$. By Equation \ref{eq:product} these representations are all of the form $W_{i_1}\boxtimes\cdots\boxtimes W_{i_n}$ for some $i_1,i_2,\ldots i_n\in \{1,\ldots, b\}$. Next, we considser the action of $S_n$ on $\Irr(B^n)$. Let $\tau\in S_n$. The $B^n$ representation $\tau^*(W_{i_1}\ot\cdots\ot W_{i_n})$ is equal to $W_{i_1}\boxtimes\cdots\boxtimes W_{i_n}$ as a vector space, and $(g_1,\ldots, g_n)\in B^n$ acts by $\tau(g_1,\ldots, g_n)\tau^{-1} = (g_{\tau^{-1}(1)},\ldots, g_{\tau^{-1}(n)})$. By considering the action of the different factors of $B$ in $B^n$ we see that this representation is isomorphic to $W_{i_{\tau(1)}}\boxtimes\cdots\boxtimes W_{i_{\tau(n)}}$. 
So the action of $S_n$ on $\Irr(B_n)$ is given by shuffling the representations. 
This already shows us that any $S_n$-orbit in $\Irr(B^n)$ contains a unique representation of the form $W_1^{\boxtimes a_1}\boxtimes W_2^{\boxtimes a_2}\boxtimes\cdots\boxtimes W_b^{\boxtimes a_b}$ with $\sum_i a_i=n$, where $a_i=|\{j| i_j=i\}|$.

The stabilizer in $S_n$ of the above representation is then $S_{a_1,\ldots, a_b}$. The representation $W_1^{\boxtimes a_1}\boxtimes\cdots\boxtimes W_b^{\boxtimes a_b}$ is also a representation of $S_{a_1,\ldots, a_b}\ltimes B^n$, where $S_{a_1\ldots a_b}$ acts by tensor-permuting the tensorands. This shows that the two-cocycle arising in Clifford Theory is trivial here, and there is a one-to-one correspondence between irreducible representations of $S_n\ltimes B^n$ lying over the orbit of $W_1^{\boxtimes a_1}\boxtimes\cdots\boxtimes W_b^{\boxtimes a_b}$ and irreducible representations of $S_{a_1,\ldots, a_b}$. Since the irreducible representations of $S_{a_i}$ are in one to one correspondence with partitions of $a_i$ we get the result.
\end{proof}
Using Clifford Theory again, we see that the irreducible representation of $S_n\ltimes B^n$ that corresponds to $(a_1,\ldots, a_b,\la_1,\ldots,\la_b)$ is given by 
$$\Ind_{S_{a_1,\ldots, a_b}\ltimes B^n}^{S_n\ltimes B^n}((\Inf_{S_{a_1,\ldots a_b}}^{S_{a_1,\ldots, a_b}\ltimes B^n} \S_{\la_1}\boxtimes\cdots\boxtimes \S_{\la_b})\ot (W_1^{\boxtimes a_1}\boxtimes\cdots\boxtimes W_b^{\boxtimes a_b})).$$
\begin{definition}\label{def:Wala}
We denote the above representation by $W_{(a_i,\la_i)}$. 
\end{definition}
Assume now that $\rho:B\to B$ is an automorphism. Then $\rho^n:B^n\to B^n, (b_1,\ldots, b_n)\mapsto (\rho(b_1),\ldots,\rho(b_n))$ is an automorphism of $B^n$, and $\xi:S_n\ltimes B^n\to S_n\ltimes B^n$ $(\sigma,b_1,\ldots, b_n)\mapsto (\sigma,\rho(b_1),\ldots, \rho(b_n))$ is an automorphism of the wreath product $S_n\ltimes B^n$. The action of $\rho$ on $B$ induces an action on $\Irr(B)$. We denote this action by $\rho$ as well, so that $\rho^*(W_i)\cong W_{\rho(i)}$. We claim the following:
\begin{lemma} The action of $\xi$ on $\Irr(S_n\ltimes B^n)$ is given by $\xi^*(W_{(a_i,\la_i)}) = W_{(a_{\rho^{-1}(i)},\la_{\rho^{-1}(i)})}$. As a result, $W_{(a_i,\la_i)}$ is $\xi$-invariant if and only if $a_i=a_j$ and $\la_i=\la_j$ whenever $i$ and $j$ are in the same $\rho$-orbit. 
\end{lemma}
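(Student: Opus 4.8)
The plan is to trace through the Clifford-theoretic description of $\Irr(S_n\ltimes B^n)$ that was just established, and see how the automorphism $\xi$ interacts with each ingredient of that description. Recall that $W_{(a_i,\la_i)}$ is built by first choosing the orbit of $W_1^{\boxtimes a_1}\boxtimes\cdots\boxtimes W_b^{\boxtimes a_b}$ inside $\Irr(B^n)$, taking its stabilizer $S_{a_1,\ldots,a_b}$, tensoring the inflation of $\S_{\la_1}\boxtimes\cdots\boxtimes\S_{\la_b}$ with that representation, and inducing up. Since $\xi$ restricts to $\rho^n$ on $B^n$ and to the identity on the quotient $S_n$, I would first compute the action of $\xi^*$ (equivalently $(\rho^n)^*$) on $\Irr(B^n)$: by the displayed fact $\rho^*(W_i)\cong W_{\rho(i)}$ together with Equation \ref{eq:product}, we get $(\rho^n)^*(W_{i_1}\boxtimes\cdots\boxtimes W_{i_n})\cong W_{\rho(i_1)}\boxtimes\cdots\boxtimes W_{\rho(i_n)}$. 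In particular $\xi^*$ sends the canonical orbit representative $W_1^{\boxtimes a_1}\boxtimes\cdots\boxtimes W_b^{\boxtimes a_b}$ to something isomorphic to $W_{\rho(1)}^{\boxtimes a_1}\boxtimes\cdots\boxtimes W_{\rho(b)}^{\boxtimes a_b}$, which after reindexing is the canonical representative with multiplicity $a_{\rho^{-1}(i)}$ on $W_i$. This already pins down the permutation on orbit-data: the tuple $(a_i)$ is sent to $(a_{\rho^{-1}(i)})$.

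Next I would push this through the two remaining layers. Because $\xi$ is compatible with the extension $1\to B^n\to S_n\ltimes B^n\to S_n\to 1$ and acts trivially on $S_n$, it carries $\Stab(W_1^{\boxtimes a_1}\boxtimes\cdots) = S_{a_1,\ldots,a_b}\ltimes B^n$ to $\Stab(W_{\rho(1)}^{\boxtimes a_1}\boxtimes\cdots) = S_{a_{\rho^{-1}(1)},\ldots,a_{\rho^{-1}(b)}}\ltimes B^n$, and since induction and inflation are natural with respect to group automorphisms, $\xi^*\bigl(\Ind\,((\Inf\,\S_{\la_1}\boxtimes\cdots\boxtimes\S_{\la_b})\ot(W_1^{\boxtimes a_1}\boxtimes\cdots))\bigr)$ is the corresponding induced module over the new stabilizer. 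The Specht-module factor, being inflated from $S_{a_1,\ldots,a_b}$ on which $\xi$ is the identity, is merely relabelled by the same reindexing of blocks: the factor sitting on the $i$-th block of the new group is $\S_{\la_{\rho^{-1}(i)}}$. Collecting the orbit-data and the partition-data together, $\xi^*(W_{(a_i,\la_i)}) \cong W_{(a_{\rho^{-1}(i)},\la_{\rho^{-1}(i)})}$, which is the first assertion. For this I would want to state cleanly the functoriality fact I am using — namely that for a group automorphism $\theta$ of $G$ restricting to $\theta_0$ on a subgroup $H$, one has $\theta^*\,\Ind_H^G(V)\cong \Ind_{\theta(H)}^G(\theta_0^{-1})^*(V)$, and similarly for inflation along $G\to G/N$ — and then apply it with $\theta=\xi$.

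The second assertion is then immediate: $W_{(a_i,\la_i)}$ is $\xi$-invariant iff $W_{(a_i,\la_i)}\cong W_{(a_{\rho^{-1}(i)},\la_{\rho^{-1}(i)})}$, and since the assignment $(a_i,\la_i)\mapsto W_{(a_i,\la_i)}$ is a bijection (the previous lemma), this happens iff $(a_i)_i = (a_{\rho^{-1}(i)})_i$ and $(\la_i)_i = (\la_{\rho^{-1}(i)})_i$, i.e. iff $a_i=a_j$ and $\la_i=\la_j$ whenever $i,j$ lie in a common $\rho$-orbit. I expect the only real obstacle to be bookkeeping: making sure the reindexing of the blocks of $S_{a_1,\ldots,a_b}$ is done consistently across the inflation factor and the $B^n$-factor, and that the direction of the permutation ($\rho$ versus $\rho^{-1}$) comes out matching the statement — this is exactly the kind of place where a pullback along $\rho$ flips an index. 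Everything else (naturality of $\Ind$ and $\Inf$, Equation \ref{eq:product}, the classification lemma) is either standard or already available in the excerpt, so the argument should be short once the indexing conventions are fixed.
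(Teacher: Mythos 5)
Your proposal is correct and follows essentially the same route as the paper: both compute $(\rho^n)^*$ on $\Irr(B^n)$, re-order the factors to identify the new canonical orbit representative $W_1^{\boxtimes a_{\rho^{-1}(1)}}\boxtimes\cdots\boxtimes W_b^{\boxtimes a_{\rho^{-1}(b)}}$, track the Specht-module labels through the Clifford construction (the paper via the $\W$-isotypic component, you via naturality of $\Ind$ and $\Inf$ under automorphisms, which amounts to the same computation), and deduce the invariance criterion from the bijectivity of the parametrisation. The one caveat is the direction in your stated functoriality formula (as written it computes $(\theta^{-1})^*\Ind_H^G V$ rather than $\theta^*\Ind_H^G V$), but you explicitly flag this $\rho$-versus-$\rho^{-1}$ bookkeeping as the point to be checked, and the final indexing matches the statement.
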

\begin{proof}
We have $$(\rho^n)^*(W_1^{\boxtimes a_1}\boxtimes\cdots\boxtimes W_b^{\boxtimes a_b})\cong W_{\rho(1)}^{\boxtimes a_1}\boxtimes\cdots\boxtimes W_{\rho(b)}^{\boxtimes a_b}$$ as $B^n$-representations. After re-ordering the factors in the last tensor product we get the representation $$\W:= W_1^{\boxtimes a_{\rho^{-1}(1)}}\boxtimes\cdots\boxtimes W_b^{\boxtimes a_{\rho^{-1}(b)}}.$$
We thus see that taking the $\W$-isotypic component of $\xi^*(W_{(a_i,\la_i)})$ gives $\S_{\rho^{-1}(1)}\boxtimes\cdots\boxtimes \S_{\rho^{-1}(b)}\boxtimes \W$. So we get that $$\xi^*(W_{(a_i,\la_i)}) = W_{(a_{\rho^{-1}(i)},\la_{\rho^{-1}(i)})}$$ as desired. The claim about $\xi$-invariant representations is now immediate. 
\end{proof}

 For the next lemma, we consider a semi-direct product $G=Q\ltimes N$. Let $\W$ be an irreducible representation of $N$, and let $Q_1<Q$ be a subgroup. Write $Q_2 = \Stab_Q([\W])$ and $Q_3 = Q_1\cap Q_2$. Assume that $W$ can be extended to a $Q_2\ltimes N$-representation, and fix such an extension. We claim the following:
\begin{lemma}\label{lem:technical}
Let $U$ be an irreducible representation of $Q_3$ and let $V$ be an irreducible representation of $Q_2$. 
Then $\wt{U}=\Ind_{Q_3\ltimes N}^{Q_1\ltimes N}(U\ot \W)$ is an irreducible representation of $Q_1\ltimes N$, $\wt{V}=\Ind_{Q_2\ltimes N}^{Q\ltimes N}(V\ot \W)$ is an irreducible representation of $G$, and 
$$\Hom_{Q_3}(U,\Res^{Q_2}_{Q_3}V)\to \Hom_{Q_1\ltimes N}(\wt{U},\Res^G_{Q_1\ltimes N}\wt{V})$$ 
$$ T\mapsto \wt{T}$$
where $\wt{T}(g\ot u\ot w) = g\ot T(u)\ot w$ is an isomorphism. As a result, the $\wt{U}$-isotypic component of $\Res^G_{Q_1\ltimes N}\wt{V}$ is equal to $(KQ_1\ltimes N)\ot_{Q_3\ltimes N} V_U\ot \W\subseteq \wt{V}$, where $V_U$ is the $U$-isotypic component of $\Res^{Q_2}_{Q_3}V$. 
\end{lemma}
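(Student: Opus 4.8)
The plan is to verify the three assertions in order, using Clifford theory (C2) for $Q_1 \ltimes N$ and for $G = Q \ltimes N$, and Frobenius reciprocity / Mackey theory to identify the Hom spaces. First I would check that $\wt{U}$ is irreducible: the representation $\W$ of $N$ extends to $Q_2 \ltimes N$ by hypothesis, hence it extends to $Q_3 \ltimes N$ by restriction, and $Q_3 = \Stab_{Q_1}([\W])$ since $Q_3 = Q_1 \cap Q_2 = Q_1 \cap \Stab_Q([\W])$. By (C3') the cocycle is trivial, so the representations of $Q_3 \ltimes N$ lying over the $(Q_1\ltimes N)$-orbit of $\W$ are exactly $U \ot \W$ for $U \in \Irr(Q_3)$, and by (C2) inducing up to $Q_1 \ltimes N$ gives an irreducible. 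The same argument with $Q_1$ replaced by $Q$ (so that $Q_3$ is replaced by $Q_2$) shows $\wt{V} = \Ind_{Q_2\ltimes N}^{Q\ltimes N}(V \ot \W)$ is irreducible.

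The core of the argument is the claimed isomorphism $\Hom_{Q_3}(U,\Res^{Q_2}_{Q_3}V) \to \Hom_{Q_1\ltimes N}(\wt{U},\Res^G_{Q_1\ltimes N}\wt{V})$. I would compute the right-hand side by Frobenius reciprocity:
$$\Hom_{Q_1\ltimes N}(\wt{U},\Res^G_{Q_1\ltimes N}\wt{V}) = \Hom_{Q_1\ltimes N}\bigl(\Ind_{Q_3\ltimes N}^{Q_1\ltimes N}(U\ot\W),\ \Res^G_{Q_1\ltimes N}\wt{V}\bigr) \cong \Hom_{Q_3\ltimes N}\bigl(U\ot\W,\ \Res^G_{Q_3\ltimes N}\wt{V}\bigr).$$
Now $\Res^G_{Q_3\ltimes N}\wt{V} = \Res^G_{Q_3\ltimes N}\Ind_{Q_2\ltimes N}^{G}(V\ot\W)$, which I would expand by the Mackey formula over the double cosets $Q_3\backslash Q / Q_2$ (lifted to $(Q_3\ltimes N)\backslash G/(Q_2\ltimes N)$, the cosets being the same since $N$ is normal and contained in both). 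The key point is that $\Hom_{Q_3\ltimes N}(U\ot\W, {}^{g}(V\ot\W))$ can only be nonzero when the $N$-socles match, i.e. when ${}^{g}[\W]$, restricted back to the relevant conjugate of $N$, still contains $[\W]$ as an $(Q_3\ltimes N)$-constituent; since $Q_2 = \Stab_Q([\W])$ this forces $g \in Q_2$, so only the double coset of the identity survives. On that term the Mackey summand is $\Hom_{Q_3\ltimes N}(U\ot\W,\ \Res^{Q_2\ltimes N}_{Q_3\ltimes N}(V\ot\W))$, and since $\W$ is a genuine $(Q_2\ltimes N)$-representation this untwists to $\Hom_{Q_3}(U,\Res^{Q_2}_{Q_3}V)\ot \Hom_N(\W,\W)$, and $\Hom_N(\W,\W) = K$ by Schur. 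Tracking the identifications shows the composite sends $T$ to the map $g\ot u\ot w \mapsto g\ot T(u)\ot w$, which is the stated $\wt{T}$; naturality and injectivity of $T\mapsto\wt{T}$ are then formal, and bijectivity follows from the dimension count just made.

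For the final assertion about isotypic components: once we know $\wt{U}$ is irreducible and $\dim\Hom_{Q_1\ltimes N}(\wt{U},\Res\wt{V}) = \dim\Hom_{Q_3}(U,\Res^{Q_2}_{Q_3}V)$, the $\wt{U}$-isotypic component of $\Res^G_{Q_1\ltimes N}\wt{V}$ has the predicted dimension. To identify it concretely as $(KQ_1\ltimes N)\ot_{Q_3\ltimes N}(V_U\ot\W) \subseteq \wt{V}$, I would observe that $V_U\ot\W$ is a $Q_3\ltimes N$-subrepresentation of $\Res^{Q_2\ltimes N}_{Q_3\ltimes N}(V\ot\W)$ isomorphic to a sum of copies of $U\ot\W$, so the $(KQ_1\ltimes N)$-submodule of $\wt{V} = (KQ\ltimes N)\ot_{Q_2\ltimes N}(V\ot\W)$ generated by $1\ot(V_U\ot\W)$ is $(KQ_1\ltimes N)\ot_{Q_3\ltimes N}(V_U\ot\W)$ — using that $Q = \bigsqcup Q_1 g Q_2$ with only $g=e$ contributing a $\wt{U}$-isotypic part, exactly as in the Mackey computation — and this is a sum of copies of $\Ind_{Q_3\ltimes N}^{Q_1\ltimes N}(U\ot\W) = \wt{U}$ of the right multiplicity.

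**Main obstacle.** The delicate step is the Mackey-decomposition bookkeeping: showing that every double coset other than the identity contributes zero to the Hom space, which hinges on precisely matching $N$-isotypic (equivalently, $[\W]$-) data across conjugation and invoking $Q_2 = \Stab_Q([\W])$, together with the trivial-cocycle hypothesis (C3') that lets $\W$ be treated as an honest module rather than a projective one throughout. Getting the explicit formula $\wt{T}(g\ot u\ot w) = g\ot T(u)\ot w$ to drop out of the chain of canonical isomorphisms (rather than merely an abstract isomorphism of Hom spaces) also requires care in choosing compatible coset representatives.
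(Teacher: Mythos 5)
Your proposal is correct, and the irreducibility and isotypic-component parts match the paper's argument, but the core Hom-space computation takes a genuinely different route. You apply Frobenius reciprocity once to land in $\Hom_{Q_3\ltimes N}(U\ot\W,\Res^G_{Q_3\ltimes N}\wt{V})$ and then run a Mackey decomposition over $Q_3\backslash Q/Q_2$, killing every double coset other than the one containing the identity by matching $N$-isotypic data against $Q_2=\Stab_Q([\W])$; this is the classical Clifford--Mackey computation and your justification of why only the trivial coset survives (the condition $g\in Q_3Q_2=Q_2$) is sound. The paper avoids Mackey altogether: it uses Frobenius reciprocity and transitivity of induction to push both modules all the way up to $G$, rewrites $\Ind_{Q_3\ltimes N}^{G}(U\ot\W)$ as $\Ind_{Q_2\ltimes N}^{G}(\Ind_{Q_3}^{Q_2}(U)\ot\W)$, and then invokes (C2) as a full-faithfulness statement --- induction from $Q_2\ltimes N$ is a bijection on irreducibles lying over $[\W]$, hence induces isomorphisms of Hom spaces between semisimple modules supported there --- followed by (C3) to strip off $\W$. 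Your route makes the vanishing mechanism (the stabilizer condition) explicit and is self-contained modulo the Mackey formula; the paper's route is shorter and delegates the double-coset bookkeeping entirely to the already-stated Clifford assertions. Both correctly identify where (C3') is needed to treat $\W$ as an honest rather than projective module, and both give the same explicit formula for $\wt{T}$.
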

\begin{proof}
The fact that $\wt{U}$ and $\wt{V}$ are irreducible follows from Clifford Theory together with the fact that $Q_2$ is the stabilizer of $[\W]$ in $Q$, and the fact that $Q_3 = Q_1\cap Q_2$ is the stabilizer of $[\W]$ in $Q_1$. 
By using the definitions of $\wt{U}$ and $\wt{V}$ Frobenius reciprocity gives us 
$$\Hom_{Q_1\ltimes N}(\wt{U},\Res^G_{Q_1\ltimes N}\wt{V})\cong \Hom_G(\Ind_{Q_1\ltimes N}^G \Ind_{Q_3\ltimes N}^{Q_1\ltimes N}(U\ot \W),\Ind_{Q_2\ltimes N}^G (V\ot \W))\cong$$
$$\Hom_G(\Ind_{Q_3\ltimes N}^G(U\ot \W),\Ind_{Q_2\ltimes N}^G (V\ot \W))= \Hom_G(\Ind_{Q_2\ltimes N}^G \Ind_{Q_3\ltimes N}^{Q_2\ltimes N}(U\ot \W),\Ind_{Q_2\ltimes N}^G (V\ot \W)).$$
We have an isomorphism $$\Ind_{Q_3\ltimes N}^{Q_2\ltimes N} (U\ot \W)\cong \Ind_{Q_3}^{Q_2}(U)\ot \W$$ $$g\ot u\ot w\mapsto g\ot u\ot w,$$ and the last hom-space is thus isomorphic to 
$$\Hom_G(\Ind_{Q_2\ltimes N}^G (\Ind_{Q_2\ltimes N}^G\Ind_{Q_3}^{Q_2}(U)\ot \W),\Ind_{Q_2\ltimes N}^G (V\ot \W))\stackrel{\text{\ding{172}}}{\cong} $$
$$\Hom_{Q_2\ltimes N}(\Ind_{Q_3}^{Q_2}(U)\ot\W,V\ot \W)\stackrel{\text{\ding{173}}}{\cong} \Hom_{Q_2}(\Ind_{Q_3}^{Q_2}(U),V)\stackrel{\text{\ding{174}}}{\cong} \Hom_{Q_3}(U,\Res^{Q_2}_{Q_3}V).$$
We used (C2) for the isomorphism \ding{172} and (C3) for the isomorphism \ding{173}. The isomorphism \ding{174} follows from Frobenius reciprocity. Following all the isomorphisms we had here we get the isomorphism stated in the lemma. The last statement, about the isotypic components, follows by considering the sum of all the images of maps of the form $\wt{T}$.
\end{proof}
 
\section{Translating diagrams into graphs}
We keep the type $((p_i,q_i))_{i=1}^r$ fixed. 
The main object of study in this paper will be the closed diagrams for structures of type $((p_i,q_i))$. Such a diagram is made of boxes labeled by elements in $\{x_i\}_{i=1}^r$, and strings, where a box with a label $x_i$ has $p_i$ ordered output strings and $q_i$ ordered input strings. All input strings are connected to output strings and all output strings are connected to input strings. We will now explain how to translate such diagrams into graphs with a coloring. 

To this end, a graph $\Ga=(V,E)$ is given by a set of vertices $V$ and a set of edges $E$, equipped with two maps $s,t:E\to V$, indicating that the edge $e$ goes from $s(e)$ to $t(e)$. Multiple edges and self edges are allowed. 
In this paper we will consider graphs with colorings $c:V\to \N$, $c_o,c_i:E\to \N$. We call $c_o$ the output coloring and $c_i$ the input coloring. We will also write $c(e) = (c_o(e),c_i(e))$.
\begin{definition}
A graph $\Ga=(V,E)$ with a coloring $c$ is called adequate if: 
\begin{enumerate}
\item For every $v\in V$ we have $c(v)\in \{1,\ldots, r\}$. 
\item If $c(v)=a$ then $|\{e\in E| s(e)=v\}|=p_a$, $|\{e\in E| t(e)=v\}|=q_a$, $\{c_o(e)\}_{s(e)=v} = \{1,2,\ldots, p_a\}$, and $\{c_i(e)\}_{t(e)=v} = \{1,2,\ldots, q_a\}$.
\end{enumerate}
\end{definition}
There is an obvious notion of morphisms between graphs:
\begin{definition} 
A morphism of graphs $\phi:\Ga=(V,E)\to \Ga'=(V',E')$ is a pair of maps $\phi_V:V\to V'$ and $\phi_E:E\to E'$ such that $\phi_V(s(e)) = s(\phi_E(e))$, $\phi_V(t(e))= t(\phi_E(e))$ for every $e\in E$. If $\Ga$ and $\Ga'$ have colorings then the morphism $\phi$ is called chromatic if it preserves the colorings, that is: $(c_o(\phi(e)),c_i(\phi(e))) = (c_0(e),c_i(e))$ and $c(\phi(v))= \phi(v)$ for every $e\in E$ and $v\in V$.
\end{definition}
We now claim the following:
\begin{lemma}
There is a one-to-one correspondence between isomorphism classes of finite adequate graphs and closed diagrams. 
\end{lemma}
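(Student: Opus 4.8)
The plan is to exhibit the correspondence explicitly in both directions and then check the two constructions are mutually inverse, carrying isomorphisms to isomorphisms. The crucial preliminary point is to make precise what a \emph{closed diagram} is combinatorially: up to the equivalence under which diagrams are identified, such a diagram is nothing more than a finite set $V$ of boxes, a labelling $c\colon V\to\{1,\ldots,r\}$, and, for each box $v$ with $c(v)=a$, a family of $p_a$ ordered output slots and $q_a$ ordered input slots, together with a perfect matching pairing the set of all output slots (over all boxes) with the set of all input slots. An isomorphism of diagrams is then a label-preserving bijection of boxes carrying matching to matching. I would state this reformulation first, citing \cite{meirUIR} for the original definition.

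From a diagram I would build a graph $\Ga=(V,E)$ by taking $V$ to be the set of boxes with the same colouring $c$, and letting $E$ be the set of strings: a string joining the $j$-th output slot of $v$ to the $k$-th input slot of $w$ becomes an edge $e$ with $s(e)=v$, $t(e)=w$, $c_o(e)=j$, $c_i(e)=k$. The defining property of a box of type $a$ (exactly $p_a$ output slots labelled $1,\ldots,p_a$ and $q_a$ input slots labelled $1,\ldots,q_a$) together with the fact that every output slot is matched to exactly one input slot and conversely is \emph{precisely} adequacy condition (2); condition (1) is immediate. Conversely, from an adequate graph I reverse this: each vertex $v$ becomes a box labelled $c(v)$, each edge $e$ becomes a string joining output slot $c_o(e)$ of $s(e)$ to input slot $c_i(e)$ of $t(e)$, and adequacy guarantees that this assignment of strings to slots uses every slot exactly once, i.e. is a perfect matching, hence a legitimate closed diagram.

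It then remains to verify that the two assignments are mutually inverse up to isomorphism. Starting from a diagram, passing to the graph and back returns a diagram with the same boxes, labels and string-to-slot matching, hence the identical diagram; starting from an adequate graph $\Ga$, passing to a diagram and back returns a graph with vertex set $V$, one edge per string, and maps $s,t,c_o,c_i$ recovered verbatim, so the result is canonically equal to $\Ga$. Finally, a chromatic isomorphism of adequate graphs is exactly a label-preserving bijection of vertex sets compatible with edges and their slot labels, which is exactly an isomorphism of the associated diagrams; so isomorphism classes correspond on both sides.

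The only real subtlety — and the part I would write out most carefully — is the bookkeeping in the first step: convincing the reader that a closed diagram carries no information beyond the labelled boxes and the output-slot/input-slot matching (in particular that the planar layout, the crossings of strings, and the relative heights of the boxes are irrelevant up to the equivalence of diagrams), and checking that self-loops (a string from a box to itself) and several parallel strings between two boxes cause no trouble, which they do not, since the graph definition explicitly permits self-edges and multi-edges. Once this reformulation is in place the correspondence is essentially a tautology.
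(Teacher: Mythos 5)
Your proposal is correct and follows exactly the same construction as the paper's proof (box $\mapsto$ coloured vertex, string from the $i$-th output of an $x_a$ box to the $j$-th input of an $x_b$ box $\mapsto$ edge with $c_o=i$, $c_i=j$); the paper simply states that this is "clearly" a bijection respecting isomorphisms, whereas you spell out the inverse construction and the bookkeeping. The extra detail is fine but does not change the argument.
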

\begin{proof}
If $Di$ is a closed diagram, replace every box $x_a$ by a vertex with color $a$. If the $i$-th output string of an $x_a$ box is connected to the $j$-th input string of an $x_b$ box, replace this by an edge $e$ with $c_i(e)= j$, $c_o(e)=i$. It is clear that we get a bijection this way, and that this respects isomorphisms.
\end{proof}
\begin{remark} In principle, it is possible to develop all of the results in this paper by just using diagrams, without mentioning graphs. However, since the language of graphs is much better developed, and it is easier to think of them as topological spaces, we prefer to use it. Also, it makes more sense to speak about infinite graphs, which we will have to do later when speaking about universal coverings. The correspondence presented here also gives an isomorphism between the automorphism group of a diagram and of the colored graph that represents it. 
We are thus going to work with adequate graphs instead of diagrams from now on. 
\end{remark}
\begin{definition}
A morphism of graphs $\Ga\to \Ga'$ is called a covering if it is surjective and a local homeomorphism, where we identify the graphs with their topological realizations. 
\end{definition}
We claim the following:
\begin{lemma} Every chromatic morphism $\phi:\Ga=(V,E)\to \Ga'=(V',E')$ between adequate graphs in which $\Ga'$ is connected is a covering map. 
\end{lemma}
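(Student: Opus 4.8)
The plan is to check the two conditions in the paper's definition of a covering directly from adequacy: that $\phi$ is a local homeomorphism, and that it is surjective (here the connectedness of $\Gamma'$ enters). Throughout I would fix the standard topological realization of a graph as the quotient of $V\sqcup(E\times[0,1])$ that glues $(e,0)$ to $s(e)$ and $(e,1)$ to $t(e)$, and realize a morphism $(\phi_V,\phi_E)$ as the map $v\mapsto\phi_V(v)$, $(e,\tau)\mapsto(\phi_E(e),\tau)$. This is well defined precisely because a morphism preserves $s$ and $t$, and it restricts to a homeomorphism from each closed edge of $\Gamma$ onto a full closed edge of $\Gamma'$; in particular the image of $\phi$ is an honest subgraph. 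I would also assume $\Gamma\neq\emptyset$ (otherwise the statement needs this hypothesis, the empty map not being surjective onto a nonempty graph).

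First, the local homeomorphism property. At an interior point of an edge $e$ this is immediate, since $\phi$ maps a small open subinterval of $e$ homeomorphically onto a small open subinterval of $\phi_E(e)$. The real point is the behaviour at a vertex $v$, say of colour $a\in\{1,\ldots,r\}$. By adequacy the open star of $v$ is a cone on $p_a+q_a$ half-edges: the outgoing half-edges are indexed bijectively by their output colours $\{1,\ldots,p_a\}$ and the incoming half-edges bijectively by their input colours $\{1,\ldots,q_a\}$. Since $\phi$ is chromatic, $\phi_V(v)$ also has colour $a$, so the open star of $\phi_V(v)$ has the same combinatorial description; and because $\phi$ preserves source and target and both edge-colourings, while $c_o$ (resp.\ $c_i$) is \emph{injective} on the edges out of (resp.\ into) $v$, the rule $e\mapsto\phi_E(e)$ is forced to be a bijection between the half-edges at $v$ and the half-edges at $\phi_V(v)$. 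Matching prong to prong, $\phi$ then carries the open star of $v$ homeomorphically onto the open star of $\phi_V(v)$, which is a neighbourhood of $\phi_V(v)$. Hence $\phi$ is a local homeomorphism. (The one configuration deserving a second look is a self-loop at $v$, which contributes one outgoing and one incoming half-edge and maps to a self-loop at $\phi_V(v)$; the bookkeeping still works.)

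Second, surjectivity. Let $\Gamma''\subseteq\Gamma'$ be the image subgraph, which is nonempty since $\Gamma\neq\emptyset$. I claim $|\Gamma''|$ is both open and closed in $|\Gamma'|$. For openness: near an image point that is a vertex $\phi_V(v)$, the local homeomorphism already produces a whole neighbourhood lying in $\Gamma''$; near an image point in the interior of an edge $e'$, any preimage lies in the interior of some edge $e$ with $\phi_E(e)=e'$, hence all of $e'$ (so a neighbourhood of the point) lies in $\Gamma''$. This also gives the dichotomy: an interior point of an edge $e'$ is in $\Gamma''$ if and only if $e'$ itself is an edge of $\Gamma''$. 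For closedness of $\Gamma''$ I show its complement is open: if a vertex $v'$ of $\Gamma'$ is not in $\Gamma''$, then no edge incident to $v'$ can be an edge of $\Gamma''$ (its endpoints would then be images of vertices), so by the dichotomy the open star of $v'$ misses $\Gamma''$; and if an interior edge point is not in $\Gamma''$, the whole open edge misses $\Gamma''$. Since $|\Gamma'|$ is connected and $\Gamma''$ is a nonempty clopen subset, $\Gamma''=\Gamma'$, i.e.\ $\phi$ is surjective. Together with the first part this shows $\phi$ is a covering map.

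All steps are elementary; the only genuinely delicate point is the vertex case of the local homeomorphism, where one must set up the topological realization carefully and verify that the colour data at a vertex matches the colour data at its image half-edge by half-edge — with self-loops as the configuration to double-check. The connectedness hypothesis is used exactly once, in the clopen argument for surjectivity.
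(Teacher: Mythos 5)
Your proof is correct, and the local-homeomorphism half is essentially the paper's argument: both use that chromaticity plus adequacy force $\phi_E$ to restrict to a bijection between the half-edges at $v$ and those at $\phi_V(v)$, since the output (resp.\ input) colours index the outgoing (resp.\ incoming) edges at each vertex bijectively. Where you genuinely diverge is in the surjectivity half. The paper argues combinatorially: fix $v\in V$, take any $v'\in V'$, choose a path in $\Ga'$ from $\phi(v)$ to $v'$, and lift it edge by edge into $\Ga$ using the uniqueness of the edge of a prescribed colour at each vertex; the endpoint of the lift maps to $v'$. You instead argue topologically: the image is a subgraph, hence closed in the CW topology, and the local homeomorphism makes it open, so by connectedness the nonempty image is everything. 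Both uses of connectedness are legitimate; the paper's path-lifting is more hands-on and reproves a special case of lifting that it needs anyway in later lemmas (e.g.\ the uniqueness-of-lifts and universal-cover arguments), while your clopen argument is shorter and cleanly separates the topology from the combinatorics. Your observation that $\Ga\neq\varnothing$ must be assumed (the empty graph is vacuously adequate and the empty map is a chromatic morphism but not surjective) is a fair pedantic point that the paper glosses over by starting its proof with ``let $v\in V$ be any vertex.''
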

\begin{proof}
We begin by proving that $\phi$ is in fact surjective. Let $v\in V$ be any vertex, and consider $\phi(v)\in V'$. Let $v'\in V'$. Since $\Ga'$ is connected there is a path 
$$\phi(v)=v_1\stackrel{e_1}{\dash} v_2\stackrel{e_2}{\dash}\cdots\stackrel{e_{n-1}}{\dash} v_n=v'.$$
We can lift this path step by step to a path $$v=w_1\stackrel{f_1}{\dash} w_2\stackrel{f_2}{\dash}\cdots\stackrel{f_{n-1}}{\dash} w_n$$
in $\Ga$ in the following way:
if $c(e_1)= (i,j)$ and $e_1$ is directed from $v_1$ to $v_2$, then we take $f_1$ to be the unique edge going out of $v=w_1$ with $c_o(f_1)=i$. Since $\phi$ is chromatic, the edge $\phi(f_1)$ is the unique edge with output color $i$ that starts in $\phi(w_1)=v_1$. It thus must be equal to $e_1$. We then define $w_2:=t(f_1)$. If $e_1$ is directed from $v_2$ to $v_1$ we use the input edge of $v$ with input color $j$. 
We continue by induction, and we get a vertex $w_n$ such that $\phi(w_n) = v_n=v'$. We thus see that $\phi$ is surjective on the set of vertices. It is now easy to see that it is also surjective on the set of edges, by considering the coloring. 

To prove that $\phi$ is also a covering is now immediate. Indeed, if $v\in V$ has color $a$ then $\phi(v)$ has color $a$ as well. Both $v$ and $\phi(v)$ have $q_a$ input edges and $p_a$ output edges, and since $\phi$ is a chromatic morphism it holds that it maps the edges adjacent to $v$ bijectively to the edges adjacent to $\phi(v)$, and $\phi$ is thus a local homeomorphism and therefore a covering. 
\end{proof}
\begin{lemma}\label{lem:uniquecoloring}
Let $\phi:\Ga\to \Ga'$ be a covering of graphs. Assume that $\Ga'$ is an adequate graph. Then there is a unique coloring on $\Ga$ that makes it into an adequate graph in such a way that $\phi$ is a chromatic morphism.
\end{lemma}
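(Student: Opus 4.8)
The plan is to define the coloring on $\Ga$ by pulling back the one on $\Ga'$ along $\phi$, observe that this choice is forced, and then verify adequacy. Concretely, I would set $c(v):=c(\phi_V(v))$ for every $v\in V$ and $c_o(e):=c_o(\phi_E(e))$, $c_i(e):=c_i(\phi_E(e))$ for every $e\in E$. Uniqueness is then immediate, since by definition a chromatic morphism must satisfy exactly these equalities; so the entire content of the lemma is that this pulled-back coloring makes $\Ga$ adequate.

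Condition (1) of adequacy is free: $c(v)=c(\phi_V(v))\in\{1,\ldots,r\}$ because $\Ga'$ is adequate. For condition (2) I would first establish the local claim that $\phi$ restricts, at each vertex $v$, to a bijection from the set of half-edges (edge-ends) incident to $v$ onto the set of half-edges incident to $\phi_V(v)$, carrying outgoing ends to outgoing ends and incoming ends to incoming ends. The first half of this comes from $\phi$ being a local homeomorphism: a neighborhood of $v$ in the topological realization is a star with one ray per half-edge at $v$ (a self-loop contributing two rays), similarly at $\phi_V(v)$, and a local homeomorphism must match these stars, hence their ray sets. The compatibility with the source/target labels is then just the graph-morphism identities $\phi_V(s(e))=s(\phi_E(e))$ and $\phi_V(t(e))=t(\phi_E(e))$. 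Once this is in hand, condition (2) for $v$ follows by transporting condition (2) for $\phi_V(v)$: the half-edge bijection gives $|\{e\mid s(e)=v\}|=p_a$ and $|\{e\mid t(e)=v\}|=q_a$ where $a=c(v)$, and since $c_o,c_i$ on $\Ga$ are by construction the composites of $\phi_E$ with $c_o,c_i$ on $\Ga'$, the color sets $\{c_o(e)\}_{s(e)=v}$ and $\{c_i(e)\}_{t(e)=v}$ coincide with the corresponding color sets at $\phi_V(v)$, namely $\{1,\ldots,p_a\}$ and $\{1,\ldots,q_a\}$.

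I expect the only real subtlety to be the passage from ``local homeomorphism of topological realizations'' to ``bijection of half-edge sets at each vertex'': one has to handle self-loops carefully (they contribute two half-edges, so the matching must be ray-by-ray rather than edge-by-edge) and one must observe that the local homeomorphism forces injectivity, not merely surjectivity, on half-edges — but this is exactly the data packaged in the word ``homeomorphism'' near the vertex. Everything else is routine bookkeeping, and the argument is insensitive to whether $\Ga$ is finite or connected, which is convenient for the later application to universal coverings.
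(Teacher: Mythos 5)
Your proof is correct and follows essentially the same route as the paper: pull back the coloring along $\phi$, note uniqueness is forced by the definition of a chromatic morphism, and deduce adequacy at each vertex from the local homeomorphism property. Your extra care about half-edges and self-loops is a legitimate elaboration of the step the paper compresses into ``since $\phi$ is a local homeomorphism the same holds for $v$,'' but it is the same argument.
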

\begin{proof}
We define the color of $v\in V$ to be the color of $\phi(v)$ and the color of $e\in E$ to be the color of $\phi(e)$. Since $\phi$ is a local homeomoprhism this coloring really gives a structure of an adequate graph on $\Ga$: if $c(v)=a$ then $c(\phi(v))=a$ and therefore $\phi(v)$ has $p_a$ output edges with colors $1,\ldots, p_a$, and $q_a$ input edges with colors $1,\ldots, q_a$. Since $\phi$ is a local homeomorphism the same holds for $v$. Since a chromatic morphism preserves the coloring it is clear that this coloring is unique.
\end{proof}
\begin{lemma}\label{lem:uniquevertex}
Let $\phi_1,\phi_2:\Ga=(V,E)\to \Ga'=(V',E')$ be two chromatic morphisms between connected adequate graphs. Assume that there is a vertex $v\in V$ such that $\phi_1(v) = \phi_2(v)$. Then $\phi_1=\phi_2$. 
\end{lemma}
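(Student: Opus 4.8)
The plan is to argue exactly as in the proof that lifts of paths along a covering map are unique, but phrased combinatorially in terms of the coloring. The one local fact needed is a \emph{rigidity} statement: if $\psi\colon\Ga\to\Ga'$ is any chromatic morphism of adequate graphs and $w\in V$, then $\psi$ is completely determined on the edges incident to $w$ once $\psi(w)$ is known. Indeed, if $w$ has color $a$, then so does $\psi(w)$, and since $\Ga'$ is adequate there is, for each $i\in\{1,\dots,p_a\}$, exactly one edge leaving $\psi(w)$ with output color $i$, and for each $j\in\{1,\dots,q_a\}$ exactly one edge entering $\psi(w)$ with input color $j$; because $\psi$ preserves colors and sources/targets, the image of the edge leaving $w$ with output color $i$ (resp.\ entering $w$ with input color $j$) must be that unique edge.

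With this in hand I would consider the set $S=\{w\in V : \phi_1(w)=\phi_2(w)\}$, which is nonempty because $v\in S$. I claim that if $w\in S$ and $e\in E$ is an edge incident to $w$, then $\phi_1(e)=\phi_2(e)$ and the other endpoint of $e$ again lies in $S$. To see this, suppose first that $e$ leaves $w$, say with output color $i$; applying the rigidity statement to $\phi_1$ and to $\phi_2$ at the common value $\phi_1(w)=\phi_2(w)$ forces $\phi_1(e)$ and $\phi_2(e)$ to be the same edge, namely the unique edge leaving $\phi_1(w)$ with output color $i$, and then $\phi_1(t(e))=t(\phi_1(e))=t(\phi_2(e))=\phi_2(t(e))$, so $t(e)\in S$. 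The case where $e$ enters $w$ is identical, using the input color of $e$ and taking sources in place of targets. Since $\Ga$ is connected, it follows that $S=V$, i.e.\ $\phi_1$ and $\phi_2$ agree on all vertices.

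Finally, every edge $e\in E$ is incident to a vertex, which by the previous step lies in $S=V$, so the same argument yields $\phi_1(e)=\phi_2(e)$; hence $\phi_1=\phi_2$ as morphisms of graphs.

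I do not anticipate any real obstacle here: the statement is the graph-theoretic incarnation of the uniqueness of lifts along coverings, and the proof is a straightforward connectedness argument (one could equally well phrase it as an induction on the length of a path from $v$). The only point requiring a little care is the bookkeeping with edge orientations — one must treat separately an edge traversed from its source to its target and an edge traversed the other way, invoking adequacy of $\Ga'$ for the relevant output, resp.\ input, color each time to obtain uniqueness of the colored incident edge.
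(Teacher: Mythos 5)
Your proof is correct and follows essentially the same route as the paper's: the paper argues by induction on the distance from $v$, using exactly the rigidity fact that adequacy forces a unique incident edge of each output (resp.\ input) color, which is your propagation step phrased slightly differently. Your explicit separation of the two edge orientations is, if anything, a little more careful than the paper's wording.
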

\begin{proof}
Let $v'\in V$. We will prove by induction on the length of the minimal path from $v$ to $v'$ that $\phi_1(v')=\phi_2(v')$. The fact that the images of the edges are also the same follows by considering the colors of the edges.

If $d(v,v')=0$ then $v=v'$ and we know that $\phi_1(v)=\phi_2(v)$. 
Assume that if $d(v,v')=n$ then $\phi_1(v')=\phi_2(v')$. 
If $v'$ is a vertex with $d(v,v')=n+1$ then there is a vertex $v''$ with $d(v,v'')=n$ and $d(v'',v')=1$. By the induction hypothesis we know that $\phi_1(v'')=\phi_2(v'')$. If $v''$ is connected to $v'$ by an edge $e$ with color $(i,j)$, then $\phi_1(v')$ is the unique vertex in $V'$ that is connected to $\phi_1(v'')$ by an edge with color $(i,j)$. Similarly, $\phi_2(v')$ is the unique vertex in $V'$ that is connected to $\phi_2(v'')=\phi_1(v'')$ by an edge with color $(i,j)$, and therefore $\phi_1(v')=\phi_2(v')$ as desired. 
\end{proof}
The next lemma is well known in graph theory. We recall it here:
\begin{lemma}
A graph $\Ga$ is connected and simply connected if and only if it is contractible if and only if it is a tree.
\end{lemma}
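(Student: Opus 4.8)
I would establish the equivalences by proving the cycle of implications
$$\text{(tree)}\ \Longrightarrow\ \text{(contractible)}\ \Longrightarrow\ \text{(connected and simply connected)}\ \Longrightarrow\ \text{(tree)},$$
viewing a graph throughout as its topological realisation, a $1$-dimensional CW complex, recalling that for a graph topological connectedness coincides with combinatorial connectedness, and taking ``tree'' to mean a connected graph with no cycle (a self-loop being a cycle of length one, a pair of parallel edges a cycle of length two). The middle implication is immediate — a contractible space is homotopy equivalent to a point, hence path-connected with trivial fundamental group — so I would dispose of it first.

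For (tree) $\Rightarrow$ (contractible) I would first treat the finite case by induction on the number of edges. A finite tree with at least one edge has a vertex of degree one: otherwise, starting at any vertex and repeatedly following an edge other than the one just used, finiteness forces a vertex to repeat, producing a cycle. Deleting such a leaf together with its incident edge is a deformation retraction onto a tree with one fewer edge, which is contractible by induction; the base case is a single vertex. For a general tree $\Ga$ I would fix a vertex $v_0$: since $\Ga$ is a tree, each point $x$ lies on a unique arc $[v_0,x]$, and $H(x,s):=$ the point of $[v_0,x]$ at distance $(1-s)\,d(v_0,x)$ from $v_0$ is a contraction to $v_0$, continuity being checked cell by cell after noting that $\Ga\times[0,1]$ carries the CW topology (as $[0,1]$ is locally compact). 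Since the trees arising as universal covers later in the paper are locally finite, one may alternatively exhaust $\Ga$ by an increasing union of finite subtrees and patch together the contractions from the finite case, sidestepping any point-set subtlety; one could also simply cite Serre, \emph{Trees}.

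For (connected and simply connected) $\Rightarrow$ (tree): by Zorn's lemma a connected graph $\Ga$ contains a spanning tree $T$, i.e. a maximal subforest; maximality together with connectedness of $\Ga$ forces $T$ to meet every vertex and to be connected. By the previous step $T$ is contractible, so collapsing it is a homotopy equivalence $\Ga\simeq\Ga/T$, and $\Ga/T$ is a wedge of circles indexed by the edges of $\Ga$ not in $T$. If $\pi_1(\Ga)=1$ then this wedge is simply connected, hence has no circles, so $E(\Ga)=E(T)$ and $\Ga=T$ is a tree. For finite $\Ga$ one can instead argue via the Euler characteristic: $V-E=\chi(\Ga)=1-\mathrm{rank}\,\pi_1(\Ga)=1$, and a connected graph with $E=V-1$ is a tree. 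The main obstacle is the (tree) $\Rightarrow$ (contractible) direction for infinite trees, entangled with the standard fact that collapsing a contractible subcomplex of a CW complex is a homotopy equivalence; the middle implication, the finite subcase, and the existence of a spanning tree are all routine.
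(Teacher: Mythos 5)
The paper does not actually prove this lemma: it is introduced with the sentence ``The next lemma is well known in graph theory. We recall it here,'' and no argument is supplied. Your proposal therefore cannot be compared with a proof in the paper; it fills in the standard argument that the paper omits, and it does so correctly. The cycle (tree) $\Rightarrow$ (contractible) $\Rightarrow$ (connected and simply connected) $\Rightarrow$ (tree) is the textbook route: the leaf-induction handles finite trees, the geodesic contraction $H(x,s)$ handles general trees, and the spanning-tree/wedge-of-circles argument for the last implication is exactly the one in Hatcher (whom the paper cites for all its covering-space facts). Two minor remarks. First, your alternative for infinite trees --- patching contractions of an exhausting sequence of finite subtrees --- is the one genuinely sketchy step, since contractions of nested subtrees need not be compatible without further choices; basing everything at a fixed root and contracting along geodesics, as your primary argument does, avoids the issue, so nothing is lost. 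Second, you are right to insist that a self-loop and a pair of parallel edges count as cycles: the paper's graphs explicitly allow multiple and self edges, so this convention is needed for the equivalence to be true as stated. For the paper's purposes only locally finite trees arise (universal covers of finite adequate graphs), so your treatment is more than sufficient.
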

\begin{proof}
Any contractible topological space is connected and simply connected. By definition, a tree $T$ is a connected graph with no cycles, which is equivalent to $T$ being connected and simply connected. Finally, if $T$ is a tree then it can be contracted by picking a base point $t$ , and contracting every other point $s$ in $T$ to that point $t$ along the unique path between $s$ and $t$. See also Theorem 9.1. in \cite{Wilson} for more characterisations of trees.
\end{proof}
\begin{lemma}\label{lem:lifting}
Assume that $\phi:\Ga_1=(V_1,E_1)\to \Ga_2=(V_2,E_2)$ is a chromatic morphism between adequate graphs. Let $\wt{\Ga}=(\wt{V},\wt{E})$ be a connected simply connected adequate graph, and let $\psi:\wt{\Ga}\to \Ga_2$ be another chromatic morphism. Let $v_1\in V_1$, $v_2\in V_2$ $\wt{v}\in\wt{V}$ be vertices that satisfy $\phi(v_1)= v_2= \psi(\wt{v})$. Then there is a chromatic morphism $\wt{\psi}:\wt{\Ga}\to \Ga_1$ such that $\wt{\psi}(\wt{v}) = v_1$ and $\phi\wt{\psi}=\psi$. If $\Ga_1$ is also connected and simply connected, then $\wt{\psi}$ is in fact an isomorphism. 
\end{lemma}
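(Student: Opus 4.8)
The plan is to build $\wt\psi$ by path-lifting, mimicking the step-by-step argument in the proof that chromatic morphisms are coverings together with the uniqueness argument of Lemma~\ref{lem:uniquevertex}. First I would isolate the path-lifting property of $\phi$: given any edge-path $v_2 = u_1\dash u_2\dash\cdots\dash u_n$ in $\Ga_2$ and the vertex $v_1$ over $u_1$, there is a unique edge-path $v_1 = \wt u_1\dash\cdots\dash\wt u_n$ in $\Ga_1$ lying over it, because at each step the edge out of (or into) $\wt u_k$ of the prescribed colour is unique. This is exactly the construction used in the covering lemma and uses only that $\phi$ is a chromatic morphism of adequate graphs, not connectedness of $\Ga_2$; note also that the lift of an edge $e$ and the lift of its reverse $\ol e$ (started at the appropriate endpoint) are mutually reverse.

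Next, for $\wt w\in\wt V$, use connectedness of $\wt\Ga$ to pick a path from $\wt v$ to $\wt w$, apply $\psi$ to obtain a path from $v_2$ to $\psi(\wt w)$ in $\Ga_2$, lift it through $\phi$ starting at $v_1$, and declare $\wt\psi(\wt w)$ to be the terminal vertex of that lift. The \emph{crux} is well-definedness, i.e. independence of the chosen path. Here I would invoke the preceding lemma that a connected, simply connected graph is a tree, so any two paths from $\wt v$ to $\wt w$ reduce to the same non-backtracking path. Any path differs from its reduction by insertions of backtracks $\wt e\,\ol{\wt e}$; applying $\psi$ turns these into backtracks $\psi(\wt e)\,\ol{\psi(\wt e)}$ downstairs; and by the remark above the $\phi$-lift of such a backtrack is again a backtrack, which returns to the same vertex and so does not change the terminal vertex of the lift. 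Hence $\wt\psi$ is well-defined on vertices, and defining $\wt\psi$ on an edge $\wt e$ to be the last edge of the lift of $\psi$ applied to a path to $s(\wt e)$ followed by $\wt e$ gives the edge map; the compatibilities $\wt\psi_V(s(\wt e)) = s(\wt\psi_E(\wt e))$ and $\wt\psi_V(t(\wt e)) = t(\wt\psi_E(\wt e))$ are immediate from this recipe.

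The three asserted properties then follow at once: $\wt\psi(\wt v) = v_1$ since the empty path at $\wt v$ lifts to the empty path at $v_1$; $\phi\wt\psi = \psi$ since $\phi$ carries the lifted path back to $\psi$ of the original path, so $\phi(\wt\psi(\wt w))$ is its terminal vertex $\psi(\wt w)$; and $\wt\psi$ is chromatic because $\psi$ preserves colours and, by the lifting recipe, the $\phi$-lift of an edge carries the same colour as that edge. (Uniqueness of $\wt\psi$, although not required, is Lemma~\ref{lem:uniquevertex}.)

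Finally, for the last clause, suppose $\Ga_1$ is also connected and simply connected. Then I apply the part of the lemma just established with the roles of $(\Ga_1,v_1)$ and $(\wt\Ga,\wt v)$ interchanged — legitimate precisely because $\Ga_1$ is now connected and simply connected — to get a chromatic morphism $\chi:\Ga_1\to\wt\Ga$ with $\chi(v_1)=\wt v$ and $\psi\chi=\phi$. Then $\wt\psi\circ\chi:\Ga_1\to\Ga_1$ and $\Id_{\Ga_1}$ are chromatic morphisms agreeing at $v_1$, hence equal by Lemma~\ref{lem:uniquevertex} ($\Ga_1$ connected); symmetrically $\chi\circ\wt\psi=\Id_{\wt\Ga}$, so $\wt\psi$ is an isomorphism. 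I expect the only genuinely delicate point to be the well-definedness argument of the second paragraph; the rest is bookkeeping with edge-colours.
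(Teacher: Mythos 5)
Your proof is correct, but it takes a genuinely different route from the paper's. The paper disposes of this lemma in three lines by citing the lifting criterion for covering spaces (Proposition 1.33 of \cite{Hatcher}) applied to $\phi$ and the simply connected source $\wt{\Ga}$, and then, for the last clause, observes that $\wt{\psi}$ is a covering map between trees and hence an isomorphism. You instead reprove the lifting criterion from scratch in this combinatorial setting: unique colour-guided edge-lifting, well-definedness via reduction of edge-paths in a tree (using that backtracks lift to backtracks), and, for the final clause, a two-sided inverse obtained from the symmetry of the hypotheses together with Lemma \ref{lem:uniquevertex}. What your approach buys is self-containedness and slightly greater generality: it nowhere uses connectedness of $\Ga_2$ (nor of $\Ga_1$ in the first part), whereas the paper's earlier lemma only establishes that chromatic morphisms are coverings when the target is connected, so the appeal to the topological lifting criterion implicitly requires restricting to components. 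What it costs is length; the well-definedness step, which you correctly single out as the only delicate point, is precisely the content of the homotopy-lifting argument hidden inside the cited proposition. Both arguments are sound.
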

\begin{proof}
This follows immediately from Proposition 1.33 in \cite{Hatcher}. By considering all the relevant colors we see that the resulting map $\wt{\psi}$ is indeed a chromatic morphism. If $\Ga_1$ is connected and simply connected then $\wt{\psi}$ is a covering map of trees, and therefore must be an isomorphism.
\end{proof}

\begin{definition}
A finite connected adequate graph $\Ga$ is called irreducible if any morphism $\phi:\Ga\to \Ga'$ of adequate graphs is an isomorphism.
\end{definition}
\begin{remark} In \cite{meirUIR} the term ``irreducible diagram'' was used differently. We changed the usage of the word here.\end{remark}
Let now $\Ga$ be a connected adequate graph. 
Recall that the \emph{universal covering} of $\Ga$ is the unique tree $\wt{\Ga}$ for which there is a covering map $p:\wt{\Ga}\to \Ga$. 
By Lemma \ref{lem:uniquecoloring}, $\wt{\Ga}$ has a unique coloring that makes it an adequate graph. Fix a vertex $v\in \Ga$ and a vertex $\wt{v}\in p^{-1}(v)$. 

By Proposition 1.39 in \cite{Hatcher} the group $\pi_1(\Ga,v)$ is isomorphic to the group of deck transformations of $p:\wt{\Ga}\to \Ga$. 
By Proposition 1.40. in \cite{Hatcher} we also have an isomorphism $\wt{\Ga}/\pi_1(\Ga,v)\cong \Ga$. By considering the colorings of vertices and edges it is easy to see that every deck transformation is also a chromatic automorphism of the graph $\wt{\Ga}$. Write $\Aut_{C}(\wt{\Ga})$ for the group of such automorphisms.

The inclusion $\pi_1(\Ga,v)\subseteq \Aut_{C}(\wt{\Ga})$ then gives the following covering maps:
$$\wt{\Ga}\stackrel{p}{\to} \wt{\Ga}/\pi_1(\Ga,v)\cong \Ga\stackrel{q}{\to} \Ga_0:=\wt{\Ga}/\Aut_{C}(\wt{\Ga}).$$
We claim that $\Ga_0$ is an irreducible graph. Indeed, if we have a non-injective map $r:\Ga_0\to \Ga_1$ onto an adequate connected graph $\Ga_1$,  then since the map $r$ is a covering it follows that there is a vertex $w\in \Ga_0$ such that $w\neq q(v)$ and $rq(v)=r(w)$. By considering now the covering $rqp:\wt{\Ga}\to \Ga_1$, and using the fact that $\wt{\Ga}$ is simply connected, we see that if $\wt{w}\in (qp)^{-1}(w)$ then by Lemma \ref{lem:lifting} there is a deck transformation $\phi:\wt{\Ga}\to\wt{\Ga}$ with respect to the covering $rqp$ that takes $\wt{v}$ to $\wt{w}$. Such a deck transformation is in particular a chromatic automorphism of $\wt{\Ga}$. But this implies in particular that $\wt{v}$ and $\wt{w}$ are in the same $\Aut_{C}(\wt{\Ga})$-orbit of $\wt{\Ga}$, and therefore $q(v) = qp(\wt{v}) =qp(\wt{w}) = w$, which is a contradiction. 

We thus see that $\Ga_0$ is an irreducible graph. We also see that $\Ga_0$ is the only irreducible graph that $\wt{\Ga}$ covers. This is because if $\wt{\Ga}\to \Ga_2$ is another covering of an irreducible graph then by the same argument presented above we get $\wt{\Ga}/\Aut_{C}(\wt{\Ga})\cong \Ga_2$. Since this quotient is already isomorphic to $\Ga_0$ we get the uniqueness. 

The covering map $qp:\wt{\Ga}\to \Ga_0$ is also unique. If there is another map $p':\wt{\Ga}\to \Ga_0$, then by Lemma \ref{lem:uniquevertex} 
we see that $p'(\wt{v})\neq qp(\wt{v})$. Take an element $z\in (qp)^{-1}p'(\wt{v})$. Then by Lemma \ref{lem:lifting} there is an adequate graph automorphism $\phi:\wt{\Ga}\to\wt{\Ga}$ such that $\phi(\wt{v})=z$. This contradicts the fact that $\wt{v}$ and $z$ are in different $\Aut_{C}(\wt{\Ga})$-orbits. 

We summarize this in the following proposition:
\begin{proposition}\label{prop:uniquecover}
Let $\Ga$ be a finite connected adequate graph. Then there is a unique (up to isomorphism) irreducible graph $\Ga_0$ for which there is a chromatic covering $q:\Ga\to\Ga_0$. The covering map is unique, and an automorphism $\phi:\Ga\to\Ga$ preserves the coloring of $\Ga$ if and only if it is a deck transformation.
\end{proposition}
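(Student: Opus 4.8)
The genuine content here is front-loaded into the discussion preceding the statement, so the plan is mainly to assemble that material and add the characterisation of chromatic automorphisms. Given a finite connected adequate graph $\Ga$, I would take its universal covering $p\colon\wt{\Ga}\to\Ga$, put on $\wt{\Ga}$ the unique adequate coloring making $p$ chromatic (Lemma \ref{lem:uniquecoloring}), and set $\Ga_0:=\wt{\Ga}/\Aut_C(\wt{\Ga})$ with $q\colon\Ga\to\Ga_0$ the induced quotient and $qp\colon\wt{\Ga}\to\Ga_0$ the quotient covering. First I would record that $\Aut_C(\wt{\Ga})$ acts freely on $\wt{\Ga}$: a chromatic automorphism fixing a vertex equals the identity by Lemma \ref{lem:uniquevertex}, so the quotient is a bona fide adequate graph, $qp$ and $q$ are chromatic coverings, and $\Ga_0$ is finite since $q$ is surjective and $\Ga$ is finite. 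Irreducibility of $\Ga_0$ is exactly the argument given in the text: a non-injective chromatic morphism $r\colon\Ga_0\to\Ga_1$ would identify $q(v)$ with a distinct vertex $w$, and lifting $rqp$ through the simply connected $\wt{\Ga}$ via Lemma \ref{lem:lifting} produces a chromatic automorphism of $\wt{\Ga}$ carrying $\wt v$ to a vertex in a different $\Aut_C(\wt{\Ga})$-orbit, a contradiction.

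For the two uniqueness claims: if $\Ga_1$ is any irreducible graph admitting a chromatic covering from $\Ga$, then composing with $p$ covers $\Ga_1$ by $\wt{\Ga}$, and the same orbit-lifting argument forces $\Ga_1\cong\wt{\Ga}/\Aut_C(\wt{\Ga})\cong\Ga_0$. If $q,q'\colon\Ga\to\Ga_0$ were distinct chromatic coverings, then $qp\ne q'p$ as chromatic morphisms $\wt{\Ga}\to\Ga_0$, so by Lemma \ref{lem:uniquevertex} they already disagree at $\wt v$; picking $z\in(qp)^{-1}(q'p(\wt v))$ and applying Lemma \ref{lem:lifting} yields a chromatic automorphism of $\wt{\Ga}$ sending $\wt v$ to $z$, contradicting that $\wt v$ and $z$ lie in different $\Aut_C(\wt{\Ga})$-orbits; hence $qp=q'p$ and, by surjectivity of $p$, $q=q'$.

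Finally, for the automorphism statement: by Lemma \ref{lem:uniquecoloring} the coloring of $\Ga$ is the pullback along $q$ of the coloring of $\Ga_0$, so $c(\phi(x))=c(q\phi(x))$ and $c(x)=c(q(x))$ for every vertex or edge $x$ and every automorphism $\phi$ of $\Ga$. If $\phi$ is a deck transformation of $q$ then $q\phi=q$, whence $\phi$ preserves $c$; conversely, if $\phi$ preserves $c$ then $q\phi\colon\Ga\to\Ga_0$ is again a chromatic covering, so by the uniqueness just proved $q\phi=q$, i.e.\ $\phi$ is a deck transformation. The one place that needs genuine care — and really the crux of the proposition — is the bookkeeping with $\Aut_C(\wt{\Ga})$-orbits in $\wt{\Ga}$, namely that two vertices share an image in $\Ga_0$ exactly when a chromatic automorphism relates them; granted that, both uniqueness statements collapse to applications of Lemmas \ref{lem:uniquevertex} and \ref{lem:lifting}, and the coloring claims are routine.
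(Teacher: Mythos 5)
Your proposal is correct and follows essentially the same route as the paper: construct $\Ga_0=\wt{\Ga}/\Aut_C(\wt{\Ga})$, use Lemmas \ref{lem:uniquecoloring}, \ref{lem:uniquevertex} and \ref{lem:lifting} to get irreducibility and both uniqueness statements via the orbit argument in $\wt{\Ga}$, and then deduce the deck-transformation characterisation from the uniqueness of $q$. The only difference is cosmetic — you make explicit the freeness of the $\Aut_C(\wt{\Ga})$-action and the descent from $qp=q'p$ to $q=q'$ via surjectivity of $p$, which the paper leaves implicit.
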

\begin{proof}
We need to prove the uniqueness of the map $q:\Ga\to\Ga_0$. If $q':\Ga\to\Ga_0$ is another covering map then $pq,pq':\wt{\Ga}\to \Ga_0$ are two different coverings, and the discussion above shows that this is impossible. If $\phi:\Ga\to \Ga$ is a deck transformation, then for every $v\in V$ it holds that $c(\phi(v)) = c(q(\phi(v))) = c(q(v)) = c(v)$, so $\phi$ preserves the coloring of the vertices, and by a similar argument it also preserves the colors of the edges. In the other direction, if $\phi:\Ga\to \Ga$ is a chromatic automorphism, then $q\phi:\Ga\to \Ga_0$ is a chromatic covering. By the uniqueness of $q$ we get that $q\phi = q$, and therefore $\phi$ is a deck transformation. 
\end{proof}
Let $\Ga_0$ be an irreducible graph with a basepoint $v\in \Ga_0$. Choose a maximal tree $T$ in $\Ga_0$. This means that $T$ is maximal connected and acyclic. Since $\Ga_0$ is a graph, $\pi_1(\Ga_0,v)$ is a free group. If $e_1,\ldots, e_m$ are all the edges of $\Ga_0$ outside $T$, then $\pi_1(\Ga_0,v)$ is free of rank $m$ and we write $\pi_1(\Ga_0,v)\cong F_m = \langle z_1,\ldots, z_m\rangle$, where each edge gives rise to a generator of $F_m$. 
\begin{proposition}\label{prop:bijection}
Let $\Ga_0$ be as above, and let $n\geq 0$.
We have bijections between the following sets: 
\begin{enumerate}
\item The set of isomorphism classes of $n$-fold coverings of $\Ga_0$ 
\item The set of isomorphism classes of $\pi_1(\Ga_0,v)$-sets of cardinality $n$.
\item The set of conjugacy classes of homomorphisms $\pi_1(\Ga_0,v)\to S_n$.
\item The set $(S_n^m)/\sim$, where $(\sigma_1,\ldots, \sigma_m)\sim(\tau_1,\ldots, \tau_m)$ if and only if there is a permutation
$\mu\in S_n$ such that $\forall i\;\mu\sigma_i\mu^{-1} = \tau_i$
\end{enumerate}
\end{proposition}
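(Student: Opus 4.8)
The plan is to establish the bijections as a chain $(1)\leftrightarrow(2)\leftrightarrow(3)\leftrightarrow(4)$, in which only the first link uses genuine covering-space theory, while the other two are formal.

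For $(1)\leftrightarrow(2)$ I would use the universal covering $\wt{\Ga_0}$ of $\Ga_0$. As in the discussion preceding the statement (and by Proposition 1.39 of \cite{Hatcher}) its group of deck transformations is $\pi_1(\Ga_0,v)$; by Lemma \ref{lem:uniquecoloring} it carries a unique adequate structure for which the projection is chromatic, and $\wt{\Ga_0}/\pi_1(\Ga_0,v)\cong\Ga_0$. Given an $n$-fold chromatic covering $\phi\colon\Ga\to\Ga_0$ --- all fibres having the same cardinality since $\Ga_0$ is connected --- path-lifting equips the fibre $\phi^{-1}(v)$ with an action of $\pi_1(\Ga_0,v)$, hence with the structure of a $\pi_1(\Ga_0,v)$-set of cardinality $n$; this is the monodromy construction of Propositions 1.38--1.40 in \cite{Hatcher}. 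Conversely, a $\pi_1(\Ga_0,v)$-set $X$ with $|X|=n$ yields the $n$-fold covering $\wt{\Ga_0}\times_{\pi_1(\Ga_0,v)}X\to\Ga_0$, which by Lemma \ref{lem:uniquecoloring} inherits a unique adequate structure making its projection chromatic. That these two assignments are mutually inverse on isomorphism classes is the classical classification of covering spaces; the only point special to our setting is that any isomorphism of coverings over $\Ga_0$ is automatically chromatic, so the adequate structure carries no extra data --- this is precisely what Lemmas \ref{lem:uniquecoloring}, \ref{lem:uniquevertex} and \ref{lem:lifting} guarantee --- and the topological classification transfers verbatim.

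The remaining two equivalences are bookkeeping. For $(2)\leftrightarrow(3)$: an action of a group $G$ on a fixed $n$-element set, identified with $\{1,\dots,n\}$, is literally the same datum as a homomorphism $G\to S_n$, and two such actions are isomorphic as $G$-sets exactly when the corresponding homomorphisms differ by a relabelling of $\{1,\dots,n\}$, i.e. are conjugate in $S_n$. For $(3)\leftrightarrow(4)$: by the universal property of the free group $\pi_1(\Ga_0,v)\cong F_m=\langle z_1,\dots,z_m\rangle$ recorded before the statement, restriction to the generators is a bijection $\Hom(F_m,S_n)\xrightarrow{\ \sim\ }S_n^m$, $\rho\mapsto(\rho(z_1),\dots,\rho(z_m))$, under which post-composing $\rho$ with conjugation by $\mu\in S_n$ corresponds to $(\sigma_i)_i\mapsto(\mu\sigma_i\mu^{-1})_i$; passing to conjugacy classes on one side and $\sim$-classes on the other gives the bijection.

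The only step with real content is $(1)\leftrightarrow(2)$, and there the work lies in reconciling the classical correspondence with the adequate/chromatic bookkeeping: checking that the covering attached to a $\pi_1(\Ga_0,v)$-set is again adequate, that monodromy is well defined on isomorphism classes, and that ``isomorphic as coverings'' coincides with ``isomorphic as $\pi_1(\Ga_0,v)$-sets''. Each of these reduces to a uniqueness statement already established (Lemmas \ref{lem:uniquecoloring}, \ref{lem:uniquevertex}, \ref{lem:lifting}) together with Hatcher's classification theorem, so I do not anticipate any genuinely new obstacle beyond this reconciliation.
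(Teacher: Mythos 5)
Your proposal is correct and follows essentially the same route as the paper: the monodromy action on the fibre and the quotient construction $(X\times\wt{\Ga_0})_{F_m}$ for $(1)\leftrightarrow(2)$, the identification of $G$-sets with conjugacy classes of homomorphisms for $(2)\leftrightarrow(3)$, and the universal property of the free group for $(3)\leftrightarrow(4)$. The extra attention you give to the adequate/chromatic bookkeeping is consistent with (and slightly more explicit than) what the paper relies on via Lemma \ref{lem:uniquecoloring}.
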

\begin{proof}
The bijection between the first two sets follows from the discussion in pages 68-70 in \cite{Hatcher}. 
We sketch it here. If $S$ is a finite $F_m$-set with $|S|=n$, then $(S\times \wt{\Ga_0})_{F_m}$, where $F_m$ acts diagonally on the product, is an $n$-fold covering of $\Ga_0$.
If $p:\Ga\to \Ga_0$ is an $n$-fold covering of $\Ga_0$, write $p^{-1}(v) = \{v_1,\ldots, v_n\}$. For every element $g\in F_m$ and every $v_i\in p^{-1}(v)$ we can lift $g$ to a path $\wt{g}$ in $\Ga$ that begins with $v_i$. The end point of $\wt{g}$ is then $g\cdot v_i$. 

The bijection between the second and third sets holds for general groups, not only $F_m$. It is given by choosing a bijection between the $n$-elements set on which $\pi_1(\Ga_0,v)$ acts and the set $\{1,\ldots, n\}$. 

If $\alpha:\pi_1(\Ga_0,v)= \langle z_1,\ldots, z_m\rangle\to S_n$ is a homomorphism, then $(\alpha(z_1),\ldots,\alpha(z_m))$ is an element in $S_n^m$. 
It is then easy to see that this correspondence induces the bijection between the third set and the fourth set. 
\end{proof}

By the above proof we see that if $p:\Ga\to \Ga_0$ is a covering that corresponds to the tuple $(\sigma_1,\ldots \sigma_n)$ then we can write $p^{-1}(v) = \{v_1,\ldots, v_n\}$, and for $i=1,\ldots, m$ it holds that the different liftings of the path $z_i$ from $\Ga_0$ to $\Ga$ connect $v_j$ to $v_{\sigma_i(j)}$. 

We claim now the following:
\begin{lemma}
Let $\Ga_0$ be an irreducible graph and let $p:\Ga\to \Ga_0$ be an $n$-fold covering that corresponds to a tuple $(\sigma_1,\ldots, \sigma_m)\in S_n^m$, by identifying $p^{-1}(v)\cong \{1,\ldots, n\}$. Then the group $\Aut_{C}(\Ga)$ is isomorphic to the group $\{\sigma| \forall i \hspace{0.2cm} \sigma\sigma_i = \sigma_i\sigma\}<S_n$. This group can also be identified the with group $\Aut_{F_m}(p^{-1}(v))$. 
\end{lemma}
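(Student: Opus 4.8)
The plan is to identify $\Aut_C(\Ga)$ with the automorphism group of the monodromy $F_m$-set $p^{-1}(v)$ and then unwind what that automorphism group is. First I would check that every $\phi\in\Aut_C(\Ga)$ satisfies $p\phi=p$: a chromatic automorphism permutes the connected components of $\Ga$, and restricting $p$ and $p\circ\phi$ to a single component $A$ gives two chromatic coverings of the connected adequate graph $\Ga_0$ out of the connected adequate graph $A$ (a chromatic morphism to a connected graph being a covering, it restricts to a covering on each component onto $\Ga_0$), which must coincide by the uniqueness of the covering map in Proposition \ref{prop:uniquecover}; hence $p\phi=p$ on all of $\Ga$. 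Consequently $\phi$ preserves the fibre $p^{-1}(v)=\{v_1,\dots,v_n\}$ and determines $\sigma\in S_n$ by $\phi(v_j)=v_{\sigma(j)}$; the assignment $\phi\mapsto\sigma$ is a group homomorphism $\Aut_C(\Ga)\to S_n$. It is injective: if $\sigma=\mathrm{id}$ then, since $p\phi=p$ forces $\phi$ to fix each component (each component meets $p^{-1}(v)$, as $p$ restricted to it is surjective), $\phi$ restricts to a chromatic automorphism of each component fixing a vertex there, hence is the identity there by Lemma \ref{lem:uniquevertex}, and it is the identity on edges because it preserves edge colours.

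Next I would compute the image of $\phi\mapsto\sigma$ and show it equals $C:=\{\sigma\in S_n\mid \sigma\sigma_i=\sigma_i\sigma\ \forall i\}$. For ``image $\subseteq C$'': given $\phi$ with associated $\sigma$, recall from the paragraph following Proposition \ref{prop:bijection} that the lift of the loop $z_i$ starting at $v_j$ ends at $v_{\sigma_i(j)}$; since $p\phi=p$ and $\phi$ is a chromatic automorphism, it maps this lift to a path covering $z_i$ and starting at $\phi(v_j)=v_{\sigma(j)}$, which by uniqueness of path lifting is the lift of $z_i$ at $v_{\sigma(j)}$, whose endpoint is $v_{\sigma_i\sigma(j)}$; comparing with $\phi(v_{\sigma_i(j)})=v_{\sigma\sigma_i(j)}$ gives $\sigma\sigma_i(j)=\sigma_i\sigma(j)$ for all $j$, so $\sigma\in C$. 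For ``$C\subseteq$ image'' I would realise $\Ga$ as $(p^{-1}(v)\times\wt{\Ga_0})_{F_m}$, as in the proof of Proposition \ref{prop:bijection}, with the coloring pulled back from $\Ga_0$ through the projection to $\Ga_0$; any $\sigma\in C$ is, by definition, $F_m$-equivariant on $p^{-1}(v)$, so $\sigma\times\mathrm{id}_{\wt{\Ga_0}}$ is $F_m$-equivariant and descends to an automorphism of $\Ga$ that leaves the $\wt{\Ga_0}$-coordinate untouched, hence commutes with the projection to $\Ga_0$ and therefore preserves the coloring, and it induces $\sigma$ on the fibre over $v$. This yields $\Aut_C(\Ga)\cong C$.

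The identification with $\Aut_{F_m}(p^{-1}(v))$ is then immediate from definitions: under the monodromy action of Proposition \ref{prop:bijection} the generator $z_i$ of $F_m$ acts on $p^{-1}(v)\cong\{1,\dots,n\}$ as $\sigma_i$, and since $F_m=\langle z_1,\dots,z_m\rangle$ a bijection of $\{1,\dots,n\}$ is $F_m$-equivariant precisely when it commutes with each $\sigma_i$; hence $\Aut_{F_m}(p^{-1}(v))=C$, and all three groups agree.

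The step requiring the most care is the inclusion $C\subseteq\mathrm{image}$, i.e. producing an honest chromatic automorphism of $\Ga$ from a bare permutation that merely commutes with the $\sigma_i$; this is exactly where the model $\Ga\cong(p^{-1}(v)\times\wt{\Ga_0})_{F_m}$ is doing the work, turning an $F_m$-set automorphism into a deck transformation with no effect on the coloring. One should also keep in mind throughout that $\Ga$ need not be connected, so Proposition \ref{prop:uniquecover} and Lemma \ref{lem:uniquevertex} are to be applied component by component, with the relation $p\phi=p$ guaranteeing that chromatic automorphisms respect the decomposition into components.
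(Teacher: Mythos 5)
Your proof is correct and follows essentially the same route as the paper's: identify chromatic automorphisms with deck transformations via the uniqueness statements of Proposition \ref{prop:uniquecover} and Lemma \ref{lem:uniquevertex}, deduce $\sigma\sigma_i=\sigma_i\sigma$ from uniqueness of path lifting, and realise every commuting $\sigma$ as an automorphism through the model $(p^{-1}(v)\times\wt{\Ga_0})_{F_m}$. Your added care in handling a possibly disconnected $\Ga$ component by component is a reasonable refinement of the paper's argument rather than a different approach.
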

\begin{proof}
We have already seen in Proposition \ref{prop:uniquecover} that chromatic automorphisms of $\Ga$ are the same as deck transformations of $\Ga$. Let $\phi:\Ga\to \Ga$ be such an automorphism. Write as before $p^{-1}(v) = \{v_1,\ldots, v_n\}$. Since $\phi$ is a deck transformation it permutes the set $p^{-1}(v)$. Write $\alpha(\phi)\in S_n$ for the unique permutation that satisfies $\phi(v_i) = v_{\alpha(\phi)(i)}$. Then $\alpha:\Aut_{C}(\Ga)\to S_n$ is a homomorphism. Moreover, since a deck transformation is determined by its value on a single vertex (Lemma \ref{lem:uniquevertex}), $\alpha$ is one to one.

Write now $\alpha(\phi) = \sigma$. Let $1\leq 1\leq m$ and let $1\leq j\leq n$. Write $z_i^j$ for the path in $\Ga$ that lifts $z_i$ and starts in $v_j$. The end point of $z_i^j$ is then $v_{\sigma_i(j)}$. Then $\phi(z_i^j)$ is the path that starts in $v_{\sigma(j)}$ and ends in $v_{\sigma\sigma_i(j)}$. But a lifting of $z_i$ that starts in $v_{\sigma(j)}$ must end in $v_{\sigma_i\sigma(j)}$. By uniqueness of the end points of liftings we get that $\sigma \sigma_i = \sigma_i\sigma$ as desired.  Since $F_m$ acts on $p^{-1}(v)$ via the permutations $\sigma_i$, we see that we can indeed identify the group $\{\sigma| \forall i \hspace{0.2cm} \sigma\sigma_i = \sigma_i\sigma\}<S_n$ with $\Aut_{F_m}(p^{-1}(v))$. If $\sigma\in S_n$ is in this group, then $\sigma\times Id: p^{-1}(v)\times \wt{\Ga_0}\to p^{-1}(v)\times \wt{\Ga_0}$ commutes with the $F_m$-action and thus induces a homeomorphism $\ol{\sigma\times Id}:(p^{-1}(v)\times \wt{\Ga_0})_{F_m}\to (p^{-1}(v)\times \wt{\Ga_0})_{F_m}$, where for a graph $R$ with an $F_m$-action, $R_{F_m}$ denotes the quotient of $R$ by the action of $F_m$.  
By the discussion above, this covering space is isomorphic to $\Ga$, and we thus see that the image of $\alpha$ is $\Aut_{F_m}(p^{-1}(v))$ indeed. 
\end{proof}

We summarize this section with the following proposition:
\begin{proposition}\label{prop:splitting1}
\begin{enumerate}
\item The algebra $\Ainf$ splits as $$\Ainf = \bigotimes_{\Ga_0}\Ainf_{\Ga_0},$$ where $\Ga_0$ runs over all irreducible graphs and $\Ainf_{\Ga_0}$ is the polynomial algebra generated by all the connected graphs that cover $\Ga_0$.
\item The algebra $\Ainf_{\Ga_0}$ is graded by $\N$, and $(\Ainf_{\Ga_0})_n\cong (KS_n^m)_{S_n}$, where $m$ is the rank of the fundamental group of $\Ga_0$, and $S_n$ acts on $S_n^m$ by diagonal conjugation. If $\Ga_0$ has degree $(n_1,\ldots, n_r)$ as an element of $\Ainf$, then the $n$-th homogeneous component of $\Ainf_{\Ga_0}$ has degree $(nn_1,\ldots, nn_r)$ in $\Ainf$. 
\item If $\Ga\in \Ainf_{\Ga_0}$ corresponds to the tuple $\ol{(\sigma_1,\ldots, \sigma_m)}$, then $||\Ga||^2 = |\{\sigma\in S_n| \forall i\hspace{0.2cm} \sigma\sigma_i = \sigma_i\sigma\}$. 
\item The multiplication in $\Ainf_{\Ga_0}$ is given by the rule $(\sigma_i)\cdot (\tau_i) = ((\sigma_i,\tau_i))$, where we identify $S_{n_1}\times S_{n_2}$ with $S_{n_1,n_2}\subseteq S_{n_1+n_2}$.

\end{enumerate}
\end{proposition}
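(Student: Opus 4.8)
The plan is to deduce all four assertions by assembling the structural results already in place; the content is almost entirely a translation between three languages — closed diagrams, chromatic coverings of $\Ga_0$, and conjugacy classes of tuples in $S_n^m$ — and keeping the various normalisations straight.

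\textbf{Part (1).} By construction $\Ainf$ is the polynomial algebra on the set of all connected closed diagrams, which by the diagram–graph dictionary is the set of all finite connected adequate graphs. Proposition \ref{prop:uniquecover} says this set is partitioned according to the unique irreducible graph $\Ga_0$ that a given connected graph covers. Since a polynomial algebra on a disjoint union of sets of variables is the tensor product of the polynomial algebras on the pieces, we get $\Ainf=\bigotimes_{\Ga_0}\Ainf_{\Ga_0}$ with $\Ainf_{\Ga_0}$ the polynomial algebra on the connected covers of $\Ga_0$. (This is in fact a decomposition of graded Hopf algebras, the connected diagrams being primitive, but only the algebra statement is needed here.)

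\textbf{Part (2).} Fix $\Ga_0$ with $\pi_1(\Ga_0,v)\cong F_m$ and write $\deg(\Ga_0)=(n_1,\dots,n_r)\in\N^r$, so $n_i$ is the number of vertices of $\Ga_0$ of colour $i$. A connected $k$-fold cover of $\Ga_0$ has exactly $kn_i$ vertices of colour $i$, hence $\N^r$-degree $k\cdot\deg(\Ga_0)$ in $\Ainf$; as the $\N^r$-degree is additive on disjoint unions and disjoint union is the multiplication of the polynomial generators (see \cite{meirUIR}), assigning a connected $k$-fold cover the weight $k$ extends to a well-defined $\N$-grading on $\Ainf_{\Ga_0}$ with $(\Ainf_{\Ga_0})_n$ sitting in $\N^r$-degree $n\cdot\deg(\Ga_0)$ (note $\deg(\Ga_0)\neq 0$). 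A monomial of $\N$-degree $n$ is a product of connected covers whose covering degrees sum to $n$, i.e. up to isomorphism an arbitrary $n$-fold cover of $\Ga_0$; thus $(\Ainf_{\Ga_0})_n$ has a basis indexed by isomorphism classes of $n$-fold covers of $\Ga_0$. By Proposition \ref{prop:bijection} this index set is in bijection with $(S_n^m)/\sim$ for diagonal conjugation, which is precisely the canonical orbit-sum basis of the coinvariants $(KS_n^m)_{S_n}$; this yields the claimed linear isomorphism.

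\textbf{Parts (3) and (4).} For (3): by the preliminaries $\langle\Ga,\Ga\rangle=|\Aut(\Ga)|$, the diagram automorphism group, which equals the automorphism group of the associated coloured graph and, by Proposition \ref{prop:uniquecover}, the group $\Aut_C(\Ga)$ of chromatic automorphisms; applied to the $n$-fold cover $\Ga$ corresponding to a tuple $(\sigma_1,\dots,\sigma_m)$, the Lemma immediately preceding the proposition — whose proof uses connectedness of $\Ga_0$ only, not of $\Ga$ — identifies $\Aut_C(\Ga)$ with $\Aut_{F_m}(p^{-1}(v))=\{\sigma\in S_n\mid \sigma\sigma_i=\sigma_i\sigma\ \forall i\}$, giving $\|\Ga\|^2=|\{\sigma\in S_n\mid \sigma\sigma_i=\sigma_i\sigma\ \forall i\}|$. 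For (4): multiplication of diagrams in $\Ainf$ is disjoint union, hence on the corresponding coverings it is disjoint union of covers, hence on basepoint fibres it is disjoint union of $F_m$-sets; identifying $p_1^{-1}(v)=\{1,\dots,n_1\}$ with $z_i$ acting by $\sigma_i$ and $p_2^{-1}(v)=\{1,\dots,n_2\}$ with $z_i$ acting by $\tau_i$, the $F_m$-set $p_1^{-1}(v)\sqcup p_2^{-1}(v)\cong\{1,\dots,n_1+n_2\}$ has $z_i$ acting by $(\sigma_i,\tau_i)\in S_{n_1}\times S_{n_2}=S_{n_1,n_2}\subseteq S_{n_1+n_2}$, which is the asserted rule; compatibility with the bijection of Proposition \ref{prop:bijection} follows from the explicit description recorded after it (liftings of $z_i$ carry $v_j$ to $v_{\sigma_i(j)}$).

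I expect the only real subtlety to be bookkeeping rather than any substantial difficulty: checking that the $\N$-grading in (2) is well defined (additivity of the covering degree under disjoint union, and $\deg(\Ga_0)\neq 0$), and verifying that the combinatorial encoding of coverings by conjugacy classes of tuples is natural enough that disjoint union of covers is carried exactly to the block-diagonal embedding $S_{n_1}\times S_{n_2}\hookrightarrow S_{n_1+n_2}$ — which amounts to carefully matching the normalisations (choice of basepoint fibre, identification $p^{-1}(v)\cong\{1,\dots,n\}$) used in the earlier lemmas.
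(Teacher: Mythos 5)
Your proposal is correct and follows essentially the same route as the paper: part (1) via the unique-irreducible-cover statement of Proposition \ref{prop:uniquecover} together with unique decomposition into connected components, part (2) via the bijection of Proposition \ref{prop:bijection} between $n$-fold covers and $(S_n^m)/\!\sim$, part (3) via the identification of $\Aut_C(\Ga)$ with the joint centralizer of the $\sigma_i$, and part (4) via disjoint union of $F_m$-sets. The only difference is that you spell out the bookkeeping (well-definedness of the $\N$-grading, and the applicability of the centralizer lemma to disconnected covers) that the paper leaves implicit.
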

\begin{proof}
The first part follows from the fact that every connected graph covers a unique irreducible graph, and every graph can be written uniquely as the disjoint union of its connected components. The second part follows from the above discussion. The third part follows from the fact that the squared norm of every diagram is the cardinality of its automorphism groups (see also \cite[Section 8]{meirUIR}). The formula for the multiplication follows by considering the correspondence between $F_m$-sets and coverings. 
\end{proof}

\begin{remark}
\begin{enumerate}
\item If $m=0$ then $\Ga_0$ is a tree, the only covering spaces of $\Ga_0$ is itself. As a result, $\Ainf_{\Ga_0}$ is a polynomial ring in one variable.

\item Isomorphism classes of finite transitive $F_m$-sets are classified by conjugacy classes of finite index subgroups of $F_m$, and every finite $F_m$-set can be written uniquely as a disjoint union of finite transitive $F_m$-sets. Most of the results in this paper are easier to describe by just using the general language of $F_m$-sets, and not referring to particular subgroups. 
\end{enumerate}
\end{remark}

Below is an example of a diagram with an associated irreducible graph, and  of 2-fold and 3-fold coverings of the diagram and the associated graphs:
\begin{center}\scalebox{0.7}{
\begin{tikzpicture}
	\begin{pgfonlayer}{nodelayer}
		\node [style=2function] (0) at (-3.25, 4.5) {$x_1$};
		\node [style=none] (1) at (-3.25, 4.75) {};
		\node [style=none] (2) at (-3, 4.25) {};
		\node [style=none] (3) at (-3.5, 4.25) {};
		\node [style=none] (4) at (-3.25, 5.75) {};
		\node [style=none] (5) at (-3, 3.25) {};
		\node [style=none] (6) at (-3.5, 3.25) {};
		\node [style=2function] (7) at (-0.75, 4.5) {$x_2$};
		\node [style=none] (8) at (-0.75, 3.25) {};
		\node [style=none] (9) at (-0.5, 5.75) {};
		\node [style=none] (10) at (-1, 5.75) {};
		\node [style=none] (11) at (-0.75, 4.25) {};
		\node [style=none] (12) at (-0.5, 4.75) {};
		\node [style=none] (13) at (-1, 4.75) {};
		\node [style=none] (14) at (-2.25, 5.75) {};
		\node [style=none] (15) at (-2.25, 3.25) {};
		\node [style=none] (16) at (-4.25, 5.75) {};
		\node [style=none] (17) at (-4.25, 3.25) {};
		\node [style=none] (18) at (-5, 5.75) {};
		\node [style=none] (19) at (-5, 3.25) {};
		\node [style=none] (22) at (4.75, 5.25) {$(1,1)$};
		\node [style=none] (23) at (5.5, 6.25) {$(1,1)$};
		\node [style=none] (24) at (3.75, 3.25) {$(2,2)$};
		\node [style=none] (25) at (2.5, 5.5) {1};
		\node [style=none] (26) at (6.5, 4.25) {2};
		\node [style=graph node] (27) at (3.25, 5.25) {};
		\node [style=graph node] (28) at (6, 4.25) {};
	\end{pgfonlayer}
	\begin{pgfonlayer}{edgelayer}
		\draw (3.center) to (6.center);
		\draw (2.center) to (5.center);
		\draw (4.center) to (1.center);
		\draw (10.center) to (13.center);
		\draw (9.center) to (12.center);
		\draw (11.center) to (8.center);
		\draw [bend left=90, looseness=1.25] (4.center) to (14.center);
		\draw (14.center) to (15.center);
		\draw [bend right=90, looseness=1.25] (15.center) to (8.center);
		\draw (16.center) to (17.center);
		\draw (18.center) to (19.center);
		\draw [bend left=90, looseness=0.75] (18.center) to (9.center);
		\draw [bend left=90, looseness=0.75] (16.center) to (10.center);
		\draw [bend right=90, looseness=1.50] (17.center) to (6.center);
		\draw [bend right=90] (19.center) to (5.center);
		\draw [style=new edge style 0, bend right=270, looseness=1.25] (28) to (27);
		\draw [style=new edge style 0, bend right=90, looseness=1.25] (28) to (27);
		\draw [style=new edge style 0] (27) to (28);
	\end{pgfonlayer}
\end{tikzpicture}}
\end{center}
\begin{center}\scalebox{0.6}{
\input{diagram2.tikz}}
\end{center}

\section{decompositions of group algebras}\label{sec:decomposing}
Let $G$ be a finite group, and let $H$ be a subgroup of $G$. In this section we will study the space of coinvariants $(KG)_H$, where $H$ acts on $KG$ by conjugation. The motivation for this is the fact that the $n$-th homogeneous component of $\Ainf_{\Ga_0}$ is isomorphic with $(KS_n^m)_{S_n}$, where $S_n$ is embedded in $S_n^m$ diagonally. The space $(KG)_H$ is equipped with the sesquilinear inner product given by 
$$\langle \ol{g_1},
\ol{g_2}\rangle = \begin{cases} |C_H(g_1)| & \text{ if } \ol{g_1}=\ol{g_2}\\ 0 & \text{ else} \end{cases},$$ where $\ol{g}$ denotes the $H$-conjugacy class $\{hgh^{-1}\}_{h\in H}$ and $C_H(g) = \{a\in H| aga^{-1}=g\}$.
If $H=G$, then this space is spanned by the images of the central idempotents in $KG$, which correspond to the irreducible representations of $G$. By rescaling by the dimensions of the irreducible representations we get an orthonormal basis. If $H=1$, then this space has an orthonormal  basis given by the group elements of $G$. In the general case we have a basis that is given by a mixture of group theory and representation theory, as we shall describe next. 

Write  $G= \sqcup_{x\in D} HxH$, where $D$ is a set of double-coset representatives. We can then write 
$$(KG)_H = \bigoplus_{x\in D} (KHxH)_H.$$ Up to $H$-conjugation, every element in $HxH$ is conjugate to an element of the form $Hx$. It holds that $h_1x$ and $h_2x$ are $H$-conjugate if and only if there is an element $h_3\in H\cap x^{-1}Hx$ such that $h_1 = h_3h_2xh_3^{-1}x^{-1}$. We thus see that $$(KHxH)_H \cong (KH)_{H\cap x^{-1}Hx},$$ where the action of $H\cap x^{-1}Hx$ on $H$ is given by $h'\cdot h = h'hxh'^{-1}x^{-1}$. We can decompose now $H$ again to double cosets and continue with this procedure inductively. This motivates the following definition:
\begin{definition}
Let $G$ be a finite group, and let $H$ be a subgroup. For $x\in X$ we define $H_x: = \bigcap_{i\in \Z} x^iHx^{-i}$. We write $S_x = H_x\cdot x\subseteq H$. 
\end{definition}
The subgroup $H_x$ is the biggest subgroup of $H$ that is normalized by $x$. 
We claim the following:
\begin{lemma} If $y\in S_x$ then $H_y=H_x$ and as a result $S_x=S_y$. For $x,y\in G$ it thus holds that $S_x=S_y$ or $S_x\cap S_y = \varnothing$. 
\end{lemma}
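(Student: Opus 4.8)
The plan is to exploit the characterization already noted in the text, that $H_x$ is the \emph{largest} subgroup of $H$ normalized by $x$, and to apply it symmetrically in $x$ and $y$, bootstrapping from one inclusion to the other. Throughout, one uses the elementary fact that $H_x$ (being the $i=0$ term of the intersection, conjugated back) is a subgroup of $H$, and that $xH_xx^{-1}=H_x$ by reindexing the intersection.

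First I would write $y=hx$ with $h\in H_x$, which is possible precisely because $y\in S_x=H_x\cdot x$. Since $H_x$ is normalized by $x$ and $h\in H_x$, conjugation by $y$ gives $yH_xy^{-1}=h(xH_xx^{-1})h^{-1}=hH_xh^{-1}=H_x$; thus $H_x$ is a subgroup of $H$ normalized by $y$, and the maximality property of $H_y$ yields $H_x\subseteq H_y$.

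Next I would run the same argument in the other direction. From $H_x\subseteq H_y$ we get $h\in H_y$, so $h$ normalizes $H_y$; since $y$ normalizes $H_y$ as well and $x=h^{-1}y$, the element $x$ normalizes $H_y$. As $H_y$ is a subgroup of $H$, the maximality property of $H_x$ forces $H_y\subseteq H_x$, hence $H_y=H_x$. It then follows that $S_y=H_yy=H_x(hx)=(H_xh)x=H_xx=S_x$, using once more that $h\in H_x$.

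Finally, for the dichotomy: if $S_x\cap S_y$ contains an element $z$, then applying the assertion just proved with $z$ in the role of $y$ gives $S_z=S_x$ (from $z\in S_x$) and $S_z=S_y$ (from $z\in S_y$), whence $S_x=S_y$; otherwise the intersection is empty. I do not expect a genuine obstacle here — the construction is forced once one has the maximality characterization — the only point requiring a little care is the order of the two maximality arguments, since the reverse inclusion $H_y\subseteq H_x$ is obtained only after the forward inclusion $H_x\subseteq H_y$ has put $h$ inside $H_y$.
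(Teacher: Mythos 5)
Your proof is correct but runs on a different mechanism from the paper's. The paper computes $H_y$ directly from the defining intersection: writing $y=hx$ with $h\in H_x$, it expands $y^n=(hx)^n=x^nh_n$ with $h_n\in H_x\subseteq H$ (using that $x$ normalizes $H_x$), so that $y^nHy^{-n}=x^nh_nHh_n^{-1}x^{-n}=x^nHx^{-n}$ and the two intersections agree term by term. You instead treat $H_x$ entirely through its universal property as the largest subgroup of $H$ normalized by $x$ --- a fact the paper asserts just before the lemma; the ``largest'' half is the one-liner $K=x^iKx^{-i}\subseteq x^iHx^{-i}$ for any $x$-normalized subgroup $K\leq H$, which you lean on without spelling out. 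Your bootstrap is sound: $yH_xy^{-1}=h(xH_xx^{-1})h^{-1}=H_x$ gives $H_x\subseteq H_y$, and only then does $h\in H_x\subseteq H_y$ let $x=h^{-1}y$ normalize $H_y$, yielding the reverse inclusion; the ordering issue you flag is genuine, since the second application of maximality needs the first to have placed $h$ inside $H_y$. What your route buys is that it avoids the explicit computation of $(hx)^n$, including the $n<0$ case that the paper only gestures at; what the paper's route buys is that $H_y=H_x$ falls out of a single chain of equalities of intersections with no appeal to a maximality principle. The deduction $S_y=H_yy=H_xhx=H_xx=S_x$ and the disjointness dichotomy via a common element $z\in S_x\cap S_y$ are identical in the two arguments.
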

\begin{proof}
If $y\in S_x$ then $y=hx$ where $h\in H_x$. Since $x$ normalizes $H_x$ we can write, for $n\geq 0$, $$y^n =(hx)^n = x^n(x^{-n}hx^n)\cdot (x^{1-n}hx^{n-1})\cdots (x^{-1}hx) = x^n h_n$$ for some $h_n\in H_x$. We can show that a similar result holds when $n<0$. It then holds that
$$H_y = \bigcap_{n\in \Z} y^n Hy^{-n} = \bigcap_{n\in \Z} x^nh_nHh_n^{-1}x^{-n} = \bigcap_{n\in \Z} x^nHx^{-n} = H_x.$$
We then also have $S_y = H_yy = H_xhx = H_xx = S_x$ as desired. 
For the second part, if $S_x\cap S_y\neq \varnothing$, then we can take $z\in S_x\cap S_y$, and then $S_x=S_z=S_y$. 
\end{proof} 
Thus, the different subsets $S_x$ partition the group $G$ into mutually disjoint subsets.
For $h\in H$ and $x\in G$ it holds that $$hH_xh^{-1} = \bigcap_{n\in \Z}hx^nHx^{-n}h^{-1} = \bigcap_{n\in \Z}hx^nh^{-1}Hhx^{-n}h^{-1} = \bigcap_{n\in \Z}(hxh^{-1})^nH(hxh^{-1})^{-n} = H_{hxh^{-1}}$$ and therefore 
$$S_{hxh^{-1}} = H_{hxh^{-1}}hxh^{-1} = hH_xh^{-1}hxh^{-1} = hH_xxh^{-1} = hS_xh^{-1}$$
\begin{definition} We define $x\sim y$ if there is an $h\in H$ such that $hS_xh^{-1} = S_y$.  
\end{definition}
Since $\{S_x\}$ is a partition of $G$, we see that $\sim$ is an equivalence relation, where the equivalence class of $x\in G$ is $$T_x:=\bigcup_{h\in H} S_{hxh^{-1}}.$$ We next claim the following:
\begin{lemma}
Assume that $hS_xh^{-1} = S_x$ for some $x\in G$ and $h\in H$. Then $h\in H_x$. In particular, if $h\in H$ satisfies that $hS_xh^{-1}\cap S_x\neq \varnothing$ then $h\in H_x$. 
\end{lemma}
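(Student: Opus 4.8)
The plan is to reduce the whole statement to a single membership assertion and then exploit the fact, recalled just after the definition, that $H_x$ is the largest subgroup of $H$ normalized by $x$. First observe that the ``in particular'' clause is not genuinely weaker: using the identity $hS_xh^{-1}=S_{hxh^{-1}}$ proved above, the hypothesis $hS_xh^{-1}\cap S_x\neq\varnothing$ becomes $S_{hxh^{-1}}\cap S_x\neq\varnothing$, and since the subsets $S_y$ ($y\in G$) are pairwise disjoint this forces $S_{hxh^{-1}}=S_x$, i.e.\ $hS_xh^{-1}=S_x$. So it suffices to treat the equality case. From $hxh^{-1}\in S_{hxh^{-1}}=S_x=H_x\cdot x$ one then extracts an element $g\in H_x$ with $hxh^{-1}=gx$, equivalently $g:=hxh^{-1}x^{-1}\in H_x$; this $g$ is the only extra datum the argument needs.

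Next I would consider the subgroup $H'=\langle H_x,h\rangle$ of $H$ (it lies in $H$ because $h\in H$ and $H_x\subseteq H$) and show that $x$ normalizes it. Since $H_x$ is stable under conjugation by $x$ and by $x^{-1}$, it is enough to check that $xhx^{-1}$, $xh^{-1}x^{-1}$, $x^{-1}hx$ and $x^{-1}h^{-1}x$ all lie in $H'$. The two computations that matter are $xhx^{-1}=g^{-1}h$, which is immediate from $g=hxh^{-1}x^{-1}$, and $x^{-1}hx=(x^{-1}gx)h$, obtained by substituting $x=h^{-1}gxh$ into $x^{-1}hx$; both right-hand sides lie in $H'$ because $g\in H_x$ and $x^{-1}gx\in H_x$, and the remaining two conjugates are just the inverses of these. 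Hence $xH'x^{-1}=H'$, so $H'$ is a subgroup of $H$ normalized by $x$, and maximality of $H_x$ forces $H'\subseteq H_x$; in particular $h\in H_x$, which is the assertion.

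There is no real obstacle here; the one point to keep in mind is that $x$ need not lie in $H$, so ``normalized by $x$'' — not normality inside $H$ — is the correct notion throughout, and the inclusion $H'\subseteq H$ should be recorded before invoking maximality. If one prefers to avoid the auxiliary subgroup, an equally short alternative is to prove directly, by induction on $n\geq 0$ (using that $x$ normalizes $H_x$), that $x^{n}hx^{-n}=g_n^{-1}h$ and $x^{-n}hx^{n}=g_{-n}h$ for suitable $g_{\pm n}\in H_x$; this gives $x^{i}hx^{-i}\in H$ for every $i\in\Z$, hence $h\in\bigcap_{i\in\Z}x^{i}Hx^{-i}=H_x$.
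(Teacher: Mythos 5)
Your proof is correct and essentially matches the paper's: both arguments extract $g=hxh^{-1}x^{-1}\in H_x$ from $hxh^{-1}\in S_x=H_x\cdot x$, exploit that $x$ normalizes $H_x$ to propagate the relation to all conjugates $x^{i}hx^{-i}$, and reduce the ``in particular'' clause to the equality case via $hS_xh^{-1}=S_{hxh^{-1}}$ and the disjointness of the sets $S_y$. Your main route merely repackages the paper's two-sided induction by applying the maximality of $H_x$ to the subgroup $\langle H_x,h\rangle$, and your stated alternative is literally the paper's argument.
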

\begin{proof}
Since $x\in S_x$ we get that $hxh^{-1} = ax$ for some $a\in H_x$. This gives $h^{-1}a = xh^{-1}x^{-1}$. We conjugate the last equation by $x^n$ where $n\in \Z$ and we get $$x^nh^{-1}x^{-n}\cdot x^n a x^{-n}  = x^{n+1}h^{-1}x^{-n-1}.$$ Since $x^n a x^{-n}\in H$ for every $n$, it holds that $x^nhx^{-n}\in H$ if and only if $x^{n+1}hx^{-n-1}\in H$. Since this is true for $n=0$, it is also true for all $n\in \Z$ and thus $h\in H_x$. 
The last statement follows from the fact that $hS_xh^{-1} = S_{hxh^{-1}}$, and thus if the intersection is not empty then the two sets must be equal. 
\end{proof}
The group $H_x$ acts on $S_x$ by conjugation. We have 
$$(KS_x)_{H_x} \cong (KH_x)_{H_x},$$ where the action of $H_x$ on $KH_x$ is given by $a\cdot b = abxax^{-1}$. Pick now a set $\{g_1,\ldots, g_r\}$ of equivalence class representatives for $\sim$. Since the equivalence classes for $\sim$ are closed under $H$-conjugation, we get
$$(KG)_H = \bigoplus_i (KT_{g_i})_H.$$
Every element in $T_{g_i}$ is conjugate to an element in $S_{g_i}$, and we just proved that two elements in $S_{g_i}$ are $H$-conjugate if and only if they are $H_{g_i}$-conjugate. It follows that $(KT_{g_i})_H\cong (KS_{g_i})_{H_{g_i}}$. 

We thus have 
\begin{equation}\label{eq:splittingKG}(KG)_H\cong \bigoplus_i (KH_{g_i})_{H_{g_i}},\end{equation}
where $H_{g_i}$ acts on $KH_{g_i}$ by the formula $a\cdot h = ahg_iag_i^{-1}$. We will call this action the \emph{$g_i$-twisted conjugation action}. The direct sum respects the inner product in the sense that the different direct summands are orthogonal to each other.  
We calculate the inner product on $(KH_{g_i})_{H_{g_i}}$. Following all the isomorphisms we have so far we get
\begin{equation}\label{eq:sqnorm}\langle \ol{h_1},\ol{h_2}\rangle = \begin{cases}  |C_H(h_1g_i)| & \text{ if } \ol{h_1}=\ol{h_2}\\ 0 & \text{ else}\end{cases} \end{equation}
If $a\in C_H(h_1g_i)$ then $aS_{g_i}a^{-1} = aS_{h_1g_i}a^{-1} = S_{ah_1g_ia^{-1}} = S_{h_1g_i}$ and by the above lemma we get that in fact $a\in H_{g_i}$, so $C_H(h_1g_i) = C_{H_{g_i}}(h_1g_i)$.

We have a natural isomorphism $$(KH_{g_i})_{H_{g_i}}\cong (KH_{g_i})^{H_{g_i}}$$
$$\ol{h}\mapsto\frac{1}{|H_{g_i}|}\sum_{a\in H_{g_i}} ahg_ia^{-1}g_i^{-1}$$
We can thus identify $(KH_{g_i})_{H_{g_i}}$ with $(KH_{g_i})^{H_{g_i}}$, which is in turn a subspace of $KH_{g_i}$. 
We claim that the inner product on $(KH_{g_i})_{H_{g_i}}$ is just the restriction of the sesquilinear inner product on $KH_{g_i}$ given by
$$\langle h_1, h_2\rangle = \begin{cases} |H_{g_i}| & \text{ if } h_1=h_2 \\ 0 & \text{ else } \end{cases} .$$
Indeed, since the isomorphism between the invariants and the coinvariants sends an orthogonal basis to an orthogonal basis, it is enough to check that the two inner products agree on the squared norms of elements in the bases. Write $q_1,\ldots, q_b$ for a set of coset representatives of $C_H(hg_i)$ in $H_{g_i}$. We then have 
$$||\frac{1}{|H_{g_i}|}\sum_{a\in H_{g_i}} ahg_ia^{-1}g_i^{-1}||^2 = \frac{1}{|H_{g_i}^2|}||\sum_{j=1}^b |C_{H_{g_i}}(hg_i)|q_jhg_iq_j^{-1}g_i^{-1}||^2 =$$ $$ \frac{|C_H(hg_i)|^2}{|H_{g_i}^2|}b |H_{g_i}| = |C_{H_{g_i}}(hg_i)|,$$ where we used the fact that $b= |H_{g_i} / C_{H_{g_i}}(hg_i)|$. 

Write now $\{W^{(i)}_j\}$ for the set of isomorphism classes of irreducible representations of $H_{g_i}$. 
Wedderburn-Artin Theorem enables us to write $KH_{g_i} \cong \bigoplus_j \Hom_K(W^{(i)}_j,W^{(i)}_j)$.
The isomorphism sends $h\in H_{g_i}$ to the tuple $(R_j)$, where $R_j:W_j^{(i)}\to W_j^{(i)}$ is given by the action of $h$ on the representation $W_j^{(i)}$. The $g_i$-twisted conjugation action preserves the direct sum decomposition, and the $g_i$-twisted conjugation action of $h\in H_{g_i}$ on $R:W_j^{(i)}\to W_j^{(i)}$ is the map $hRg_ih^{-1}g_i^{-1}:W_j^{(i)}\to W_j^{(i)}$.
We thus see that $R:W_j^{(i)}\to W_j^{(i)}$ is invariant under the $g_i$-twisted conjugation action if and only if it is $H_{g_i}$-equivariant when considered as a map $g_i^*(W_j^{(i)})\to W_j^{(i)}$. 
Since the invariants and coivariants are isomorphic, we get $$(KH_{g_i})_{H_{g_i}}\cong (KH_{g_i})^{H_{g_i}}\cong \bigoplus_j \Hom_{H_{g_i}}(g_i^*(W^{(i)}_j),W^{(i)}_j).$$

The representation $(g_i^*)(W^{(i)}_j)$ is again an irreducible representation. If $(g_i)^*(W^{(i)}_j)\cong W^{(i)}_j$ then $\Hom_{H_{g_i}}((g_i)^*(W^{(i)}_j),W^{(i)}_j)$ is one dimensional. Otherwise it is zero dimensional. 
This implies the following lemma:
\begin{lemma}
The dimension of $(KH_{g_i})_{H_{g_i}}$ is equal to the number of isomorphism classes of irreducible $H_{g_i}$ representations that are invariant under the action by conjugation of $g_i$. 
\end{lemma}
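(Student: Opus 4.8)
The plan is to read the statement off directly from the chain of isomorphisms
$$(KH_{g_i})_{H_{g_i}}\cong (KH_{g_i})^{H_{g_i}}\cong \bigoplus_j \Hom_{H_{g_i}}(g_i^*(W^{(i)}_j),W^{(i)}_j)$$
established just above, so that the only remaining ingredient is Schur's lemma together with a tautological translation between the two ways of phrasing $g_i$-invariance.

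First I would record that $g_i$ normalises $H_{g_i}$ — this is exactly the characterisation of $H_{g_i}$ as the biggest subgroup of $H$ normalised by $g_i$ — so pre-composition with conjugation by $g_i$ is a well-defined bijection $g_i^*$ of $\Irr(H_{g_i})$; in particular each $g_i^*(W^{(i)}_j)$ is again an irreducible $H_{g_i}$-representation. Then for each $j$, Schur's lemma applied to the two irreducibles $g_i^*(W^{(i)}_j)$ and $W^{(i)}_j$ gives
$$\dim_K\Hom_{H_{g_i}}(g_i^*(W^{(i)}_j),W^{(i)}_j)=\begin{cases}1 & \text{if } g_i^*(W^{(i)}_j)\cong W^{(i)}_j,\\ 0 & \text{otherwise.}\end{cases}$$

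Summing over $j$, the dimension of $(KH_{g_i})_{H_{g_i}}$ equals the number of indices $j$ for which $g_i^*(W^{(i)}_j)\cong W^{(i)}_j$, i.e. for which the isomorphism class $[W^{(i)}_j]$ is a fixed point of the $g_i$-conjugation action on $\Irr(H_{g_i})$. Since $\{[W^{(i)}_j]\}_j$ is by construction a complete irredundant list of the isomorphism classes of irreducible $H_{g_i}$-representations, this count is precisely the number of such classes that are invariant under conjugation by $g_i$, which is the assertion of the lemma.

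I do not expect a genuine obstacle here: the substantive work has already been done in assembling the displayed isomorphism, and what remains is a one-line Schur-lemma computation together with the identification of ``$g_i$-invariant isomorphism class of irreducible representation'' with ``$g_i^*$-fixed point of $\Irr(H_{g_i})$''. The only point worth spelling out explicitly is the well-definedness of $g_i^*$ as a self-map of $\Irr(H_{g_i})$, which is immediate once one notes that $g_i$ normalises $H_{g_i}$.
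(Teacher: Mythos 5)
Your proposal is correct and follows exactly the paper's route: the lemma is deduced from the displayed isomorphism $(KH_{g_i})_{H_{g_i}}\cong\bigoplus_j \Hom_{H_{g_i}}(g_i^*(W^{(i)}_j),W^{(i)}_j)$ together with the Schur's-lemma observation that each summand is one-dimensional precisely when $g_i^*(W^{(i)}_j)\cong W^{(i)}_j$ and zero otherwise. The only difference is that you make explicit the (correct) remark that $g_i$ normalises $H_{g_i}$, which the paper leaves implicit.
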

We claim the following:
\begin{lemma}
An irreducible representation $W^{(i)}_j$ is invariant under the action of $(g_i)^*$ if and only if it can be extended to a representation of $H_{g_i}\cdot \langle g_i\rangle$. 
\end{lemma}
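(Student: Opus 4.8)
The plan is to establish the two implications separately. The direction ``$W^{(i)}_j$ extends to $H_{g_i}\langle g_i\rangle$ $\Rightarrow$ $W^{(i)}_j$ is $(g_i)^*$-invariant'' is immediate, so the real work is in the converse. The engine of that converse is Schur's lemma together with the standing assumption that $K$ is algebraically closed; conceptually, the point is that the obstruction to extending a $(g_i)^*$-invariant representation of the normal subgroup $H_{g_i}$ to the overgroup $H_{g_i}\langle g_i\rangle$ lives in $H^2$ of the quotient with coefficients in $K^\times$, and that quotient is \emph{cyclic}, so the obstruction automatically vanishes. Throughout I write $\rho\colon H_{g_i}\to\GL(W)$ for the given representation on $W=W^{(i)}_j$, and I use that $g_i$ normalizes $H_{g_i}=\bigcap_{n\in\Z}g_i^nHg_i^{-n}$, so that $H_{g_i}\langle g_i\rangle$ is a subgroup of $G$ in which $H_{g_i}$ is normal.

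For the easy direction, suppose $\rho$ extends to a representation $\tilde\rho$ of $H_{g_i}\langle g_i\rangle$. Setting $\phi=\tilde\rho(g_i)\in\GL(W)$, one reads off $\phi\,\rho(h)\,\phi^{-1}=\tilde\rho(g_ihg_i^{-1})=\rho(g_ihg_i^{-1})$ for every $h\in H_{g_i}$, which says precisely that $\phi$ is an $H_{g_i}$-equivariant isomorphism $(g_i)^*(W)\to W$; equivalently, in the language of Section~\ref{sec:decomposing}, $\phi$ is a nonzero element of the one-dimensional space of endomorphisms of $W$ fixed by the $g_i$-twisted conjugation action. Hence $W$ is $(g_i)^*$-invariant.

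For the converse, I would start from a $(g_i)^*$-invariance isomorphism, i.e.\ some $\phi\in\GL(W)$ with $\phi\,\rho(h)\,\phi^{-1}=\rho(g_ihg_i^{-1})$ for all $h\in H_{g_i}$, and manufacture the extension by sending $g_i$ to a suitable rescaling of $\phi$. Since $G$ is finite, there is a least $d>0$ with $g_i^d\in H_{g_i}$, and $H_{g_i}\langle g_i\rangle/H_{g_i}$ is cyclic of order $d$, generated by the image of $g_i$. Iterating the intertwining relation $d$ times and using that conjugation by $g_i^d\in H_{g_i}$ is an inner automorphism of $H_{g_i}$, I get $\phi^d\,\rho(h)\,\phi^{-d}=\rho(g_i^d)\,\rho(h)\,\rho(g_i^d)^{-1}$ for all $h$, so $\rho(g_i^d)^{-1}\phi^d$ centralizes $\rho(H_{g_i})$ and therefore, by Schur's lemma, equals a scalar $\mu\in K^\times$. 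Picking $c\in K^\times$ with $c^d=\mu^{-1}$ and putting $\psi=c\phi$ yields, at once, $\psi\,\rho(h)\,\psi^{-1}=\rho(g_ihg_i^{-1})$ and $\psi^d=\rho(g_i^d)$. I would then define $\tilde\rho(hg_i^k)=\rho(h)\psi^k$ for $h\in H_{g_i}$ and $0\le k<d$, and verify that this is a well-defined homomorphism extending $\rho$: this is a short check, because the two displayed identities for $\psi$ are exactly the defining relations of $H_{g_i}\langle g_i\rangle$ viewed as an extension of the cyclic group $C_d$ by $H_{g_i}$.

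The step I expect to be delicate — and the reason the whole argument succeeds — is the rescaling of $\phi$ to $\psi$. One needs the operator assigned to $g_i$ to have its $d$-th power equal to the operator $\rho(g_i^d)$ that is already forced by the requirement that $\tilde\rho$ extend $\rho$, and this is possible precisely because $K^\times$ is $d$-divisible; all remaining choices are irrelevant up to isomorphism. In the terminology of the Clifford-theoretic preliminaries this is the instance of assertion~\eqref{C3'} in which the quotient group is cyclic, so the relevant two-cocycle $\beta$ is automatically a coboundary and no genuine obstruction arises. Everything else in the proof is bookkeeping with the relations $\psi\,\rho(h)\,\psi^{-1}=\rho(g_ihg_i^{-1})$ and $\psi^d=\rho(g_i^d)$.
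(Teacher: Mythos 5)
Your proof is correct and takes essentially the same route as the paper's: both directions are handled identically, with the converse obtained by iterating the intertwining relation $d$ times, applying Schur's lemma to $\rho(g_i^d)^{-1}\phi^d$, rescaling $\phi$ by a $d$-th root of the resulting scalar (using that $K$ is algebraically closed), and then defining the extension on the coset decomposition $H_{g_i}\cdot\langle g_i\rangle=\{hg_i^k:0\le k<d\}$. The cohomological framing you add is accurate but not needed; the computation you describe is exactly the one in the paper.
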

\begin{proof}
One direction is obvious, since if $W=W^{(i)}_j$ can be extended to a representation of $H_{g_i}\cdot \langle g_i\rangle$, then the action of $g_i$ on $W$ gives an isomorphism between $W$ and $(g_i)^*(W)$.
In the other direction, assume that $k$ is the minimal integer such that $g_i^k=a\in H_{g_i}$.  Assume that $(g_i)^*(W)\cong W$, and let $T:W\to W$ be a linear automorphism that satisfies $T(hw) = g_ihg_i^{-1}T(w)$ for $h\in H_{g_i}$. It holds that $T^k(hw) = g_i^khg_i^{-k}T^k(w) = aha^{-1}T^k(w)$. It follows that $a^{-1}T^k(hw) = ha^{-1}T^k(w)$. So $w\mapsto a^{-1}T^k(w)$ is $H_{g_i}$-equivariant. By Schur's Lemma this means that it is multiplication by some scalar. By rescaling $T$ and using the fact that $K$ is algebraically closed, we can assume that this scalar is 1. We then get a representation of $H_{g_i}\cdot \langle g_i\rangle$ on $W$, where $h\cdot g_i^l$ acts by $h\cdot T^l$. The fact that $k$ is the minimal integer such that $g_i^k\in H_{g_i}$ and that $g_i$ normalizes $H_{g_i}$ implies that we can write every element of $H_{g_i}\cdot\langle g_i\rangle$ uniquely as $h\cdot g_i^l$ for some $l\in \{0,1,\ldots, k-1\}$, so we are done. 
\end{proof}
\begin{remark} In the above proof we have $k$ different extensions of $W$ to a representation of $H_{g_i}\cdot \langle g_i\rangle$, as we can alter $T$ by a $k$-th root of unity.
\end{remark}
The last lemma provides us with an orthonormal basis for $(KH_{g_i})_{H_{g_i}}$. By classical representation theory we know that the sesquilinear inner product on $H_{g_i}$ is in fact equal to $\langle h_1,h_2\rangle = \chi_{reg}(h_1h_2^{-1})$, where $\chi_{reg}$ is the character of the regular representation. The restriction to $\End(W^{(i)}_j)$ is $\langle T_1,T_2\rangle=\dim(W^{(i)}_j)\cdot \Tr(T_1\cdot T_2^*)$, where $T_2^*$ is the adjoint of $T_2$ (we use the fact that every irreducible representation admits an invariant sesquilinear form).
Assume that $W^{(i)}_j$ is $(g_i)^*$-invariant. Choose an extension of $W^{(i)}_j$ to a representation of $H_{g_i}\cdot\langle g_i\rangle$. Then $T^{(i)}_j:=g_i^{-1}:W^{(i)}_j\to W^{(i)}_j$ is an element of $\Hom_{H_{g_i}}((g_i)^*(W^{(i)}_j),W^{(i)}_j)$. By considering the character of the regular representation in the bigger group $H_{g_i}\cdot \langle g_i\rangle$, we see that $\langle T^{(i)}_j,T^{(i)}_j\rangle = \dim(W^{(i)}_j)\Tr(g_i^{-1}g_i|_{W^{(i)}_j})=\dim(W^{(i)}_j)^2$. 
We summarize this in the following:
\begin{lemma}
The set $\{\frac{1}{\dim(W^{(i)}_j)}T^{(i)}_j\}_{[W^{(i)}_j]\in \Irr(H_{g_i})^{g_i}}$ is an orthonormal basis for $(KH_{g_i})^{H_{g_i}}$, and the set $\{\frac{1}{\dim(W^{(i)}_j)}\ol{T^{(i)}_j}\}_{[W^{(i)}_j]\in \Irr(H_{g_i})^{g_i}}$ is an orthonormal basis for $(KH_{g_i})_{H_{g_i}}$, 
\end{lemma}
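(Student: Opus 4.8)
The plan is to read off the lemma from the decompositions established above together with the two computations just carried out; essentially no new idea is needed, only careful bookkeeping. First, I would record that $\{T^{(i)}_j\}$, as $[W^{(i)}_j]$ runs over $\Irr(H_{g_i})^{g_i}$, is a basis of $(KH_{g_i})^{H_{g_i}}$. We have the identification
$$(KH_{g_i})^{H_{g_i}}\cong \bigoplus_j \Hom_{H_{g_i}}((g_i)^*(W^{(i)}_j),W^{(i)}_j)\subseteq \bigoplus_j \End_K(W^{(i)}_j)\cong KH_{g_i},$$
in which the $j$-th summand is one-dimensional exactly when $W^{(i)}_j$ is $(g_i)^*$-invariant and $0$ otherwise. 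Since $g_i$ acts invertibly, $T^{(i)}_j=g_i^{-1}|_{W^{(i)}_j}$ is a nonzero vector of this one-dimensional space, hence a basis of it; as distinct $T^{(i)}_j$ are supported in distinct Wedderburn blocks they are linearly independent, and they span $(KH_{g_i})^{H_{g_i}}$ by the dimension count already obtained (the dimension being the number of $(g_i)^*$-invariant irreducibles).

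Next I would check orthonormality after the rescaling by $\dim(W^{(i)}_j)$. For $j\neq k$ the elements $T^{(i)}_j$ and $T^{(i)}_k$ live in different Wedderburn blocks of $KH_{g_i}$, and since $\chi_{reg}=\sum_l \dim(W^{(i)}_l)\chi_{W^{(i)}_l}$ with each summand supported on its own block, the blocks are mutually orthogonal for $\langle h_1,h_2\rangle=\chi_{reg}(h_1h_2^{-1})$; hence $\langle T^{(i)}_j,T^{(i)}_k\rangle=0$. For the squared norm I would invoke the computation performed just above: the restriction of $\langle-,-\rangle$ to $\End_K(W^{(i)}_j)$ is $\dim(W^{(i)}_j)\,\Tr(T_1T_2^*)$, and with respect to the invariant sesquilinear form on $W^{(i)}_j$ (for which, after extending to $H_{g_i}\cdot\langle g_i\rangle$, $g_i$ acts unitarily) one gets $(T^{(i)}_j)^*=((g_i)^{-1})^*=g_i$, so $T^{(i)}_j(T^{(i)}_j)^*=\Id_{W^{(i)}_j}$ and $\langle T^{(i)}_j,T^{(i)}_j\rangle=\dim(W^{(i)}_j)\Tr(\Id_{W^{(i)}_j})=\dim(W^{(i)}_j)^2$. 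Dividing by $\dim(W^{(i)}_j)$ gives the first orthonormal basis.

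Finally I would transport everything to the coinvariants. The natural map $\Psi:(KH_{g_i})_{H_{g_i}}\to (KH_{g_i})^{H_{g_i}}$, $\ol h\mapsto \tfrac{1}{|H_{g_i}|}\sum_{a\in H_{g_i}}ahg_ia^{-1}g_i^{-1}$, is an isomorphism, and, as was verified above on the squared norms of an orthogonal basis, it is an isometry. Since $T^{(i)}_j$ is by construction fixed by the $g_i$-twisted conjugation action, $\Psi(\ol{T^{(i)}_j})=T^{(i)}_j$, so $\Psi$ carries $\{\tfrac{1}{\dim(W^{(i)}_j)}\ol{T^{(i)}_j}\}$ bijectively onto the orthonormal basis $\{\tfrac{1}{\dim(W^{(i)}_j)}T^{(i)}_j\}$ of $(KH_{g_i})^{H_{g_i}}$; an isometric isomorphism pulls an orthonormal basis back to an orthonormal basis, so $\{\tfrac{1}{\dim(W^{(i)}_j)}\ol{T^{(i)}_j}\}$ is an orthonormal basis of $(KH_{g_i})_{H_{g_i}}$.

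The only point that is not pure bookkeeping is the adjoint computation $(T^{(i)}_j)^*=g_i$: it requires that the chosen extension of $W^{(i)}_j$ to $H_{g_i}\cdot\langle g_i\rangle$ be unitary for the same form that computes the norm on $\End_K(W^{(i)}_j)$. This holds because an $H_{g_i}$-irreducible representation carries an invariant sesquilinear form unique up to a positive scalar, the finite overgroup $H_{g_i}\cdot\langle g_i\rangle$ also preserves an invariant form (using the standing hypothesis on $K$, whose order-two automorphism inverts roots of unity), and comparing the two forms on $W^{(i)}_j$ forces them to agree up to scaling; the scaling constant is simultaneously a root of unity (since some power $g_i^k$ lies in $H_{g_i}$ and acts unitarily) and a positive real, hence is $1$, so $g_i$ preserves the $H_{g_i}$-invariant form on the nose.
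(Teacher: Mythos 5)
Your proposal is correct and follows essentially the same route as the paper: the lemma there is stated as the summary of the immediately preceding discussion, which already contains the identification $(KH_{g_i})_{H_{g_i}}\cong (KH_{g_i})^{H_{g_i}}\cong \bigoplus_j \Hom_{H_{g_i}}((g_i)^*(W^{(i)}_j),W^{(i)}_j)$, the blockwise formula $\langle T_1,T_2\rangle=\dim(W^{(i)}_j)\Tr(T_1T_2^*)$, and the norm computation $\langle T^{(i)}_j,T^{(i)}_j\rangle=\dim(W^{(i)}_j)^2$ via the regular character of $H_{g_i}\cdot\langle g_i\rangle$. Your only addition is to spell out why $(T^{(i)}_j)^*=g_i$ (i.e., that the extension to $H_{g_i}\cdot\langle g_i\rangle$ preserves the invariant form), which the paper leaves implicit in the phrase ``by considering the character of the regular representation in the bigger group.''
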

From now on, whenever we have an automorphism $\nu:G\to G$ of a finite group, and an isomorphism $T:\nu^*(W)\to W$, we will assume that $T$ has finite order when considered as a linear map $W\to W$. 

We finish this section with a lemma that will be used later when calculating the multiplication explicitly in terms of the basis we describe here. 
To state the lemma, let $G_1\subseteq G_2$ be finite groups, and let $\nu:G_2\to G_2$ be an automorphism such that $\nu(G_1)=G_1$. Let $V$ be an irreducible $\nu$-invariant representation of $G_1$, let $W$ be an irreducible $\nu$-invariant representation of $G_2$, and let $T_V:\nu^*(V)\to V$ and $T_W:\nu^*(W)\to W$ be isomorphisms. 
\begin{lemma}\label{lem:innerprod}
Write $\beta:(KG_1)_{G_1}\to (KG_2)_{G_2}$ for the map $\ol{g}\mapsto \ol{g}$, where the action of $G_i$ on $KG_i$ is defined as $g\cdot h = gh\nu(g)^{-1}$ for $i=1,2$. Write $\gamma:KG_1\to KG_2$ for the natural inclusion. Then $\langle \beta(\frac{1}{\dim(V)}\ol{T_V}),\frac{1}{\dim(W)}\ol{T_W} \rangle = \frac{1}{\dim(V)}\chi_W(\gamma(T_V)T_W^*)$
\end{lemma}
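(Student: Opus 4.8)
The plan is to unwind the chain of identifications built up in this section and reduce everything to a computation of a character value in the group $G_2$ (or rather in $G_2 \cdot \langle \text{generator} \rangle$, after choosing compatible extensions). First I would recall that, under the Wedderburn identification $KG_i \cong \bigoplus_j \End_K(W^{(i)}_j)$, the coinvariant space $(KG_i)_{G_i}$ for the $\nu$-twisted conjugation action is identified with $\bigoplus_j \Hom_{G_i}(\nu^*(W^{(i)}_j), W^{(i)}_j)$, and the distinguished basis vector $\ol{T_V}$ (resp.\ $\ol{T_W}$) sits inside this sum as the isomorphism $T_V$ (resp.\ $T_W$) in the appropriate summand, extended by zero. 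The map $\beta$ is, on the level of $\End$-pieces, simply "view a $G_1$-equivariant map as living in the span of $G_2$ via $\gamma$ and average": concretely $\beta(\ol{T_V})$ corresponds to $\gamma(T_V) \in KG_2$, which one then decomposes into its Wedderburn components $\sum_j (R_j)$ with $R_j \in \End_K(W^{(2)}_j)$ obtained by letting $\gamma(T_V)$ act on $W^{(2)}_j$. Crucially, $\ol{T_W}$ has only one nonzero Wedderburn component, namely the one indexed by $W = W^{(2)}_{j_0}$, so the inner product only sees the $W$-component of $\beta(\ol{T_V})$, which is the operator $\gamma(T_V)|_W : W \to W$.

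Next I would invoke the explicit formula for the inner product on the $\End_K(W)$-piece established just above the statement: for $T_1, T_2 \in \End_K(W)$ one has $\langle T_1, T_2 \rangle = \dim(W)\,\Tr(T_1 T_2^*)$, where $*$ is the adjoint with respect to the chosen $G_2$-invariant sesquilinear form on $W$, and this is a restriction of the sesquilinear form $\langle h_1, h_2 \rangle = \chi_{reg}(h_1 h_2^{-1})$ on $KG_2$ normalised by $|G_2|$. Applying this with $T_1 = \gamma(T_V)|_W$ and $T_2 = T_W$ (the $W$-component of $\ol{T_W}$) gives
$$\langle \beta(\ol{T_V}), \ol{T_W} \rangle = \dim(W)\,\Tr\bigl(\gamma(T_V)|_W \cdot T_W^*\bigr).$$
Then dividing by $\dim(V)\dim(W)$ as in the statement reduces the claim to the identity $\Tr(\gamma(T_V)|_W \cdot T_W^*) = \frac{1}{\dim(W)}\chi_W(\gamma(T_V)\,T_W^*)$. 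But this last identity is almost a tautology once one is careful about what "$\chi_W$" means here: since $W$ is $\nu$-invariant and we have fixed the extension $T_W$ (of finite order, by the standing convention introduced at the end of this section), the operator $T_W^{-1}$ realises the generator of $G_2 \cdot \langle \nu\text{-generator}\rangle / G_2$ acting on $W$; composing $\gamma(T_V)$ (an honest element of $KG_2$, acting on $W$) with $T_W^*$ and taking the ordinary trace on $W$ is exactly the evaluation of the character $\chi_W$ of the extended representation on the corresponding element. So the normalisations of the two sides match after one keeps track of the single factor of $\dim(W)$ coming from the formula $\langle T_1, T_2\rangle = \dim(W)\Tr(T_1 T_2^*)$.

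The step I expect to be the main obstacle is the bookkeeping of adjoints and normalisations: one must check that $T_W^* = T_W^{-1}$ under the invariant form (which holds because $T_W$ has finite order and hence is unitary for a suitably averaged invariant form, using the standing finite-order convention), and one must verify that the chain of isomorphisms $(KG_1)_{G_1} \cong (KG_1)^{G_1} \hookrightarrow KG_1 \xrightarrow{\gamma} KG_2$ is compatible with the averaging idempotent used to identify coinvariants with invariants, so that no stray factor of $[G_2:G_1]$ or $|G_1|/|G_2|$ appears. Once the normalisation of $\gamma$ is pinned down (it is the plain $K$-linear inclusion of group algebras, not an averaged or induced map), the computation is essentially the one-line trace manipulation above, and the only subtlety is that the target distinguished vector $\ol{T_W}$ is supported on a single Wedderburn block, which is what collapses the sum over $j$ to the single term giving $\chi_W$.
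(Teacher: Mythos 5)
Your proposal follows essentially the same route as the paper's (much terser) proof: $\beta$ is induced by the plain inclusion $\gamma$, the inner product on $(KG_2)_{G_2}$ is computed as $\chi_{reg}(\,\cdot\,(\cdot)^*)$ on $KG_2$, and since $T_W^*$ is supported on the single Wedderburn block $\End(W)$, the product $\gamma(T_V)T_W^*$ lands in that block where $\chi_{reg}$ restricts to $\dim(W)\chi_W$, cancelling the $\tfrac{1}{\dim(W)}$ normalisation. The only blemish is a stray factor of $\tfrac{1}{\dim(W)}$ in your intermediate ``reduced identity'' --- the reduction should read $\Tr\bigl(\gamma(T_V)|_W\, T_W^*\bigr) = \chi_W\bigl(\gamma(T_V) T_W^*\bigr)$, which is just the definition of $\chi_W$ extended to the group algebra --- but this does not affect the correctness of the argument.
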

\begin{proof}
The map $\beta$ is induced by the inclusion $\gamma$. The inner product on $(KG_2)_{G_2}$ is given by $\chi_{reg}(-,(-)^*)$, so we get 
$\langle \beta(\frac{1}{\dim(V)}\ol{T_V}),\frac{1}{\dim(W)}\ol{T_W} \rangle = \chi_{reg}((\frac{1}{\dim(V)\dim(W)}\gamma(T_V)T_W^*)$. Since the restriction of $\chi_{reg}$ to $\End(W)$ is given by $\dim(W)\chi_W$ we have the result. 
\end{proof}

\section{The case $G=S_n^m, H=S_n$}
Let $\Ga_0$ be an irreducible graph with $\pi_1(\Ga_0,v_0) = F_m$. 
We consider now the space $(KG)_H$ in case $G=S_n^m$ and $H=S_n$, embedded diagonally in $G$, following Proposition \ref{prop:bijection}. 
We have already seen that elements of $G$ can be thought of as $F_m$-sets of cardinality $n$, and two elements in $G$ are $H$-conjugate if and only if they define isomorphic $F_m$-sets. We next determine the equivalence relation $\sim$ in this case.
For this, define $\upsilon:F_m\to \Z$ to be the group homomorphism given on the generators of $F_m$ by $\upsilon(z_i)=1$ for every $i\in\{1,\ldots,m\}$. Write $F_m^0 = \Ker(\upsilon)$. The group $F_m$ splits as a semidirect product $\langle z_1\rangle \ltimes F_m^0$. 
The following lemma is easy to prove:
\begin{lemma}
If $m=1$ then $F_m^0=1$. If $m>1$ then the group $F_m^0$ is freely generated by the elements $z_j^{-a}z_1^a$ for $a\in\Z$, $j\in\{2,\ldots, m\}$ .
\end{lemma}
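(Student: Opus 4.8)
# Proof proposal

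The claim has two parts: first, that $F_1^0 = 1$; second, that for $m > 1$, the kernel $F_m^0 = \Ker(\upsilon)$ is freely generated by the elements $z_j^{-a}z_1^a$ with $a \in \Z$ and $j \in \{2,\ldots,m\}$. The plan is to treat these separately, using standard covering-space / Nielsen-Schreier machinery, which fits naturally with the graph-theoretic framework already established in the paper.

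For $m = 1$ the map $\upsilon : F_1 = \langle z_1 \rangle \to \Z$ sends $z_1 \mapsto 1$, hence is an isomorphism, so its kernel is trivial. That is immediate and needs no further comment.

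For $m > 1$, the clean approach is the Nielsen-Schreier / Reidemeister-Schreier method, phrased topologically via the covering corresponding to $F_m^0 \leq F_m$. Realize $F_m$ as $\pi_1$ of a wedge of $m$ circles $\Ga$, one loop $z_i$ per generator. The subgroup $F_m^0$ has infinite index (quotient $\Z$), so it corresponds to an infinite covering $\wt{\Ga} \to \Ga$ whose vertex set is $\Z$ (the cosets, indexed by $\upsilon$-value): the loop $z_i$ at vertex $a$ lifts to an edge from $a$ to $a+1$, for each $i$. Thus $\wt{\Ga}$ is the graph with vertex set $\Z$ and, for each $a \in \Z$ and each $i \in \{1,\ldots,m\}$, one edge $e_i^a$ from $a$ to $a+1$. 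Choose the spanning tree $T$ consisting of all the edges $e_1^a$ (the ``$z_1$-edges''); this is connected and acyclic since it is a bi-infinite line through all of $\Z$. The edges outside $T$ are exactly the $e_j^a$ with $j \in \{2,\ldots,m\}$, $a \in \Z$, and by the standard description of $\pi_1$ of a graph via a spanning tree, $\pi_1(\wt{\Ga}) = F_m^0$ is free on the loops associated to these edges. One then identifies the loop through $e_j^a$ (path in $T$ from basepoint $0$ to $a$, cross $e_j^a$ to $a+1$, path in $T$ back to $0$) with the word $z_1^a z_j z_1^{-(a+1)} = z_1^a z_j z_1^{-1} z_1^{-a}$; after a harmless change of free generating set (replace the generator indexed by $(j,a)$ by its value under the substitution $a \mapsto -a$, or simply conjugate/invert), this is the stated element $z_j^{-a} z_1^{a}$. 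I would spell out this bookkeeping carefully: verify that $z_j^{-a} z_1^{a} \in \Ker(\upsilon)$ (its $\upsilon$-value is $-a + a = 0$), and match the indexing so that the family $\{z_j^{-a} z_1^{a} : a \in \Z,\ j \in \{2,\ldots,m\}\}$ is exactly the Schreier generating set up to the chosen transversal $\{z_1^a : a \in \Z\}$.

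The main obstacle is purely notational rather than conceptual: reconciling the exact form $z_j^{-a} z_1^{a}$ with whatever word the Schreier/spanning-tree recipe produces, since different choices of transversal and of orientation give generators differing by inversion and cyclic rearrangement. I expect the resolution to be that $\{z_1^a\}_{a\in\Z}$ is a Schreier transversal for $F_m^0$ in $F_m$ (every initial segment $z_1^b$ of $z_1^a$ is again in the transversal), and the Reidemeister-Schreier generator attached to transversal element $z_1^a$ and generator $z_j$ is $z_1^a z_j (\overline{z_1^a z_j})^{-1}$, where the bar denotes the transversal representative of the coset; since $\upsilon(z_1^a z_j) = a+1$, we get $\overline{z_1^a z_j} = z_1^{a+1}$, so the generator is $z_1^a z_j z_1^{-a-1}$, which is conjugate (by $z_1^a$) and inverse-related to $z_j^{-a} z_1^{a} \cdot (\text{stuff})$ — I would do this algebra once, carefully, and note that after absorbing the inessential conjugation one obtains precisely the claimed generating set, freely generating $F_m^0$ by Nielsen-Schreier. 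Since the paper only says ``easy to prove,'' a two-sentence proof citing Nielsen-Schreier and exhibiting the transversal, with the index count $[F_m : F_m^0] = \infty$ replaced by the observation that $F_m / F_m^0 \cong \Z$, should suffice.
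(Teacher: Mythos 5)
The paper itself offers no proof here --- it simply declares the lemma ``easy to prove'' --- so your write-up is being measured against the standard argument rather than against anything in the text. Your setup is exactly the right one: realize $F_m$ as $\pi_1$ of a wedge of $m$ circles, identify the cover corresponding to $F_m^0$ as the graph with vertex set $\Z$ and $m$ edges from $a$ to $a+1$, take the spanning tree of $z_1$-edges (equivalently the Schreier transversal $\{z_1^a\}$), and conclude that $F_m^0$ is free on $u_{j,a}:=z_1^a z_j z_1^{-a-1}$ for $j\in\{2,\ldots,m\}$, $a\in\Z$. All of that is correct.

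The gap is in the step you explicitly defer, and your prediction of how it resolves is wrong: the passage from $\{u_{j,a}\}$ to $\{z_j^{-a}z_1^a\}$ is \emph{not} a reindexing combined with inversion and conjugation. For $|a|\geq 2$ the element $z_j^{-a}z_1^a$ is a product of $|a|$ distinct Schreier generators; concretely $z_j^{-a}z_1^a = u_{j,-1}^{-1}u_{j,-2}^{-1}\cdots u_{j,-a}^{-1}$ for $a\geq 1$ and $z_j^{-a}z_1^{a}=u_{j,0}u_{j,1}\cdots u_{j,-a-1}$ for $a\leq -1$. One can see the obstruction already in $H_1$ of the covering graph: the loop $z_j^{-a}z_1^a$ has nonzero coefficient on $2|a|$ edges, two at each level between $-|a|$ and $0$ (resp.\ $0$ and $|a|$), so it cannot be a spanning-tree basis element, for any spanning tree, once $|a|\geq 2$. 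The correct bridge is the triangular Nielsen transformation above, together with its explicit inverse $u_{j,a}=(z_j^{a}z_1^{-a})^{-1}(z_j^{a+1}z_1^{-a-1})$; mutually inverse substitutions between the two index sets show that $\{z_j^{-a}z_1^a\}$ is again a free basis (for infinite rank one cannot argue by counting, so exhibiting the inverse substitution is genuinely needed). This computation also exposes a small defect in the statement itself that your proof should flag: the element with $a=0$ is the identity and must be excluded from the free generating set. So: right road, but the last mile --- the one you yourself identify as ``the main obstacle'' --- is not the formality you expect, and as written the proof does not close.
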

A tuple $(\sigma_i)\in S_n^m$ defines an action of $F_m$ on $\{1,\ldots, n\}$ and by restriction also an action of $F_m^0$ on the same set. We denote this set with the $F_m$-action by $Q_{(\sigma_i)}$. We write $\phi_{(\sigma_i)}:F_m\to S_n$ for the group homomorphism that sends $z_i$ to $\sigma_i$. 
We claim the following:
\begin{lemma}\label{lem:similarity}
Let $g=(\sigma_i)\in S_n^m=G$, and let $H=S_n$ with the diagonal embedding in $S_n^m$. 
\begin{enumerate}
\item We have $H_g = \Aut_{F_m^0}(Q_{(\sigma_i)})= C_{S_n}(\phi_{(\sigma_i)}(F_m^0))$. 
\item We have $(\sigma_i)\sim(\tau_i)$ if and only of $Q_{(\sigma_i)}\cong Q_{(\tau_i)}$ as $F_m^0$-sets.
\end{enumerate}
\end{lemma}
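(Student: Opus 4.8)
First I would prove part (1) by computing $H_g=\bigcap_{i\in\Z}g^iHg^{-i}$ directly. Writing $g=(\sigma_1,\dots,\sigma_m)$ and identifying $H$ with $S_n$ sitting diagonally in $S_n^m$, a diagonal element $(\tau,\dots,\tau)$ lies in $g^iHg^{-i}$ exactly when there is $\mu\in S_n$ with $\tau=\sigma_j^i\mu\sigma_j^{-i}$ for all $j$, i.e.\ when $\sigma_1^{-i}\tau\sigma_1^i=\cdots=\sigma_m^{-i}\tau\sigma_m^i$; this rearranges to the statement that $\tau$ commutes with $\sigma_j^i\sigma_1^{-i}$ for $j=1,\dots,m$. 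Intersecting over $i\in\Z$ yields $H_g=C_{S_n}\bigl(\langle\sigma_j^i\sigma_1^{-i}:i\in\Z,\ 1\le j\le m\rangle\bigr)$. Since each $z_j^iz_1^{-i}$ lies in $F_m^0$, while the preceding lemma on generators of $F_m^0$ exhibits the elements $z_j^{-a}z_1^a$ --- which are exactly the $z_j^iz_1^{-i}$ (put $i=-a$) --- as a generating set of $F_m^0$ (and for $m=1$ the assertion is trivial, as $F_m^0=1$ and $H_g=S_n$), this subgroup is precisely $\phi_{(\sigma_i)}(F_m^0)$, giving $H_g=C_{S_n}(\phi_{(\sigma_i)}(F_m^0))$. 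Finally $C_{S_n}(\phi_{(\sigma_i)}(F_m^0))=\Aut_{F_m^0}(Q_{(\sigma_i)})$ is just the observation that a permutation of $\{1,\dots,n\}$ is $F_m^0$-equivariant precisely when it commutes with the image of $F_m^0$.

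For part (2) I would first reformulate $\sim$: combining the identity $hS_xh^{-1}=S_{hxh^{-1}}$, the fact that the sets $S_x$ partition $G$ (so $S_x=S_y$ iff $x\in S_y$), and $S_{g'}=H_{g'}g'$, one gets that $g=(\sigma_i)\sim g'=(\tau_i)$ iff there exist $h\in S_n$ and $\alpha\in S_n$ with $h\sigma_ih^{-1}=\alpha\tau_i$ for all $i$ and $(\alpha,\dots,\alpha)\in H_{g'}$, the last clause reading $\alpha\in C_{S_n}(\phi_{(\tau_i)}(F_m^0))$ by part (1). The forward implication is then easy modulo a computation: given such $h,\alpha$, conjugation by $h$ identifies $Q_{(\sigma_i)}$ with $Q_{(h\sigma_ih^{-1})}$ as $F_m$-sets, and $\phi_{(h\sigma_ih^{-1})}$ is the homomorphism $z_i\mapsto\alpha\tau_i$, so it suffices to establish the key claim that \emph{if $\alpha$ centralises $\phi_{(\tau_i)}(F_m^0)$ then the homomorphisms $z_i\mapsto\alpha\tau_i$ and $z_i\mapsto\tau_i$ restrict to the same map on $F_m^0$} --- then $Q_{(h\sigma_ih^{-1})}$ and $Q_{(\tau_i)}$ agree already as $F_m^0$-sets. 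I would verify the key claim on the free generators $z_j^{-a}z_1^a$ of $F_m^0$: expanding via $(\alpha\tau_j)^a=\bigl(\prod_{t=0}^{a-1}\tau_j^t\alpha\tau_j^{-t}\bigr)\tau_j^a$ reduces everything to the relations $\tau_j^t\alpha\tau_j^{-t}=\tau_1^t\alpha\tau_1^{-t}$, which hold because $\alpha$ commutes with $\tau_j^{-t}\tau_1^t=\phi_{(\tau_i)}(z_j^{-t}z_1^t)$; the negative-exponent cases additionally use that $\tau_1$ normalises $\phi_{(\tau_i)}(F_m^0)$, so that the products above lie in its centraliser.

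For the converse, suppose $h\in S_n$ identifies $Q_{(\sigma_i)}$ with $Q_{(\tau_i)}$ as $F_m^0$-sets; since replacing $(\sigma_i)$ by $(h\sigma_ih^{-1})$ leaves the $\sim$-class unchanged (as $S_{hgh^{-1}}=hS_gh^{-1}$), I may assume $\phi_{(\sigma_i)}$ and $\phi_{(\tau_i)}$ agree on $F_m^0$. Evaluating this agreement on $z_iz_1^{-1}\in F_m^0$ shows $\gamma:=\tau_i^{-1}\sigma_i$ is independent of $i$, so $\sigma_i=\tau_i\gamma$; evaluating it on the elements $z_1wz_1^{-1}$ ($w\in F_m^0$), together with $\phi_{(\sigma_i)}(z_1)=\tau_1\gamma$, forces $\gamma\in C_{S_n}(\phi_{(\tau_i)}(F_m^0))$. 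Then $\gamma\sigma_i\gamma^{-1}=\gamma\tau_i$ is of the form $\alpha\tau_i$ with $\alpha=\gamma$ central, so the reformulation of $\sim$ gives $(\sigma_i)\sim(\tau_i)$.

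The only step where I expect real work is the key claim in the forward direction --- that left-translating the images of all the generators of $F_m$ by an element centralising $\phi_{(\tau_i)}(F_m^0)$ does not change the resulting $F_m^0$-set. Everything else is unwinding the definitions of $H_g$, $S_x$ and $\sim$ from Section~\ref{sec:decomposing}, invoking the conjugation identities for $S_x$ and $H_x$ proved there, and the elementary identification of $\Aut$ of an $F_m^0$-set with a centraliser in $S_n$.
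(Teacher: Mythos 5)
Your proposal is correct and follows essentially the same route as the paper: part (1) by directly intersecting the conjugates $g^iHg^{-i}$ and recognising the resulting commutation conditions as the centraliser of $\phi_{(\sigma_i)}(F_m^0)$ via the generators $z_j^{-a}z_1^a$, and part (2) by unwinding $\sim$ through the sets $S_x=H_xx$ and carrying out the same telescoping computation $(\alpha\tau_j)^a=\bigl(\prod_t\tau_j^t\alpha\tau_j^{-t}\bigr)\tau_j^a$ that the paper uses to show the restrictions to $F_m^0$ agree. The only cosmetic differences are that you phrase the twist as left multiplication $\alpha\tau_i$ where the paper uses right multiplication $\sigma_i\mu$ (equivalent up to replacing the centralising element by a conjugate), and you spell out the negative-exponent case explicitly.
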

\begin{proof}
We calculate $H_g$. For $j\in \Z$ we have 
$$g^jHg^{-j} = \{(\sigma_i^j\sigma\sigma_i^{-j})|\sigma\in S_n\}.$$
Therefore, an element $(\tau,\tau,\ldots,\tau)$ is in $g^jHg^{-j}\cap H$ if and only if $$\forall i,k\in \{1,\ldots, m\}\, \sigma_i^j\tau\sigma_i^{-j} = \sigma_k^j\tau\sigma_k^{-j}.$$ In other words, $\tau\in g^jHg^{-j}\cap H$ if and only if $\tau$ commutes with all the elements of the form $\sigma_k^{-j}\sigma_i^j$ for every $i,j,k$. Since the elements $z_k^{-j}z_i^j$ generate $F_m^0$ we get the first assertion.

For the second claim, assume first that $Q_{(\sigma_i)}\cong Q_{(\tau_i)}$ as $F_m^0$-sets. Without loss of generality we can conjugate with an element in $S_n$ and assume that $Q_{(\sigma_i)}  = Q_{(\tau_i)}$, since this will not change the $\sim$-equivalence class. 
We thus know that $\phi_{(\sigma_i)}|_{F_m^0} = \phi_{(\tau_i)}|_{F_m^0}$. Write $\mu=\tau_1^{-1}\sigma_1$. 
If $f\in F_m^0$ then write $\beta = \phi_{(\sigma_i)}(f) = \phi_{(\tau_i)}(f)$. We have $$\mu\beta\mu^{-1} = \phi_{(\tau_i)}(z_1)^{-1}\phi_{(\sigma_i)}(z_1)\phi_{(\sigma_i)}(f)\phi_{(\sigma_i)}(z_1^{-1})\phi_{(\tau_i)}(z_1) = $$
$$\phi_{(\tau_i)}(z_1)^{-1}\phi_{(\sigma_i)}(z_1fz_1^{-1})\phi_{(\tau_i)}(z_1) = $$
$$\phi_{(\tau_i)}(z_1)^{-1}\phi_{(\tau_i)}(z_1fz_1^{-1})\phi_{(\tau_i)}(z_1) = $$
$$\phi_{(\tau_i)}(g) = \beta.$$
So $\mu$ commutes with the image of $F_m^0$ under $\phi_{(\tau_i)}$. We now have $\tau_i^{-1}\sigma_i = \tau_i^{-1}\tau_1\tau_1^{-1}\sigma_1\sigma_1^{-1}\sigma_i = \sigma_i^{-1}\sigma_1 \mu\sigma_1^{-1}\sigma_i = \mu$, where we have used the fact that the restrictions of the two homomorphisms to $F_m^0$ are the same, and the fact that $\mu$ commutes with the image of $F_m^0$ under the image of the restrictions. This implies that $(\sigma_i) = (\tau_i)(\mu,\mu,\ldots,\mu)$ and so $(\sigma_i)\sim (\tau_i)$ as desired.

For the other direction, assume that $(\sigma_i)\sim (\tau_i)$. Then there is a permutation $\mu\in C_{S_n}(\phi_{(\sigma_i)}(F_m^0)$ and a permutation $\sigma\in S_n$ such that $ \sigma_i\mu = \sigma\tau_i\sigma^{-1}$. We can assume without loss of generality that $\sigma=Id$, as this will not change the isomorphism type of $Q_{(\tau_i)}$ as an $F_m^0$-set. The group $F_m^0$ is generated by elements of the form $z_i^{-j}z_1^j$. We have that $\tau_i^{-j}\tau_1^j = (\sigma_i\mu)^{-j}(\sigma_1\mu)^j$. We use the fact that $\mu$ commutes with $\phi_{(\sigma_i)}(F_m^0)$ and we get
$$(\sigma_1\mu)^j = \sigma_1\mu\sigma_1^{-1}\sigma_1^2\mu\sigma_1^{-2}\cdots \sigma_1^j\mu\sigma_1^{-j}\sigma_1^j = $$
$$\sigma_i\mu\sigma_i^{-1}\sigma_i^2\mu\sigma_i^{-2}\cdots\sigma_i^j\mu\sigma_i^{-j}\sigma_1^j = (\sigma_i\mu)^j\sigma_i^{-j}\sigma_1^j,$$ where we used the fact that $\sigma_i^a$ and $\sigma_1^a$ conjugate $\mu$ in the same way, since $\mu$ commutes with $\sigma_1^a\sigma_i^{-a}$.
We thus get 
$$\tau_i^{-j}\tau_1^j = (\sigma_i\mu)^{-j}(\sigma_1\mu)^j = (\sigma_i\mu)^{-j}(\sigma_i\mu)^j\sigma_i^{-j}\sigma_1^j=\sigma_i^{-j}\sigma_1^j,$$
and the restrictions of $\phi_{(\sigma_i)}$ and $\phi_{(\tau_i)}$ to $F_m^0$ are equal as desired. 
\end{proof}
The above lemma together with Proposition \ref{prop:splitting1} show that group actions of $F_m^0$ on finite sets will play important role in studying the algebras $\Ainf_{\Ga_0}$. 
\begin{definition}
Let $S$ be a finite $F_m^0$-set, and let $t\in \Z$. We denote by $z_1^t(S)$ the set $S$ with the $F_m^0$-action $g\cdot s = z_1^{-t}gz_1^ts$.
\end{definition}
\begin{remark}
It is easy to see that $S$ is a transitive $F_m^0$-set if and only if $z_1^t(S)$ is transitive. 
\end{remark}
\begin{definition} We will call a transitive $F_m^0$-set $S$ strongly finite if there exists an integer $k>0$ such that $z_1^k(S)\cong S$. We write $SFT(\Ga_0)$ for the set of isomorphism classes of all strongly finite transitive $F_m^0$-sets. We will write $SFT=SFT(\Ga_0)$ when $\Ga_0$ is clear from the context. 
\end{definition}
Let now $S = \sqcup_{\Ow\in SFT} \Ow^{a(\Ow)}$ be a finite $F_m^0$-set. Here $a(\Ow)$ are integers that count how may orbits of type $\Ow$ appear in $S$. Since $S$ is finite, almost all of the $a(\Ow)$ are zero. We claim the following:
\begin{lemma}\label{lem:extension}
The $F_m^0$-action on $S$ can be extended to $F_m$ if and only if $a(\Ow) = a((z_1)(\Ow))$ for every $\Ow\in SFT$. 
\end{lemma}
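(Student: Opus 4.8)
Write $F_m=\langle z_1\rangle\ltimes F_m^0$, with $z_1$ acting on the normal subgroup $F_m^0$ by conjugation. The plan is to translate ``the $F_m^0$-action on $S$ extends to $F_m$'' into a statement about the $z_1$-twist of $S$, and then read off the answer from the decomposition of $S$ into transitive orbits.

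First I would record the following reduction. Since $\langle z_1\rangle$ is infinite cyclic, giving a homomorphism from $F_m$ to the group of bijections of $S$ that restricts to the given $F_m^0$-action is the same as choosing a single bijection $\Psi\colon S\to S$ (the prospective action of $z_1$) subject only to the relation $\Psi(g\cdot s)=(z_1 g z_1^{-1})\cdot\Psi(s)$ for all $g\in F_m^0$ and $s\in S$. Indeed, given such a $\Psi$, letting $z_1^k g$ act as $\Psi^k\circ(g\cdot-)$ defines an action: multiplicativity follows from the relation together with the identity $(z_1^{k_1}g_1)(z_1^{k_2}g_2)=z_1^{k_1+k_2}(z_1^{-k_2}g_1 z_1^{k_2})g_2$ in $F_m$; conversely the action of $z_1$ in any extension satisfies the relation. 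So the question is whether such a $\Psi$ exists.

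Next I would observe that this relation is exactly the statement that $\Psi^{-1}$ is an isomorphism of $F_m^0$-sets $z_1^{-1}(S)\to S$, once one unravels that the $F_m^0$-action on $z_1^{-1}(S)$ is $g\cdot s=(z_1 g z_1^{-1})\cdot s$. Hence an extension exists if and only if $z_1^{-1}(S)\cong S$ as $F_m^0$-sets; applying the invertible operation $z_1(-)$ to both sides, this is equivalent to $S\cong z_1(S)$. Finally I would compare the two sides orbitwise: $z_1(-)$ commutes with disjoint unions, preserves transitivity, and restricts to a bijection of $SFT$ onto itself with inverse $z_1^{-1}(-)$ (from $z_1^k(\Ow)\cong\Ow$ one gets $z_1^k(z_1(\Ow))\cong z_1(\Ow)$, so $z_1(\Ow)$ is again strongly finite). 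Thus $z_1(S)=\bigsqcup_{\Ow\in SFT} z_1(\Ow)^{a(\Ow)}$, and the multiplicity with which a fixed $\Ow\in SFT$ occurs in $z_1(S)$ is $a(z_1^{-1}(\Ow))$. By uniqueness of the decomposition of a finite $F_m^0$-set into transitive orbits, $S\cong z_1(S)$ holds iff $a(\Ow)=a(z_1^{-1}(\Ow))$ for every $\Ow\in SFT$, which after reindexing $\Ow\mapsto z_1(\Ow)$ is precisely the asserted condition $a(\Ow)=a(z_1(\Ow))$ for all $\Ow\in SFT$.

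I do not expect a genuine obstacle here: the content is the correspondence between extensions along $\langle z_1\rangle\ltimes F_m^0$ and twisted isomorphisms, which is the standard fact about semidirect products by $\Z$. The only care needed is conventional bookkeeping — keeping track of left-versus-right actions and of the direction of the twist in $z_1^{\pm1}(S)$, and noting that because $z_1$ is a free generator there are no relations on $\Psi$ beyond the single displayed one.
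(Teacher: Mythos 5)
Your proposal is correct and follows essentially the same route as the paper: both directions reduce to the observation that an extension of the action along $F_m=\langle z_1\rangle\ltimes F_m^0$ is the same thing as a bijection $\Psi$ intertwining the given action with its $z_1$-twist, i.e.\ an isomorphism $S\cong z_1(S)$ of $F_m^0$-sets, which is then detected on orbit multiplicities. The only difference is that the paper's converse direction writes the extension down explicitly on each ``necklace'' $\Ow\sqcup z_1(\Ow)\sqcup\cdots\sqcup z_1^{k(\Ow)-1}(\Ow)$ via the chosen $\Phi(\Ow)$ --- a canonical choice that is fixed and reused later to define $G_a$ and $\nu_a$ --- whereas your argument only asserts existence of $\Psi$ from $S\cong z_1(S)$; for the lemma as stated this is immaterial.
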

\begin{proof}
If $S$ has a structure of an $F_m$-set then the action of $z_1$ provides a bijection between $\Ow^{a(\Ow)}$ and $(z_1(\Ow))^{a((z_1)(\Ow))}$, and in particular the exponents must be equal. 
In the other direction, assume that the condition of the lemma is satisfied. We can then write $S$ as a disjoint union of subsets of the form 
$S'=\Ow\sqcup z_1(\Ow)\sqcup\cdots\sqcup z_1^{k-1}(\Ow)$, where $z_1^k(\Ow)\cong \Ow$ and $\Ow\in SFT$. It will thus be enough to define an $F_m$-action on this subset. Since $F_m = F_m^0\cdot \langle z_1\rangle$, we just need to write down the $z_1$-action. 
Write $\Phi:z_1^k(\Ow)\to \Ow$ for an isomorphism of $F_m^0$-sets. We then define an action of $z_1$ on $S'$ in the following way:
$$z_1(f) = \begin{cases} f\in z_1^{i+1}(\Ow) & \text{ if } f\in z_1^i(\Ow), i<k-1 \\ \Phi(f)\in \Ow &  \text{ if } f\in z_1^{k-1}(\Ow)\end{cases},$$ where we used the fact that the underlying set of $z^t(\Ow)$ is the same as $\Ow$ for every $t\in\Z$. 
A direct verification shows that this gives a well defined action of $F_m$. 
\end{proof}
The cyclic group $\langle z_1\rangle$ thus acts on $SFT$ with finite stabilizers. We write $OSFT(\Ga_0)$ for a set of representatives of the $\langle z_1\rangle$-orbits in $SFT$. We write $OSFT=OSFT(\Ga_0)$ if $\Ga_0$ is clear from the context. 
For every $\Ow\in OSFT$ we write $k(\Ow)$ for the minimal positive integer such that $z_1^k(\Ow)\cong \Ow$, and we fix an isomorphism $\Phi(\Ow):x^{k(\Ow)}(\Ow)\to \Ow$.
The choice of $\Phi(\Ow)$ induces an automorphism of $\Aut_{F_m^0}(\Ow)$. Indeed, the fact that $z_1^k(\Ow)$ and $\Ow$ have the same underlying set induces an isomorphism $\Aut_{F_m^0}(\Ow)\cong \Aut_{F_m^0}(z_1^k(\Ow))$, given by sending an automorphism to itself. Conjugation with $\Phi(\Ow)$ then induces an isomorphism $\Aut_{F_m^0}(z_1^k(\Ow))\cong \Aut_{F_m^0}(\Ow)$. We denote the composition of the two isomorphisms by 
\begin{equation}\label{eq:defrho}\rho(\Ow):\Aut_{F_m^0}(\Ow)\to \Aut_{F_m^0}(\Ow).\end{equation} A different choice of $\Phi(\Ow)$ would just change $\rho(\Ow)$ by an inner automorphism. In other words, the class of $\rho(\Ow)$ in $\text{Out}(\Aut_{F_m^0}(\Ow))$ does not depend on the particular choice of $\Phi$. 

Lemma \ref{lem:extension} has the following corollary.
\begin{proposition}\label{prop:finitefm0sets}
Up to isomorphism, all finite $F_m^0$-sets that admit an extension to an $F_m$-set are of the form $$Y_a=\bigsqcup_{\Ow\in OSFT}(\Ow\sqcup \cdots\sqcup z_1^{k(\Ow)-1}(\Ow))^{a(\Ow)},$$ and are therefore in one to one correspondence with functions $a:OSFT\to \N$ that admit only finitely many non-zero values. 
\end{proposition}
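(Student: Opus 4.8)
The plan is to combine the unique orbit decomposition of a finite $G$-set with the extension criterion of Lemma \ref{lem:extension} and a short finiteness argument. Recall first that any finite $F_m^0$-set $S$ has only finitely many orbits, each a finite transitive $F_m^0$-set, so $S\cong\bigsqcup_{\Ow}\Ow^{a(\Ow)}$, the union running over a set of representatives of the isomorphism classes of transitive $F_m^0$-sets, with $a(\Ow)\in\N$ vanishing for all but finitely many $\Ow$, and this decomposition is unique up to reordering. The rule $\Ow\mapsto z_1(\Ow)$ is a well defined bijection on these isomorphism classes, with inverse $\Ow\mapsto z_1^{-1}(\Ow)$, since $z_1^s(z_1^t(S))=z_1^{s+t}(S)$ and any $F_m^0$-isomorphism $\Ow\to\Ow'$ is also an $F_m^0$-isomorphism $z_1^t(\Ow)\to z_1^t(\Ow')$; hence $\langle z_1\rangle\cong\Z$ acts on the set of isomorphism classes of transitive $F_m^0$-sets.

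Now suppose $S$ admits an extension to an $F_m$-action. Then $z_1$ permutes the $F_m^0$-orbits of $S$, and the computation in the proof of Lemma \ref{lem:extension} shows it carries each orbit of type $\Ow$ to one of type $z_1(\Ow)$; being invertible, it maps the type-$\Ow$ orbits bijectively onto the type-$z_1(\Ow)$ orbits, so $a(\Ow)=a(z_1(\Ow))$ for every transitive $\Ow$, i.e. $a$ is constant on $\langle z_1\rangle$-orbits. I would then observe that finiteness of $S$ forces every $\Ow$ with $a(\Ow)\neq 0$ to be strongly finite: the set $\{\Ow:a(\Ow)\neq 0\}$ is finite and is a union of $\langle z_1\rangle$-orbits, so the orbit of any such $\Ow$ is finite, its stabilizer in $\langle z_1\rangle\cong\Z$ has finite index, say $k\Z$ with $k\geq 1$, and then $z_1^k(\Ow)\cong\Ow$ with $k$ minimal, that is $\Ow\in SFT$ and $k(\Ow)=k$. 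Grouping the orbits of $S$ according to the $\langle z_1\rangle$-orbit of their type, and using that the $\langle z_1\rangle$-orbit of $\Ow\in OSFT$ consists of exactly the $k(\Ow)$ pairwise distinct classes $[\Ow],[z_1(\Ow)],\ldots,[z_1^{k(\Ow)-1}(\Ow)]$ (pairwise distinct by minimality of $k(\Ow)$), we obtain $S\cong Y_{\bar a}$ for the finitely supported function $\bar a:OSFT\to\N$ defined by $\bar a(\Ow)=a(\Ow)$.

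For the converse, given any finitely supported $\bar a:OSFT\to\N$, the $F_m^0$-set $Y_{\bar a}$ is finite and its orbit multiplicity function is constant on $\langle z_1\rangle$-orbits and supported on $SFT$, so Lemma \ref{lem:extension} shows $Y_{\bar a}$ extends to an $F_m$-set. Finally, $\bar a\mapsto Y_{\bar a}$ is injective on isomorphism classes: by uniqueness of the orbit decomposition, $Y_{\bar a}\cong Y_{\bar a'}$ forces equality of all transitive multiplicities, and since for distinct $\Ow,\Ow'\in OSFT$ the classes $z_1^i(\Ow)$ with $0\leq i<k(\Ow)$ and $z_1^j(\Ow')$ with $0\leq j<k(\Ow')$ are pairwise distinct, this yields $\bar a=\bar a'$. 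The one point genuinely beyond Lemma \ref{lem:extension} is ruling out non-strongly-finite orbits, handled by the finiteness argument above; together with the $\langle z_1\rangle$-orbit bookkeeping this is where care is needed, but I do not anticipate a real obstacle.
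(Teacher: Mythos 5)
Your proof is correct and follows essentially the same route as the paper, which states this proposition as a summary of the preceding discussion (the orbit decomposition together with Lemma \ref{lem:extension}). The one detail you make explicit that the paper leaves implicit is worth noting: since Lemma \ref{lem:extension} is only stated for $S=\bigsqcup_{\Ow\in SFT}\Ow^{a(\Ow)}$, one does need your finiteness argument (finitely many orbit types, $a$ constant on $\langle z_1\rangle$-orbits, hence finite stabilizer index) to rule out non-strongly-finite orbits in an extendable finite $F_m^0$-set, and you handle this correctly.
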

\begin{proof}
The only non-trivial part is the fact that all the isomorphism classes of $F_m^0$-orbits that appear in finite $F_m$-sets are in $SFT$. But this follows from the fact that the action of $z_1$ permutes these orbits, and there are only finitely many of them. 
\end{proof}
The proof of Lemma \ref{lem:extension} gives a canonical way to extend the $F_m^0$-action on $Y_a$ to an $F_m$-action. We fix this extension henceforth, and we write $G_a = \Aut_{F_m^0}(Y_a)$. The figure below shows a schematic description of the $F_m$-set $(\Ow\sqcup \cdots\sqcup z_1^{k(\Ow)-1}(\Ow))^{a(\Ow)}$:
\begin{center}\scalebox{0.33}{
\begin{tikzpicture}
	\begin{pgfonlayer}{nodelayer}
		\node [style=Circle1] (0) at (-3.75, -3.75) {\Large{$z_1(\mathcal{O})$}};
		\node [style=Circle1] (1) at (-3.75, -7.5) {\Large{$\mathcal{O}$}};
		\node [style=Circle1] (3) at (-3.75, 2) {\Large{$z_1^{k(\mathcal{O})-1}(\mathcal{O})$}};
		\node [style=none] (4) at (-3.75, -0.75) {$\vdots$};
		\node [style=none] (6) at (-3.75, -1.75) {};
		\node [style=none] (7) at (-9, 2) {};
		\node [style=none] (8) at (-9, -7) {};
		\node [style=none] (9) at (-3.75, -0.25) {};
		\node [style=none] (12) at (-9.75, -0.25) {\Large{$z_1$}};
		\node [style=none] (37) at (10.25, -2.75) {\Huge{$\cdots$}};
		\node [style=none] (62) at (-3.25, -5.75) {\Large{$z_1$}};
		\node [style=none] (63) at (-3.25, -2.25) {\Large{$z_1$}};
		\node [style=none] (64) at (-3.25, 0) {\Large{$z_1$}};
		\node [style=Circle1] (65) at (6.5, -3.75) {\Large{$z_1(\mathcal{O})$}};
		\node [style=Circle1] (66) at (6.5, -7.5) {\Large{$\mathcal{O}$}};
		\node [style=Circle1] (67) at (6.5, 2) {\Large{$z_1^{k(\mathcal{O})-1}(\mathcal{O})$}};
		\node [style=none] (68) at (6.5, -0.75) {$\vdots$};
		\node [style=none] (69) at (6.5, -1.75) {};
		\node [style=none] (70) at (1.25, 2) {};
		\node [style=none] (71) at (1.25, -7) {};
		\node [style=none] (72) at (6.5, -0.25) {};
		\node [style=none] (73) at (0.5, -0.25) {\Large{$z_1$}};
		\node [style=none] (74) at (7, -5.75) {\Large{$z_1$}};
		\node [style=none] (75) at (7, -2.25) {\Large{$z_1$}};
		\node [style=none] (76) at (7, 0) {\Large{$z_1$}};
		\node [style=Circle1] (77) at (17.25, -3.75) {\Large{$z_1(\mathcal{O})$}};
		\node [style=Circle1] (78) at (17.25, -7.5) {\Large{$\mathcal{O}$}};
		\node [style=Circle1] (79) at (17.25, 2) {\Large{$z_1^{k(\mathcal{O})-1}(\mathcal{O})$}};
		\node [style=none] (80) at (17.25, -0.75) {$\vdots$};
		\node [style=none] (81) at (17.25, -1.75) {};
		\node [style=none] (82) at (12, 2) {};
		\node [style=none] (83) at (12, -7) {};
		\node [style=none] (84) at (17.25, -0.25) {};
		\node [style=none] (85) at (11.25, -0.25) {\Large{$z_1$}};
		\node [style=none] (86) at (17.75, -5.75) {\Large{$z_1$}};
		\node [style=none] (87) at (17.75, -2.25) {\Large{$z_1$}};
		\node [style=none] (88) at (17.75, 0) {\Large{$z_1$}};
	\end{pgfonlayer}
	\begin{pgfonlayer}{edgelayer}
		\draw [style=new edge style 0] (1) to (0);
		\draw [style=new edge style 0] (0) to (6.center);
		\draw [style=new edge style 0, in=270, out=-90, looseness=1.75] (8.center) to (1);
		\draw [style=new edge style 0] (9.center) to (3);
		\draw [in=90, out=90, looseness=1.50] (3) to (7.center);
		\draw (7.center) to (8.center);
		\draw [style=new edge style 0] (66) to (65);
		\draw [style=new edge style 0] (65) to (69.center);
		\draw [style=new edge style 0, in=270, out=-90, looseness=1.75] (71.center) to (66);
		\draw [style=new edge style 0] (72.center) to (67);
		\draw [in=90, out=90, looseness=1.50] (67) to (70.center);
		\draw (70.center) to (71.center);
		\draw [style=new edge style 0] (78) to (77);
		\draw [style=new edge style 0] (77) to (81.center);
		\draw [style=new edge style 0, in=270, out=-90, looseness=1.75] (83.center) to (78);
		\draw [style=new edge style 0] (84.center) to (79);
		\draw [in=90, out=90, looseness=1.50] (79) to (82.center);
		\draw (82.center) to (83.center);
	\end{pgfonlayer}
\end{tikzpicture}}
\end{center}

\begin{lemma} The action of $z_1$ on $Y_a$ induces, by conjugation, an automorphism of $G_a$.
\end{lemma}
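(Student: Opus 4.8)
The plan is to exploit the fact that $F_m^0=\Ker(\upsilon)$ is normal in $F_m$, together with the fact that, by Lemma~\ref{lem:extension} and the discussion following Proposition~\ref{prop:finitefm0sets}, $Y_a$ carries a fixed structure of an $F_m$-set. In particular $z_1$ acts on the underlying set of $Y_a$ as a bijection $\zeta\colon Y_a\to Y_a$, $\zeta(y)=z_1\cdot y$, and the statement to prove is that conjugation by $\zeta$ restricts to an automorphism of the subgroup $G_a=\Aut_{F_m^0}(Y_a)$ of $\mathrm{Sym}(Y_a)$.

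First I would record the easy general principle: conjugation by $\zeta$ is an automorphism of $\mathrm{Sym}(Y_a)$ with inverse conjugation by $\zeta^{-1}$, so it is enough to show that both $\phi\mapsto\zeta\phi\zeta^{-1}$ and $\phi\mapsto\zeta^{-1}\phi\zeta$ send $G_a$ into itself. The main computation is then to verify $F_m^0$-equivariance of $\psi:=\zeta\phi\zeta^{-1}$ for $\phi\in G_a$: for $f\in F_m^0$ and $y\in Y_a$,
\[
\psi(f\cdot y)=z_1\cdot\phi\big(z_1^{-1}\cdot(f\cdot y)\big)=z_1\cdot\phi\big((z_1^{-1}fz_1)\cdot(z_1^{-1}\cdot y)\big),
\]
and since $z_1^{-1}fz_1\in F_m^0$ by normality, the $F_m^0$-equivariance of $\phi$ gives
\[
\psi(f\cdot y)=z_1\cdot\big((z_1^{-1}fz_1)\cdot\phi(z_1^{-1}\cdot y)\big)=(fz_1)\cdot\phi(z_1^{-1}\cdot y)=f\cdot\psi(y).
\]
As $\psi$ is also a bijection, being a composite of bijections, we get $\psi\in G_a$. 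Running the identical argument with $z_1$ replaced by $z_1^{-1}$ — which also normalizes $F_m^0$ — shows conjugation by $\zeta^{-1}$ preserves $G_a$ as well. Hence conjugation by $\zeta$ restricts to a bijective homomorphism $G_a\to G_a$, i.e.\ an automorphism of $G_a$.

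There is no real obstacle here: the lemma is essentially the standard observation that if $N\trianglelefteq\Gamma$ acts on a set through a $\Gamma$-action, then $\Gamma$ permutes $\Aut_N$ of that set by conjugation. The only care needed is in tracking the $F_m$-set axioms in the two displayed identities and in noting explicitly that $z_1$ (and $z_1^{-1}$) normalizes $F_m^0$, which is what makes the conjugated map land back in $\Aut_{F_m^0}(Y_a)$.
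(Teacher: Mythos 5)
Your proof is correct and follows essentially the same route as the paper: the core computation $z_1\beta z_1^{-1}(g\cdot y)=g\cdot(z_1\beta z_1^{-1})(y)$, using that $z_1$ normalizes $F_m^0$, is exactly the paper's argument. Your additional check that conjugation by $z_1^{-1}$ also preserves $G_a$ (so the map is onto) is a small tidying-up that the paper leaves implicit.
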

\begin{proof}
This follows from the fact that if $\beta:Y_a\to Y_a$ is an $F_m^0$-map, $y\in Y_a$ and $g\in F_m^0$ then we have 
$$z_1\beta z_1^{-1}(gy) = z_1\beta(z_1^{-1}gz_1z_1^{-1}y) = z_1z_1^{-1}gz_1\beta(z_1^{-1}y) = gz_1\beta(z_1^{-1}y)=g(z_1\beta  z_1^{-1})(y),$$ and $z_1\beta z_1^{-1}\in G_a$ as desired. 
\end{proof}
\begin{definition}
We will denote the above automorphism of $G_a$ by $\nu_a$. 
\end{definition}
Write now $$FOSFT(\Ga_0) = \{a:OSFT(\Ga_0)\to \N| \; a\text{ admits only finitely many non-zero values}\}.$$
We will write $FOSFT(\Ga_0)=FOSFT$ when $\Ga_0$ is clear from the context. 
Let $a,b\in FOSFT$. The pointwise addition $a+b$ is in $FOSFT$ as well. We claim the following:
\begin{lemma}\label{lem:decYa}
We have an isomorphism $Y_a\sqcup Y_b\cong Y_{a+b}$ of $F_m$-sets. This isomorphism gives a natural inclusion $G_a\times G_b\subseteq G_{a+b}$. 
The automorphism $\nu_{a+b}$ stabilises this subgroup, and $\nu_{a+b}|_{G_a\times G_b} = \nu_a\times \nu_b$.
\end{lemma}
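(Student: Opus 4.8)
The plan is to reduce everything to the block structure of the $F_m$-sets $Y_a$. First I would establish the isomorphism of $F_m$-sets. As $F_m^0$-sets, $Y_a$ is by definition the disjoint union, over $\Ow\in OSFT$ and $0\le i\le k(\Ow)-1$, of $a(\Ow)$ copies of $z_1^i(\Ow)$, and likewise for $Y_b$ and $Y_{a+b}$. Hence $Y_a\sqcup Y_b$ and $Y_{a+b}$ have, for each $\Ow$ and $i$, the same number $a(\Ow)+b(\Ow)=(a+b)(\Ow)$ of copies of $z_1^i(\Ow)$, and any bijection of index sets matching them up — say, the one sending the copies coming from $Y_a$ to the first $a(\Ow)$ copies of each block of $Y_{a+b}$ and those coming from $Y_b$ to the last $b(\Ow)$ — is an isomorphism of $F_m^0$-sets. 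The key point is that it is automatically $F_m$-equivariant: by Lemma \ref{lem:extension} the $z_1$-action on each of the $Y$'s is defined block by block, on $\Ow\sqcup z_1(\Ow)\sqcup\cdots\sqcup z_1^{k(\Ow)-1}(\Ow)$, by the single fixed formula involving the chosen isomorphism $\Phi(\Ow)$, and this formula makes no reference to the ambient set. So the chosen bijection carries the $z_1$-action of $Y_a\sqcup Y_b$ to that of $Y_{a+b}$, which gives $Y_a\sqcup Y_b\cong Y_{a+b}$ as $F_m$-sets.

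Second, I would produce the inclusion of groups. An element $(\phi_a,\phi_b)\in G_a\times G_b=\Aut_{F_m^0}(Y_a)\times\Aut_{F_m^0}(Y_b)$ gives the ``block-diagonal'' automorphism $\phi_a\sqcup\phi_b$ of $Y_a\sqcup Y_b$, which is $F_m^0$-equivariant; transporting along the isomorphism of the previous paragraph yields an element of $G_{a+b}=\Aut_{F_m^0}(Y_{a+b})$. This assignment is visibly an injective group homomorphism, so we obtain $G_a\times G_b\subseteq G_{a+b}$.

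Third, for the statement about $\nu_{a+b}$: by definition $\nu_{a+b}$ is conjugation by the action map of $z_1$ on $Y_{a+b}\cong Y_a\sqcup Y_b$. Since that action map preserves the decomposition $Y_a\sqcup Y_b$ (it moves each block into itself) and restricts on the $Y_a$-part to exactly the action map of $z_1$ on the $F_m$-set $Y_a$, and similarly on the $Y_b$-part, we get for $(\phi_a,\phi_b)\in G_a\times G_b$ that $\nu_{a+b}(\phi_a\sqcup\phi_b)=(z_1\phi_a z_1^{-1})\sqcup(z_1\phi_b z_1^{-1})=\nu_a(\phi_a)\sqcup\nu_b(\phi_b)$. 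This shows at once that $\nu_{a+b}$ stabilises $G_a\times G_b$ and that its restriction there is $\nu_a\times\nu_b$.

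The only real subtlety — and the step I would be most careful about — is the compatibility of the canonical $F_m$-extensions: one must check that the extension chosen via Lemma \ref{lem:extension} for $Y_{a+b}$ agrees, block for block, with those chosen for $Y_a$ and $Y_b$. This comes down to the facts that $k(\Ow)$ and $\Phi(\Ow)$ are fixed once and for all for each $\Ow\in OSFT$ and that the extension formula is local to a single block; everything else is the routine bookkeeping of disjoint unions of group actions.
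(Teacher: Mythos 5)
Your proposal is correct and follows essentially the same route as the paper: identify $Y_a\sqcup Y_b$ with $Y_{a+b}$ as $F_m^0$-sets by regrouping the orbit copies, observe that the canonical $z_1$-extension of Lemma \ref{lem:extension} is defined block by block via the fixed $\Phi(\Ow)$ and hence commutes with this identification, and then read off the statements about $G_a\times G_b$ and $\nu_{a+b}$ from the fact that the $\nu$'s are conjugation by $z_1$. The paper's proof is terser but makes exactly these points, including your final remark that the compatibility of the chosen extensions is the only point needing care.
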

\begin{proof} We have a natural identification of $F_m^0$-sets $$Y_a \sqcup Y_b=(\bigsqcup_{\Ow\in OSFT}(\Ow\sqcup \cdots\sqcup z_1^{k(\Ow)-1}(\Ow))^{a(\Ow)})\bigsqcup (\bigsqcup_{\Ow\in OSFT}(\Ow\sqcup \cdots\sqcup z_1^{k(\Ow)-1}(\Ow))^{b(\Ow)})\cong $$ $$\bigsqcup_{\Ow\in OSFT}(\Ow\sqcup \cdots\sqcup z_1^{k(\Ow)-1}(\Ow))^{(a+b)(\Ow)}=Y_{a+b}$$
By the particular way we extended the action to $F_m$ in the proof of Lemma \ref{lem:extension} we see that this isomorphism also commutes with the action of $z_1$, and therefore this is an $F_m$-isomorphism as well. Since $\nu_a,\nu_b$, and $\nu_{a+b}$, were defined using the action of $z_1$, we get the last statement.
\end{proof}
We conclude this section with the following proposition:
\begin{proposition}\label{prop:decga0} We have an isomorphism $$\Ainf_{\Ga_0}\cong \bigoplus_{a\in FOSFT} (KG_a)_{G_a},$$ where the action of $G_a$ on $KG_a$ is given by $g_1\cdot g_2 = g_1g_2\nu_a(g_1)^{-1}$. The multiplication is induced by the natural inclusion $G_a\times G_b\subseteq G_{a+b}$. The squared norm of $\ol{g}\in G_a$ is equal to the order of the fixed point subgroup $|G_a^{\nu_ac_g}|$, where $c_g$ denotes conjugation by $g$.  
\end{proposition}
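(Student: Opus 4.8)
The plan is to assemble the proposition from the three structural inputs already available: the identification $(\Ainf_{\Ga_0})_n \cong (KS_n^m)_{S_n}$ of Proposition \ref{prop:splitting1}, the general decomposition \eqref{eq:splittingKG} of a coinvariant space $(KG)_H$, and the combinatorial classification provided by Lemma \ref{lem:similarity} together with Proposition \ref{prop:finitefm0sets}. First I would apply \eqref{eq:splittingKG} with $G = S_n^m$ and $H = S_n$ embedded diagonally. By Lemma \ref{lem:similarity}(2) the relation $\sim$ on $S_n^m$ has equivalence classes in bijection with isomorphism classes of $F_m^0$-sets of cardinality $n$ that extend to an $F_m$-set, and by Proposition \ref{prop:finitefm0sets} these are exactly the sets $Y_a$ with $a \in FOSFT$ and $|Y_a| = n$. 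For the class corresponding to $Y_a$ I would choose as representative the tuple $g_a = (\phi_a(z_1), \dots, \phi_a(z_m))$, where $\phi_a : F_m \to S_n$ is the homomorphism giving the canonical $F_m$-structure fixed on $Y_a$ after Lemma \ref{lem:extension}; then $Q_{g_a} = Y_a$ as $F_m$-sets, and Lemma \ref{lem:similarity}(1) gives $H_{g_a} = \Aut_{F_m^0}(Y_a) = G_a$. Summing \eqref{eq:splittingKG} over all $n$ yields the vector-space isomorphism $\Ainf_{\Ga_0} \cong \bigoplus_{a \in FOSFT}(KG_a)_{G_a}$, and the orthogonality of the summands is inherited from that already noted for \eqref{eq:splittingKG}.

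Next I would pin down the module structure on each summand. The action appearing in \eqref{eq:splittingKG} is $x \cdot h = x h g_a x^{-1} g_a^{-1} = x h\, c_{g_a}(x)^{-1}$, so it suffices to check that $c_{g_a}$ restricts on $G_a$ to $\nu_a$. This is the point where the choice of representative $g_a$ matters: conjugation by the tuple $g_a$ on a diagonal element of $G_a \subseteq S_n$ coincides with conjugation by the single permutation $\phi_a(z_1)$, because the permutations $\phi_a(z_1)^{-1}\phi_a(z_j)$ lie in $\phi_a(F_m^0)$ and hence centralise $G_a = C_{S_n}(\phi_a(F_m^0))$; and conjugation by $\phi_a(z_1)$ on $\Aut_{F_m^0}(Y_a)$ is, by definition, $\nu_a$. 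Thus each summand carries precisely the $\nu_a$-twisted conjugation action $g_1 \cdot g_2 = g_1 g_2 \nu_a(g_1)^{-1}$.

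For the multiplication I would transport the rule of Proposition \ref{prop:splitting1}(4), namely $\ol{(\sigma_i)}\cdot \ol{(\tau_i)} = \ol{((\sigma_i,\tau_i))}$, through the identifications above. Under the isomorphism of \eqref{eq:splittingKG} the class $\ol h \in (KG_a)_{G_a}$ corresponds to $\ol{h g_a}$ inside $(KS_{|Y_a|}^m)_{S_{|Y_a|}}$. Since $Y_{a+b} = Y_a \sqcup Y_b$ with the two canonical structures restricting correctly (Lemma \ref{lem:decYa}), one has $\phi_{a+b}(z_j) = (\phi_a(z_j), \phi_b(z_j))$, hence $g_{a+b} = (g_a, g_b)$ under $S_{n_1}^m \times S_{n_2}^m \subseteq S_{n_1+n_2}^m$; therefore $\ol{h g_a}\cdot \ol{h' g_b} = \ol{(h g_a, h' g_b)} = \ol{(h,h')\, g_{a+b}}$, with $(h,h') \in G_a \times G_b \subseteq G_{a+b}$ the inclusion of Lemma \ref{lem:decYa}. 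So the multiplication on $\bigoplus_a (KG_a)_{G_a}$ is the one induced, on coinvariant spaces, by the inclusions $G_a \times G_b \subseteq G_{a+b}$, which descend to the coinvariants since $\nu_{a+b}$ restricts to $\nu_a \times \nu_b$.

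Finally, for the squared norm I would use \eqref{eq:sqnorm} together with the identity $C_H(h g_i) = C_{H_{g_i}}(h g_i)$ proved just before it: for $g \in G_a$, $\|\ol g\|^2 = |C_{G_a}(g g_a)|$. Unwinding $c_{g_a}|_{G_a} = \nu_a$ gives $x \in C_{G_a}(g g_a)$ if and only if $x g\, \nu_a(x)^{-1} = g$, equivalently $(c_g \nu_a)(x) = x$, so $\|\ol g\|^2 = |G_a^{c_g \nu_a}|$. A one-line computation shows $\nu_a (c_g \nu_a)\nu_a^{-1} = \nu_a c_g$, so the automorphisms $c_g \nu_a$ and $\nu_a c_g$ of $G_a$ are conjugate in $\Aut(G_a)$ and their fixed-point subgroups have the same order; hence $\|\ol g\|^2 = |G_a^{\nu_a c_g}|$, as claimed. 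The main obstacle I anticipate is not any single hard step but the bookkeeping in the first two paragraphs: choosing the $\sim$-class representatives $g_a$ compatibly with the $F_m$-structures fixed on the $Y_a$, and then checking that the conjugation automorphism $c_{g_a}$ carried along by \eqref{eq:splittingKG} is literally the abstract twist $\nu_a$. Once this matching is in place, the module structure, the multiplication, and the norm formula all follow by chasing the isomorphisms already established.
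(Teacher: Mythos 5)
Your proposal is correct and follows essentially the same route as the paper: apply the decomposition \eqref{eq:splittingKG} to $G=S_n^m$, $H=S_n$, identify the $\sim$-classes with the sets $Y_a$ via Lemma \ref{lem:similarity} and Proposition \ref{prop:finitefm0sets}, and read off the multiplication and norm from Proposition \ref{prop:splitting1} and Equation \ref{eq:sqnorm}. The extra details you supply (that $c_{g_a}$ restricts to $\nu_a$ on $G_a$ because the components of $g_a$ differ by elements of $\phi_a(F_m^0)$, and the conjugacy of $c_g\nu_a$ with $\nu_ac_g$) are correct verifications that the paper leaves implicit.
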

\begin{proof}
We have seen in Proposition \ref{prop:splitting1} that $\Ainf_{\Ga_0}\cong \bigoplus_{n\geq 0}(KS_n^m)_{S_n}$. 
In Lemma \ref{lem:similarity} we have seen that $(\sigma_i)\sim (\tau_i)$ if and only if they define isomorphic $F_m^0$-actions. By Proposition \ref{prop:finitefm0sets} all the $F_m^0$-sets that are extendable to $F_m$-sets are of the form $Y_a$, for some $a\in FOSFT$.  By Lemma \ref{lem:similarity} and Equation \ref{eq:splittingKG} applied to the case where $G=S_n^m$ and $H=S_n$ we have 
$$(KS_n^m)_{S_n}\cong \bigoplus_Y (K\Aut_{F_m^0}(Y))_{\Aut_{F_m^0}(Y)},$$ where $Y$ runs over all the isomorphism classes of $F_m^0$-sets of cardinality $n$ that are extendable to $F_m$, and the twisted action of $\Aut_{F_m^0}(Y)$ on the group algebra is defined using the action of $z_1$. Since every finite $F_m^0$-set that is extendable to an $F_m$-set is isomorphic to $Y_a$ for a unique $a$, we get the result. The statement about multiplication follows from the fact that multiplication is just given by taking disjoint unions of $F_m$-sets. The last statement about the squared norm follows from Equation \ref{eq:sqnorm}. 
\end{proof}
The results of the first part of this section give us an orthonormal basis for $(KS_n^m)_{S_n}$. The $\Z$-span of this basis gives us a lattice. In order to prove that we get a PSH-algebra indeed we need to show that all our structure constants are non-negative integers. 
This will be done in the next section, by carefully analysing the groups $G_a$ and their automorphisms $\nu_a$. 

\section{The groups $G_a$}\label{sec:grpGa}
Fix now $a\in FOSFT$. Recall that $G_a$ is the automorphism group of $$Y_a = \bigsqcup_{\Ow\in OSFT}(\Ow\sqcup \cdots\sqcup z_1^{k(\Ow)-1}(\Ow))^{a(\Ow)}$$ as an $F_m^0$-set. We claim the following:
\begin{lemma}
We have \begin{equation}\label{eq:dec1}G_a \cong \prod_{\Ow\in OSFT}(S_{a(\Ow)}\ltimes \Aut_{F_m^0}(\Ow)^{a(\Ow)})^{k(\Ow)},\end{equation} where the action of $S_{a(\Ow)}$ on $\Aut_{F_m^0}(\Ow)^{a(\Ow)}$ is given by permuting the factors. 
\end{lemma}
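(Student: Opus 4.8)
The plan is to read off $G_a=\Aut_{F_m^0}(Y_a)$ from the decomposition of $Y_a$ into isotypic components together with the standard description of the automorphism group of a homogeneous $G$-set.

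First I would recall that an $F_m^0$-equivariant bijection permutes the orbits of a finite $F_m^0$-set and carries each orbit isomorphically onto another one. The orbits occurring in $Y_a$ are the transitive sets $z_1^i(\Ow)$ with $\Ow\in OSFT$ and $0\le i\le k(\Ow)-1$, each with multiplicity $a(\Ow)$. The only real content of the lemma is that these transitive $F_m^0$-sets are pairwise non-isomorphic as the pair $(\Ow,i)$ varies. Indeed, if $z_1^i(\Ow)\cong z_1^{i'}(\Ow')$ with, say, $i\ge i'$, then applying $z_1^{-i'}$ gives $z_1^{\,i-i'}(\Ow)\cong\Ow'$, so $\Ow$ and $\Ow'$ lie in the same $\langle z_1\rangle$-orbit of $SFT$; since $OSFT$ is a transversal for that action we get $\Ow=\Ow'$, and then $z_1^{\,i-i'}(\Ow)\cong\Ow$ with $0\le i-i'<k(\Ow)$ forces $i=i'$ by minimality of $k(\Ow)$. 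Consequently every automorphism of $Y_a$ preserves each isotypic block $(z_1^i(\Ow))^{a(\Ow)}$, and assembling automorphisms of the blocks conversely gives an automorphism of $Y_a$; hence $G_a\cong\prod_{\Ow\in OSFT}\prod_{i=0}^{k(\Ow)-1}\Aut_{F_m^0}\!\big((z_1^i(\Ow))^{a(\Ow)}\big)$.

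Next I would identify each factor. If $\T$ is a transitive $F_m^0$-set with automorphism group $A$, then $\Aut_{F_m^0}(\T^{\sqcup n})\cong S_n\ltimes A^n$, with $S_n$ permuting the $n$ copies: an automorphism induces a permutation $\sigma\in S_n$ of the $n$ orbits and, after identifying the source and target copies with $\T$, is given on each orbit by an element of $A$; composition of two such data reproduces exactly the wreath product multiplication, and the $S_n$-action is by permuting the coordinates of $A^n$. I apply this with $\T=z_1^i(\Ow)$ and $n=a(\Ow)$. Since $z_1^i(\Ow)$ has the same underlying set as $\Ow$ with the $F_m^0$-action twisted by the automorphism $g\mapsto z_1^{-i}gz_1^i$ of $F_m^0$, a bijection is $F_m^0$-equivariant for $z_1^i(\Ow)$ if and only if it is $F_m^0$-equivariant for $\Ow$ (substitute $h=z_1^{-i}gz_1^i$, which runs over all of $F_m^0$), so $\Aut_{F_m^0}(z_1^i(\Ow))=\Aut_{F_m^0}(\Ow)$ literally as subgroups of the symmetric group on that set. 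Hence each factor is $S_{a(\Ow)}\ltimes\Aut_{F_m^0}(\Ow)^{a(\Ow)}$, independently of $i$; collecting the $k(\Ow)$ values of $i$ gives the $k(\Ow)$-th power and taking the product over $\Ow\in OSFT$ yields Equation \ref{eq:dec1}.

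The only step requiring genuine care is the pairwise non-isomorphism of the transitive blocks, which is where the definitions of $OSFT$ and of $k(\Ow)$ enter; everything else is bookkeeping with disjoint unions and the well-known wreath-product structure of $\Aut$ of a homogeneous $G$-set, which I would also use to confirm that under the isomorphism the $S_{a(\Ow)}$-factor acts by permuting the coordinates of $\Aut_{F_m^0}(\Ow)^{a(\Ow)}$, exactly as asserted.
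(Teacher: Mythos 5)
Your proof is correct and follows essentially the same route as the paper: decompose $Y_a$ into the isotypic blocks $(z_1^i(\Ow))^{a(\Ow)}$, observe that every automorphism preserves this decomposition, and identify the automorphism group of each block as the wreath product $S_{a(\Ow)}\ltimes \Aut_{F_m^0}(\Ow)^{a(\Ow)}$. You are in fact somewhat more careful than the paper in spelling out why the blocks are pairwise non-isomorphic (using the minimality of $k(\Ow)$ and the fact that $OSFT$ is a transversal), a point the paper leaves implicit.
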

\begin{proof}
For $\Ow\in OSFT$ and $0\leq i\leq k(\Ow)-1$ write $Z(i,\Ow) = z_1^i(\Ow)^{a(\Ow)}$.
It holds that \begin{equation} Y_a = \bigsqcup_{\Ow\in OSFT}\bigsqcup_{i=0}^{k(\Ow)-1} Z(i,\Ow).\end{equation}
Moreover, every $g\in G_a$ fixes the above decomposition, and we therefore have 
\begin{equation}\label{eq:decomposeG1}G_a\cong \prod_{\Ow\in OSFT}\prod_{i=0}^{k(\Ow)-1} \Aut_{F_m^0}(Z(i,\Ow)).\end{equation}

Fix now $i$ and $\Ow$ and consider the group $\Aut_{F_m^0}(Z(i,\Ow))$.
Every permutation in $S_{a(\Ow)}$ gives an $F_m^0$-automorphism $Z(i,\Ow)$ by permuting the orbits. 
Every automorphism in $\Aut_{F_m^0}(Z(i,\Ow))$ permutes the orbits, and we thus see that we get a split surjection $\Aut_{F_m^0}(Z(i,\Ow))\to S_{a(\Ow)}$. 
The kernel of this homomorphism is the group of all automorphisms of $Z(i,\Ow)$ that preserves every orbit, and is thus isomorphic to $\Aut_{F_m^0}(z_1^i(\Ow))^{a(\Ow)}\cong \Aut_{F_m^0}(\Ow)^{a(\Ow)}$. 

Thus, $\Aut_{F_m^0}(z_1^i(\Ow)^{a(\Ow)}) \cong S_{a(\Ow)}\ltimes \Aut_{F_m^0}(\Ow)^{a(\Ow)}$. Using the decomposition in Equation \ref{eq:decomposeG1}, we get the claim of the lemma.
\end{proof}
Next, we write down explicitly the automorphism $\nu_a$ of $G_a$. 
Since the action of $z_1$ fixes the subsets $Z=(\Ow\sqcup z_1(\Ow)\sqcup\cdots\sqcup z_1^{k(\Ow)-1}(\Ow))^{a(\Ow)}$ it is enough to consider the action of $\nu_a$ on the automorphism group of these sets. 
The set $Z$ is the disjoint union of $k(\Ow)\cdot a(\Ow)$ sets that are all naturally identified with $\Ow$. We will use subscript to distinguish elements in the different orbits. Thus, for $f$ in the $l$-th copy of $z_1^i(\Ow)$ we will write $f_{il}$. With this notation, it holds that $z_1(f_{il}) = f_{i+1,l}$ for $i<k(\Ow)-1$ and $z_1(f_{k(\Ow)-1,l}) = \Phi(\Ow)(f)_{0,l}$. 

Let now $\beta:Z\to Z$ be given by $(\sigma_i,\lambda_{i1},\ldots, \lambda_{ia(\Ow)})_{i=0}^{k(\Ow)-1}$ where $\sigma_i\in S_{a(\Ow)}$ and $\lambda_{ij}\in \Aut_{F_m^0}(\Ow)$. We calculate $\nu_a(\beta)$ = $z_1\beta z_1^{-1}$: 
$$z_1\beta\xi^{-1}(f_{il}) = z_1\beta(f_{i-1,l}) = z_1\la_{i-1,l}(f)_{i-1,\sigma_{i-1}(l)} = \la_{i-1,l}(f)_{i,\sigma_{i-1,}(l)} \text{ where } i>0$$
$$z_1\beta z_1^{-1}(f_{0l}) = z_1\beta (\Phi(\Ow)^{-1}(f))_{k(\Ow)-1,l} = z_1 \la_{k(\Ow)-1,l}(\Phi(\Ow)^{-1}(f))_{k(\Ow)-1,\sigma_{k(\Ow)-1}(l)} =$$ $$(\Phi(\Ow)\la_{k(\Ow)-1,l}\Phi(\Ow)^{-1}(f))_{0,\sigma_{k(\Ow)-1}(l)}$$
The map $\la\mapsto \Phi(\Ow)\la\Phi(\Ow)^{-1}$ is exactly $\rho(\Ow)$ defined in Equation \ref{eq:defrho}. Write $g_i = (\sigma_i,\lambda_{i1},\ldots, \lambda_{ia(\Ow)})\in S_{a(\Ow)}\ltimes \Aut_{F_m^0}(\Ow)^{a(\Ow)}$. Then $\beta = (g_0,\ldots, g_{k(\Ow)-1})$. The above calculation gives the following lemma:
\begin{lemma}
The restriction of the automorphism $\nu_a$ to $(S_{a(\Ow)}\ltimes \Aut_{F_m^0}(\Ow)^{a(\Ow)})^{k(\Ow)}$ is given explicitly by the formula
$$\nu_a(g_0,\ldots, g_{k(\Ow)-1}) =(\xi_{a(\Ow)}(g_{k(\Ow)-1}),g_0,\ldots, g_{k(\Ow)-2} ),$$
where $\xi_{a(\Ow)}:S_{a(\Ow)}\ltimes \Aut_{F_m^0}(\Ow)^{a(\Ow)}\to S_{a(\Ow)}\ltimes \Aut_{F_m^0}(\Ow)^{a(\Ow)}$ is the automorphism given by $$(\sigma,\la_1,\ldots, \la_{a(\Ow)})\mapsto (\sigma, \rho(\Ow)(\la_1),\ldots, \rho(\Ow)(\la_{a(\Ow)})).$$
\end{lemma}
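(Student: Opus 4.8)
The statement is obtained by a direct computation, most of which is already carried out in the paragraph preceding the lemma; the plan is simply to organise that computation cleanly. Since the $z_1$-action on $Y_a$ preserves each block $Z=(\Ow\sqcup z_1(\Ow)\sqcup\cdots\sqcup z_1^{k(\Ow)-1}(\Ow))^{a(\Ow)}$, and $\nu_a$ was defined as conjugation by $z_1$, it suffices to describe $\beta\mapsto z_1\beta z_1^{-1}$ on $\Aut_{F_m^0}(Z)\cong (S_{a(\Ow)}\ltimes\Aut_{F_m^0}(\Ow)^{a(\Ow)})^{k(\Ow)}$. I would fix the notation $f_{il}$ for the element $f\in\Ow$ sitting in the $l$-th copy of $z_1^i(\Ow)$ inside $Z$, and recall the two defining rules for the $z_1$-action on $Z$: $z_1(f_{il})=f_{i+1,l}$ for $0\le i<k(\Ow)-1$, and $z_1(f_{k(\Ow)-1,l})=\Phi(\Ow)(f)_{0,l}$ (equivalently $z_1^{-1}(f_{0l})=\Phi(\Ow)^{-1}(f)_{k(\Ow)-1,l}$). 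Write $\beta=(g_0,\dots,g_{k(\Ow)-1})$ with $g_i=(\sigma_i,\la_{i1},\dots,\la_{ia(\Ow)})$, so that $\beta(f_{il})=\la_{il}(f)_{i,\sigma_i(l)}$.

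Next I would compute $z_1\beta z_1^{-1}$ separately on the blocks indexed by $i>0$ and by $i=0$. For $i>0$, applying the three maps in turn gives $z_1\beta z_1^{-1}(f_{il})=z_1\beta(f_{i-1,l})=z_1\bigl(\la_{i-1,l}(f)_{i-1,\sigma_{i-1}(l)}\bigr)=\la_{i-1,l}(f)_{i,\sigma_{i-1}(l)}$, so the $i$-th component of $\nu_a(\beta)$ is exactly $g_{i-1}$: nothing but a shift happens here, and in particular the $S_{a(\Ow)}$-parts are carried along untouched. For $i=0$ the wrap-around rule enters: $z_1\beta z_1^{-1}(f_{0l})=z_1\beta\bigl(\Phi(\Ow)^{-1}(f)_{k(\Ow)-1,l}\bigr)=z_1\bigl(\la_{k(\Ow)-1,l}\Phi(\Ow)^{-1}(f)_{k(\Ow)-1,\sigma_{k(\Ow)-1}(l)}\bigr)=\bigl(\Phi(\Ow)\la_{k(\Ow)-1,l}\Phi(\Ow)^{-1}(f)\bigr)_{0,\sigma_{k(\Ow)-1}(l)}$. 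Thus the $0$-th component of $\nu_a(\beta)$ has permutation part $\sigma_{k(\Ow)-1}$ and group parts $\Phi(\Ow)\la_{k(\Ow)-1,j}\Phi(\Ow)^{-1}=\rho(\Ow)(\la_{k(\Ow)-1,j})$, i.e. it equals $\xi_{a(\Ow)}(g_{k(\Ow)-1})$ with $\xi_{a(\Ow)}$ as in the statement.

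Putting the two cases together yields $\nu_a(g_0,\dots,g_{k(\Ow)-1})=(\xi_{a(\Ow)}(g_{k(\Ow)-1}),g_0,\dots,g_{k(\Ow)-2})$, as claimed; that this is a genuine element of $(S_{a(\Ow)}\ltimes\Aut_{F_m^0}(\Ow)^{a(\Ow)})^{k(\Ow)}$ is clear since each component is built from a $\sigma_i$ and $\rho(\Ow)$-images of the $\la_{ij}$, and that $\nu_a$ maps $G_a$ into $G_a$ was already established in the preceding lemma. There is no substantial obstacle here: the only points requiring care are the index bookkeeping and the observation that the twisting by $\Phi(\Ow)$ enters in the single wrap-around coordinate only, while the symmetric-group components are permuted cyclically without modification — everything else is a mechanical substitution using the two rules for the $z_1$-action.
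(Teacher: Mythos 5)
Your proposal is correct and follows essentially the same route as the paper: the same block-by-block computation of $z_1\beta z_1^{-1}$ on the elements $f_{il}$, split into the shift case $i>0$ and the wrap-around case $i=0$ where $\Phi(\Ow)$ (hence $\rho(\Ow)$) enters. The bookkeeping and conclusions match the paper's argument exactly.
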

The action of $\nu_a$ on $G_a$ respects the direct product decomposition in Equation \ref{eq:dec1}. We thus get 
$$(KG_a)_{G_a}\cong \bigotimes_{\Ow\in OSFT} ((KS_{a(\Ow)}\ltimes \Aut_{F_m^0}(\Ow)^{a(\Ow)})^{k(\Ow)})_{(S_{a(\Ow)}\ltimes \Aut_{F_m^0}(\Ow)^{a(\Ow)})^{k(\Ow)}}.$$
Proposition \ref{prop:decga0} now gives 
$$\Ainf_{\Ga_0}\cong \bigoplus_{a\in FOSFT} (KG_a)_{G_a} \cong$$ $$ \bigoplus_{a\in FOSFT}  \bigotimes_{\Ow\in OSFT} ((KS_{a(\Ow)}\ltimes \Aut_{F_m^0}(\Ow)^{a(\Ow)})^{k(\Ow)})_{(S_{a(\Ow)}\ltimes \Aut_{F_m^0}(\Ow)^{a(\Ow)})^{k(\Ow)}}\cong $$
$$\bigotimes_{\Ow\in OSFT}\bigoplus_{n\geq 0} ((KS_n\ltimes \Aut_{F_m^0}(\Ow)^n)^{k(\Ow)}_{(S_n\ltimes \Aut_{F_m^0}(\Ow)^n)^{k(\Ow)}},$$ where we have used the fact that any $a\in FOSFT$ is determined by its values on the different elements of $OSFT$. 
For $\Ow\in OSFT$ write
$$\Ainf_{\Ga_0,\Ow} = \bigoplus_{n\geq 0} ((KS_n\ltimes \Aut_{F_m^0}(\Ow)^n)^{k(\Ow)}_{(S_n\ltimes \Aut_{F_m^0}(\Ow)^n)^{k(\Ow)}}.$$ 
We claim the following:
\begin{lemma}
For every $\Ow\in OSFT$  it holds that $\Ainf_{\Ga_0,\Ow}$ is a Hopf subalgebra of $\Ainf_{\Ga_0}$. 
\end{lemma}
\begin{proof}
Write $FOSFT_{\Ow}:= \{a: OSFT\to \N | a(\Ow')=0 \text{ for all } \Ow'\neq \Ow \}$.
We can write $\Ainf_{\Ga_0,\Ow} \cong \bigoplus_{a\in FOSFT_{\Ow}} (KG_a)_{G_a}$, 
By Proposition \ref{prop:decga0} the multiplication is induced by the natural inclusion $G_a\times G_b\to G_{a+b}$. For $a,b\in FOSFT$ it holds that $a+b\in FOSFT_{\Ow}$ if and only if $a,b\in FOSFT_{\Ow}$. This implies that $\Ainf_{\Ga_0,\Ow}$ is closed under multiplication. Since the comultiplication is dual to the multiplication, it holds that for $c\in FOSFT$ we have $$\Delta((KG_c)_{G_c}) \subseteq \bigoplus _{a+b=c} (KG_a)_{G_a}\ot (KG_b)_{G_b}.$$ But if $c\in FOSFT_{\Ow}$ and $a+b=c$ then $a,b\in FOSFT_{\Ow}$ as well. This shows that $\Ainf_{\Ga_0,\Ow}$ is closed under comultiplication as well, and is therefore a sub-bialgebra. Since it is connected, it is also a Hopf subalgebra. 
\end{proof}

We summarize this discussion with the following result:
\begin{proposition}
We have an isomorphism of Hopf algebras $$\Ainf_{\Ga_0}\cong \bigotimes_{\Ow\in OSFT} \Ainf_{\Ga_0,\Ow}.$$
\end{proposition}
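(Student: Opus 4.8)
The plan is to read the claimed isomorphism directly off the chain of isomorphisms assembled just before the proposition, and to promote each link of that chain to an isomorphism of Hopf algebras. First I would recall that by Proposition \ref{prop:decga0} we have $\Ainf_{\Ga_0}\cong\bigoplus_{a\in FOSFT}(KG_a)_{G_a}$ as a graded algebra equipped with the sesquilinear form, where the multiplication is induced by the inclusions $G_a\times G_b\subseteq G_{a+b}$ of Lemma \ref{lem:decYa} and the comultiplication is the adjoint of the multiplication with respect to that form. Next I would observe that for a fixed $a$ the decomposition \eqref{eq:dec1} presents $G_a$ as the direct product over $\Ow\in OSFT$ of the groups $(S_{a(\Ow)}\ltimes\Aut_{F_m^0}(\Ow)^{a(\Ow)})^{k(\Ow)}$, and that the explicit computation of $\nu_a$ carried out above shows $\nu_a$ preserves this product, restricting on the $\Ow$-factor to the cyclic shift by one place twisted by $\xi_{a(\Ow)}$.

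The second step is the elementary fact that if a finite group splits as $G=\prod_l G_l$ and an automorphism splits accordingly as $\nu=\prod_l\nu_l$, then the $\nu$-twisted conjugation coinvariants factor as $(KG)_G\cong\bigotimes_l(KG_l)_{G_l}$, and that this factorisation is an isometry for the inner products of Proposition \ref{prop:decga0}, since both sides carry orthogonal bases parametrised by compatible tuples (using Equation \eqref{eq:product}) and the squared norms multiply across factors. Applying this with $G=G_a$ and the $OSFT$-indexed product — keeping each $k(\Ow)$-fold power intact, because $\nu_a$ merely cyclically permutes those $k(\Ow)$ copies rather than respecting a further splitting of them — gives a factorisation of $(KG_a)_{G_a}$ indexed by $\Ow\in OSFT$. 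Since an element of $FOSFT$ is precisely a family $(a(\Ow))_{\Ow\in OSFT}$ of natural numbers almost all of which vanish, and since every tensor factor has degree-zero component equal to $K$ (the class of the empty $F_m^0$-set), the restricted tensor product over $OSFT$ is well defined and one may interchange $\bigoplus_a$ with $\bigotimes_{\Ow}$, arriving at $\bigotimes_{\Ow\in OSFT}\bigoplus_{n\geq0}((K(S_n\ltimes\Aut_{F_m^0}(\Ow)^n)^{k(\Ow)})_{(S_n\ltimes\Aut_{F_m^0}(\Ow)^n)^{k(\Ow)}})$, which is $\bigotimes_{\Ow\in OSFT}\Ainf_{\Ga_0,\Ow}$ by the very definition of $\Ainf_{\Ga_0,\Ow}$.

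The final and hardest step will be to verify that this linear isomorphism respects the full Hopf structure; I expect this bookkeeping to be the main obstacle. The key point is that the multiplication on $\Ainf_{\Ga_0}$, which comes from the inclusions $G_a\times G_b\subseteq G_{a+b}$, corresponds under \eqref{eq:dec1} in each $\Ow$-block to the $k(\Ow)$-th power of the block inclusion $S_{a(\Ow)}\times S_{b(\Ow)}\subseteq S_{(a+b)(\Ow)}$ together with the identity identification $\Aut_{F_m^0}(\Ow)^{a(\Ow)}\times\Aut_{F_m^0}(\Ow)^{b(\Ow)}=\Aut_{F_m^0}(\Ow)^{(a+b)(\Ow)}$, and that these inclusions are compatible with $\nu_a$, $\nu_b$ and $\nu_{a+b}$ exactly as recorded in Lemma \ref{lem:decYa}; once this is checked, the transported multiplication is literally the tensor product of the multiplications of the $\Ainf_{\Ga_0,\Ow}$. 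The comultiplication then goes automatically to the tensor product comultiplication, being its adjoint for the inner product, which we have already shown factors as a tensor product, and the unit and counit, which land in the $n=0$ summand, present nothing to check. The same parametrisation of bases that gives the factorisation shows all structure constants agree factor by factor, and in particular exhibits each $\Ainf_{\Ga_0,\Ow}$ as a Hopf subalgebra of $\Ainf_{\Ga_0}$ via the corresponding factor inclusion.
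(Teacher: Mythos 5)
Your proposal is correct and follows essentially the same route as the paper: the paper's argument is precisely the chain consisting of the decomposition of $G_a$ in Equation (6.1), the observation that $\nu_a$ respects it factor by factor, the resulting factorisation of the twisted coinvariants $(KG_a)_{G_a}$, and the interchange of $\bigoplus_{a\in FOSFT}$ with $\bigotimes_{\Ow\in OSFT}$ using that $a$ is determined by its finitely many nonzero values. Your extra care in checking that the multiplication (via the inclusions $G_a\times G_b\subseteq G_{a+b}$ and Lemma \ref{lem:decYa}) and its adjoint comultiplication are respected only makes explicit what the paper leaves implicit.
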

We would like to get rid of the exponent $k(\Ow)$ in the description of $\Ainf_{\Ga_0,\Ow}$. For this, we prove a more general result. Let $G$ be a finite group with an automorphism $\xi:G\to G$. Let $k>0$ be an integer, and let $\nu:G^k\to G^k$ be the automorphism $(g_0,\ldots, g_{k-1})\mapsto (\xi(g_{k-1}),g_0,\ldots, g_{k-2})$. We claim the following:
\begin{lemma}
With the above notations the inverse order multiplication map $G^k\to G$ $(g_0,\ldots, g_{k-1})\mapsto g_{k-1}\cdots g_0$ induces an isomorphism $(KG^k)_{G^k}\to (KG)_G$, where $G^k$ acts on $KG^k$ by $a\cdot b = ab\nu(a)^{-1}$ and $G$ acts on $KG$ by $a\cdot b= ab\xi(a)^{-1}$.
\end{lemma}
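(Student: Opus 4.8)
The plan is to construct the inverse of the multiplication map explicitly and check that both composites are the identity on coinvariants. First I would describe the multiplication map $\pi: G^k \to G$, $(g_0,\ldots,g_{k-1})\mapsto g_{k-1}\cdots g_0$, and observe that it is $\nu$-twisted-equivariant in the appropriate sense, so that it descends to a linear map $\bar\pi: (KG^k)_{G^k}\to (KG)_G$. The key point here is to verify that $\pi$ intertwines the $\nu$-twisted conjugation on $G^k$ with the $\xi$-twisted conjugation on $G$ after passing through the diagonal-type embedding; concretely, for $a=(a_0,\ldots,a_{k-1})\in G^k$ one computes $\pi(a\cdot b) = \pi(ab\nu(a)^{-1})$ and shows this equals $c\cdot \pi(b)$ where $c = \pi(a)$ up to the $\xi$-twist — this is a direct (if slightly fiddly) telescoping computation using $\nu(a)^{-1} = (a_1,\ldots,a_{k-1},\xi(a_0))^{-1}$. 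I expect this verification to be the main obstacle, since one has to be careful about the order of multiplication and where $\xi$ enters.

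**Construction of the inverse.** In the other direction, I would use the section $s: G\to G^k$, $g\mapsto (g,e,e,\ldots,e)$, and show that it too is compatible with the twisted conjugation actions up to conjugacy, hence descends to $\bar s:(KG)_G\to (KG^k)_{G^k}$. Then $\pi\circ s = \mathrm{id}_G$ is immediate, so $\bar\pi\circ\bar s = \mathrm{id}$. For the other composite, I would show that every element of $G^k$ is $\nu$-twisted conjugate to one in the image of $s$: given $(g_0,\ldots,g_{k-1})$, repeatedly twisted-conjugate by elements of the form $(e,\ldots,e,h,e,\ldots,e)$ to "sweep" all the coordinates into the first slot, leaving $(g_{k-1}\cdots g_0, e,\ldots,e)$. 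This is the same inductive sweeping argument used earlier in Section~\ref{sec:decomposing} (the reduction of $HxH$ to $Hx$), so it can be cited in spirit. This shows $\bar s\circ\bar\pi = \mathrm{id}$ as well.

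**Finishing.** Once both composites are identities, $\bar\pi$ is a linear isomorphism. It remains to note compatibility with the relevant structure, but since the lemma as stated only asserts an isomorphism of the coinvariant spaces (not yet of Hopf algebras), establishing the bijection on the level of the twisted-conjugacy-class bases suffices: $\bar\pi$ sends the twisted-conjugacy class of $(g_0,\ldots,g_{k-1})$ to that of $g_{k-1}\cdots g_0$, and the sweeping argument shows this assignment is a well-defined bijection on classes, which in particular respects the inner products since twisted conjugacy classes with the same representative have centralizers of the same order (indeed $\pi$ induces an isomorphism of the relevant twisted centralizers, which one checks by the same telescoping identity). I would close by remarking that this lemma will let us replace $(KS_n\ltimes\Aut_{F_m^0}(\Ow)^n)^{k(\Ow)}$-coinvariants, under the shift-type automorphism $\nu$, by the coinvariants of the single group $S_n\ltimes\Aut_{F_m^0}(\Ow)^n$ under $\xi_{a(\Ow)}$, thereby eliminating the exponent $k(\Ow)$.
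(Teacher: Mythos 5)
Your proposal is correct and is in substance the paper's own argument: the paper takes coinvariants in two stages, first by the subgroup $G^{k-1}=\{(1,g_1,\ldots,g_{k-1})\}$ (identifying $(KG^k)_{G^{k-1}}\cong KG\otimes_G\cdots\otimes_G KG\cong KG$, which is exactly your multiplication map) and then by the remaining copy of $G$; your sweeping argument is the explicit form of the first stage, and your final identification of the residual action as $\xi$-twisted conjugation is the second. Your version has the added merit of checking that twisted centralizers match (the twisted centralizer of $(g,e,\ldots,e)$ in $G^k$ is the diagonal copy of the $\xi$-twisted centralizer of $g$ in $G$), which the paper needs later but does not spell out here. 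One detail to fix: with the paper's convention $\nu(g_0,\ldots,g_{k-1})=(\xi(g_{k-1}),g_0,\ldots,g_{k-2})$ one gets $(a\cdot b)_0=a_0b_0\xi(a_{k-1})^{-1}$ and $(a\cdot b)_i=a_ib_ia_{i-1}^{-1}$ for $i\geq 1$, so the descending product telescopes to $\pi(a\cdot b)=a_{k-1}\,\pi(b)\,\xi(a_{k-1})^{-1}$; your transcription $\nu(a)=(a_1,\ldots,a_{k-1},\xi(a_0))$ shifts in the wrong direction and would force the opposite product order. Relatedly, sweeping upward through positions $1,\ldots,k-1$ lands on $(b_0\,\xi(b_{k-1}\cdots b_1),e,\ldots,e)$ rather than literally on $(b_{k-1}\cdots b_0,e,\ldots,e)$; these are $\xi$-twisted conjugate in $G$ (conjugate by $b_{k-1}\cdots b_1$), or one can instead sweep downward into position $0$ to land exactly where you claim. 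Neither point affects the validity of the overall argument.
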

\begin{proof}
Write $G^{k-1}\subseteq G^k$ for the subgroup $\{(1,g_1,\ldots, g_{k-1})\}$. It holds that $(KG^k)_{G^{k-1}}\cong KG\ot_G KG\ot_G\cdots \ot_G KG\cong KG$. It is then immediate to check that the first copy of $G$ in $G^k$ acts on the resulting $KG$ by the automorphism $\xi$. 
\end{proof}
Using the above lemma we can write 
$$\Ainf_{\Ga_0,\Ow}\cong \bigoplus_{n\geq 0} (KS_n\ltimes \Aut_{F_m^0}(\Ow)^n)_{S_n\ltimes \Aut_{F_m^0}(\Ow)^n},$$
where the action of $S_n\ltimes \Aut_{F_m^0}(\Ow)^n$ is the twisted $\xi_n$-action. 
By following the isomorphisms we see that multiplication in $\Ainf_{\Ga_0,\Ow}$ is induced by the inclusion of groups $$(S_n\ltimes \Aut_{F_m^0}(\Ow)^n)\times (S_m\ltimes \Aut_{F_m^0}(\Ow)^m)\to S_{n+m}\ltimes \Aut_{F_m^0}(\Ow)^{n+m},$$ and that the squared norm of $\ol{g}\in (KS_n\ltimes \Aut_{F_m^0}(\Ow)^n)_{S_n\ltimes \Aut_{F_m^0}(\Ow)^n}$ is equal to the order of the fixed point subgroup $|S_n\ltimes (\Aut_{F_m^0}(\Ow)^n)^{\xi_nc_g}|$, where $c_g$ denotes conjugation by $g$.


\section{The algebra $\Ainf_{\Ga_0,\Ow}$}\label{sec:mains}
As before, let $\Ga_0$ be an irreducible graph, and let $\Ow\in OSFT(\Ga_0)$. 
Write $B = \Aut_{F_m^0}(\Ow)$ and $\rho=\rho(\Ow):B\to B$. The automorphism $\rho$ induces the automorphism $\xi=\xi_n:S_n\ltimes B^n\to S_n\ltimes B^n$. 
Let $\Irr(B) = \{[W_1],\ldots, [W_b]\}$. In Section \ref{sec:prelim} we have seen that irreducible representations of the wreath product $S_n\ltimes B^n$ are in one to one correspondence with tuples $(a_i,\la_i)$ where $\sum_i a_i=n$ and $\la_i\parti a_i$, and we denoted the corresponding representation by $W_{(a_i,\la_i)}$. 
Assume without loss of generality that $[W_1],\ldots, [W_a]$ are representatives of the distinct $\rho$-orbits in $\Irr(B)$. Write $l_i$ for the length of the $\rho$-orbit of $[W_i]$, $(i=1,\ldots a)$. Thus $\Irr(B)=\{[(\rho^j)^*(W_i)]\}_{i=1,\ldots, a, j=0,\ldots, l_i-1}$. Fix isomorphisms $T_{W_i}:(\rho^{l_i})^*(W_i)\to W_i$. 
Write 
$$W_{i,\la,c} = \Ind_{S_{c}^{l_i}\ltimes B^{cl_i}}^{S_{cl_i}\ltimes B^{cl_i}}(\S_{\la}^{\boxtimes l_i}\ot W_i^{\ot c}\ot\cdots\ot (\rho^{l_i-1})^*(W_i)^{\ot c})\in \Rep(S_{cl_i}\ltimes B^{cl_i}),$$
where $i\in \{1,\ldots, a\}$, $c\in \N$ and $\la\parti c$. Then the discussion in Section \ref{sec:prelim} implies that  every irreducible $\xi$-invariant representation of $S_n\ltimes B^n$ is of the form $$\Ind_{\prod S_{c_i}^{l_i}\ltimes B^n}^{S_n\ltimes B^n}\bigotimes_i W_{i,\la_i,c_i}$$ for some integers $c_i$ and partitions $\la_i\parti c_i$ such that $\sum c_il_i = n$. 
Moreover, the automorphism $\xi$ stabilizes the subgroups $S_{cl_i}\ltimes B^{cl_i}$ and it holds that we have an isomorphism $T_{i,\la_i,c}:\xi^*(W_{i,\la_i,c})\to W_{i,\la_i,c}$ given by $$1\ot (s_0\ot\cdots\ot s_{l_i-1}\ot w_{01}\ot\cdots w_{0c}\ot w_{11}\ot\cdots\ot w_{1c}\ot\cdots w_{l_i-1,1}\ot\cdots w_{l_i-1,c})\mapsto $$
$$\tau\ot (s_{l_i-1}\ot s_0\ot\cdots\ot s_{l_i-2}\ot T_{W_i}(w_{l_i-1,1})\ot\cdots\ot T_{W_i}(w_{l_i-1,c})\ot w_{01}\ot\cdots\ot w_{0c}\ot\cdots w_{l_i-2,1}\ot\cdots w_{l_i-2,c}),$$ where $\tau\in S_{cl_i}$ is given by $\tau(j) = j-c\text{ mod } cl_i$, $s_j\in \S_{\la_i}$, and $w_{jk}\in (\rho^j)^*(W_i)$. In order to ease notations, we will write \begin{equation}\label{eq:Tilc} T_{i,\la_i,c}(1\ot (s_0\ot\cdots\ot s_{l_i-1}\ot w)  =\tau\ot (s_{l_i-1}\ot s_0\ot\cdots\ot s_{l_i-2}\ot \wt{T_{i,\la_i,c}}(w)).\end{equation}  
For every $i=1,\ldots a$ we write now $\Ainf_{\Ga_0,\Ow,i}$ for the subspace spanned by all elements of the form $T_{i,\la,c}$, where $c\in \N$ and $\la\parti c$. 
\begin{lemma}
The subspaces $\Ainf_{\Ga_0,\Ow,i}$ are sub Hopf-algebras of $\Ainf_{\Ga_0,\Ow}$, and we have $$\Ainf_{\Ga_0,\Ow}\cong \bigotimes_i \Ainf_{\Ga_0,\Ow,i},$$ where the tensor product is taken over all the $\rho$-orbits in $\Irr(\Aut_{F_m^0}(\Ow))$. Moreover, $\Ainf_{\Ga_0,\Ow,i}$ has an orthonormal basis given by the elements $$\ol{T_{i,\la,c}}=\frac{1}{\dim(W_{i,\la,c})}T_{i,\la,c}.$$
\end{lemma}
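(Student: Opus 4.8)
The plan is to deduce the lemma from two ingredients already in place: the distinguished orthonormal basis of a twisted space of coinvariants produced in Section~\ref{sec:decomposing}, and the classification of $\xi$-invariant irreducibles of the wreath product from Section~\ref{sec:prelim}. First I would use the description obtained at the end of Section~\ref{sec:grpGa}: $\Ainf_{\Ga_0,\Ow}\cong\bigoplus_{n\ge0}(KS_n\ltimes B^n)_{S_n\ltimes B^n}$ for the $\xi_n$-twisted conjugation action, with multiplication induced by the block inclusions $(S_n\ltimes B^n)\times(S_{n'}\ltimes B^{n'})\hookrightarrow S_{n+n'}\ltimes B^{n+n'}$ and comultiplication adjoint to them. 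By the orthonormal-basis lemma of Section~\ref{sec:decomposing}, each summand $(KS_n\ltimes B^n)_{S_n\ltimes B^n}$ has a $\Q$-orthonormal basis $\{\tfrac1{\dim W}\ol{T_W}\}$ indexed by the $\xi_n$-invariant $W\in\Irr(S_n\ltimes B^n)$, where $T_W\colon\xi_n^*(W)\to W$ is any chosen finite-order isomorphism. Now the two wreath-product lemmas of Section~\ref{sec:prelim} give $\xi_n^*(W_{(a_j,\la_j)})=W_{(a_{\rho^{-1}(j)},\la_{\rho^{-1}(j)})}$, so $W_{(a_j,\la_j)}$ is $\xi_n$-invariant exactly when $(a_j,\la_j)$ is constant along each $\rho$-orbit of $\Irr(B)$; writing $(c_i,\la_i)$ with $\la_i\parti c_i$ for the common value on the $i$-th orbit and using $\Irr(B)=\{[(\rho^j)^*W_i]\}$, these are precisely the representations $\Ind_{\prod_iS_{c_i}^{l_i}\ltimes B^n}^{S_n\ltimes B^n}\bigotimes_iW_{i,\la_i,c_i}$ with $\sum_ic_il_i=n$, as recorded just before the lemma. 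Hence, over all $n$, the index set of the basis of $\Ainf_{\Ga_0,\Ow}$ is canonically $\prod_{i=1}^{a}\{(\la,c):c\in\N,\ \la\parti c\}$, i.e. exactly the index set of $\bigotimes_i\Ainf_{\Ga_0,\Ow,i}$.

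Next I would pin down the isomorphisms. For $W=\Ind_{\prod_iS_{c_i}^{l_i}\ltimes B^n}^{S_n\ltimes B^n}\bigotimes_iW_{i,\la_i,c_i}$ I take $T_W$ to be the isomorphism induced by $\bigotimes_iT_{i,\la_i,c_i}$, each $T_{i,\la_i,c_i}$ as in \eqref{eq:Tilc}: one checks that $\bigotimes_iT_{i,\la_i,c_i}$ is a map $\xi_n^*\!\bigl(\bigotimes_iW_{i,\la_i,c_i}\bigr)\to\bigotimes_iW_{i,\la_i,c_i}$ intertwining the factor-shuffle that $\xi_n$ performs on $\prod_iS_{c_i}^{l_i}\ltimes B^n$, hence descends to an isomorphism $\xi_n^*(W)\to W$, and that it has finite order since $\rho$ and the $T_{W_i}$ do. With this choice the orthonormal basis vector $\tfrac1{\dim W}\ol{T_W}$ equals the product $\prod_i\tfrac1{\dim W_{i,\la_i,c_i}}\ol{T_{i,\la_i,c_i}}$ of the claimed basis vectors of the factors, which is where Lemmas~\ref{lem:technical} and \ref{lem:innerprod} enter, together with transitivity of induction along $\prod_i(S_{c_i}^{l_i}\ltimes B^{c_il_i})\subseteq S_n\ltimes B^n$. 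In particular the $\ol{T_{i,\la,c}}$ ($c\in\N$, $\la\parti c$) form an orthonormal set spanning $\Ainf_{\Ga_0,\Ow,i}$ — the last assertion — and $\bigotimes_i\ol{T_{i,\la_i,c_i}}\mapsto\prod_i\ol{T_{i,\la_i,c_i}}$ is a graded vector-space isomorphism $\bigotimes_i\Ainf_{\Ga_0,\Ow,i}\xrightarrow{\sim}\Ainf_{\Ga_0,\Ow}$.

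Finally I would upgrade this to a Hopf isomorphism. Since multiplication is induction and comultiplication is restriction along the inclusions $S_n\ltimes B^n\hookleftarrow\prod_iS_{c_i}^{l_i}\ltimes B^n$, and these are compatible both with the block decomposition $B^n=\prod_iB^{c_il_i}$ and with the $\rho$-orbit decomposition of $\Irr(B)$, Mackey's formula shows that in a product or coproduct of $\xi$-invariant irreducibles the data attached to distinct $\rho$-orbits $i$ combine independently; in particular the product of $\ol{T_{i,\la,c}}$ and $\ol{T_{i,\mu,d}}$ (all other tensorands being the unit) lies in the span of the $\ol{T_{i,\nu,c+d}}$, and dually for the coproduct, so each $\Ainf_{\Ga_0,\Ow,i}$ is a sub-Hopf-algebra and the map above is an isomorphism of graded Hopf algebras. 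I expect the main obstacle to be the bookkeeping in the second paragraph: checking that the explicitly written $\bigotimes_iT_{i,\la_i,c_i}$ genuinely induces the isomorphism used to normalize the basis vector attached to $W$ — that the shuffle permutation $\tau$ of \eqref{eq:Tilc}, the induction combinatorics, and the $\rho$-orbit structure all align — and, in the same spirit, that induction never mixes representations lying in different $\rho$-orbits. Everything else is formal once these Mackey-type compatibilities are verified.
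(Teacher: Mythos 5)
Your proposal is correct and follows essentially the same route as the paper: the paper's (much terser) proof likewise derives the statement from the orthonormal basis of the twisted coinvariants in Section~\ref{sec:decomposing}, the classification of $\xi$-invariant irreducibles of $S_n\ltimes B^n$ via the wreath-product lemmas, and the description of multiplication by group inclusions at the end of Section~\ref{sec:grpGa}. You simply make explicit the bookkeeping (the choice of $T_W=\bigotimes_iT_{i,\la_i,c_i}$ and the Mackey-type independence of distinct $\rho$-orbits) that the paper leaves implicit.
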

\begin{proof}
The fact that $\Ainf_{\Ga_0,\Ow,i}$ is a subalgebra of $\Ainf_{\Ga_0,\Ow}$ follows from the discussion in the end of Section \ref{sec:grpGa}. Since comultiplication is dual to multiplication, it is easy to see that this is also a Hopf-subalgebra. The fact that the elements $\ol{T_{i,\la,c}}$ form an orthonormal set follows from the the results of Section \ref{sec:decomposing}.
\end{proof}
\begin{definition}
Let $\H_{\Ga_0,\Ow,i}=\oplus_{\la,c}\Z \ol{T_{i,\la,c}}$ be the $\Z$-lattice in $\Ainf_{\Ga_0,\Ow,i}$ spanned by the above basis. 
Let $\H_{\Ga_0}=\bigotimes_{\Ow,i}\H_{\Ga_0,\Ow,i}$.
\end{definition}
The following is the main theorem of this paper, which partially answers Question 1 in \cite{meirUIR}:
\begin{theorem}\label{thm:main}
The $\Z$-lattice $\H_{\Ga_0}$ with the basis given by all products of the basis elements of the subalgebras $\H_{\Ga_0,\Ow,i}$ and with the restrictions of the structure of $\Ainf$ is a PSH-algebra. 
\end{theorem}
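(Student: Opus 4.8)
The plan is to reduce the statement to a single tensor factor and then to check the PSH axioms there, the only substantial point being the positivity and integrality of the structure constants. By the chain of Hopf-algebra isomorphisms established in Sections \ref{sec:grpGa} and \ref{sec:mains}, $\Ainf_{\Ga_0}\cong\bigotimes_{\Ow,i}\Ainf_{\Ga_0,\Ow,i}$ as $\N$-graded Hopf algebras, and the inner product $\langle-,-\rangle$ on $\Ainf_{\Ga_0}$ is the tensor product of the inner products on the factors: the direct-sum decompositions of Section \ref{sec:decomposing} are orthogonal, and Lemma \ref{lem:innerprod} shows that the pairing is multiplicative along the tensor factors. Since a (possibly infinite, graded) tensor product of PSH-algebras, equipped with the tensor-product basis, is again a PSH-algebra \cite{Zelevinsky}, it suffices to prove that each $\H:=\H_{\Ga_0,\Ow,i}$, with the basis $\{\ol{T_{i,\la,c}}\}_{c\geq 0,\ \la\vdash c}$ and the restricted structure, is a PSH-algebra. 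Here $\H$ is $\N$-graded by $c$ (equivalently, up to a fixed positive rescaling, by the degree in $\Ainf$).

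Most of the axioms are immediate. The basis $\{\ol{T_{i,\la,c}}\}$ is orthonormal by the lemma preceding the definition of $\H_{\Ga_0,\Ow,i}$. The algebra $\H$ is connected: its degree-$0$ component is spanned by the empty element $\one$, so $\H_0=\Z$. The unit sends $1\mapsto\one$ and the counit is the projection onto $\H_0$; their structure constants lie in $\{0,1\}$, and they are mutually adjoint (with $\Z$ carrying its canonical pairing) by a one-line check. Finally, because $\Ainf$ is a rational PSH-algebra \cite{meirUIR}, multiplication is adjoint to comultiplication with respect to $\langle-,-\rangle$; restricting this identity to the orthonormal basis of $\H$ shows that the structure constants of $\Delta$ coincide with those of $m$, and in particular that $\H$ is closed under $\Delta$ as soon as it is closed under $m$. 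Thus everything reduces to proving that for $\la\vdash c$ and $\mu\vdash d$ one has
$$\ol{T_{i,\la,c}}\cdot\ol{T_{i,\mu,d}}=\sum_{\nu\vdash c+d} c^{\nu}_{\la,\mu}\,\ol{T_{i,\nu,c+d}},$$
where $c^{\nu}_{\la,\mu}$ are the Littlewood--Richardson coefficients; in particular, these coefficients are non-negative integers, which is all that the PSH axioms require.

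To establish the product formula I would compute the coefficient $\langle \ol{T_{i,\la,c}}\cdot\ol{T_{i,\mu,d}},\ol{T_{i,\nu,c+d}}\rangle$ using Lemma \ref{lem:innerprod}, applied to the inclusion $(S_{cl_i}\ltimes B^{cl_i})\times(S_{dl_i}\ltimes B^{dl_i})\hookrightarrow S_{(c+d)l_i}\ltimes B^{(c+d)l_i}$ and the compatible twisting automorphisms $\xi$. This identifies the coefficient with the multiplicity of the $\xi$-invariant irreducible $W_{i,\nu,c+d}$ inside the induced representation $\Ind(W_{i,\la,c}\boxtimes W_{i,\mu,d})$, which is automatically a non-negative integer. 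To evaluate it, I would use transitivity of induction to rewrite this induced representation as induced from $(S_c\times S_d)^{l_i}\ltimes B^{(c+d)l_i}$; on the permutation part it becomes $\bigl(\Ind_{S_c\times S_d}^{S_{c+d}}\S_\la\boxtimes\S_\mu\bigr)^{\boxtimes l_i}=\bigl(\bigoplus_\nu\S_\nu^{c^{\nu}_{\la,\mu}}\bigr)^{\boxtimes l_i}$. The automorphism $\xi$ cyclically permutes the $l_i$ tensor factors (twisting the $B$-part by $\rho$), so the analysis of $\xi$-invariant irreducibles from Section \ref{sec:prelim}, together with Lemma \ref{lem:technical} used to track isotypic components, shows that the only $\xi$-invariant constituents are the $W_{i,\nu,c+d}$ and that the ``diagonal'' multiplicity extracted from the $l_i$-fold box power is exactly $c^{\nu}_{\la,\mu}$ rather than $(c^{\nu}_{\la,\mu})^{l_i}$. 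This last passage is the step I expect to be the main obstacle: the bookkeeping needed to descend from the naive multiplicities in the wreath product $S_n\ltimes B^n$ to the twisted ones, i.e.\ to show that taking ``$\xi$-invariants'' of the wreath-product PSH-algebra $\bigotimes_{[W]\in\Irr(B)}\Lambda$ along a $\rho$-orbit of length $l_i$ returns a single copy of $\Lambda$ with grading scaled by $l_i$. Once the product formula is in place, $m$ has non-negative integer structure constants on $\H$, hence so does $\Delta$ by adjointness, and combining the factors as in the first paragraph shows that $\H_{\Ga_0}$ is a PSH-algebra.
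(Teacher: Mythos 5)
Your overall strategy coincides with the paper's: reduce to the tensor factors $\H_{\Ga_0,\Ow,i}$, observe that self-adjointness of $\Ainf$ and orthonormality of the basis dispose of all axioms except positivity and integrality of the structure constants, and then establish the product formula $\langle\ol{T_{i,\la,c}}\cdot\ol{T_{i,\mu,d}},\ol{T_{i,\nu,c+d}}\rangle = c^{\nu}_{\la,\mu}$ via Lemmas \ref{lem:innerprod} and \ref{lem:technical}. You have also correctly located the crux. But the step you flag as ``the main obstacle'' is left unresolved, and the way you frame it contains an error that obscures the actual difficulty: the coefficient computed by Lemma \ref{lem:innerprod} is \emph{not} the multiplicity of $W_{i,\nu,c+d}$ in $\Ind(W_{i,\la,c}\boxtimes W_{i,\mu,d})$ --- that multiplicity equals $\dim\Hom=(c^{\nu}_{\la,\mu})^{l_i}$, the dimension of the space $(U^{\nu}_{\la,\mu})^{\ot l_i}$ supplied by Lemma \ref{lem:technical} --- and so it is not ``automatically a non-negative integer.'' It is the normalized trace of the composition of two specific intertwiners, $\frac{1}{\dim}\chi\bigl(\gamma(T_{i,\la,c}\ot T_{i,\mu,d})\,T^{*}_{i,\nu,c+d}\bigr)$, and a priori such a quantity need not be a non-negative integer at all.

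The missing idea is the paper's Lemma \ref{lem:cyclic}. One writes out the chosen isomorphisms $T_{i,\nu,c+d}$ and $T_{i,\la,c}\ot T_{i,\mu,d}$ explicitly on the isotypic component $(U^{\nu}_{\la,\mu})^{\boxtimes l_i}\ot\Ind(\cdots)$, checks that the permutation $\tau$ occurring in both is the same so that everything cancels except the action on the multiplicity space, and finds that the composition acts as $L\ot\Id$ where $L$ is the cyclic permutation of the $l_i$ tensor factors of $(U^{\nu}_{\la,\mu})^{\ot l_i}$. The trace of $L$ is $\dim U^{\nu}_{\la,\mu}=c^{\nu}_{\la,\mu}$, not $(c^{\nu}_{\la,\mu})^{l_i}$, and this is precisely the mechanism that ``descends from the naive multiplicities to the twisted ones.'' Your appeal to ``the analysis of $\xi$-invariant irreducibles from Section \ref{sec:prelim}'' only classifies which irreducibles are $\xi$-invariant; it does not compute the trace of the intertwiner composition, so as written the positivity and integrality of the structure constants --- the one substantive point, as you yourself note --- is asserted rather than proved.
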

\begin{proof}
Since $\Ainf$ is a self-adjoint Hopf algebra, and since $\H_{\Ga_0}$ has an orthonormal basis, the only thing that we need to prove is that all the structure constants are non-negative integers. 
The proof of the theorem will thus be completed once we prove the next result.
\end{proof}
\begin{lemma}\label{lem:LR}
Let $\la\parti c, \mu\parti  d, \nu\parti c+d$. Then $\langle \ol{T_{i,\la,c}}\cdot \ol{T_{i,\mu,d}},\ol{T_{i,\nu,c+d}}\rangle= c_{\la,\mu}^{\nu}$, the Littlewood-Richardson coefficient. 
\end{lemma}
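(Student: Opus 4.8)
The plan is to translate the twisted coinvariant picture into representations and multiplicity spaces, identify the operator whose trace computes the pairing, and recognise it as a cyclic shift so that Lemma~\ref{lem:cyclic} produces the Littlewood--Richardson coefficient.

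First I would fix the combinatorial set-up. By the discussion at the end of Section~\ref{sec:grpGa} the multiplication on $\Ainf_{\Ga_0,\Ow}$ is induced by the inclusion $(S_n\ltimes B^n)\times(S_{n'}\ltimes B^{n'})\hookrightarrow S_{n+n'}\ltimes B^{n+n'}$, and $\xi_{n+n'}$ restricts to $\xi_n\times\xi_{n'}$. Taking $n=cl_i$, $n'=dl_i$ and conjugating by the coordinate permutation that interleaves the $l_i$ blocks of size $c$ with the $l_i$ blocks of size $d$ (harmless, since $\xi$ acts only on the $B$-coordinates and commutes with all coordinate permutations), this realises $W_{i,\la,c}\boxtimes W_{i,\mu,d}$ as an irreducible representation $\widetilde{U}$ of $G_1:=(S_{cl_i}\ltimes B^{cl_i})\times(S_{dl_i}\ltimes B^{dl_i})$ inside $G_2:=S_{(c+d)l_i}\ltimes B^{(c+d)l_i}$, and $W_{i,\nu,c+d}$ as an irreducible representation $\widetilde{V}$ of $G_2$. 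I would then apply Lemma~\ref{lem:technical} with $N=B^{(c+d)l_i}$, $Q=S_{(c+d)l_i}$, $Q_1=S_{cl_i}\times S_{dl_i}$, $Q_2=\Stab_Q([\W])=S_{c+d}^{\,l_i}$ (the block-diagonal subgroup, using that $W_i,\rho^{*}W_i,\dots,(\rho^{l_i-1})^{*}W_i$ are pairwise non-isomorphic because $l_i$ is the $\rho$-orbit length), $Q_3=Q_1\cap Q_2=(S_c\times S_d)^{l_i}$, $\W=\bigotimes_{j=0}^{l_i-1}(\rho^j)^{*}(W_i)^{\ot(c+d)}$, $V=\S_\nu^{\boxtimes l_i}$ and $U=(\S_\la\boxtimes\S_\mu)^{\boxtimes l_i}$. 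The lemma identifies $\Hom_{G_1}(\widetilde{U},\Res^{G_2}_{G_1}\widetilde{V})$ with $\Hom_{Q_3}(U,\Res^{Q_2}_{Q_3}V)\cong\bigotimes_{j=0}^{l_i-1}M$, where $M:=\Hom_{S_c\times S_d}(\S_\la\boxtimes\S_\mu,\Res^{S_{c+d}}_{S_c\times S_d}\S_\nu)$ has $\dim M=c_{\la,\mu}^{\nu}$ by Frobenius reciprocity and the Littlewood--Richardson rule recalled in Section~\ref{sec:prelim}.

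Next I would invoke Lemma~\ref{lem:innerprod}. Since the basis of a product of twisted coinvariant spaces is the product of the basis elements, $\ol{T_{i,\la,c}}\cdot\ol{T_{i,\mu,d}}$ is the image under the inclusion-induced map of $\tfrac{1}{\dim\widetilde{U}}\ol{T_{i,\la,c}\boxtimes T_{i,\mu,d}}\in(KG_1)_{G_1}$, so the pairing in question equals $\tfrac{1}{\dim\widetilde{U}}\chi_{\widetilde{V}}\bigl(\gamma(T_{i,\la,c}\boxtimes T_{i,\mu,d})\,T_{i,\nu,c+d}^{*}\bigr)$. Combining this with the explicit description of the $\widetilde{U}$-isotypic component of $\Res^{G_2}_{G_1}\widetilde{V}$ in Lemma~\ref{lem:technical} rewrites this scalar as the trace, on $\Hom_{G_1}(\widetilde{U},\Res^{G_2}_{G_1}\widetilde{V})\cong\bigotimes_{j=0}^{l_i-1}M$, of the operator $\Theta\mapsto T_{i,\nu,c+d}\circ\xi^{*}(\Theta)\circ(T_{i,\la,c}\boxtimes T_{i,\mu,d})^{-1}$. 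Finally I would compute this operator from \eqref{eq:Tilc}: since $\xi$ applies $\rho$ in every $B$-coordinate, each of $T_{i,\la,c}$, $T_{i,\mu,d}$, $T_{i,\nu,c+d}$ implements the same cyclic shift of the $l_i$ blocks (block $j\mapsto j+1$), with a correction by $T_{W_i}$ on the wrap-around. On the multiplicity space $\bigotimes_j M$ the block shifts of source and target cancel up to a relabelling, so the induced map is the cyclic permutation of the $l_i$ tensor factors, post-composed with the map on one factor coming from conjugation by $T_{W_i}$; but that correction acts trivially on $M$, because $M$ is built purely from symmetric-group data and involves neither $B$ nor $W_i$. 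Hence the operator is exactly the cyclic shift $L$ on $M^{\ot l_i}$, and $\Tr(L)=\dim M=c_{\la,\mu}^{\nu}$ by Lemma~\ref{lem:cyclic}, which is the claim.

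The main obstacle is the middle step: carefully chasing $T_{i,\la,c}$, $T_{i,\mu,d}$ and $T_{i,\nu,c+d}$ through the induction--restriction adjunctions, the isomorphism of Lemma~\ref{lem:technical}, and Wedderburn, to confirm both that the pairing really is the trace of the displayed twist operator and that on $M^{\ot l_i}$ this operator is the bare cyclic shift with no residual contribution from the $T_{W_i}$-correction. Once the block decomposition is arranged so that the $S_c$-, $S_d$- and $S_{c+d}$-factors of the three wreath products line up block by block, these are routine compatibility checks of the kind already carried out in Sections~\ref{sec:decomposing}--\ref{sec:grpGa}.
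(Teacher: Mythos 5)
Your proposal is correct and follows essentially the same route as the paper's proof: the same application of Lemma \ref{lem:technical} with the identical choices of $N$, $Q$, $Q_1=S_{cl_i}\times S_{dl_i}$, $Q_2=S_{c+d}^{l_i}$, $Q_3=(S_c\times S_d)^{l_i}$, $\W$, $U$ and $V$, the same reduction of the pairing via Lemma \ref{lem:innerprod}, and the same identification of the resulting operator on the $l_i$-fold tensor power of the Littlewood--Richardson multiplicity space as a cyclic shift, whose trace is computed by Lemma \ref{lem:cyclic}. The only cosmetic difference is that you phrase the final trace on the Hom-space $\Hom_{Q_3}(U,\Res V)^{\ot l_i}$ while the paper works directly with the isotypic component $(U^{\nu}_{\la,\mu})^{\boxtimes l_i}$; these are equivalent.
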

\begin{proof}
Write $\Res^{S_{c+d}}_{S_c\times S_d}(\S_{\nu}) = (U^{\nu}_{\la,\mu}\ot \S_{\la}\boxtimes \S_{\mu})\oplus\ol{\S_{\nu}}$, where $\dim(U_{\la,\mu}^{\nu}) = c_{\la,\mu}^{\nu}$. We thus have $$\Hom_{S_c\times S_d}(\ol{\S_{\nu}},\S_{\la}\boxtimes \S_{\mu})=0.$$ We can then similarly write $$\Res^{S_{c+d}^{l_i}}_{S_c^{l_i}\times S_d^{l_i}}(\S_{\nu}^{\boxtimes l_i}) = (U^{\nu}_{\la,\mu})^{\boxtimes l_i}\ot (\S_{\la}\boxtimes \S_{\mu})^{\boxtimes l_i}\oplus \ol{\S_{\nu}^{\boxtimes l_i}}.$$
Write $\W = W_i^{\ot (c+d)}\ot\cdots\ot (\rho^{l_i-1})^*(W_i)^{\ot (c+d)}$, 
$N=B^{(c+d)l_i}$, $Q= S_{(c+d)l_i}$, and $Q_1 = S_{cl_i}\times S_{dl_i}$. Here $Q_1$ is embedded in $Q$ by letting $S_{cl_i}$ act on the numbers \begin{equation}\label{eq:R1}R_1:=\{j(c+d)+ k| 0\leq j< l_i, 0<k\leq c\}\end{equation} and  letting $S_{dl_i}$ act on the numbers \begin{equation}\label{eq:R2}R_2:=\{j(c+d)+c +k | 0\leq j\leq l_i, 0<k\leq d\}\end{equation} (we can think of this as dividing $\{1,\ldots, (c+d)l_i\}$ into segments of lengths $c,d,c,d\ldots, c,d$). 
The conditions of Lemma \ref{lem:technical} are fulfilled, where $Q_2 = S_{c+d}^{l_i}$, $Q_3 = Q_1\cap Q_2 = (S_c\times S_d)^{l_i}$, $V= \S_{\nu}^{\boxtimes l_i}$, $U= (\S_{\la}\boxtimes \S_{\mu})^{\boxtimes l_i}$, $\wt{V} = W_{i,\nu,c+d}$ and $\wt{U} = W_{i,\la,c}\boxtimes W_{i,\mu,d}$. Here $Q_2$ is embedded in $Q$ by letting the $j$th copy of $S_{c+d}$ act on the numbers $\{(j-1)(c+d) + k | 0<k\leq c+d\}$. The $W_{i,\la,c}\boxtimes W_{i,\mu,d}$-isotypic component in $\Res^{Q\ltimes N}_{Q_1\ltimes N}W_{i,\nu,c+d}$ is then equal to 
$$\Ind_{Q_3\ltimes N}^{Q_1\ltimes N}(U^{\nu}_{\la,\mu})^{\boxtimes l_i}\ot (\S_{\la}\ot \S_{\mu})^{\boxtimes l_i}\ot \W)\cong (U^{\nu}_{\la,\mu})^{\boxtimes l_i}\ot \Ind_{Q_3\ltimes N}^{Q_1\ltimes N}(\S_{\la}\ot \S_{\mu})^{\boxtimes l_i}\ot \W).$$
The linear automorphism $T_{i,\nu,c+d}:\xi^*(W_{i,\nu,c+d})\to W_{i,\nu,c+d}$ stabilizes this subspace, and is given explicitly by 
$$u_1\ot\cdots\ot u_{l_i}\ot 1\ot s_{11}\ot s_{12}\ot\cdots\ot s_{l_i1}\ot s_{l_i2}\ot w\mapsto $$ $$u_{l_i}\ot\cdots\ot u_{l_i-1}\ot \tau_{c+d}\ot s_{l_i1}\ot s_{l_i2}\ot\cdots\ot s_{l_i-1,1}\ot s_{l_i-1,2}\ot \wt{T_{i,\nu,c+d}}(w),$$ where $\wt{T_{i,\nu,c+d}}$ is given in Formula \ref{eq:Tilc} above, and the permutation $\tau_{c+d}\in S_{(c+d)l_i}$ is given by $\tau(j) = j-(c+d)$ mod $(c+d)l_i$.

The map $T_{i,\la,c}\ot T_{i,\mu,d}$ can also be written explicitly and is given by
$$u_1\ot\cdots\ot u_{l_i}\ot 1\ot s_{11}\ot s_{12}\ot\cdots\ot s_{l_i1}\ot s_{l_i2}\ot w\mapsto $$ $$u_{1}\ot\cdots\ot u_{l_i}\ot (\tau_c,\tau_d)\ot s_{l_i1}\ot s_{l_i2}\ot\cdots\ot s_{l_i-1,1}\ot s_{l_i-1,2}\ot (\wt{T_{i,\la,c}}\ot\wt{T_{i,\mu,d}})(w).$$ Here the permutation $(\tau_c,\tau_d)\in S_{cl_i}\times S_{dl_i}$ is given explicitly by$\tau_c(j) = j-c$ mod $cl_i$ and $\tau_d(j) = j-d$ mod $dl_i$. Since we embed $S_{cl_i}\times S_{dl_i}$ by letting $S_{cl_i}$ act on the set $R_1$ and by letting $S_{dl_i}$ act on the set $R_2$ defined in Equations \ref{eq:R1} and \ref{eq:R2} respectively, the permutation $(\tau_c,\tau_d)$ is equal to the permutation $\tau_{c+d}$ from the previous equation. We will denote the common value of these permutations by $\tau$ henceforth. 

We have an equality $\wt{T_{i,\la,c}}\ot\wt{T_{i,\mu,d}}= \wt{T_{i,\nu,c+d}}$. The composition $(T_{i,\la,c}\ot T_{i,\mu,d})T_{i,\nu,c+d}^*$ is therefore 
$$u_1\ot\cdots\ot u_{l_i}\ot 1\ot s_{11}\ot s_{12}\ot\cdots\ot s_{l_i1}\ot s_{l_i2}\ot w\mapsto $$ $$u_2\ot\cdots\ot u_{l_i}\ot u_1\ot 1\ot s_{11}\ot s_{12}\ot\cdots\ot s_{l_i1}\ot s_{l_i2}\ot w.$$ In other words, we can write this map as $L\ot \Id$ where $L:(U_{\la,\mu}^{\nu})^{\ot l_i}\to (U_{\la,\mu}^{\nu})^{\ot l_i}$ is given by cyclically permuting the tensors, and $\Id$ is the identity on $W_{i,\la,c}\ot W_{i,\mu,d}$. By Lemma \ref{lem:cyclic} the trace of this map is $\dim(W_{i,\la,c})\dim(W_{i,\mu,d})\cdot \dim(U_{\la,\mu}^{\nu})=  \dim(W_{i,\la,c})\dim(W_{i,\mu,d})\cdot c^{\nu}_{\la,\mu}$. Since $\ol{T_{i,\la,c}}= \frac{1}{\dim(W_{i,\la,c})}T_{i,\la,c}$, and similarly for $\mu$ and $\nu$ Lemma \ref{lem:innerprod} gives $\langle \ol{T_{i,\la,c}}\cdot \ol{T_{i,\mu,d}},\ol{T_{i,\nu,c+d}}\rangle= c_{\la,\mu}^{\nu}$ as desired. 
\end{proof}
Lemma \ref{lem:LR} shows that in fact we also have a parametrisation of the cuspidal elements in $\H_{\Ga_0}$. Indeed, $\H_{\Ga_0}$ splits as the tensor product of the algebras $\H_{\Ga_0,\Ow,i}$, and we have just seen that each one of these algebras is a basic PSH-algebra. 
\begin{theorem}\label{thm:cuspidals}
The cuspidal elements in the lattice $\H_{\Ga_0}$ are parametrised by pairs $(\Ow,\ol{[W]})$, where $\Ow$ is a strongly finite transitive $F_m^0$-set, and $\ol{[W]}\in \Irr(\Aut(F_m^0))/\langle \rho\rangle$. The cuspidal element that corresponds to $(\Ow,\ol{[W]})$ has degree $|\Ow|k(\Ow)l([W])$, where $l([W])$ is the cardinality of the $\rho$-orbit of $[W]$. \end{theorem}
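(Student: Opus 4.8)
The plan is to exploit the tensor product decomposition $\H_{\Ga_0}\cong\bigotimes_{\Ow\in OSFT(\Ga_0)}\bigotimes_i\H_{\Ga_0,\Ow,i}$, where $i$ runs over the $\rho(\Ow)$-orbits in $\Irr(\Aut_{F_m^0}(\Ow))$, together with Zelevinsky's structure theory. Theorem \ref{thm:main} shows $\H_{\Ga_0}$ is a PSH-algebra, and the same argument shows each factor $\H_{\Ga_0,\Ow,i}$ is a PSH-algebra: it is a sub-Hopf-algebra carrying the orthonormal basis $\{\ol{T_{i,\la,c}}\}_{c\geq 0,\ \la\vdash c}$, its multiplication structure constants in this basis are the Littlewood--Richardson coefficients $c^{\nu}_{\la,\mu}\geq 0$ by the preceding lemma, and hence (by self-adjointness) so are its comultiplication structure constants. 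So it remains to identify the cuspidal elements of each factor, transfer this to $\H_{\Ga_0}$, and track the grading.

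First I would determine the cuspidal elements of a single factor $\H_{\Ga_0,\Ow,i}$. A basis element $\ol{T_{i,\nu,c}}$ is cuspidal iff it has norm $1$ and is primitive; norm $1$ is automatic, and by self-adjointness primitivity is equivalent to $\ol{T_{i,\nu,c}}$ being orthogonal to every product $\ol{T_{i,\la,d}}\cdot\ol{T_{i,\mu,e}}$ with $d,e\geq 1$ and $d+e=c$. For $c=1$ there are no admissible $(d,e)$, so $\ol{T_{i,(1),1}}$ is cuspidal; for $c\geq 2$, Pieri's rule gives $\langle\ol{T_{i,\nu,c}},\ \ol{T_{i,\la,c-1}}\cdot\ol{T_{i,(1),1}}\rangle=c^{\nu}_{\la,(1)}=1$ for $\la$ obtained from $\nu$ by deleting a corner box, so no such $\ol{T_{i,\nu,c}}$ is cuspidal. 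Thus $\H_{\Ga_0,\Ow,i}$ has the single cuspidal element $\ol{T_{i,(1),1}}$; in particular each factor is a universal PSH-algebra and the displayed decomposition is Zelevinsky's canonical one.

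Next I would pass to $\H_{\Ga_0}$ itself. Its basis consists of the products $\prod_{\Ow,i}b_{\Ow,i}$ of basis elements of the factors, each of norm $1$, and a short computation with the tensor-product comultiplication (using that the unit of each factor is grouplike and that $\H_{\Ga_0}$ is connected and graded) shows such a product is primitive precisely when all but one of the $b_{\Ow,i}$ is the unit and the remaining one is primitive in its factor. Combined with the previous step, this gives a bijection between the cuspidal elements of $\H_{\Ga_0}$ and the pairs $(\Ow,i)$ with $\Ow\in OSFT(\Ga_0)$ and $i$ a $\rho(\Ow)$-orbit in $\Irr(\Aut_{F_m^0}(\Ow))$; writing $\ol{[W_i]}$ for that orbit, this is exactly the indexing set $FT(F_m,F_m^0)$, and the cuspidal attached to $(\Ow,\ol{[W_i]})$ is $\ol{T_{i,(1),1}}$ embedded into $\H_{\Ga_0}$ by tensoring with the units of the other factors. (Equivalently, this is just the uniqueness part of Zelevinsky's classification applied to the decomposition above.) It then remains to compute $\deg\ol{T_{i,(1),1}}$ by tracking the $\N$-grading through Sections \ref{sec:decomposing}--\ref{sec:grpGa}: by Propositions \ref{prop:splitting1} and \ref{prop:decga0} the summand $(KG_a)_{G_a}$ of $\Ainf_{\Ga_0}$ sits in degree $|Y_a|=\sum_{\Ow}|\Ow|\,k(\Ow)\,a(\Ow)$, so in $\Ainf_{\Ga_0,\Ow}\cong\bigoplus_n(KS_n\ltimes\Aut_{F_m^0}(\Ow)^n)_{S_n\ltimes\Aut_{F_m^0}(\Ow)^n}$ the $n$-th summand has degree $|\Ow|\,k(\Ow)\,n$, and since $\ol{T_{i,\la,c}}$ corresponds to $W_{i,\la,c}\in\Rep(S_{cl_i}\ltimes\Aut_{F_m^0}(\Ow)^{cl_i})$ it lies in the piece with $n=c\,l_i=c\,l([W_i])$; hence $\deg\ol{T_{i,\la,c}}=|\Ow|\,k(\Ow)\,l([W_i])\,c$, and $c=1$ gives the claimed degree $|\Ow|\,k(\Ow)\,l([W_i])$.

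The representation-theoretic content — uniqueness of the cuspidal in each factor and the behaviour of cuspidals under tensor products — is essentially immediate from the preceding lemma and Zelevinsky's theory. The step I expect to require genuine care, though it is routine, is the grading bookkeeping above: each reduction in Sections \ref{sec:decomposing}--\ref{sec:grpGa} (passing to the subsets $S_x$ and to twisted conjugation, collapsing the exponent $k(\Ow)$ via the multiplication map $G^k\to G$, and splitting off the $\rho$-orbits) reindexes or rescales the grading, and one must verify that the three factors $|\Ow|$, $k(\Ow)$ and $l([W_i])$ are each picked up exactly once.
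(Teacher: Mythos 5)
Your proposal is correct and follows essentially the same route as the paper: the paper's (very terse) proof likewise deduces from the Littlewood--Richardson lemma that each tensor factor $\H_{\Ga_0,\Ow,i}$ is a basic PSH-algebra, hence has a unique cuspidal element, and reads off the parametrisation and degrees from the decomposition $\H_{\Ga_0}=\bigotimes_{\Ow,i}\H_{\Ga_0,\Ow,i}$. Your Pieri-rule argument and the grading bookkeeping simply make explicit the details the paper leaves implicit.
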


\section{Examples}\label{sec:examples}
\subsection{Graphs with arbitrary large fundamental group ranks}
Let $m\geq 1$. We give here an example how graphs with fundamental group $F_m$ can occur, even when we start with relatively simple structure tensors. 
Assume that our type $((p_i,q_i))$ contains structure tensors $x_1$ and $x_2$ with $(p_1,q_1)=(1,2)$ and $(p_2,q_2) = (2,1)$. Such structure tensors occur, for example, when one considers Hopf algebras or Frobenius algebras. The invariant $$\Tr(x_1(x_1\ot 1)(x_1\ot 1\ot 1)\cdots (x_1\ot 1^{m-2})(x_2\ot 1^{m-2})\cdots x_2)$$ has an associated irreducible graph $\Ga_0$ with $\pi_1(\Ga_0)\cong F_m$. An alternative graph with fundamental group $F_m$ is given by simply taking a single tensor $x_3$ of type $(m,m)$ and taking its trace. 

\subsection{The case $m=1$}
Consider the algebraic structure that contains $k$ endomorphisms $T_i:W\to W$, $i=0,\ldots, k-1$. Such an algebraic structure can also be thought of as representations of the free algebra on $k$ generators. In this case, the connected diagrams just correspond to invariants of the form $\Tr(T_{i_1}T_{i_2}\cdots T_{i_n})$. Denote the adequate graph that corresponds to such an invariant by $\Ga_{i_1,\ldots, i_n}$. We have $\Ga_{i_1,\ldots, i_n} = \Ga_{i_n,i_1,\ldots i_{n-1}}$. Moreover, we have a covering $\Ga_{i_1,\ldots, i_n}\to \Ga_{j_1,\ldots, j_l}$ if and only if there is an $r>0$ such that $(i_1,\ldots i_n) = (j_1,\ldots, j_l)^r$ up to a cyclic permutation. The figure below shows an example of a covering of such graphs, that correspond to the invariants $\Tr(T_1T_2T_1T_2)$ and $\Tr(T_1T_2)$:
\begin{center}\scalebox{0.7}{
\begin{tikzpicture}
	\begin{pgfonlayer}{nodelayer}
		\node [style=graph node] (0) at (-6.25, 1) {};
		\node [style=graph node] (1) at (-3.5, 3.75) {};
		\node [style=graph node] (2) at (-0.75, 1) {};
		\node [style=graph node] (3) at (-3.5, -1.5) {};
		\node [style=none] (4) at (-7, 1) {1};
		\node [style=none] (5) at (-4, -2) {2};
		\node [style=none] (6) at (-3.5, 4.5) {2};
		\node [style=none] (7) at (0.25, 1) {1};
		\node [style=none] (8) at (2.75, 1) {\Large{$\to$}};
		\node [style=graph node] (9) at (6, 0.75) {};
		\node [style=graph node] (10) at (8.75, 0.75) {};
		\node [style=none] (11) at (5.25, 0.75) {1};
		\node [style=none] (12) at (9.5, 0.75) {2};
		\node [style=none] (13) at (-5, 3) {$(1,1)$};
		\node [style=none] (14) at (-5.25, -0.75) {$(1,1)$};
		\node [style=none] (15) at (-1.5, -0.5) {$(1,1)$};
		\node [style=none] (16) at (-1.75, 2.75) {$(1,1)$};
		\node [style=none] (17) at (7.25, 3) {$(1,1)$};
		\node [style=none] (18) at (7.25, -1.5) {$(1,1)$};
	\end{pgfonlayer}
	\begin{pgfonlayer}{edgelayer}
		\draw [style=new edge style 0] (0) to (3);
		\draw [style=new edge style 0] (3) to (2);
		\draw [style=new edge style 0] (2) to (1);
		\draw [style=new edge style 0] (1) to (0);
		\draw [style=new edge style 0, bend left=90, looseness=2.00] (10) to (9);
		\draw [style=new edge style 0, bend left=90, looseness=2.00] (9) to (10);
	\end{pgfonlayer}
\end{tikzpicture}}
\end{center}

Write $c_n:\{0,1,\ldots, k-1\}^n\to \{0,1,\ldots, k-1\}^n$ for the cyclic permutation. The vector space $\Ainf_n$ has a basis that is given by the orbit space $\{0,\ldots, k-1\}^n/\langle c_n\rangle$. A $c_n$-orbit in $\{0,1,\ldots, k-1\}^n$ gives a cuspidal element in $\Ainf_n$ if and only if it contains exactly $n$ elements. 

We thus see that in this case all the connected graphs are cycles. This implies that for every irreducible graph the parameter $m$ is equal to 1. Since in this case $F_m^0$ is the trivial group, the set of cuspidal elements in $\Ainf$ is in one to one correspondence with the irreducible graphs. 

A particular instance of this is the case where $k=q=p^l$, a prime power. In this case the algebra $\Ainf$ has a surprising connection with the PSH-algebra $Z(\F_q):= \bigoplus_{n\geq 0} \R(\GL_n(\F_q))$, in which the multiplication and comultiplication are given by parabolic induction and restriction. It is known (see \cite[Page 131]{Zelevinsky}) that the number of cuspidal elements of degree $n$ in this algebra is equal to the number of irreducible monic polynomials of degree $n$ in $\F_q[x]$. By taking minimal polynomials, this is also the number of $Gal(\F_{q^n}/\F_q)$-orbits in the set $\{a\in \F_{q^n}| \F_q[a] = \F_{q^n}\}$ of primitive elements in the extension $\F_{q^n}$. Recall that a \emph{normal basis} for a Galois extension $F_2/F_1$ is a basis of the form $\{g(a)|g\in Gal(F_2/F_1)\}$. By the normal basis Theorem (see \cite[Section 4.14]{Jac}), a normal basis always exists. By considering the finite extension $\F_{q^n}/\F_q$ we have the following lemma:
\begin{lemma}\label{lem:normalbasis} There is an element $a\in \F_{q^n}$ such that $\{a,a^q,a^{q^2},\ldots, a^{q^{n-1}}\}$ is a basis for $\F_{q^n}$ over $\F_q$. 
\end{lemma}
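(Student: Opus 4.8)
The plan is to deduce the lemma directly from the Normal Basis Theorem just quoted. The extension $\F_{q^n}/\F_q$ is Galois, and its Galois group $\mathrm{Gal}(\F_{q^n}/\F_q)$ is cyclic of order $n$, generated by the Frobenius automorphism $\phi\colon x\mapsto x^q$. By definition a normal basis for this extension is an $\F_q$-basis of $\F_{q^n}$ of the form $\{g(a)\mid g\in \mathrm{Gal}(\F_{q^n}/\F_q)\}$ for some $a\in \F_{q^n}$; since $\mathrm{Gal}(\F_{q^n}/\F_q)=\{\mathrm{id},\phi,\phi^2,\dots,\phi^{n-1}\}$ and $\phi^i(a)=a^{q^i}$, such a basis is exactly $\{a,a^q,a^{q^2},\dots,a^{q^{n-1}}\}$. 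Thus the statement is precisely the Normal Basis Theorem specialised to $\F_{q^n}/\F_q$, and the only thing to observe is that the Galois group is generated by a single element, namely Frobenius, so that the orbit $\{g(a)\}_g$ literally becomes the list of iterated $q$-th powers.

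For completeness I would also recall the short self-contained argument, which avoids invoking the general theorem. View $\F_{q^n}$ as an $n$-dimensional $\F_q$-vector space and regard $\phi$ as an $\F_q$-linear operator on it. By Artin's theorem on the linear independence of characters, the distinct automorphisms $\mathrm{id},\phi,\dots,\phi^{n-1}$ are linearly independent over $\F_{q^n}$, hence \emph{a fortiori} no nontrivial $\F_q$-linear combination of $\mathrm{id},\phi,\dots,\phi^{n-1}$ is the zero operator. Therefore the minimal polynomial of $\phi$ over $\F_q$ has degree at least $n$; since $\phi^n=\mathrm{id}$, it equals $X^n-1$, which is then also the characteristic polynomial. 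Consequently $\F_{q^n}$ is a cyclic $\F_q[\phi]$-module, i.e.\ $\phi$ admits a cyclic vector $a$, which means that $\{a,\phi(a),\dots,\phi^{n-1}(a)\}=\{a,a^q,\dots,a^{q^{n-1}}\}$ is an $\F_q$-basis of $\F_{q^n}$.

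There is essentially no obstacle in this proof: the result is classical. The only points deserving a word of care are the identification of the Galois group of $\F_{q^n}/\F_q$ with $\langle\phi\rangle$ and, in the self-contained version, the passage from $\F_{q^n}$-linear independence of the $\phi^i$ to the assertion that $\phi$ has no annihilating polynomial of degree $<n$ over $\F_q$.
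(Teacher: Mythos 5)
Your first paragraph is exactly the paper's argument: the paper simply invokes the Normal Basis Theorem from Jacobson and observes that for $\F_{q^n}/\F_q$ the Galois group is cyclic, generated by Frobenius, so the normal basis is $\{a,a^q,\ldots,a^{q^{n-1}}\}$. Your supplementary self-contained argument (linear independence of characters forces the minimal polynomial of Frobenius over $\F_q$ to be $X^n-1$, hence equal to the characteristic polynomial, hence a cyclic vector exists) is also correct, but it is a bonus beyond what the paper does; the core proof is the same.
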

Recall that we denote by $Zel$ the universal PSH-algebra with a single cuspidal element. 
Uri Onn observed the following interesting property of the algebra $\Ainf$ in this case:
\begin{proposition}\label{prop:uri} In the case where $k=q=p^l$ we have $$\Ainf\cong Z(\F_q)\ot Zel.$$ 
\end{proposition}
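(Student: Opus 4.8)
The plan is to compare cuspidal elements and then invoke Zelevinsky's structure theorem. Work with the PSH-lattice $\H=\bigotimes_{\Ga_0}\H_{\Ga_0}$ of Theorem \ref{thm:main}; in the present situation every irreducible graph is an (aperiodic) cycle, hence has fundamental group $F_1$, so each factor $\H_{\Ga_0}$ is already a genuine PSH-algebra (not merely a rational one) and $\H\ot_{\Z}K=\Ainf$. By Zelevinsky's theorem any PSH-algebra $A$ satisfies $A\cong\bigotimes_{\rho}A_{\rho}$, the tensor product being over the cuspidal elements $\rho$ of $A$, with each $A_{\rho}\cong Zel$ after rescaling the grading by $\deg(\rho)$; hence $A$ is determined, up to isomorphism of PSH-algebras, by the number of cuspidal elements it has in each degree. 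So it suffices to check that $\H$ and $Z(\F_q)\ot Zel$ have the same such counts in every degree $n\ge1$.

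Counting for $\H=\Ainf$: as explained in this section, for the type of $k=q$ endomorphisms the irreducible graphs are exactly the aperiodic cycles $\Ga_{i_1,\dots,i_n}$, so by Theorem \ref{thm:cuspidals} the cuspidal elements of $\Ainf$ of degree $n$ are in bijection with the aperiodic necklaces of length $n$ over an alphabet of size $q$; their number $M_q(n)$ is characterised by $\sum_{d\mid n}d\,M_q(d)=q^n$ for all $n\ge1$, since every word of length $n$ is a power of a unique primitive word. To translate this into the language of polynomials I use Lemma \ref{lem:normalbasis}: fix $a\in\F_{q^n}$ with $\{a,a^q,\dots,a^{q^{n-1}}\}$ a normal basis of $\F_{q^n}/\F_q$, and send a monic irreducible $f$ of degree $n$ to the cyclic orbit of the coefficient vector, in this basis, of any root of $f$ (the Frobenius $\alpha\mapsto\alpha^q$ acting as the cyclic shift, and irreducibility of $f$ forcing the orbit to have full length $n$); conversely an aperiodic necklace $(c_0,\dots,c_{n-1})$ yields $\sum_i c_i a^{q^i}\in\F_{q^n}$ lying in no proper subfield, whose minimal polynomial is monic irreducible of degree $n$. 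These constructions are mutually inverse, so $M_q(n)$ equals the number of monic irreducible polynomials of degree $n$ over $\F_q$. In particular $\Ainf$ has $q$ cuspidal elements of degree $1$, namely the one-vertex graphs $\Ga_0,\dots,\Ga_{q-1}$.

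Counting for $Z(\F_q)\ot Zel$: a tensor product of PSH-algebras is again a PSH-algebra with the product basis, and its cuspidal elements are $\{\rho\ot 1:\rho\text{ cuspidal in }Z(\F_q)\}\sqcup\{1\ot\sigma:\sigma\text{ cuspidal in }Zel\}$, degrees preserved — one checks directly $\|\rho\ot 1\|=1$ and $\Delta(\rho\ot 1)=(\rho\ot 1)\ot 1+1\ot(\rho\ot 1)$, and the converse (no other cuspidals occur) follows by writing each factor as a tensor product of copies of $Zel$. Now $Zel$ has exactly one cuspidal element, of degree $1$, while by the cited computation of Zelevinsky (p.\ 131) the number of cuspidal elements of $Z(\F_q)=\bigoplus_n\R(\GL_n(\F_q))$ of degree $n$ is the number of monic irreducible polynomials of degree $n$ over $\F_q$ when $n\ge2$, and equals $q-1$ when $n=1$ (the characters of $\GL_1(\F_q)=\F_q^{\times}$, i.e.\ the monic linear polynomials other than $X$). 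Adding the single degree-$1$ cuspidal of $Zel$, we get for every $n\ge1$ exactly the number of monic irreducible polynomials of degree $n$ over $\F_q$, which by the previous paragraph is $M_q(n)$ — the number of cuspidal elements of $\Ainf$ of degree $n$. By Zelevinsky's classification this forces $\Ainf\cong Z(\F_q)\ot Zel$ as PSH-algebras, in particular as graded Hopf algebras.

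The only substantial ingredient is Zelevinsky's structure theorem, which is already invoked in the paper; everything else is elementary bookkeeping. The one point needing care is the degree-$1$ discrepancy ($q$ cuspidals for $\Ainf$ versus $q-1$ for $Z(\F_q)$), which is precisely what the extra tensor factor $Zel$ absorbs; if one prefers to avoid the normal-basis bijection, the same conclusion follows from the two counting identities $\sum_{d\mid n}d\,M_q(d)=q^n$ and $\sum_{d\mid n}d\cdot\#\{\text{monic irreducibles of degree }d\text{ over }\F_q\}=q^n$ (the latter from the factorisation of $X^{q^n}-X$).
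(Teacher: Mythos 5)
Your proof is correct and follows essentially the same route as the paper: reduce to counting cuspidal elements in each degree via Zelevinsky's classification, identify the cuspidals of $\Ainf$ with aperiodic necklaces, and use the normal basis theorem to biject these with Galois orbits of primitive elements, i.e.\ with monic irreducible polynomials, matching the cuspidal count of $Z(\F_q)\ot Zel$. Your explicit handling of the degree-one case ($q-1$ cuspidals for $Z(\F_q)$ plus one for $Zel$) and the alternative counting-identity argument are slightly more careful than, but not different in substance from, the paper's proof.
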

\begin{proof}
It will be enough to show that the two PSH-algebras have the same number of cuspidal elements in every degree. In degree 1 the algebra $Z(\F_q)\ot Zel$ has $q-1+1=q$ cuspidal elements. The same holds for the algebra $\Ainf$, where the cuspidal elements are  $\Tr(T_0),\ldots, \Tr(T_{q-1})$. 

In degree $n>1$, the number of cuspidal elements in the algebra $\Ainf$ is equal to the number of the equivalence classes of sequences $(i_1,\ldots, i_n)$ in $\{0,\ldots, q-1\}^n$ that cannot be written as a proper power of any smaller sequence. This is equivalent to saying that the orbit of $(i_1,\ldots, i_n)$ under the cyclic permutation contains $n$ elements. 

Let $\psi:\{0,1,\ldots, q-1\}\to \F_q$ be a bijection, and let $a\in \F_{q^n}$ be an element that satisfies the condition of Lemma \ref{lem:normalbasis}. Define $\psi(i_1,\ldots, i_n) = \sum_j \psi(i_j)a^{q^{j-1}}$. This gives us a bijection between $\{0,1,\ldots, q-1\}^n$ and $\F_{q^n}$. We claim that $\Tr(T_{i_1}\cdots T_{i_n})$ is a cuspidal element if and only if $\psi(i_1,\ldots,i_n)\in \F_{q^n}$ is a primitive element. Indeed, by considering the action of the generator of $Gal(\F_{q^n}/\F_q)$ we see that 
$$\psi(i_1,\ldots, i_n)^q = \sum_j \psi(i_j)^q a^{q^{j}} = \sum_j \psi(i_{j-1})a^{q^{j-1}} = \psi(i_n,i_1,\ldots, i_{n-1}).$$ 
The $Gal(\F_{q^n}/\F_q)$-orbit of $\psi(i_1,\ldots, i_n)$ then contains $n$ elements if and only if the $c_n$-orbit of $(i_1,\ldots, i_n)$ contains $n$ elements. This shows that there is a bijection between the cuspidal elements in $\Ainf_n$ and the cuspidal elements in $\R(\GL_n(\F_q))$. Since $Zel$ has no cuspidal elements in degree $n>1$, we get the result. 
 \end{proof}
 For every $d\geq 0$ the algebra $\Ainf$ contains an ideal $I_d$ such that $\Ainf/I_d\cong K[x_{ij}^{(l)}]^{\GL_d}$, the algebra of invariants of $k$-tuples of matrices of degree $d$, where the action of $\GL_d$ is given by simultaneous conjugation. These algebras were studied extensively, see \cite{Procesi,Teranishi,Nakamoto,ADS,BD,Hoge,Razmyslov}

 The proposition above provides an isomorphism $\Theta:\Ainf\to Z(\F_q)\ot Zel$. 
 The main difficulty in studying the algebras $K[x_{ij}^{(l)}]^{\GL_d}$ is in studying the relations arising from the ideal $I_d$.  
 This raises the following natural question:
 \begin{question} Do the ideals $\Theta(I_d)$ have a representation theoretical interpretation? 
 \end{question}
 \subsection{The case $m=2$}
For convenience, we will assume we have a structure tensor $x_1$ of type $(2,2)$. We consider the graph \\ 
\begin{center}
\begin{tikzpicture}
	\begin{pgfonlayer}{nodelayer}
		\node [style=graph node] (0) at (7, 4.75) {};
		\node [style=none] (1) at (7, 4.75) {};
		\node [style=none] (2) at (7, 4.75) {};
		\node [style=none] (3) at (4.75, 4.75) {};
		\node [style=none] (4) at (9.25, 4.75) {};
		\node [style=none] (5) at (6, 6.25) {$(1,1)$};
		\node [style=none] (6) at (8.25, 6.25) {$(2,2)$};
		\node [style=none] (7) at (7.5, 4.75) {1};
		\node [style=none] (8) at (3.5, 4.75) {$\Gamma_0:=$};
	\end{pgfonlayer}
	\begin{pgfonlayer}{edgelayer}
		\draw [style=new edge style 0, bend left=90, looseness=1.75] (4.center) to (2.center);
		\draw [style=new edge style 0, bend right=90, looseness=1.75] (3.center) to (2.center);
		\draw [style=new edge style 0, bend left=90, looseness=1.75] (2.center) to (4.center);
		\draw [style=new edge style 0, bend left=270, looseness=1.75] (2.center) to (3.center);
	\end{pgfonlayer}
\end{tikzpicture}
\end{center}
that corresponds to the invariant $\Tr(x_1)$, where we consider $x_1$ as a map from $W\ot W\to W\ot W$. 
The graph $\Ga_0$ is then a cuspidal element of degree 1 in the algebra $\Ainf_{\Ga_0}$. 

We describe now the cuspidal elements of degree 2 in $\Ainf_{\Ga_0}$. Write $\pi_1(\Ga_0,v) = \langle z_1,z_2\rangle$, where $z_i$ is given by the edge with coloring $(i,i)$ for $i=1,2$. The subgroup $F_2^0$ is then freely generated by the elements $\{z_1^az_2^{-a}\}_{a\in \Z}$. Define $F_2^0\to S_2$ by $z_1^az_2^{-a}\mapsto (12)^a$. By considering the action of $z_1$ on the generators of $F_2^0$ it can be shown that this is the only non-trivial homomorphism that is invariant under the conjugation action of $z_1$ on $F_2^0$. 
Thus, if we think of this homomorphism as an orbit $\Ow$ we have $k(\Ow)=1$. Since $S_2$ is abelian it holds that $\Aut_{F_2^0}(\Ow)=S_2$. This group has two one-dimensional representations, denoted by $K_0$ and $K_1$. 
The automorphism $\rho(\Ow)$ is trivial, and therefore $l(K_0)=l(K_1)=1$.  The resulting cuspidal elements in degree 2 are then $\frac{1}{2}(\Ga_1+\Ga_2)$ and $\frac{1}{2}(\Ga_1-\Ga_2)$, where 
\begin{center}\scalebox{0.9}{
\begin{tikzpicture}
	\begin{pgfonlayer}{nodelayer}
		\node [style=none] (8) at (15.75, 7.25) {$\Gamma_1:=$};
		\node [style=graph node] (13) at (19, 7.25) {};
		\node [style=none] (14) at (19, 7.25) {};
		\node [style=none] (15) at (19, 7.25) {};
		\node [style=none] (16) at (16.75, 7.25) {};
		\node [style=none] (17) at (18, 8.75) {$(1,1)$};
		\node [style=graph node] (18) at (22.25, 7.25) {};
		\node [style=none] (19) at (22.25, 7.25) {};
		\node [style=none] (20) at (22.25, 7.25) {};
		\node [style=none] (21) at (20, 7.25) {};
		\node [style=none] (22) at (21.25, 8.75) {$(1,1)$};
		\node [style=none] (23) at (20.75, 9.5) {};
		\node [style=none] (24) at (21, 4.5) {};
		\node [style=none] (25) at (20.5, 10) {$(2,2)$};
		\node [style=none] (26) at (20.5, 4) {$(2,2)$};
		\node [style=none] (27) at (24.5, 7.25) {$\Gamma_2:=$};
		\node [style=graph node] (28) at (27.75, 7.25) {};
		\node [style=none] (29) at (27.75, 7.25) {};
		\node [style=none] (30) at (27.75, 7.25) {};
		\node [style=none] (31) at (25.5, 7.25) {};
		\node [style=none] (32) at (29.5, 10) {$(1,1)$};
		\node [style=graph node] (33) at (31, 7.25) {};
		\node [style=none] (34) at (31, 7.25) {};
		\node [style=none] (35) at (31, 7.25) {};
		\node [style=none] (36) at (28.75, 7.25) {};
		\node [style=none] (37) at (29.5, 4) {$(1,1)$};
		\node [style=none] (38) at (29.5, 9.5) {};
		\node [style=none] (39) at (29.75, 4.5) {};
		\node [style=none] (40) at (29.75, 8.75) {$(2,2)$};
		\node [style=none] (41) at (26.5, 8.75) {$(2,2)$};
		\node [style=none] (42) at (31.5, 7.25) {1};
		\node [style=none] (43) at (27.25, 7.25) {1};
		\node [style=none] (44) at (18.5, 7.25) {1};
		\node [style=none] (45) at (22.75, 7.25) {1};
	\end{pgfonlayer}
	\begin{pgfonlayer}{edgelayer}
		\draw [style=new edge style 0, bend right=90, looseness=1.75] (16.center) to (15.center);
		\draw [style=new edge style 0, bend left=270, looseness=1.75] (15.center) to (16.center);
		\draw [style=new edge style 0, bend right=90, looseness=1.75] (21.center) to (20.center);
		\draw [style=new edge style 0, bend left=270, looseness=1.75] (20.center) to (21.center);
		\draw [style=new edge style 0, in=180, out=90] (15.center) to (23.center);
		\draw [style=new edge style 0, in=90, out=0, looseness=1.25] (23.center) to (20.center);
		\draw [style=new edge style 0, in=0, out=-90] (20.center) to (24.center);
		\draw [style=new edge style 0, in=-90, out=-180] (24.center) to (15.center);
		\draw [style=new edge style 0, bend right=90, looseness=1.75] (31.center) to (30.center);
		\draw [style=new edge style 0, bend left=270, looseness=1.75] (30.center) to (31.center);
		\draw [style=new edge style 0, bend right=90, looseness=1.75] (36.center) to (35.center);
		\draw [style=new edge style 0, bend left=270, looseness=1.75] (35.center) to (36.center);
		\draw [style=new edge style 0, in=180, out=90] (30.center) to (38.center);
		\draw [style=new edge style 0, in=90, out=0, looseness=1.25] (38.center) to (35.center);
		\draw [style=new edge style 0, in=0, out=-90] (35.center) to (39.center);
		\draw [style=new edge style 0, in=-90, out=-180] (39.center) to (30.center);
	\end{pgfonlayer}
\end{tikzpicture}}
\end{center}
 
\section{Finitely generated groups}\label{sec:fggrp}
Let $G$ be a finitely generated group. Assume that $G$ admits a surjective group homomorphism $q:G\to\Z$. We can find a generating set $\{g_1,\ldots, g_m\}$ for $G$ such that $q(g_i)=1$ for $i=1,\ldots, m$. This generating set induces a surjective homomorphism $p:F_m\to G$ given by $p(z_i)=g_i$. The homomorphism $\upsilon$ then factors as $\upsilon = qp$.
 
Write $G^0 = \Ker(q)$, and write $x\in G$ for a preimage of $1\in\Z$. We thus have $G=\langle x\rangle\rtimes G^0$, and $\Ker(p)\subseteq F_m^0$. Let $\Ga_0$ be a graph with $\pi_1(\Ga_0,v)=F_m$. Then $\Ainf_{\Ga_0}$ has a graded subspace $\Ainf_G$ such that $(\Ainf_G)_n$ is spanned by all $\ol{(\sigma_1,\ldots, \sigma_m)}$ such that $\phi_{(\sigma_i)}:F_m\to S_n$ factors through $p$. It is easy to see that $\Ainf_G$ is a graded Hopf subalgebra of $\Ainf_{\Ga_0}$. The following result is immediate using the correspondence between covering spaces and $G$-sets:
\begin{proposition}
Assume that $G=\pi_1(T,t)$ for some topological space $T$ that admits a universal covering space. 
The algebra $\Ainf_G$ is isomorphic to the polynomial algebra on the isomorphism classes of the finite covering spaces of $T$. If $z:\wt{T}\to T$ is an $n$-fold covering space, then $[\wt{T}]$ has degree $n$ in $\Ainf_G$, and $|[\wt{T}]|^2 = |\Aut_T(\wt{T})|$, the cardinality of the automorphism group of $\wt{T}$ as a covering space. 
\end{proposition}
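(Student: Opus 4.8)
The statement to prove is a purely topological restatement of what has already been established for $\Ainf_{\Ga_0}$ combined with Proposition \ref{prop:bijection} and Proposition \ref{prop:splitting1}, so the proof is essentially a translation. First I would recall that $\Ainf_G$ is, by its very definition, spanned by classes $\ol{(\sigma_1,\ldots,\sigma_m)}$ whose associated homomorphism $\phi_{(\sigma_i)}\colon F_m\to S_n$ factors through $p\colon F_m\to G$; under the correspondence of Proposition \ref{prop:bijection} such tuples correspond exactly to $n$-fold coverings of $\Ga_0$ whose monodromy action of $F_m$ on the fibre descends to a $G$-action, i.e. to isomorphism classes of finite $G$-sets of cardinality $n$. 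Since $G=\pi_1(T,t)$ and $T$ admits a universal cover, the classification of covering spaces (Hatcher, Proposition 1.39–1.40, as already invoked in this paper) gives a bijection between isomorphism classes of $n$-fold covers of $T$ and isomorphism classes of $G$-sets of cardinality $n$. Composing these bijections identifies a basis of $(\Ainf_G)_n$ with the isomorphism classes of $n$-fold covers of $T$.

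Next I would verify that this identification is compatible with the full Hopf-algebra structure, not just the grading. The key point is that the multiplication on $\Ainf_{\Ga_0}$, by part (4) of Proposition \ref{prop:splitting1}, corresponds on the level of $F_m$-sets to disjoint union; the subspace $\Ainf_G$ is closed under this because a disjoint union of two $G$-sets is again a $G$-set, which is exactly the statement (already asserted in the excerpt) that $\Ainf_G$ is a graded Hopf subalgebra. On the topological side, disjoint union of $G$-sets corresponds precisely to disjoint union (i.e. the product in the monoid of covering spaces) of the associated covers of $T$, so the polynomial generators of $\Ainf_G$ are exactly the connected covers of $T$, and $\Ainf_G$ is the polynomial algebra on isomorphism classes of all finite covers. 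For the degree statement: an $n$-fold cover $z\colon\wt T\to T$ corresponds to a $G$-set of cardinality $n$, hence to a tuple $(\sigma_i)\in S_n^m$, which by Proposition \ref{prop:splitting1}(2) sits in degree $n$ of $\Ainf_{\Ga_0}$.

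For the norm formula, I would invoke part (3) of Proposition \ref{prop:splitting1} together with the group-theoretic identification already used repeatedly: for a cover corresponding to $(\sigma_1,\ldots,\sigma_m)$ one has $\|(\sigma_i)\|^2=|\{\sigma\in S_n\mid \forall i\ \sigma\sigma_i=\sigma_i\sigma\}|=|\Aut_{F_m}(p^{-1}(t))|$, and this centralizer is exactly the automorphism group $\Aut_T(\wt T)$ of the cover, since deck transformations of $\wt T\to T$ are precisely the $G$-equivariant (equivalently, monodromy-commuting) permutations of the fibre — this is the content of the lemma preceding Proposition \ref{prop:splitting1}, transported along $p$. Hence $\|[\wt T]\|^2=|\Aut_T(\wt T)|$.

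**Main obstacle.** None of the steps is deep; the only thing requiring care is the bookkeeping that the chain of bijections (tuples $\bmod$ conjugacy $\leftrightarrow$ $F_m$-sets $\leftrightarrow$ $G$-sets $\leftrightarrow$ covers of $T$) really is an isomorphism of graded Hopf algebras and not merely a bijection of bases — in particular that the comultiplication, which is dual to multiplication and hence records the ways of splitting a $G$-set as an ordered disjoint union, matches the comultiplication one expects on covering spaces of $T$. Since the pairing is determined (orthogonality of the basis plus the norm formula just proved) and multiplication is disjoint union on both sides, comultiplication is forced, so this is routine; I would state it as "follows from the correspondence between covering spaces and $G$-sets" exactly as the excerpt does, without belaboring it.
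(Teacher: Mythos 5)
Your proposal is correct and follows exactly the route the paper intends: the paper offers no written proof beyond the remark that the result is ``immediate using the correspondence between covering spaces and $G$-sets,'' and your argument simply makes explicit the chain of identifications (tuples modulo conjugacy $\leftrightarrow$ $F_m$-sets factoring through $G$ $\leftrightarrow$ $G$-sets $\leftrightarrow$ covers of $T$) together with the relevant parts of Propositions \ref{prop:bijection} and \ref{prop:splitting1}. Nothing is missing; the norm computation via $\Aut_{F_m}(p^{-1}(v))=\Aut_G(\text{fibre})=\Aut_T(\wt T)$ is exactly the intended justification.
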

The following definition is a direct generalization of the definitions given in Section \ref{sec:decomposing} in case $G=F_m$. 
\begin{definition}
Let $G,G^0$, and $x$ be as above. A strongly finite transitive $G^0$-set is a finite transitive $G^0$-set $\Ow$ such that $x^k(\Ow)\cong \Ow$ for some $k>0$. We write $k(\Ow)$ for the minimal $k$ that satisfies this condition.
\end{definition}
Choose an isomorphism $\Phi:x^k(\Ow)\to \Ow$. Conjugation by $\Phi$ induces an automorphism $\rho(\Ow):\Aut_{G^0}(\Ow)\to \Aut_{G^0}(\Ow)$. 
\begin{definition} Write 
$FT(G,G^0)$ for the set of all tuples of the form $(\Ow,\ol{[W]})$ such that $\Ow$ is a strongly finite transitive $G^0$-set, $W$ is an irreducible $\Aut_{G^0}(\Ow)$-representation, and $\ol{[W_1]} = \ol{[W_2]}$ if and only if $W_1\cong (\rho(\Ow)^i)^*(W_2)$ for some $i$. We define the degree of $(\Ow,\ol{[W]})$ to be $|\Ow|k(\Ow)l(W)$, where $l(W)$ is the minimal number $l$ for which $(\rho^l)^*(W)\cong W$. We write $FT(G,G^0)_d$ for the subset of elements of $FT(G,G^0)$ of degree $d$. 
\end{definition}
\begin{proposition}\label{prop:fggrp}
The intersection $\H_G:=\H_{\Ga_0}\cap \Ainf_G$ is a PSH-algebra. The cuspidal elements of degree $n$ in this PSH algebras are in one to one correspondence with elements of degree $n$ in $FT(G,G^0)$. As a result, the number of conjugacy classes of index $n$ subgroups in $G$ is equal to $\sum_{d|n}|FT(G,G^0)|_d$.
\end{proposition}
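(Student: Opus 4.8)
The plan is to identify $\H_G$ explicitly as a tensor product of the basic PSH-subalgebras $\H_{\Ga_0,\Ow,i}$ constructed in Section~\ref{sec:mains}, taken over those $\Ow$ that happen to be $G^0$-sets, and then to read off everything from Zelevinsky's structure theory together with Lemma~\ref{lem:numofgens}. First I would describe $\Ainf_G$ in the language of Section~\ref{sec:decomposing}. A tuple $(\sigma_i)\in S_n^m$ gives a homomorphism $\phi_{(\sigma_i)}\colon F_m\to S_n$ that factors through $p\colon F_m\to G$ exactly when $\Ker(p)$ acts trivially on $Q_{(\sigma_i)}$, i.e.\ when $Q_{(\sigma_i)}$ is the inflation of a finite $G$-set; and since $\Ker(p)\subseteq F_m^0$ is normal in $F_m$, this happens iff every $F_m^0$-orbit of $Q_{(\sigma_i)}$ is a $G^0$-set. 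Under the decomposition $\Ainf_{\Ga_0}\cong\bigoplus_{a\in FOSFT}(KG_a)_{G_a}$ of Proposition~\ref{prop:decga0}, the diagram basis element $\ol{(\sigma_i)}$ sits in the single summand indexed by the isomorphism type of $Q_{(\sigma_i)}|_{F_m^0}$, namely some $Y_a$; and $Y_a$ is a $G^0$-set iff every $\Ow\in OSFT$ with $a(\Ow)>0$ is a $G^0$-set. Hence $\Ainf_G=\bigoplus_{a\,:\,Y_a\text{ is a }G^0\text{-set}}(KG_a)_{G_a}$, which is a graded Hopf subalgebra by the multiplication rule (disjoint union of $F_m$-sets preserves the property of being a $G$-set), as already observed in the text.

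Next I would intersect with the lattice $\H_{\Ga_0}$. Each element of the PSH basis of $\H_{\Ga_0}$ is a finite product of basis elements $\ol{T_{i,\la,c}}$ of the factors $\H_{\Ga_0,\Ow,i}$, and such a product lies in a single summand $(KG_a)_{G_a}$ (with $a(\Ow)=\sum_i c_{(\Ow,i)}l_i$); moreover that summand is either entirely contained in $\Ainf_G$ or meets it only in degree $0$. Thus the change of basis between the diagram basis and the PSH basis is block-diagonal with respect to $\Ainf_{\Ga_0}=\Ainf_G\oplus(\text{the remaining summands})$, so $\H_G=\H_{\Ga_0}\cap\Ainf_G$ is exactly the $\Z$-span of the PSH basis elements lying in $\Ainf_G$. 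By the tensor decomposition $\H_{\Ga_0}=\bigotimes_{\Ow,i}\H_{\Ga_0,\Ow,i}$ this yields
$$\H_G\;\cong\;\bigotimes\H_{\Ga_0,\Ow,i},$$
the tensor product being over all $\Ow\in OSFT$ that are $G^0$-sets and all $i$ (the $\rho(\Ow)$-orbits in $\Irr(\Aut_{F_m^0}(\Ow))$). Each factor is a basic PSH-algebra by Theorem~\ref{thm:main}, so $\H_G$ is a PSH-algebra. Here one uses that for a $G^0$-set $\Ow$ the two automorphism groups coincide, $\Aut_{F_m^0}(\Ow)=\Aut_{G^0}(\Ow)$, and that $z_1$-periodicity of $\Ow$ as an $F_m^0$-set is the same as $x$-periodicity as a $G^0$-set, so the indexing set above is precisely the data underlying $FT(G,G^0)$.

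For the cuspidal elements: the cuspidals of a tensor product of basic PSH-algebras are exactly the cuspidal generators of the factors, so by Theorem~\ref{thm:cuspidals} the cuspidals of $\H_G$ are parametrised by pairs $(\Ow,\ol{[W]})$ with $\Ow$ a strongly finite transitive $G^0$-set and $\ol{[W]}$ a $\rho(\Ow)$-orbit in $\Irr(\Aut_{G^0}(\Ow))$ — that is, by $FT(G,G^0)$ — and the cuspidal attached to $(\Ow,\ol{[W]})$ has degree $|\Ow|k(\Ow)l(W)$, matching the degree assigned in the definition of $FT(G,G^0)$; this proves the degree-graded bijection. Finally, $\Ainf_G$ is the polynomial subalgebra of $\Ainf_{\Ga_0}$ generated by the connected graphs covering $\Ga_0$ whose underlying $F_m$-set factors through $p$, i.e.\ by the finite transitive $G$-sets; by Proposition~\ref{prop:bijection} those of size $n$ correspond bijectively to conjugacy classes of index-$n$ subgroups of $G$. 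Hence the number of such classes is the number of degree-$n$ polynomial generators of $\H_G$, which by Lemma~\ref{lem:numofgens} equals $\sum_{d\mid n}|FT(G,G^0)_d|$.

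The one genuinely delicate point is the claim in the second paragraph that every PSH basis element of $\H_{\Ga_0}$ is supported on diagrams sharing a single fixed $F_m^0$-isomorphism type, so that intersecting with $\Ainf_G$ merely deletes a sub-collection of basis vectors; this is what makes $\H_G$ come out as an honest sub-PSH-algebra rather than just a lattice. Once this compatibility is in hand, everything else is formal and follows from Propositions~\ref{prop:splitting1} and \ref{prop:decga0}, Theorems~\ref{thm:main} and \ref{thm:cuspidals}, and Lemma~\ref{lem:numofgens}.
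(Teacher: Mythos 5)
Your argument is correct and follows essentially the same route as the paper: identify $\Ainf_G$ as the tensor factor $\bigotimes_{\Ow,i}\Ainf_{\Ga_0,\Ow,i}$ over the orbits $\Ow$ on which $\Ker(p)$ acts trivially, deduce that $\H_G$ is a PSH-algebra with cuspidals given by Theorem \ref{thm:cuspidals}, and then count polynomial generators in two ways (transitive $G$-sets versus Lemma \ref{lem:numofgens}). The only difference is that you spell out the compatibility of the PSH basis with the decomposition $\Ainf_{\Ga_0}\cong\bigoplus_a (KG_a)_{G_a}$, which the paper leaves implicit.
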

\begin{proof}
The fact that the intersection is a PSH-algebra follows from the fact that $\Ainf_G = \bigotimes_{\Ow,i} \Ainf_{\Ga_0,\Ow,i}$, where we take the tensor product over all orbits $\Ow$ on which $\Ker(p)$ acts trivially. 
For the second statement, we compare the number of generators of the polynomial ring $\Ainf_G$ and $\H_G$. On the one hand, $\Ainf_G$ is generated by finite transitive $G$-sets. On the other hand, the cuspidal elements in $\H_G$ are in one to one correspondence with the set $FT(G,G^0)$. By Lemma \ref{lem:numofgens} we get the result. 
\end{proof}
We give now a few concrete examples of this formula.
\begin{proposition}
Let $G=G^0\times \langle x \rangle$. Then the number of conjugacy classes of index $n$ subgroups in $G$ is equal to the number of irreducible representations of groups of the form $\Aut_{G^0}(\Ow)$, where $\Ow$ is a finite transitive $G^0$-set, and $|\Ow|$ divides $n$. 
\end{proposition}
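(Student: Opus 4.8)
The plan is to specialize Proposition~\ref{prop:fggrp} to the case $G = G^0 \times \langle x\rangle$ and unwind the combinatorics of $FT(G, G^0)$. By Proposition~\ref{prop:fggrp}, the number of conjugacy classes of index-$n$ subgroups of $G$ equals $\sum_{d\mid n} |FT(G,G^0)_d|$, so it suffices to identify $\bigsqcup_{d \mid n} FT(G,G^0)_d$ with the indexing set described in the statement. The key observation is that when $G$ is a \emph{direct} product, the element $x$ acts trivially on $G^0$ by conjugation, hence trivially on the set of finite transitive $G^0$-sets. Therefore \emph{every} finite transitive $G^0$-set $\Ow$ is strongly finite with $k(\Ow) = 1$, and the isomorphism $\Phi : x^{1}(\Ow) = \Ow \to \Ow$ may be taken to be the identity. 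Consequently the induced automorphism $\rho(\Ow)$ of $\Aut_{G^0}(\Ow)$ is the identity, so $l(W) = 1$ for every irreducible representation $W$, and the equivalence relation $\ol{[W_1]} = \ol{[W_2]}$ collapses to honest isomorphism $W_1 \cong W_2$.

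With these simplifications, an element of $FT(G,G^0)$ is simply a pair $(\Ow, [W])$ where $\Ow$ is (an isomorphism class of) a finite transitive $G^0$-set and $[W] \in \Irr(\Aut_{G^0}(\Ow))$, and its degree is $|\Ow| \cdot k(\Ow) \cdot l(W) = |\Ow| \cdot 1 \cdot 1 = |\Ow|$. Hence $FT(G,G^0)_d$ is in bijection with the set of pairs $(\Ow, [W])$ with $|\Ow| = d$, and
$$\bigsqcup_{d \mid n} FT(G,G^0)_d \;\longleftrightarrow\; \{(\Ow, [W]) : \Ow \text{ finite transitive } G^0\text{-set},\ |\Ow| \mid n,\ [W] \in \Irr(\Aut_{G^0}(\Ow))\}.$$
Counting this set gives exactly the number of irreducible representations of the groups $\Aut_{G^0}(\Ow)$ as $\Ow$ ranges over finite transitive $G^0$-sets with $|\Ow|$ dividing $n$, which is the claimed formula. (One should be slightly careful to range $\Ow$ over isomorphism classes of transitive $G^0$-sets, equivalently conjugacy classes of finite-index subgroups of $G^0$, so that no pair is counted twice.)

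The argument is essentially a bookkeeping exercise once the structural point — that a direct product makes the $x$-conjugation action trivial — is isolated, so I do not anticipate a genuine obstacle. The only point requiring a little care is to confirm that the definition of $FT(G,G^0)$ from the preceding definition really does degenerate as claimed: specifically that taking $\Phi = \mathrm{id}$ is legitimate (it is, since $x^{k}(\Ow)$ and $\Ow$ have literally the same underlying set with the same $G^0$-action when $x$ acts trivially) and that $l(W)$ is then forced to be $1$. After that, the formula of Proposition~\ref{prop:fggrp} applies verbatim.
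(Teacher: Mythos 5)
Your proposal is correct and follows the same route as the paper: the paper's proof likewise invokes Proposition \ref{prop:fggrp} together with the observation that conjugation by $x$ is trivial in a direct product, forcing $k(\Ow)=l(W)=1$ so that the degree of $(\Ow,\ol{[W]})$ is just $|\Ow|$. Your write-up simply makes explicit the bookkeeping that the paper leaves implicit.
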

\begin{proof}
This follows directly from the above proposition, using the fact that the action of $x$ by conjugation is trivial and therefore $k(\Ow) = l(W)=1$ for every finite transitive $G^0$-set $\Ow$ and every irreducible $\Aut_{G^0}(\Ow)$-representation $W$. 
\end{proof}
In case the group $G$ is abelian we get the following recursive formula:
\begin{corollary}
Assume that $G= G^0\times \langle x \rangle$ is abelian. Write $a_n$ for the number of index $n$ subgroups of $G^0$. Then the number of index $n$ subgroups of $G$ is equal to $\sum_{d|n} da_d$. 
\end{corollary}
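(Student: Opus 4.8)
The plan is to simply unwind the previous proposition in the abelian case. First I would record two elementary facts. Since $G$ is abelian, every subgroup is normal, so conjugacy classes of index $n$ subgroups of $G$ are the same as index $n$ subgroups of $G$; thus the previous proposition already counts exactly what we want. Similarly, since $G^0$ is abelian, for a finite transitive $G^0$-set $\Ow$ with stabilizer $H \le G^0$ we have $\Ow \cong G^0/H$ with $[G^0:H] = |\Ow|$, and $\Aut_{G^0}(\Ow) = N_{G^0}(H)/H = G^0/H$ is an abelian group of order $|\Ow|$. Moreover two such sets are isomorphic iff their stabilizers coincide, so isomorphism classes of transitive $G^0$-sets of cardinality $d$ are in bijection with index $d$ subgroups of $G^0$, of which there are $a_d$ by definition.

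Next I would invoke the assumption (made throughout the paper) that $K$ is algebraically closed of characteristic zero: an abelian group of order $d$ then has exactly $d$ irreducible representations (all one-dimensional, given by the characters of the group). Hence for each transitive $G^0$-set $\Ow$ with $|\Ow| = d$, the group $\Aut_{G^0}(\Ow)$ contributes exactly $d$ irreducible representations.

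Finally I would assemble the count. By the previous proposition, the number of index $n$ subgroups of $G = G^0 \times \langle x\rangle$ equals the number of irreducible representations of groups of the form $\Aut_{G^0}(\Ow)$ as $\Ow$ ranges over (isomorphism classes of) finite transitive $G^0$-sets with $|\Ow|$ dividing $n$. Grouping these $\Ow$ by their cardinality $d \mid n$, there are $a_d$ of them for each $d$, each contributing $d$ irreducible representations, so the total is $\sum_{d \mid n} d\, a_d$, as claimed. I do not anticipate any genuine obstacle here: the only points requiring a word of justification are the identification $\Aut_{G^0}(\Ow) \cong G^0/H$ in the abelian setting and the fact that an abelian group has as many irreducible representations as its order; both are standard. (One may also note, as a sanity check, that the corollary is consistent with the corresponding statement for the larger PSH-algebra, since here $x$ acts trivially by conjugation and so $k(\Ow) = l(W) = 1$ for every $\Ow$ and every irreducible $W$, matching the reduction used in the proof of the preceding proposition.)
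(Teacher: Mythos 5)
Your proposal is correct and follows essentially the same route as the paper: invoke the preceding proposition, identify $\Aut_{G^0}(\Ow)\cong G^0/H$ as an abelian group of order $d=|\Ow|$ with exactly $d$ irreducible representations, and sum over the $a_d$ isomorphism classes of transitive $G^0$-sets of each cardinality $d\mid n$. The paper's own proof is just a terser version of this; your additional remarks (conjugacy classes equal subgroups in the abelian case, and the consistency check via $k(\Ow)=l(W)=1$) are correct and harmless.
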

\begin{proof}
We use the above proposition. If $|\Ow|=d$ and $H\subseteq G^0$ is the stabilizer of an element in $\Ow$, then $\Aut_{G^0}(\Ow)\cong N_{G^0}(H)/H = G^0/H$ is an abelian group of order $d$, and therefore has exactly $d$ irreducible representations.
\end{proof}
\begin{remark}
The referee suggested the following alternative proof for this corollary:
For every index $n$ subgroup $H$ of $G$ we can consider the subgroup $H_0:= H\cap G_0$. It holds that $|G_0/H_0|=d$ is a divisor of $n$. The fact that $H$ has index $n$ in $G$ implies that $H = \langle H_0, yx^{n/d}\rangle$ for some $y\in G_0$. We get a bijection between subgroups $H$ of index $n$ that correspond to a given $H_0$ and $G_0/H_0$ by sending $H$ to $\ol{y}$. 
\end{remark}

This enables us to write down a formula for the number of index $n$ subgroups in any finitely generated abelian group. Let $G$ be such a group. We can write $G=D\times \Z^n$ for some $n$, where $D$ is a finite group. For $d\in\N$, write $a_d$ for the number of index $d$ subgroups in $D$. We claim the following:
\begin{proposition}
The number of index $m$ subgroups of $G$ is equal to $$\sum_{d_1|d_2|\cdots | d_n|m}a_{d_1}d_1d_2\cdots d_n.$$
\end{proposition}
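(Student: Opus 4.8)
The plan is to deduce the formula from the Corollary above by a straightforward induction on the rank $n$. Write $G \cong D \times \Z^n$ with $D$ finite abelian, and for each $0 \le k \le n$ let $b_k(m)$ denote the number of index-$m$ subgroups of $D \times \Z^k$. Since $D$ is finite, each $b_k(m)$ is a well-defined nonnegative integer; moreover $b_0(m) = a_m$ by definition, and $b_n(m)$ is the quantity we want to compute.

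For the recursion I would, for each $k \ge 1$, write $D \times \Z^k = (D \times \Z^{k-1}) \times \langle x \rangle$ with $\langle x\rangle \cong \Z$. This exhibits $D \times \Z^k$ in the form $G^0 \times \langle x\rangle$ with $G^0 = D \times \Z^{k-1}$ an abelian group, so the Corollary applies and gives
\[
b_k(m) = \sum_{d \mid m} d\, b_{k-1}(d),
\]
since the number of index-$d$ subgroups of $G^0 = D \times \Z^{k-1}$ is exactly $b_{k-1}(d)$. This is the only step that uses the structure theory developed in the earlier sections; everything else is combinatorial bookkeeping.

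Finally I would unwind this recursion. Applying the displayed identity $n$ times, from $b_n(m)$ down to $b_0$, and collapsing the resulting nested sums yields
\[
b_n(m) = \sum_{d_n \mid m} \sum_{d_{n-1} \mid d_n} \cdots \sum_{d_1 \mid d_2} a_{d_1}\, d_1 d_2 \cdots d_n = \sum_{d_1 \mid d_2 \mid \cdots \mid d_n \mid m} a_{d_1}\, d_1 d_2 \cdots d_n,
\]
which is the asserted formula. Formally this collapsing is itself a one-line induction on $n$: the base case $n = 1$ is a single application of the Corollary, giving $b_1(m) = \sum_{d_1 \mid m} a_{d_1} d_1$, and the inductive step substitutes the induction hypothesis for $b_{n-1}(d_n)$ into $b_n(m) = \sum_{d_n \mid m} d_n\, b_{n-1}(d_n)$ and relabels the summation variables.

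There is no genuine obstacle here; the points requiring care are purely bookkeeping: verifying that at each stage $D \times \Z^{k-1}$ is again a finitely generated abelian group presented as $(D \times \Z^{k-1}) \times \Z$, so the hypotheses of the Corollary are met, and keeping the nested divisibility conditions $d_1 \mid d_2 \mid \cdots \mid d_n \mid m$ in the correct order when the iterated sum is rewritten as a single sum over divisor chains.
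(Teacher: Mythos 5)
Your proof is correct and follows essentially the same route as the paper: induction on the rank $n$, peeling off one $\Z$ factor at a time and applying the corollary for $G^0\times\langle x\rangle$ to obtain the recursion $b_k(m)=\sum_{d\mid m}d\,b_{k-1}(d)$, which the stated formula satisfies. The only cosmetic difference is that you unwind the recursion into an explicit iterated sum, whereas the paper simply checks that the closed formula satisfies the recursive relation.
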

\begin{proof}
Write $a(m,n)$ for the above number. We proceed by induction on $n$. For $n=1$ the result follows directly from the above corollary with $D=G^0$. For $n>1$ the above corollary gives $$a(m,n) = \sum_{d|m} a(d,n-1)d,$$ and a direct verification shows that the above formula satisfies this recursive relation. 
\end{proof}
\begin{remark} Chapter 15 in \cite{LS} has five different proofs for this proposition in case $D=1$. 
\end{remark}

We next apply the formula to some Baumslag-Solitar groups. We mention that the subgroup growth in Baumslag-Solitar group was studied in 
\cite{Ge05} and in \cite{Ke20}. The methods of this paper are relevant to study the growth of the number of conjugacy classes of index $n$ subgroups, while subgroup growth deals with the growth of the number of index $n$ subgroups. 

We begin with the fundamental group of the Klein bottle, $G= \langle a,b| aba^{-1} = b^{-1}\rangle$. We define 
$$a_d := \begin{cases} 1 &\text{ if } d \text{ is odd} \\ 
\frac{d+6}{4} &\text{ if } d = 2 \text{ mod } 4 \\
\frac{d+4}{4} &\text{ if }  d=0 \text{ mod } 4\end{cases}.$$
We claim the following:
\begin{proposition}
The number of conjugacy classes of index $n$ subgroups in $G$ is $\sum_{d|n} a_d$. 
\end{proposition}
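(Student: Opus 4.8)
The plan is to apply Proposition~\ref{prop:fggrp} and to enumerate $FT(G,G^0)$ by hand. First I would make the semidirect product structure explicit: the relation $aba^{-1}=b^{-1}$ shows that $N=\langle b\rangle$ is normal in $G$, that $G/N\cong\Z$ via $a\mapsto 1,\ b\mapsto 0$, and that $G=\langle a\rangle\ltimes\langle b\rangle$ with $a$ acting on $\langle b\rangle\cong\Z$ by inversion. Thus, in the notation of Section~\ref{sec:fggrp}, I would take $G^0=\langle b\rangle\cong\Z$ and $x=a$, so that conjugation by $x$ on $G^0$ is $b\mapsto b^{-1}$.

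Next I would classify the data entering $FT(G,G^0)$. The finite transitive $G^0$-sets are the finite transitive $\Z$-sets, namely $\Ow_e:=\Z/e\Z$ with $b$ acting as $s\mapsto s+1$, one for each integer $e\ge 1$. Since $x^{-1}bx=b^{-1}$, the twisted set $x(\Ow_e)$ is $\Z/e\Z$ with $b$ acting as $s\mapsto s-1$, and negation $s\mapsto -s$ is an $F_m^0$-isomorphism $x(\Ow_e)\cong\Ow_e$. Hence \emph{every} $\Ow_e$ is strongly finite with $k(\Ow_e)=1$. A $\Z$-set automorphism of $\Ow_e$ must commute with $s\mapsto s+1$ and is therefore a translation, so $\Aut_{G^0}(\Ow_e)\cong\Z/e\Z$; being abelian, this group has no nontrivial inner automorphisms, so $\rho(\Ow_e)$ is a canonically defined automorphism. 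Choosing $\Phi(\Ow_e)$ to be the negation isomorphism above, a one-line computation gives $\rho(\Ow_e)(t)=-t$ on $\Z/e\Z$, and dually $\rho(\Ow_e)$ acts on $\Irr(\Z/e\Z)\cong\Z/e\Z$ by $j\mapsto -j$.

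It then remains to count. The involution $j\mapsto -j$ on $\Z/e\Z$ has exactly one fixed point when $e$ is odd and exactly two when $e$ is even, the remaining irreducibles falling into $\rho$-orbits of length $2$. A $\rho$-orbit of length $l$ yields an element of $FT(G,G^0)$ of degree $|\Ow_e|\,k(\Ow_e)\,l=e\cdot 1\cdot l$. Consequently an element of degree $d$ comes either (i) from a $\rho$-fixed irreducible of $\Z/d\Z$ --- contributing $1$ if $d$ is odd and $2$ if $d$ is even --- or (ii), when $d$ is even, from a length-$2$ orbit of irreducibles of $\Z/(d/2)\Z$ --- contributing $(d-2)/4$ when $d\equiv 2\pmod 4$ and $(d-4)/4$ when $d\equiv 0\pmod 4$. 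Summing the two contributions gives $|FT(G,G^0)_d|=1$ for $d$ odd, $(d+6)/4$ for $d\equiv 2\pmod 4$, and $(d+4)/4$ for $d\equiv 0\pmod 4$; that is, $|FT(G,G^0)_d|=a_d$. Proposition~\ref{prop:fggrp} then gives that the number of conjugacy classes of index $n$ subgroups of $G$ equals $\sum_{d\mid n}|FT(G,G^0)_d|=\sum_{d\mid n}a_d$.

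The routine part is the final orbit-counting; the one point that genuinely needs care is the identification of $\rho(\Ow_e)$ with inversion together with the assertion $k(\Ow_e)=1$, since a wrong constant there would propagate into every degree. Once those are pinned down, the bookkeeping is elementary, and the three branches in the definition of $a_d$ are exactly the residues of $d$ modulo $4$.
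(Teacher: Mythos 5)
Your proof is correct and follows essentially the same route as the paper: identify $G^0=\langle b\rangle\cong\Z$, observe $k(\Ow_e)=1$ and that $\rho(\Ow_e)$ acts on $\Aut_{G^0}(\Ow_e)\cong\Z/e\Z$ and its irreducibles by inversion, count the $\rho$-orbits, and invoke Proposition \ref{prop:fggrp}. (The only quibble is a notational slip where you write ``$F_m^0$-isomorphism'' for what should be a $G^0$-isomorphism.)
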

\begin{proof}
We can write $G=G^0\rtimes \langle a \rangle$ where $G^0 = \langle b\rangle$.
To prove the result, it will be enough to show that the algebra $\H_G$ has exactly $a_d$ cuspidal elements in degree $d$ for every $d\in\N$. 
For every $d\in \N$ there is exactly one transitive $G^0$-set of cardinality $d$, $\Ow_d:=\langle b\rangle / \langle b^d\rangle$. 
It holds that $\Aut_{G^0}(\Ow_d)\cong \Z/d$. The action of $\rho(\Ow)$ is then just given by inversion. We have $k(\Ow_d)=1$. 

If $d$ is odd, then the action of $\rho(\Ow_d)$ on $\Irr(\Aut_G^0(\Ow_d))$ has $\frac{d-1}{2}$ orbits with 2 elements, and one orbit with one element (of the trivial representation of $\Z/d$. This gives $\frac{d-1}{2}$ cuspidal elements in degree $2d$ and 1 cuspidal element in degree $d$. 
If $d$ is even, then the action of $\rho(\Ow_d)$ on $\Irr(\Aut_G^0(\Ow_d))$ has $\frac{d-2}{2}$ orbits with two elements and two orbits with one element (where a generator of $\Z/d$ acts by 1 or -1). This gives $\frac{d-2}{2}$ cuspidal elements in degree 2d and 2 cuspidal elements in degree $d$.
 
So if $d$ is odd we get just one cuspidal element in degree $d$, if $d$ is of the form $2m$ where $m$ is odd we get $2+\frac{m-1}{2} = \frac{d+6}{4}$ cuspidal elements of degree $d$, and if $d=4m$ then we get $2+\frac{2m-2}{2} = \frac{d+4}{4}$ cuspidal elements of degree $d$. This gives us exactly the formula we have for $a_d$ above.
\end{proof}
Let now $d>1$ be a natural number. We consider the group 
$G=\langle a,b | aba^{-1}=b^d\rangle$. The group $G$ can be written as a semidirect product $G=G^0\rtimes \langle a\rangle$, where $G^0 = \langle a^iba^{-i}\rangle_{i\in\Z}$. Write $S= \{(m,i) | m,i\in \N, gcd(m,d)=1, 0\leq i< m\}$. Define an equivalence relation $(m_1,i_1)\sim (m_2,i_2)$ if and only if $m_1=m_2$ and there is $j\in\Z$ such that $i_1 = i_2d^j$ mod $m_1$. Define the degree of $\ol{(m,i)}\in S/\sim$ to be $m\cdot ord_i(d)$, where $ord_i(d)$ stands for the order of $d$ in $(\Z/m')^{\times}$, with $m' = m/gcd(m,i)$. We claim the following:
\begin{proposition}
The number of conjugacy classes of index $n$ subgroups in $G$ is equal to the number of elements $\ol{(m,i)}\in S/\sim$ with $deg(\ol{(m,i)})|n$.
\end{proposition}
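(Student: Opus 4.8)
The plan is to apply Proposition \ref{prop:fggrp} to $G=\langle a,b\mid aba^{-1}=b^d\rangle$, taking $q\colon G\to\Z$ to be the homomorphism with $q(a)=1$, $q(b)=0$, and $x=a$, and then to identify the parametrising set $FT(G,G^0)$ with $S/\!\sim$ in a degree-preserving way. First I would make $G^0=\Ker(q)=\langle a^iba^{-i}\rangle_{i\in\Z}$ explicit: writing $b_i=a^iba^{-i}$, the relation $aba^{-1}=b^d$ gives $b_{i+1}=b_i^d$, so the $b_i$ commute and $b_i\mapsto d^i$ identifies $G^0$ with the additive group $\Z[1/d]\subseteq\Q$, under which $a$ acts by conjugation as multiplication by $d$ (an automorphism, since $d$ is a unit in $\Z[1/d]$).

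Next I would classify the finite transitive $G^0$-sets. As $G^0$ is abelian, they are the quotients $G^0/H$ by finite-index subgroups $H$, with $\Aut_{G^0}(G^0/H)\cong G^0/H$ acting by translations. In any finite quotient of $\Z[1/d]$ the image of $1$ generates (each $d^{-k}$ being a multiple of $1$ there, as $d$ is invertible), so every finite quotient is cyclic; a surjection $\Z[1/d]\to\Z/m$ exists exactly when $\gcd(m,d)=1$, is determined by the image of $1$, and is therefore unique up to $\Aut(\Z/m)$. Thus for each $m$ coprime to $d$ there is a unique finite transitive $G^0$-set $\Ow_m$ of cardinality $m$ (namely $\Z[1/d]/m\Z[1/d]$), these are all of them, and $\Aut_{G^0}(\Ow_m)\cong\Z/m$.

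Since $\Ow_m$ is the only transitive $G^0$-set of its size, it is strongly finite with $k(\Ow_m)=1$. For $\rho(\Ow_m)$ I would note that multiplication by $d$ on $\Z[1/d]$ descends to an isomorphism $\Phi\colon a(\Ow_m)\to\Ow_m$ of $G^0$-sets, and that conjugating the translation $t_c$ of $\Z/m$ by $\Phi$ yields $t_{dc}$; hence $\rho(\Ow_m)$ is multiplication by $d$ on $\Aut_{G^0}(\Ow_m)\cong\Z/m$, and the induced action on $\Irr(\Z/m)\cong\Z/m$ is again multiplication by $d$. Identifying $\Irr(\Z/m)$ with $\{0,1,\dots,m-1\}$, an element $(\Ow_m,\ol{[\chi_i]})$ of $FT(G,G^0)$ corresponds to the $\sim$-class of $(m,i)\in S$, and its degree $|\Ow_m|\,k(\Ow_m)\,l(\chi_i)=m\cdot l(\chi_i)$ equals $m\cdot\operatorname{ord}_{m/\gcd(m,i)}(d)$, because the $\langle d\rangle$-orbit of $i$ in $\Z/m$ has length the least $l$ with $m\mid(d^l-1)i$, i.e.\ with $d^l\equiv 1\pmod{m/\gcd(m,i)}$. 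This agrees with $\deg(\ol{(m,i)})$, so the bijection $FT(G,G^0)\cong S/\!\sim$ is degree-preserving, and Proposition \ref{prop:fggrp} then gives that the number of conjugacy classes of index $n$ subgroups of $G$ equals $\sum_{e\mid n}|FT(G,G^0)_e|=\#\{\ol{(m,i)}\in S/\!\sim\ :\ \deg(\ol{(m,i)})\mid n\}$.

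I expect the main obstacle to be the third step: carefully tracking the identifications so as to verify that $\rho(\Ow_m)$ really is multiplication by $d$, and then transferring this correctly to the character group $\Irr(\Z/m)$ and to the orbit-length computation $l(\chi_i)=\operatorname{ord}_{m/\gcd(m,i)}(d)$. Once that is pinned down, the rest is a routine unwinding of the definitions of Section \ref{sec:fggrp} together with elementary arithmetic in $\Z[1/d]$ and of cyclic-group orbits.
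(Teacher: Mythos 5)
Your proposal is correct and follows essentially the same route as the paper: classify the finite transitive $G^0$-sets (cyclic of order $m$ coprime to $d$, one for each such $m$), identify $\Aut_{G^0}(\Ow_m)\cong\Z/m$ with $\rho(\Ow_m)$ acting as multiplication by $d$, compute $k(\Ow_m)=1$ and $l(\chi_i)=\operatorname{ord}_{m/\gcd(m,i)}(d)$, and invoke Proposition \ref{prop:fggrp}. The only cosmetic differences are that you work with the explicit model $G^0\cong\Z[1/d]$ and deduce $k(\Ow_m)=1$ from the uniqueness of the transitive $G^0$-set of each cardinality, whereas the paper gets the same facts from the subgroup chain $H\subseteq aHa^{-1}\subseteq\cdots\subseteq a^kHa^{-k}=H$.
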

\begin{proof}
As before, it will be enough to show correspondence between the elements of $S/\sim$ and the set $FT(G,G^0)$ that preserves degrees. The group $G^0$ is abelian. Let $\Ow$ be a strongly finite transitive $G^0$-set, and let $H$ be the stabilizer of an element in $\Ow$. Let $k=k(\Ow)>0$ be the minimal number for which $a^k(\Ow)\cong \Ow$. This implies that $a^kHa^{-k}$ is conjugate in $G^0$ to $H$. Since $G^0$ is abelian, it holds that $a^kHa^{-k}=H$.
Since conjugation by $a$ acts on $G^0$, and in particular on $H$, by raising to the $d$-th power, it holds that $H\subseteq a H a^{-1}$. It then holds that $H\subseteq aHa^{-1}\subseteq a^2Ha^{-2}\subseteq\cdots\subseteq a^kHa^{-k} = H$, so all of these subgroups are equal. This implies, in particular, that $H=aHa^{-1}$, and so $k=1$. 

The group $\Aut_{G^0}(\Ow)$ is isomorphic with $N_{G^0}(H)/H = G^0/H$ since $G^0$ is abelian. Moreover, conjugation by $a$ acts on this group by sending $x$ to $x^{d}$. This implies that when we consider the finite abelian group $G^0/H$ as a $\Z$-module, multiplication by $d$ becomes an invertible operation. In particular, this means that $gcd(|G^0/H|,d)=1$. Write $|G^0/H|=m$. 

Since $G^0 = \bigcup_{i\geq 0} a^{-i}\langle b\rangle a^i$ is a union of a chain of cyclic groups, the same is true for the quotient $G^0/H$. Since $G^0/H$ is also finite, it must be cyclic. Write $e$ for the inverse of $d$ in $(\Z/m)^{\times}$. We get an isomorphism $G^0/H\cong \Z/m$ by sending $a^{-i}ba^i$ to $e^i$. In particular, we see that $H = (G^0)^m=\{g^m|g\in G^0\}$, and so $H$ is the unique subgroup of $G^0$ of index $m$.
 
We thus see that for every $m$ such that $gcd(m,d)=1$ there is a unique strongly finite transitive $G^0$-set of cardinality $m$. The automorphism group of this orbit is $\Z/m$, and the action of $a$ by conjugation is given by multiplication by $d$. The dual action, on $\Irr(\Z/m)\cong \Z/m$ is also multiplication by $d$. So the elements in $FT(G,G^0)$ correspond to pairs $(m,i)$ such that $gcd(m,d)=1$, and such that $0\leq i<m$. Two such pairs $(m_1,i_1)$ and $(m_2,i_2)$ are equivalent if and only if $m_1=m_2$, and $i_1=i_2d^j$ mod $m$, for some $j\in \Z$. The degree of the element $(\Ow,\ol{[W]})$ that corresponds to $(m,i)$ is then $mk(\Ow)l(\Ow)$. We have already seen that $k(\Ow)=1$. The number $l(\Ow)$ is the cardinality of the orbit of $[W]\in \Irr(\Z/m)\cong \Z/m$ under the action of multiplication by $d$. In other words, it is the minimal number $l$ such that $$d^li = i \text{ mod } m.$$ The last equation is equivalent to $$d^l = 1\text{ mod }m/gcd(m,i).$$ In other words $l$ is the order of $d$ in the group $(\Z/(m/gcd(m,i)))^{\times}$ as claimed.
\end{proof}

\section{Hilbert series}\label{sec:hilbert}
We consider now the Hilbert series of the algebra $\Ainf_{\Ga_0}$, where we assume that $\pi_1(\Ga_0,v)=F_m$. Quotients of this algebra were studied in the paper \cite{meir3} in case $m=2$. Recall from Section \ref{sec:prelim} that for partitions $\mu,\la_i\parti n$ we write 
$$g(\la_1,\ldots,\la_m,\mu) = \dim\Hom_{S_n}(\S_{\la_1}\ot\cdots\ot \S_{\la_m},\S_{\mu}).$$
Write $T(F_m)$ for the set of isomorphism classes of transitive $F_m$-sets. 
We claim the following:
\begin{theorem} The Hilbert series of $\Ainf_{\Ga_0}$ is equal to the following series: 
$$\sum_{n\geq 0}\dim((\Ainf_{\Ga_0})_n)X^n = \sum_n\sum_{\mu,\la_1,\ldots, \la_m\parti n}g(\la_1,\ldots,\la_m,\mu)^2X^n =$$ $$ \prod_{\Ow\in T(F_m)}\frac{1}{1-X^{|\Ow|}} = \prod_{(\Ow,\ol{[W]})\in FT(F_m,F_m^0),n}\frac{1}{1-X^{|\Ow|k(\Ow)l([W])n}}$$
\end{theorem}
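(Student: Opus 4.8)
The plan is to compute the Hilbert series $H(X):=\sum_{n\geq0}\dim((\Ainf_{\Ga_0})_n)X^n$ in three independent ways, matching it to the two infinite products (which both express that $\Ainf_{\Ga_0}$ is a graded polynomial algebra, presented by two different generating sets) and to the double sum (a direct representation-theoretic count of $\dim((\Ainf_{\Ga_0})_n)$). First note that every product below is a well-defined power series, since a finitely generated group has finitely many subgroups of each finite index, so $T(F_m)$ and $FT(F_m,F_m^0)$ have only finitely many elements of each bounded degree. For the product over $T(F_m)$: by Proposition \ref{prop:splitting1}(1), $\Ainf_{\Ga_0}$ is the polynomial algebra on the connected graphs covering $\Ga_0$; by Proposition \ref{prop:bijection} these are indexed by transitive $F_m$-sets (connectedness $=$ transitivity), the generator attached to $\Ow$ lying in internal degree $|\Ow|$ by Proposition \ref{prop:splitting1}(2). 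Since the Hilbert series of a graded polynomial algebra with generators in degrees $d_i$ is $\prod_i(1-X^{d_i})^{-1}$, we get $H(X)=\prod_{\Ow\in T(F_m)}(1-X^{|\Ow|})^{-1}$.

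For the product over $FT(F_m,F_m^0)$: by Theorem \ref{thm:main}, $\H_{\Ga_0}$ is a PSH-algebra with $\Ainf_{\Ga_0}\cong K\otimes_{\Z}\H_{\Ga_0}$, so they share a Hilbert series. By Zelevinsky's classification and Lemma \ref{lem:numofgens}, $\H_{\Ga_0}$ is a polynomial algebra whose generators are indexed by pairs $(\rho,n)$ with $\rho$ cuspidal and $n\geq1$, with $\deg(X_n^\rho)=n\deg(\rho)$. Theorem \ref{thm:cuspidals} identifies the cuspidal elements with $FT(F_m,F_m^0)$, the one attached to $(\Ow,\ol{[W]})$ having degree $|\Ow|k(\Ow)l([W])$, so $H(X)=\prod_{(\Ow,\ol{[W]})\in FT(F_m,F_m^0),\,n\geq1}(1-X^{|\Ow|k(\Ow)l([W])n})^{-1}$. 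As a by-product this gives a degree-preserving bijection $T(F_m)\cong FT(F_m,F_m^0)\times\N_{\geq1}$, which one could also unwind directly from the PSH structure but need not.

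For the double sum of squared iterated Kronecker coefficients: by Proposition \ref{prop:splitting1}(2), $(\Ainf_{\Ga_0})_n\cong(KS_n^m)_{S_n}$ with $S_n$ acting by diagonal conjugation, and $\dim(KS_n^m)_{S_n}=\dim(KS_n^m)^{S_n}$. Writing $KS_n$ as the $S_n$-conjugation representation gives $KS_n\cong\bigoplus_{\la\parti n}\S_\la\ot\S_\la^*\cong\bigoplus_{\la\parti n}\S_\la\ot\S_\la$ by self-duality of symmetric-group representations, so $KS_n^m\cong(KS_n)^{\ot m}$ (diagonal $S_n$-action) decomposes as $\bigoplus_{\la_1,\ldots,\la_m\parti n}V_\la\ot V_\la$ where $V_\la=\S_{\la_1}\ot\cdots\ot\S_{\la_m}$. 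The multiplicity of the trivial representation in $V_\la\ot V_\la\cong\End(V_\la)$ is $\dim\End_{S_n}(V_\la)=\sum_{\mu\parti n}g(\la_1,\ldots,\la_m,\mu)^2$; summing over the $\la_i$ gives $\dim((\Ainf_{\Ga_0})_n)=\sum_{\la_1,\ldots,\la_m\parti n}\sum_{\mu\parti n}g(\la_1,\ldots,\la_m,\mu)^2$, which is the middle expression.

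The routine parts are the Hilbert-series formula for polynomial algebras and the character bookkeeping in the last step; essentially all the substantive content — realizing $\Ainf_{\Ga_0}$ as a polynomial algebra and classifying its cuspidal elements — is already available from the earlier sections. The one genuine subtlety, and where I would be most careful, is the self-duality step: it is what guarantees that the diagonal $S_n$-invariants of $KS_n^m$ produce exactly $g(\la_1,\ldots,\la_m,\mu)$ rather than a contragredient-twisted variant. I do not expect a real obstacle.
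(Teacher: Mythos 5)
Your proposal is correct and follows essentially the same route as the paper: Proposition \ref{prop:splitting1} plus the Wedderburn decomposition of $KS_n^m$ for the Kronecker sum, the polynomial-algebra presentation by transitive $F_m$-sets for the first product, and the PSH/cuspidal structure from Theorems \ref{thm:main} and \ref{thm:cuspidals} for the second. The self-duality point you flag is harmless either way, since the paper computes $\dim\End_{S_n}(\S_{\la_1}\ot\cdots\ot\S_{\la_m})=\sum_{\mu}g(\la_1,\ldots,\la_m,\mu)^2$ directly from the isotypic decomposition.
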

\begin{proof}
For the first series, we use the fact that by Proposition \ref{prop:splitting1} it holds that $(\Ainf_{\Ga_0})_n\cong (KS_n^m)_n$. 
We will show that $\dim(KS_n^m)_{S_n} = \sum_{\la_i,\mu\parti n } g(\la_1,\ldots, \la_m,\mu)^2$. Indeed, taking the $m$-fold tensor product of the Wedderburn decomposition $$KS_n = \bigoplus_{\la\parti n}\End(\S_{\la})$$ gives $$KS_n^m = \bigoplus_{\la_i\parti n} \End(\S_{\la_1}\boxtimes\cdots\boxtimes \S_{\la_m}).$$
As $S_n$-representations, we have $$\S_{\la_1}\ot\cdots\ot \S_{\la_m} = \bigoplus_{\mu\parti n}\S_{\mu}^{\oplus g(\la_1,\ldots, \la_m,\mu)}.$$
So $$(KS_n^m)_{S_n}\cong (KS_n^m)^{S_n} =  \bigoplus_{\la_i,\mu\parti n} \End_{S_n}(\bigoplus_{\mu\parti n}\S_{\mu}^{\oplus g(\la_1,\ldots, \la_m,\mu)}).$$ By taking the dimensions of both spaces we get the result.

The second series follows from the fact that $\Ainf_{\Ga_0}$ is a polynomial algebra, where a set of variables is given by all finite connected coverings of $\Ga_0$, and such coverings are in one to one correspondence with finite transitive $F_m$-sets. 

For the third series, we use the PSH-algebra structure, and the fact that a cuspidal element of degree $k$ gives rise to infinitely many variables $x_1,x_2,\ldots$ with degrees $deg(x_n)=nk$. 
\end{proof}

\section{Concluding remarks}\label{sec:remarks}
The PSH-algebra $\H_{\Ga_0}$ gives a different generating set for the algebra $\Ainf_{\Ga_0}$. Since $\Ainf\cong \bigotimes_{\Ga_0}\Ainf_{\Ga_0}$ this also gives us a different generating set for $\Ainf$. In the case of a single linear endomorphism $T$ we have seen in \cite[Section 10]{meirUIR} that this generating set was very useful in describing the ideals $I_d$. Indeed, in this case we have $\Ainf = \Ainf_{\Ga_0}$ where $\Ga_0$ is the graph that corresponds to $\Tr(T)$, $\H_{\Ga_0} = \Z[Y_1,Y_2,\ldots]$ is a polynomial ring in infinitely many variables, and $I_d\cap \H_{\Ga_0} = (Y_{d+1},Y_{d+2},\ldots)$, which gives a very neat description of $\Ainf/I_d$. In Section \ref{sec:examples} we have constructed an isomorphism $\Phi:\Ainf\cong Zel\ot \Z(\F_q)$ for the type $((1,1)^q)$, where $q$ is a prime power. It is not clear if the ideals $\Phi(I_d)$ have an interpretation in terms of the representation theory of $\GL_n(\F_q)$. 
\begin{question} Do the ideals $I_d$ admit a generating set that is easy to describe in terms of the generators of $\H_{\Ga_0}$, for different $\Ga_0$?
\end{question}
We can also localize the above question.
\begin{question} For a given irreducible graph $\Ga_0$, do the ideals $I_d\cap \H_{\Ga_0}$ admit a generating set that is easy to describe in terms of the generators of $\H_{\Ga_0}$?
\end{question}

There is another important aspect that arises in the case of a single endomorphism. The elements of the PSH-algebra are $\Q$-linear combinations of the diagram invariants, but they still make sense over $\Z$ and therefore over any field. For example, the invariant $c_2$ can be written as 
\begin{center}
\begin{tikzpicture}
	\begin{pgfonlayer}{nodelayer}
		\node [style=1function] (0) at (-10, 4.75) {$T$};
		\node [style=1function] (1) at (-9, 4.75) {$T$};
		\node [style=1function] (2) at (-5.75, 4.75) {$T$};
		\node [style=1function] (3) at (-3.5, 4.75) {$T$};
		\node [style=none] (4) at (-10, 5.75) {};
		\node [style=none] (5) at (-10, 3.75) {};
		\node [style=none] (6) at (-11.25, 3.75) {};
		\node [style=none] (7) at (-11.25, 5.75) {};
		\node [style=none] (8) at (-9, 5.75) {};
		\node [style=none] (9) at (-9, 3.75) {};
		\node [style=none] (10) at (-7.75, 3.75) {};
		\node [style=none] (11) at (-7.75, 5.75) {};
		\node [style=none] (12) at (-4.75, 5.75) {};
		\node [style=none] (13) at (-5.75, 5.75) {};
		\node [style=none] (14) at (-4.75, 3.75) {};
		\node [style=none] (15) at (-3.5, 3.75) {};
		\node [style=none] (16) at (-3.5, 5.75) {};
		\node [style=none] (17) at (-2.5, 5.75) {};
		\node [style=none] (18) at (-2.5, 3.75) {};
		\node [style=none] (19) at (-5.75, 3.75) {};
		\node [style=none] (20) at (-7, 4.75) {$-\frac{1}{2}$};
		\node [style=none] (21) at (-12, 4.75) {$\frac{1}{2}$};
	\end{pgfonlayer}
	\begin{pgfonlayer}{edgelayer}
		\draw [bend left=270, looseness=1.75] (4.center) to (7.center);
		\draw (7.center) to (6.center);
		\draw [bend right=90, looseness=1.75] (6.center) to (5.center);
		\draw (5.center) to (0);
		\draw (4.center) to (0);
		\draw (9.center) to (1);
		\draw [bend right=90, looseness=1.75] (9.center) to (10.center);
		\draw (10.center) to (11.center);
		\draw [bend left=270, looseness=1.75] (11.center) to (8.center);
		\draw (8.center) to (1);
		\draw (2) to (13.center);
		\draw [bend right=90, looseness=1.25] (14.center) to (15.center);
		\draw (15.center) to (3);
		\draw (3) to (16.center);
		\draw [bend left=90, looseness=2.00] (16.center) to (17.center);
		\draw (17.center) to (18.center);
		\draw [bend left=90, looseness=0.75] (18.center) to (19.center);
		\draw (19.center) to (2);
		\draw [bend left=90, looseness=2.25] (13.center) to (12.center);
		\draw (12.center) to (14.center);
	\end{pgfonlayer}
\end{tikzpicture},
\end{center}
but it gives a $\Z$-linear polynomials in the entries of the matrix. 
This raises the following question: given a type $((p_i,q_i))$, we can consider the structure constants for a module of rank $d$ as a scheme $U_d$ over $\Z$. More precisely, we can define $$U_d = \text{Spec}(\Z[(x_i)_{a_{j_1},\ldots, a_{j_{q_i}}}^{b_{k_1},\ldots, b_{k_{p_i}}}]_{1\leq i\leq r, 1\leq j_t,k_s\leq d}).$$ 
This scheme is equipped with an action of the group scheme $\GL_d$, and we can consider the ring of invariants $\Z[U_d]^{\GL_d}$. 
\begin{question} Are the PSH-algebras constructed in this paper naturally contained in $\Z[U_d]^{\GL_d}$? If so, do they coincide with $\Z[U_d]^{\GL_d}$?
\end{question}
An answer to this question will enable us to tackle questions in invariant theory over the integers and also over fields of positive characteristic. 
\section*{Acknowledgements}
I would like to thank Alex Lubotzky for discussions about questions regarding subgroups growth, Uri Onn for Proposition \ref{prop:uri}, and the referee for their careful reading of the manuscript and their suggestions, which helped me to improve it.  

\section*{declarations}
\subsection{Ethical approval} 
NOT APPLICABLE
\subsection{Funding}
NOT APPLICABLE
\subsection{Availability of data and materials}
NOT APPLICABLE

\end{document}